\documentclass[10pt,a4paper]{article}

\usepackage{vmargin}
\setmarginsrb{3cm}{1cm}{3cm}{1cm}{1cm}{1cm}{1cm}{1cm}
%\setmarginsrb{1}{2}{3}{4}{5}{6}{7}{8}
%1 est la marge gauche
%2 est la marge en haut
%3 est la marge droite
%4 est la marge en bas
%5 fixe la hauteur de l'entête
%6 fixe la distance entre l'entête et le texte
%7 fixe la hauteur du pied de page
%8 fixe la distance entre le texte et le pied de page

%\usepackage{showkeys}
%\usepackage[french]{babel}
\usepackage[latin1]{inputenc}
\usepackage[T1]{fontenc}

\usepackage{amsmath,amsthm}
\usepackage{amsfonts}
\usepackage{amssymb}
\usepackage[inline]{asymptote}

\newtheorem{theorem}{Theorem}[section]
\newtheorem{proposition}[theorem]{Proposition}
\newtheorem{lemma}[theorem]{Lemma}
\newtheorem{corollary}[theorem]{Corollary}
\newtheorem{definition}[theorem]{Definition}

\newtheorem{notation}[theorem]{Notation}
\newtheorem{remark}[theorem]{Remark}
\newtheorem{assumption}[theorem]{Assumption}

\title{Uniform ball property and existence of optimal shapes for a wide class of geometric functionals} 
\author{\textsc{Jeremy Dalphin}\footnote{\textit{Institut Elie Cartan de Lorraine UMR CNRS 7502, Universit\'{e} de Lorraine, BP 70239 54506 Vand\oe{}uvre-l\`{e}s-Nancy Cedex, France.} E-mail: \texttt{jeremy.dalphin@mines-nancy.org}}}
\date{}

\begin{document}
\maketitle

%\tableofcontents

\begin{abstract}
In this paper, we are interested in shape optimization problems involving the geometry (normal, curvatures) of the surfaces. We consider a class of hypersurfaces in $\mathbb{R}^{n}$ satisfying a uniform ball condition and we prove the existence of a $C^{1,1}$-regular minimizer for general geometric functionals and constraints involving the first- and second-order properties of surfaces, such as in $\mathbb{R}^{3}$ problems of the form:
\[ \inf \int_{\partial \Omega} j_{0} \left[ \mathbf{x},\mathbf{n}\left(\mathbf{x}\right) \right] dA \left( \mathbf{x} \right) + \int_{\partial \Omega} j_{1} \left[ \mathbf{x},\mathbf{n}\left(\mathbf{x}\right),H\left( \mathbf{x} \right)\right] dA \left( \mathbf{x} \right) + \int_{\partial \Omega} j_{2}\left[\mathbf{x},\mathbf{n}\left(\mathbf{x}\right),K\left(\mathbf{x}\right)\right] dA \left( \mathbf{x}\right),  \] 
where $\mathbf{n}$, $H$, and $K$ respectively denotes the normal, the scalar mean curvature and the Gaussian curvature. We gives some various applications in the modelling of red blood cells such as the Canham-Helfrich energy and the Willmore functional.
\end{abstract}

{\bf Keywords :} shape optimization, uniform ball condition, Helfrich, Willmore, curvature depending energies, geometric functionals.
\medskip

{\bf AMS classification :} primary 49Q10, secondary 53A05, 49J45

\section{Introduction}
In the universe, many physical phenomena are governed by the geometry of their environment. The governing principle is usually modelled by some kind of energy minimization. Some problems such as soap films involve the first-order properties of surfaces (the area, the normal, the first fundamental form), while others such as the equilibrium shapes of red blood cells also concern the second-order ones (the principal curvatures, the second fundamental form). 
\bigskip

In this article, we are interested in the existence of solutions to such shape optimization problems and in the determination of an accurate class of admissible shapes. Indeed, although geometric measure theory \cite{SimonBook} often provides a general framework for understanding these questions precisely, the minimizer usually comes with a poorer regularity than the one expected, and it is very difficult to understand (and to prove) in which sense it is, since singularities can sometimes occur.
\bigskip

Using the shape optimization point of view, the aim of this paper is to introduce a more reasonable class of surfaces, in which the existence of an enough regular minimizer is ensured for general functionals and constraints involving the first- and second-order geometric properties of surfaces. Inspired by what Chenais did in \cite{Chenais1975} when she considered the uniform cone property, we consider the (hyper-)surfaces that satisfy a uniform ball condition in the following sense.

\begin{definition}
\label{definition_epsilon_boule}
Let $\varepsilon > 0$ and $B \subseteq \mathbb{R}^{n}$ be open, $n \geqslant 2$. We say that an open set $ \Omega \subseteq B $ satisfies the $ \varepsilon $-ball condition and we write $\Omega \in \mathcal{O}_{\varepsilon}(B)$ if for any $ \mathbf{x} \in \partial \Omega $, there exits a unit vector $ \mathbf{d_{x}} $ of $\mathbb{R}^{n}$ such that:
\[  \left\lbrace \begin{array}{l}
B_{\varepsilon}(\mathbf{x}-\varepsilon \mathbf{d_{x}} ) \subseteq \Omega \\
\\
 B_{\varepsilon}(\mathbf{x} + \varepsilon \mathbf{d_{x}} ) \subseteq B \backslash \overline{\Omega} , \\
\end{array} \right. \]  
where $ B_{r}(\mathbf{z}) = \lbrace \mathbf{y} \in \mathbb{R}^{n},~ \Vert \mathbf{y} - \mathbf{z} \Vert  < r \rbrace $ denotes the open ball of $ \mathbb{R}^{n} $ centred at $ \mathbf{z} $ and of radius $ r $, where $\overline{\Omega}$ is the closure of $\Omega$, and where $\partial \Omega = \overline{\Omega} \backslash \Omega$ refers to its boundary.
\end{definition}

The uniform (exterior/interior) ball condition was already considered by Poincar\'{e} in 1890 \cite{Poincare}. It avoids the formation of singularities such as corners, cuts, or self-intersections. In fact, it has been known to characterize the $C^{1,1}$-regularity of hypersurfaces for a long time by oral tradition, and also the positiveness of their reach, a notion introduced by Federer in \cite{Federer}. We did not find any reference where these two characterizations were gathered. Hence, for completeness, they are established in Section \ref{section_characterizations_epsilon_boule} and we refer to  Theorems \ref{thm_reach_equiv_boule} and \ref{thm_boule_equiv_cunun} for precise statements.
\bigskip

Equipped with this class of admissible shapes, we can now state our main general existence result in the three-dimensional Euclidean space $\mathbb{R}^{3}$. We refer to Section \ref{section_existence_functional} for its most general form in $\mathbb{R}^{n}$, but the following one is enough for the three physical applications we are presenting hereafter (further examples are also detailed in Section \ref{section_existence_functional}).

\begin{theorem}
\label{thm_existence_rtrois}
Let $\varepsilon > 0$ and $B \subset \mathbb{R}^{3} $ an open ball of radius large enough. Consider $(C, \widetilde{C}) \in \mathbb{R} \times \mathbb{R}$, five continuous maps $j_{0}$, $f_{0}$, $g_{0}$, $g_{1} $, $g_{2} : \mathbb{R}^{3} \times \mathbb{S}^{2} \rightarrow \mathbb{R}$, and four maps $j_{1} $, $j_{2}$, $f_{1}$, $f_{2}: \mathbb{R}^{3} \times \mathbb{S}^{2} \times \mathbb{R} \rightarrow \mathbb{R}$ which are continuous and convex in the last variable. Then, the following problem has at least one solution (see Notation \ref{notation_geometrie_rtrois}):
\[ \inf \int_{\partial \Omega} j_{0} \left[ \mathbf{x},\mathbf{n}\left(\mathbf{x}\right) \right] dA \left( \mathbf{x} \right) + \int_{\partial \Omega} j_{1} \left[ \mathbf{x},\mathbf{n}\left(\mathbf{x}\right),H\left( \mathbf{x} \right)\right] dA \left( \mathbf{x} \right) + \int_{\partial \Omega} j_{2}\left[\mathbf{x},\mathbf{n}\left(\mathbf{x}\right),K\left(\mathbf{x}\right)\right] dA \left( \mathbf{x}\right),  \] 
where the infimum is taken among any $\Omega \in \mathcal{O}_{\varepsilon}(B)$ satisfying a finite number of constraints of the following form:
\[ \left\lbrace \begin{array}{l} 
 \displaystyle{\int_{\partial \Omega} f_{0}\left[\mathbf{x},\mathbf{n}\left( \mathbf{x} \right) \right] dA \left( \mathbf{x} \right) + \int_{\partial \Omega} f_{1} \left[ \mathbf{x},\mathbf{n}\left(\mathbf{x}\right),H\left(\mathbf{x}\right)\right] dA \left( \mathbf{x}\right) + \int_{\partial \Omega} f_{2}\left[\mathbf{x},\mathbf{n}\left(\mathbf{x}\right),K\left(\mathbf{x}\right) \right] dA \left( \mathbf{x}\right) \leqslant C } \\
\\
\displaystyle{\int_{\partial \Omega} g_{0}\left[\mathbf{x},\mathbf{n}\left( \mathbf{x} \right) \right] dA \left( \mathbf{x} \right) + \int_{\partial \Omega} H \left(\mathbf{x} \right) g_{1} \left[\mathbf{x},\mathbf{n}\left(\mathbf{x}\right) \right] dA \left( \mathbf{x} \right) + \int_{\partial \Omega}  K \left( \mathbf{x} \right) g_{2}\left[ \mathbf{x},\mathbf{n}\left(\mathbf{x}\right) \right] dA \left( \mathbf{x} \right) = \widetilde{C}. } \\
\end{array} \right. \]
\end{theorem}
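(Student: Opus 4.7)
The plan is to apply the direct method of the calculus of variations, leaning on the compactness of $\mathcal{O}_\varepsilon(B)$ and the $C^{1,1}$-regularity it forces on boundaries. Assuming the admissible class is non-empty and the infimum finite, take a minimizing sequence $(\Omega_k) \subset \mathcal{O}_\varepsilon(B)$ satisfying the constraints. By Theorem~\ref{thm_boule_equiv_cunun}, each $\partial\Omega_k$ carries a uniform $C^{1,1}$-structure: after covering $\overline{B}$ by finitely many balls of radius $\sim \varepsilon$, the boundary $\partial\Omega_k$ is, in each such ball and after an appropriate rotation, the graph of a scalar function $u_k$ defined on a fixed planar disk, with $\|u_k\|_{W^{2,\infty}}$ bounded independently of $k$. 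Arzel\`a--Ascoli then produces a subsequence (not relabelled) with $u_k \to u^\star$ in $C^{1,\alpha}$ for every $\alpha \in (0,1)$, while $D^2 u_k \rightharpoonup D^2 u^\star$ weakly-$*$ in $L^\infty$; the associated limit set $\Omega^\star$ again lies in $\mathcal{O}_\varepsilon(B)$ by the closure of the class under this convergence.

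Next, pass to the limit term by term, working in charts. The area element $\sqrt{1+|\nabla u_k|^2}\,dy$ and the normal $\mathbf{n}_k = (-\nabla u_k,1)/\sqrt{1+|\nabla u_k|^2}$ both converge uniformly, so the zeroth-order functional $\int j_0[\mathbf{x},\mathbf{n}]\,dA$ is in fact continuous. The mean curvature $H_k$ is a linear expression in $D^2 u_k$ whose coefficients depend continuously on $\nabla u_k$; combining uniform convergence of $\nabla u_k$ with weak-$*$ convergence of $D^2 u_k$ gives $H_k \rightharpoonup H^\star$ weakly-$*$ in $L^\infty$ on each chart. The convexity of $j_1$ in its last slot, together with the uniform convergence of the remaining slots, then yields lower semicontinuity of $\int j_1[\mathbf{x},\mathbf{n},H]\,dA$ via the classical Ioffe theorem on Carath\'eodory integrands convex in one variable.

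The main obstacle is the Gaussian curvature term, because $K_k = \det(D^2 u_k)/(1+|\nabla u_k|^2)^2$ is \emph{quadratic} in $D^2 u_k$, so weak-$*$ convergence of $D^2 u_k$ does not a priori transfer to $K_k$. This is resolved via the compensated-compactness structure of the planar Monge--Amp\`ere operator: the identity
\[
\det(D^2 u) \;=\; \partial_x\bigl(u_x\,u_{yy}\bigr) \;-\; \partial_y\bigl(u_x\,u_{xy}\bigr),
\]
valid in the distributional sense for $u \in W^{2,\infty}$, writes $\det(D^2 u_k)$ as derivatives of products of one uniformly convergent factor ($u_{k,x}$) and one weakly-$*$ convergent factor ($u_{k,yy}$ or $u_{k,xy}$). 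Testing against a smooth bump function shows $\det(D^2 u_k) \rightharpoonup \det(D^2 u^\star)$, and hence $K_k \rightharpoonup K^\star$ weakly-$*$ in $L^\infty$. Once this key step is in place, the convexity of $j_2$ in $K$ delivers lower semicontinuity of $\int j_2[\mathbf{x},\mathbf{n},K]\,dA$ by the same Ioffe-type argument.

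It remains to handle the constraints. The inequality constraint is of the same type as the objective and passes to the limit by the same lower semicontinuity applied to $f_0, f_1, f_2$ (which are convex in the curvature variable by hypothesis), so $\Omega^\star$ still satisfies it. The equality constraint is \emph{linear} in $H$ and $K$, with coefficients $g_1, g_2$ that depend only on $(\mathbf{x},\mathbf{n})$ and hence converge uniformly; combined with the weak-$*$ convergence of $H_k$ and $K_k$, this gives genuine convergence, not merely one-sided semicontinuity, so the equality is preserved in the limit. Thus $\Omega^\star$ is admissible and attains the infimum.
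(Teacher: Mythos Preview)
Your proposal is correct and follows the same overall scheme as the paper: take a minimizing sequence, extract a convergent subsequence in $\mathcal{O}_\varepsilon(B)$, use uniform local $C^{1,1}$ graphs to get $C^1$-strong and $W^{2,\infty}$-weak-$*$ convergence, pass to the limit termwise (continuity for the zeroth-order and linear pieces, lower semicontinuity for the convex pieces), and observe that the equality constraint is linear in $H$ and $K$ so it survives intact. The one technical divergence is in how you establish weak-$*$ convergence of the Gaussian curvature. You invoke the divergence structure of the planar Monge--Amp\`ere operator, $\det(D^2u)=\partial_x(u_xu_{yy})-\partial_y(u_xu_{xy})$, and argue directly. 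The paper instead applies Tartar's compensated-compactness lemma (Proposition~\ref{prop_tartar}) to the coefficients $(b_{pq})$ of the second fundamental form, using the Codazzi--Mainardi equations~\eqref{expression_codazzi_mainardi_equation} to show that the relevant first-order operators lie in a compact set of $H^{-1}$; the wave cone then turns out to be exactly the rank-one matrices, so any $2\times2$ minor---in particular $\det(b_{pq})$---passes to the limit. Your identity is really the same mechanism in explicit coordinates, but the paper's formulation is more intrinsic (it lives on the surface, not in a particular chart) and, crucially, it iterates: applying Tartar again to the $2\times2$ minors yields convergence of $3\times3$ minors, and so on up to $H^{(l)}$ for all $l$, which is how the $\mathbb{R}^n$ version (Theorem~\ref{thm_continuite_rn}) is proved. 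Your Monge--Amp\`ere trick is tied to $n=3$.

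One small point to tighten: your covering should be of a tubular neighbourhood of the \emph{limit} boundary $\partial\Omega$, with frames chosen from $\partial\Omega$ (as in Theorem~\ref{thm_parametrisation_locale_i}), rather than an a priori covering of $\overline{B}$; otherwise the ``appropriate rotation'' would depend on $k$ and the graphs would not live over a common base disk.
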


The proof of Theorem \ref{thm_existence_rtrois} only relies on basic tools of analysis and does not use the ones of geometric measure theory. We also mention that the particular case $j_{0} \geqslant 0$ and $j_{1} = j_{2} = 0$ without constraints was obtained in parallel to our work in \cite{GuoYang2013}.

\begin{notation}
\label{notation_geometrie_rtrois}
We denote by $A(.)$ (respectively $V(.)$) the area (resp. the volume) i.e. the two(resp. three)-dimensional Hausdorff measure, and the integration on a surface is done with respect to $A$. The Gauss map $\mathbf{n} : \mathbf{x} \mapsto \mathbf{n} (\mathbf{x} ) \in \mathbb{S}^{2} $ always refers to the unit outer normal field of the surface, while $H = \kappa_{1}+ \kappa_{2}$ is the scalar mean curvature and $K = \kappa_{1} \kappa_{2}$ is the Gaussian curvature.
\end{notation}

\begin{remark}
In the above theorem, the radius of $B$ is large enough to avoid $\mathcal{O}_{\varepsilon}(B)$ being empty. Moreover, the assumptions on $B$ can be relaxed by requiring $B$ to be a non-empty bounded open set, smooth enough (Lipschitz for example) such that its boundary has zero three-dimensional Lebesgue measure, and large enough to contain at least an open ball of radius $3 \varepsilon$.
\end{remark} 

\subsection{First application: minimizing the Canham-Helfrich energy with area and volume constraints}
\label{section_canham_helfrich}
In biology, when a sufficiently large amount of phospholipids is inserted in a aqueous media, they immediately gather in pairs to form bilayers also called vesicles. Devoid of nucleus among mammals, red blood cells are typical examples of vesicles on which is fixed a network of proteins playing the role of a skeleton inside the membrane \cite{Pandit}. In the 70s, Canham \cite{Canham} then Helfrich \cite{Helfrich} suggested a simple model to characterize vesicles. Imposing the area of the bilayer and the volume of fluid it contains, their shape is a minimizer for the following free-bending energy (see Notation \ref{notation_geometrie_rtrois}):
\begin{equation}
\label{expression_canham_helfrich_energy}
\mathcal{E} = \dfrac{k_{b}}{2} \int_{\mathrm{membrane}} \left( H  - H_{0} \right)^{2} dA + k_{G} \int_{\mathrm{membrane}} K dA  , 
\end{equation} 
where $H_{0} \in \mathbb{R}$ (called the spontaneous curvature) measures the asymmetry between the two layers, and where $k_{b} > 0 $, $k_{G} < 0 $ are two other physical constants. Note that if $k_{G} > 0$, for any $ k_{b}, H_{0} \in \mathbb{R}$, the Canham-Helfrich energy \eqref{expression_canham_helfrich_energy} with prescribed area $A_{0}$ and volume $V_{0}$ is not bounded from below. Indeed, in that case, from the Gauss-Bonnet Theorem, the second term tends to $- \infty$ as the genus $g \rightarrow + \infty $, while the first term remains bounded by $ 4 \vert k_{b} \vert (12 \pi + \frac{1}{4} H_{0}^{2} A_{0} )$ (to see this last point, combine \cite[Remark 1.7 (iii) (1.5)]{KellerMondinoRiviere}, \cite[Theorem 1.1]{Schygulla}, and \cite[Inequality (0.2)]{Simon}).
\bigskip

The two-dimensional case of \eqref{expression_canham_helfrich_energy} is considered by Bellettini, Dal Maso, and Paolini in \cite{BellettiniMasoPaolini}. Some of their results is recovered by Delladio \cite{Delladio} in the framework of special generalized Gauss graphs from the theory of currents. Then, Choksi and Veneroni \cite{ChoksiVeneroni} solve the axisymmetric situation of \eqref{expression_canham_helfrich_energy} in $\mathbb{R}^{3}$ assuming $-2k_{b} < k_{G} < 0$. In the general case, this hypothesis gives a fundamental coercivity property \cite[Lemma 2.1]{ChoksiVeneroni} and the integrand of \eqref{expression_canham_helfrich_energy} is standard in the sense of \cite[Definition 4.1.2]{Hutchinson}. Hence, we get a minimizer for \eqref{expression_canham_helfrich_energy} in the class of rectifiable integer oriented $2$-varifold in $\mathbb{R}^{3}$ with $L^{2}$-bounded generalized second fundamental form \cite[Theorem 5.3.2]{Hutchinson} \cite[Section 2]{Mondino} \cite[Appendix]{BellettiniMugnai}. These compactness and lower semi-continuity properties were already noticed in \cite[Section 9.3]{BellettiniMugnai}.
\bigskip

However, the regularity of minimizers remains an open problem and experiments show that singular behaviours can occur to vesicles such as the budding effect \cite{SeifertBerndlLipowsky,Seifert}. This cannot happen to red blood cells because their skeleton prevents the membrane from bending too much locally \cite[Section 2.1]{Pandit}. To take this aspect into account, the uniform ball condition of Definition \ref{definition_epsilon_boule} is also motivated by the modelization of the equilibrium shapes of red blood cells. We even have a clue for its physical value \cite[Section 2.1.5]{Pandit}. Our result states as follows.

\begin{theorem}
\label{thm_existence_canham_helfrich}
Let $H_{0},k_{G} \in \mathbb{R}$ and $\varepsilon, k_{b}, A_{0}, V_{0} > 0$ such that $A_{0}^{3} > 36 \pi V_{0}^{2}$. Then, the following problem has at least one solution (see Notation \ref{notation_geometrie_rtrois}):
\[ \inf_{\substack{\Omega \in \mathcal{O}_{\varepsilon}(\mathbb{R}^{n}) \\ A(\partial \Omega) = A_{0} \\ V(\Omega) = V_{0} }} \dfrac{k_{b}}{2} \int_{\partial \Omega} (H-H_{0})^{2}dA + k_{G} \int_{\partial \Omega} K dA.  \]
\end{theorem}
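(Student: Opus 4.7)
The plan is to deduce Theorem~\ref{thm_existence_canham_helfrich} from the general result Theorem~\ref{thm_existence_rtrois}. Two things need to be done: first, cast the Canham-Helfrich energy and the area/volume constraints in the integrand format of that theorem; and second, pass from the ambient space $\mathbb{R}^{3}$ to a fixed bounded open ball $B$ by exploiting translation invariance.

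Setting $j_{0} \equiv 0$, $j_{1}(\mathbf{x},\mathbf{n},H) = \tfrac{k_{b}}{2}(H - H_{0})^{2}$, and $j_{2}(\mathbf{x},\mathbf{n},K) = k_{G} K$, the functional has the required form: since $k_{b} > 0$, the map $H \mapsto j_{1}$ is a positive quadratic and hence convex; $K \mapsto j_{2}$ is linear and hence convex; and all three integrands are jointly continuous. For the constraints, the area condition $A(\partial\Omega) = A_{0}$ fits the equality template with $g_{0} \equiv 1$, $g_{1} = g_{2} = 0$, $\widetilde{C} = A_{0}$. Using the $C^{1,1}$ regularity granted by the $\varepsilon$-ball condition (Theorem~\ref{thm_boule_equiv_cunun}), the divergence theorem yields $V(\Omega) = \tfrac{1}{3}\int_{\partial\Omega}\mathbf{x}\cdot\mathbf{n}(\mathbf{x})\,dA$, so the volume constraint fits similarly with $g_{0}(\mathbf{x},\mathbf{n}) = \tfrac{1}{3}\mathbf{x}\cdot\mathbf{n}$, $g_{1} = g_{2} = 0$, $\widetilde{C} = V_{0}$. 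Note also that with $k_{G} < 0$, the Gauss-Bonnet formula $\int_{\partial\Omega} K \, dA = 4\pi\chi$ turns $j_{2}$ into a topological penalty that discourages high genus, providing the coercivity implicitly required to make minimizing sequences meaningful.

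The remaining step, and what I expect to be the main technical obstacle, is replacing the ambient $\mathbb{R}^{3}$ by a fixed bounded open ball $B$. Since the energy and both constraints are translation invariant, I would take a minimizing sequence $(\Omega_{k})_{k \geq 1}$, translate each $\Omega_{k}$ so that $\mathbf{0}\in\Omega_{k}$, and then establish a uniform diameter bound $\mathrm{diam}(\Omega_{k})\leq R(\varepsilon,A_{0})$: each connected component of $\Omega_{k}$ must contain an interior ball of radius $\varepsilon$, and a chain of such tangent balls along any diameter forces a proportional lower bound on $A(\partial\Omega_{k}) = A_{0}$. All $\Omega_{k}$ then sit in a common ball $B$, reducing the problem to Theorem~\ref{thm_existence_rtrois} over $\mathcal{O}_{\varepsilon}(B)$. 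One must also verify non-emptiness of the admissible class, where the strict isoperimetric inequality $A_{0}^{3} > 36\pi V_{0}^{2}$ (ruling out the sphere as the only candidate with these area and volume) serves to produce an $\varepsilon$-ball shape realizing both constraints, provided $\varepsilon$ is sufficiently small relative to $A_{0}$ and $V_{0}$.
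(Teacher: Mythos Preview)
Your overall strategy matches the paper's: cast the energy and constraints in the format of Theorem~\ref{thm_existence_rtrois}, then reduce the ambient $\mathbb{R}^3$ to a fixed ball $B$ via a uniform diameter bound on a minimizing sequence. The integrand identifications you give are exactly right.

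The difference lies in how the diameter bound is obtained. The paper (via Proposition~\ref{prop_existence_vesicles} with $k_m=0$, as noted in Remark~\ref{remarque_isoperimetrique}) first controls the Willmore energy $\int_{\partial\Omega_i} H^2\,dA$ in terms of the functional value $J(\partial\Omega_i)$, the area $A_0$, and the curvature bound $|\kappa_l|\leqslant 1/\varepsilon$ from Remark~\ref{remarque_borne_linfty}; it then invokes Simon's estimate \cite[Lemma~1.1]{Simon} to bound $\mathrm{diam}(\Omega_i)$ by $A(\partial\Omega_i)\int H^2\,dA$. Your route is more elementary: you bypass the functional entirely and argue that the $\varepsilon$-ball condition together with the fixed area $A_0$ already forces a diameter bound. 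This is a legitimate alternative (morally, any ``tube'' in $\Omega$ has cross-sectional radius at least $\varepsilon$, so its length is controlled by the surface area), and it has the advantage of not needing Simon's lemma. However, your ``chain of tangent balls along any diameter'' is only a heuristic; to make it rigorous you should, for instance, slice $\Omega_i$ by hyperplanes orthogonal to a diameter-realizing direction and use the interior $\varepsilon$-ball at each boundary point to force $\mathcal{H}^1$ of each nonempty slice of $\partial\Omega_i$ to be at least $2\pi\varepsilon$, then integrate. You should also note that your translation step only pins down one connected component; the area (or volume) constraint combined with the $\varepsilon$-ball condition bounds the number of components, but you still need to argue they cannot drift apart, or else translate each component separately.

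Two minor points: your aside ``with $k_G<0$'' is misplaced, since the theorem allows arbitrary $k_G\in\mathbb{R}$ and no coercivity from the Gauss--Bonnet term is needed here (the $\varepsilon$-ball condition already bounds $|K|\leqslant 1/\varepsilon^2$, so $\int K\,dA$ is controlled by $A_0/\varepsilon^2$ regardless of sign). And your final caveat ``provided $\varepsilon$ is sufficiently small'' about non-emptiness of the admissible class is a genuine issue the paper leaves implicit as well; it is not part of the proof proper.
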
   

\begin{remark}
\label{remarque_isoperimetrique}
From the isoperimetric inequality, if $A_{0}^{3} < 36 \pi V_{0}^{2}$, one cannot find any $\Omega \in \mathcal{O}_{\varepsilon}(\mathbb{R}^{n})$ satisfying the two constraints; and if equality holds, the only admissible shape is the ball of area $A_{0}$ and volume $V_{0}$. Moreover, in the above theorem, note that we did not assume the $\Omega \in \mathcal{O}_{\varepsilon}(B)$ as it is the case for Theorem \ref{thm_existence_rtrois} because a uniform bound on their diameter is already given by the functional and the area constraint \cite[Lemma 1.1]{Simon}. Finally, the result above also holds if $H_{0}$ is continuous function of the position and the normal.    
\end{remark}

\subsection{Second application: minimizing the Canham-Helfrich energy with prescribed genus, area, and volume}
\label{section_helfrich}
Since the Gauss-Bonnet Theorem is valid for sets of positive reach \cite[Theorem 5.19]{Federer}, we get from Theorem \ref{thm_reach_equiv_boule} that $\int_{\Sigma} K dA = 4 \pi (1-g)$ for any compact connected $C^{1,1}$-surface $\Sigma$ (without boundary embedded in $\mathbb{R}^{3}$) of genus $g \in \mathbb{N}$. Hence, instead of minimizing \eqref{expression_canham_helfrich_energy}, people usually fix the topology and search for a minimizer of the following energy (see Notation \ref{notation_geometrie_rtrois}):
\begin{equation}
\label{expression_helfrich_energy}
\mathcal{H}(\Sigma) = \int_{\Sigma} \left( H-H_{0} \right)^{2} dA,
\end{equation}
with prescribed area and enclosed volume. Like \eqref{expression_canham_helfrich_energy}, such a functional depends on the surface but also on its orientation. However, in the case $H_{0} \neq 0$, Energy \eqref{expression_helfrich_energy} is not even lower semi-continuous with respect to the varifold convergence \cite[Section 9.3]{BellettiniMugnai}: the counterexample is due to Gro{\ss}e-Brauckmann \cite{Brauckmann}. In this case, we cannot directly use the tools of geometric measure theory but we can prove the following result. 

\begin{theorem}
\label{thm_existence_helfrich}
Let $H_{0} \in \mathbb{R}$, $g \in \mathbb{N}$, and $\varepsilon, A_{0}, V_{0} > 0$ such that $A_{0}^{3} > 36 \pi V_{0}^{2}$. Then, the following problem has at least one solution (see Notation \ref{notation_geometrie_rtrois} and Remark \ref{remarque_isoperimetrique}):
\[ \inf_{\substack{\Omega \in \mathcal{O}_{\varepsilon}(\mathbb{R}^{n}) \\ \mathrm{genus}(\partial \Omega) = g \\ A(\partial \Omega) = A_{0} \\ V(\Omega) = V_{0} }} \int_{\partial \Omega} (H-H_{0})^{2}dA,  \]
where $\mathrm{genus}(\partial \Omega) = g$ has to be understood as $\partial \Omega$ is a compact connected $C^{1,1}$-surface of genus $g$.
\end{theorem}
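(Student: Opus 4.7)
The plan is to use the direct method of the calculus of variations together with the compactness and lower-semicontinuity machinery underlying Theorem \ref{thm_existence_rtrois}, and to add a topological stability argument to handle the genus constraint.

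First, I would pass from $\mathcal{O}_{\varepsilon}(\mathbb{R}^{3})$ to $\mathcal{O}_{\varepsilon}(B)$ for a large enough fixed open ball $B$. Given a minimizing sequence $(\Omega_{k})$, the bound on $\int_{\partial \Omega_{k}}(H-H_{0})^{2}\,dA$ combined with the area constraint yields a bound on $\int_{\partial \Omega_{k}} H^{2}\,dA$ (using $H^{2} \leqslant 2(H-H_{0})^{2} + 2 H_{0}^{2}$), and Simon's Lemma~1.1 in \cite{Simon} then provides a uniform bound on $\mathrm{diam}(\partial \Omega_{k})$. Since the functional and all three constraints are translation-invariant, after translating the $\Omega_{k}$ I may assume $\Omega_{k} \in \mathcal{O}_{\varepsilon}(B)$ for a fixed $B$.

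Second, I would apply the compactness of $\mathcal{O}_{\varepsilon}(B)$ developed in Section \ref{section_characterizations_epsilon_boule} and used in the proof of Theorem \ref{thm_existence_rtrois} to extract, up to a subsequence, a limit $\Omega_{\infty} \in \mathcal{O}_{\varepsilon}(B)$ with Hausdorff convergence of boundaries and $C^{1,\alpha}$-convergence of their local graph parametrizations. The area and volume may be written as $\int_{\partial \Omega} 1\,dA$ and $\tfrac{1}{3}\int_{\partial \Omega} \mathbf{x}\cdot \mathbf{n}\,dA$ respectively (the latter by the divergence theorem), which are $f_{0}$-type integrals in the sense of Theorem \ref{thm_existence_rtrois} and hence continuous under the convergence; so $A(\partial \Omega_{\infty}) = A_{0}$ and $V(\Omega_{\infty}) = V_{0}$ are inherited in the limit. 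Since $(H-H_{0})^{2}$ is continuous and convex in $H$, the lower-semicontinuity result underlying Theorem \ref{thm_existence_rtrois} also gives $\int_{\partial \Omega_{\infty}}(H-H_{0})^{2}\,dA \leqslant \liminf_{k} \int_{\partial \Omega_{k}}(H-H_{0})^{2}\,dA$.

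Third, I would verify that $\partial \Omega_{\infty}$ is connected and of genus $g$. A Hausdorff limit of connected compact sets is connected, so $\partial \Omega_{\infty}$ is connected. For the genus, the $C^{1,\alpha}$-convergence together with the uniform positive reach from Theorem \ref{thm_reach_equiv_boule} ensures that, for $k$ large, the nearest-point projection defines a $C^{1}$-diffeomorphism $\partial \Omega_{k} \to \partial \Omega_{\infty}$; hence the genera coincide. Combined with the previous step, $\Omega_{\infty}$ is a minimizer. The main obstacle is precisely this topological rigidity at step three: one must guarantee that both connectedness and the genus survive the passage to the limit. This is where the uniform ball condition is decisive, since the positive reach and the $C^{1,1}$-regularity it provides are exactly what make the nearest-point projection a global diffeomorphism between nearby surfaces of the class $\mathcal{O}_{\varepsilon}(B)$.
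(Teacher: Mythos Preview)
Your proposal is correct and matches the paper's proof on the first two steps: the reduction from $\mathcal{O}_{\varepsilon}(\mathbb{R}^{3})$ to $\mathcal{O}_{\varepsilon}(B)$ via Simon's diameter bound, the compactness of $\mathcal{O}_{\varepsilon}(B)$, the continuity of area and volume, and the lower semi-continuity of $\int(H-H_{0})^{2}dA$ are exactly what the paper does. The difference lies in how the genus constraint is handled. The paper does not argue topological rigidity via the nearest-point projection; instead it encodes the genus constraint as the integral equality $\int_{\partial\Omega}K\,dA=4\pi(1-g)$ via the Gauss-Bonnet Theorem, which fits directly into the equality-constraint slot of Theorem~\ref{thm_existence_rtrois} (with $g_{2}\equiv 1$, $g_{0}=g_{1}\equiv 0$). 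Continuity of $\int K\,dA$ is established in Proposition~\ref{prop_continuite_gaussian_curvature} using the Codazzi--Mainardi equations and a compensated-compactness argument (Proposition~\ref{prop_tartar}). Connectedness of the limit is then obtained as you do, and Gauss--Bonnet is applied once more to recover the genus. Your projection-diffeomorphism route is perfectly valid and more geometric, but it requires you to actually verify that the nearest-point map is a global $C^{1}$-diffeomorphism for $k$ large (injectivity, surjectivity, and nondegeneracy of the differential), which is not entirely trivial; the paper's Gauss--Bonnet trick sidesteps this by reducing the topological constraint to a scalar integral constraint already covered by its continuity machinery.
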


\subsection{Third application: minimizing the Willmore functional with various constraints}
\label{section_willmore}
The particular case $H_{0} = 0$ in \eqref{expression_helfrich_energy} is known as the Willmore functional (see Notation \ref{notation_geometrie_rtrois}):
\begin{equation}
\label{expression_willmore_energy}
\mathcal{W}(\Sigma) = \dfrac{1}{4} \int_{\Sigma} H^{2} dA.
\end{equation} 
It has been widely studied by geometers. Without constraint, Willmore \cite[Theorem 7.2.2]{Willmore} proved that spheres are the only global minimizers of \eqref{expression_willmore_energy}. The existence was established by Simon \cite{Simon} for genus-one surfaces, Bauer and Kuwert \cite{BauerKuwert} for higher genus. Recently, Marques and Neves \cite{MarquesNeves} solved the so-called Willmore conjecture: the conformal transformations of the stereographic projection of the Clifford torus are the only global minimizers of \eqref{expression_willmore_energy} among smooth genus-one surfaces.
\bigskip

A main ingredient is the conformal invariance of \eqref{expression_willmore_energy}, from which we can in particular deduce that minimizing \eqref{expression_willmore_energy} with prescribed isoperimetric ratio is equivalent to impose the area and the enclosed volume. In this direction, Schygulla \cite{Schygulla} established the existence of a minimizer for \eqref{expression_willmore_energy} among analytic surfaces of zero genus and given isoperimetric ratio. For higher genus, Keller, Mondino, and Riviere \cite{KellerMondinoRiviere} recently obtained similar results, using the point of view of immersions developed by Riviere \cite{Riviere} to characterize precisely the critical points of \eqref{expression_willmore_energy}.
\bigskip

An existence result related to \eqref{expression_willmore_energy} is the particular case $H_{0} = 0$ of Theorem \ref{thm_existence_helfrich}. Again, the difficulty with these kind of functionals is not to obtain a minimizer (compactness and lower semi-continuity in the class of varifolds for example) but to show that it is regular in the usual sense (i.e. a smooth surface). We now give a last application of Theorem \ref{thm_existence_rtrois} which comes from the modelling of vesicles. It is known as the bilayer-couple model \cite[Section 2.5.3]{Seifert} and it states as follows.

\begin{theorem}
\label{thm_existence_willmore}
Let $M_{0} \in \mathbb{R}$ and $\varepsilon, A_{0}, V_{0} > 0$ such that $A_{0}^{3} > 36 \pi V_{0}^{2}$. Then, the following problem has at least one solution (see Notation \ref{notation_geometrie_rtrois} and Remark \ref{remarque_isoperimetrique}):
\[ \inf_{\substack{\Omega \in \mathcal{O}_{\varepsilon}(\mathbb{R}^{n}) \\ \mathrm{genus}(\partial \Omega) = g \\ A(\partial \Omega) = A_{0} \\ V(\Omega) = V_{0} \\ \int_{\partial \Omega} H dA = M_{0} }} \dfrac{1}{4}\int_{\partial \Omega} H^{2}dA,  \]
where $\mathrm{genus}(\partial \Omega) = g$ has to be understood as $\partial \Omega$ is a compact connected $C^{1,1}$-surface of genus $g$.
\end{theorem}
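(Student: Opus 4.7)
The plan is to deduce Theorem \ref{thm_existence_willmore} from the general existence result Theorem \ref{thm_existence_rtrois}, extending the approach already outlined for Theorem \ref{thm_existence_helfrich} by adding one further integral constraint. I begin by casting the problem in the required form: set $j_{0} = j_{2} \equiv 0$ and $j_{1}(\mathbf{x},\mathbf{n},H) = \tfrac{1}{4}H^{2}$, which is continuous and strictly convex in $H$. Each of the four equality constraints is then written in the second form $\int g_{0}\,dA + \int H g_{1}\,dA + \int K g_{2}\,dA = \widetilde{C}$ of Theorem \ref{thm_existence_rtrois}: the area constraint with $g_{0}\equiv 1$ and $\widetilde{C} = A_{0}$; the volume constraint, via the divergence identity $V(\Omega) = \tfrac{1}{3}\int_{\partial\Omega}\mathbf{x}\cdot\mathbf{n}\,dA$, with $g_{0}(\mathbf{x},\mathbf{n}) = \tfrac{1}{3}\mathbf{x}\cdot\mathbf{n}$ and $\widetilde{C} = V_{0}$; the total mean curvature constraint with $g_{1}\equiv 1$ and $\widetilde{C} = M_{0}$; and the genus constraint, reformulated via the Gauss--Bonnet identity (valid on sets of positive reach and hence on $\mathcal{O}_{\varepsilon}$ through Theorems \ref{thm_reach_equiv_boule} and \ref{thm_boule_equiv_cunun}) as $\int K\,dA = 4\pi(1-g)$, i.e.\ with $g_{2}\equiv 1$ and $\widetilde{C} = 4\pi(1-g)$.

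Next, the reduction from $\mathcal{O}_{\varepsilon}(\mathbb{R}^{3})$ to $\mathcal{O}_{\varepsilon}(B)$ for a bounded open ball $B$ proceeds as in Remark \ref{remarque_isoperimetrique}: the area constraint and Simon's diameter estimate \cite[Lemma 1.1]{Simon}, combined with a uniform bound on the Willmore energy of a minimizing sequence, yield a uniform bound on the diameter of the competitors; translating, one may fix an open ball $B$ of radius at least $3\varepsilon$ and large enough to contain every element of such a sequence. The hypothesis $A_{0}^{3} > 36\pi V_{0}^{2}$ (together with a smoothed genus-$g$ construction with the prescribed area, volume and total mean curvature) ensures that the admissible class is non-empty, so that the infimum is finite.

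The key additional issue, absent from Theorem \ref{thm_existence_rtrois}, is the topological constraint. The four integral equalities above do not by themselves force connectedness or genus $g$: a disjoint union of spheres and tori could realize the same values of area, volume, $\int H\,dA$ and $\int K\,dA$. I therefore restrict the minimization to the subclass $\mathcal{C}_{g}$ of $\Omega \in \mathcal{O}_{\varepsilon}(B)$ whose boundary is a compact connected $C^{1,1}$-surface of genus $g$, and must verify that $\mathcal{C}_{g}$ is closed under the compactness step of Theorem \ref{thm_existence_rtrois}'s proof. The uniform ball condition, via Theorem \ref{thm_boule_equiv_cunun}, supplies local $C^{1,1}$ graph parametrizations of $\partial\Omega_{k}$ with second-derivative bounds depending only on $\varepsilon$. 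Hence along a subsequence, $\Omega_{k}$ converges in Hausdorff distance to some $\Omega^{\star} \in \mathcal{O}_{\varepsilon}(B)$ and $\partial\Omega_{k}$ converges locally in $C^{1}$ to $\partial\Omega^{\star}$. Such $C^{1}$-convergence of uniformly regular closed surfaces preserves both the number of connected components and the genus, so that $\Omega^{\star} \in \mathcal{C}_{g}$. Continuity of the $g_{i}$'s in their arguments and convexity of $H \mapsto \tfrac{1}{4}H^{2}$ then furnish, respectively, passage to the limit in the four constraints and lower semicontinuity of the Willmore energy, producing the desired minimizer.

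The main obstacle in executing this plan is precisely the topological stability step: certifying that under the Hausdorff and local $C^{1}$ convergence granted by the $\varepsilon$-ball condition, no handle pinches off, no component disappears, and no two components merge. Everything else is a direct application of the machinery behind Theorem \ref{thm_existence_rtrois} to the closed subclass $\mathcal{C}_{g}$; the rigidity supplied by the uniform interior and exterior balls of radius $\varepsilon$ is what makes this topological passage to the limit possible, in sharp contrast with the varifold-type compactness used in classical approaches where the genus can collapse in the limit.
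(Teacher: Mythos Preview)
Your proposal is correct in outline and matches the paper's strategy of reducing to Theorem \ref{thm_existence_rtrois} with $j_{1}(\mathbf{x},\mathbf{n},H)=\tfrac14 H^{2}$ and the four equality constraints (area via $g_{0}\equiv 1$, volume via $g_{0}(\mathbf{x},\mathbf{n})=\tfrac13\langle\mathbf{x}\mid\mathbf{n}\rangle$, genus via $g_{2}\equiv 1$ and Gauss--Bonnet, and total mean curvature via $g_{1}\equiv 1$), together with the reduction to a bounded $B$ through Simon's diameter bound.

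The difference lies in the topological step you single out as ``the main obstacle''. You propose to show directly that local $C^{1}$-convergence of uniformly $C^{1,1}$ surfaces preserves both the number of components and the genus. The paper bypasses this entirely: once the integral constraint $\int_{\partial\Omega}K\,dA=4\pi(1-g)$ is in place and passes to the limit by Proposition \ref{prop_continuite_gaussian_curvature}, one only needs that the limit is \emph{connected}; for this the paper invokes the general fact that connectedness of compacts is stable under Hausdorff convergence (\cite[Proposition 2.2.17]{HenrotPierre}). Gauss--Bonnet then recovers the genus automatically. Thus the paper splits your obstacle into (i) continuity of $\int K\,dA$, already proved, and (ii) a classical stability result for connectedness --- neither of which requires arguing about handles pinching or components merging. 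Your direct $C^{1}$-diffeomorphism argument is valid but unnecessary here.
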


To conclude this introduction, the paper is organized as follows. In Section \ref{section_characterizations_epsilon_boule}, we establish precise statements of the two characterizations associated with the uniform ball condition, namely Theorem \ref{thm_reach_equiv_boule} in terms of positive reach and Theorem \ref{thm_boule_equiv_cunun} in terms of $C^{1,1}$-regularity. The proofs were already given in \cite{Dalphin} and are shortly reproduced here for completeness. 
\bigskip

Following the classical method from the calculus of variations, in Section \ref{section_compacite}, we first obtain the compactness of the class $\mathcal{O}_{\varepsilon}(B)$ for various modes of convergence. This essentially follows from the fact that the $\varepsilon$-ball condition implies a uniform cone property, for which we already have some good compactness results. 
\bigskip

Then, in the rest of Section \ref{section_parametrization_i}, we prove the key ingredient of Theorem \ref{thm_existence_rtrois}: we manage to parametrize in a fixed local frame simultaneously all the graphs associated with the boundaries of a converging sequence in $\mathcal{O}_{\varepsilon}(B)$ and to prove the $C^{1}$-strong convergence and the $W^{2,\infty}$-weak-star convergence of these local graphs. 
\bigskip

Finally, in Section \ref{section_continuite}, we show how to use this local result on a suitable partition of unity to get the global continuity of general geometric functionals. We conclude by giving some existence results in Section \ref{section_existence_functional}. We prove Theorem \ref{thm_existence_rtrois}, its generalization to $\mathbb{R}^{n}$, and detail many applications such as Theorems \ref{thm_existence_canham_helfrich}, \ref{thm_existence_helfrich}, and \ref{thm_existence_willmore}, mainly coming from the modelling of vesicles and red blood cells.

%%%%%%%%%%%%%%%%%%%%%%%%%%%%%%%%%%%%%%%%%%%%%%%%%%%%%%%%%%%%%%%%%%%%%%%%%%%%%%%%%%%%%%%%%%%%%%%%%%%%%%%%%%%%%%%%%%%%
\section{Two characterizations of the uniform ball property}
\label{section_characterizations_epsilon_boule}
In this section, we establish two characterizations of the $\varepsilon$-ball condition, namely Theorems \ref{thm_reach_equiv_boule} and \ref{thm_boule_equiv_cunun}. First, we show that this property is equivalent to the notion of positive reach introduced by Federer \cite{Federer}. Then, we prove that it is equivalent to a uniform $C^{1,1}$-regularity of hypersurfaces. These are known facts. The proofs, already given in \cite{Dalphin}, are reproduced here for completeness. 
\bigskip

Indeed, we did not find any reference where these two characterizations were gathered although many parts of Theorems \ref{thm_reach_equiv_boule} and \ref{thm_boule_equiv_cunun} can be found in the literature as remarks \cite[below Theorem 1.4]{Hormander} \cite[(1.10)]{Mitrea} \cite[Remark 4.20]{Federer}, sometimes with proofs \cite[Section 2.1]{Fu} \cite[Theorem 2.2]{GuoYang2012} \cite[\S 4 Theorem 1]{Lucas} \cite[Proposition 1.4]{Lytchak}, or as consequences of results \cite[Theorem 1.2]{GhomiHoward} \cite[Theorem 1.1 (1.2)]{AlvaroBrighamMazyaMitreaZiade}. 

\subsection{Definitions, notation, and statements}
Before stating the theorems, we recall some definitions and notation, used thereafter in the article. Consider any integer $ n \geqslant 2 $ henceforth set. The space $ \mathbb{R}^{n} $ whose points are marked $ \mathbf{x} = (x_{1}, \ldots, x_{n}) $ is naturally provided with its usual Euclidean structure, $\langle \mathbf{x} ~\vert~ \mathbf{y} \rangle = 
\sum_{k=1}^{n} x_{k}y_{k}$ and $ \Vert \mathbf{x} \Vert = \sqrt{\langle \mathbf{x} ~\vert~ \mathbf{x} \rangle} $, but also with a direct orthonormal frame whose choice will be specified later on. Inside this frame, every point $ \mathbf{x}$ of $ \mathbb{R}^{n} $ will be written into the form $ (\mathbf{x'}, x_{n}) $ such that $ \mathbf{x'} = (x_{1}, \ldots, x_{n-1}) \in \mathbb{R}^{n-1}$. In particular, the symbols $ \mathbf{0} $ and $ \mathbf{0'} $ respectively refer to the zero vector of $ \mathbb{R}^{n} $ and $ \mathbb{R}^{n-1} $. 
\bigskip

First, some of the notation introduced in \cite{Federer} by Federer are recalled. For every non-empty subset $A $ of $ \mathbb{R}^{n}$, the following map is well defined and $1$-Lipschitz continuous:
\[ \begin{array}{rrcl}
d(.,A):& \mathbb{R}^{n} &\longrightarrow & [0,+\infty[ \\
 & \mathbf{x} & \longmapsto & \displaystyle{ d( \mathbf{x},A ) = \inf_{\mathbf{a} \in A} \Vert \mathbf{x} -\mathbf{a} \Vert.} \\  
\end{array} \]
Furthermore, we introduce: 
\[ \mathrm{Unp}(A) = \lbrace \mathbf{x} \in \mathbb{R}^{n} ~\vert~ \exists ! \mathbf{a} \in A, \quad \Vert \mathbf{x} - \mathbf{a} \Vert  = d(\mathbf{x},A) \rbrace. \] 
This is the set of points in $ \mathbb{R}^{n}$ having a unique projection on $A$, that is the maximal domain on which the following map is well defined:
\[ \begin{array}{rrcl}
p_{A} : & \mathrm{Unp}(A) & \longrightarrow & A \\
 & \mathbf{x} & \longmapsto & p_{A}(\mathbf{x}) , \\ 
\end{array} \]
where $p_{A}(\mathbf{x})$ is the unique point of $ A$ such that $\Vert p_{A}(\mathbf{x}) - \mathbf{x} \Vert = d(\mathbf{x},A)$. We can also notice that $A \subseteq \mathrm{Unp}(A)$ thus in particular $\mathrm{Unp}(A) \neq \emptyset$. We can now express what is a set of positive reach.

\begin{definition}
\label{definition_reach}
Consider any non-empty subset $A$ of $\mathbb{R}^{n}$. First,  we set for any point $\mathbf{a} \in A$:
\[  \mathrm{Reach}(A,\mathbf{a}) = \sup \left\lbrace r > 0, \quad B_{r}(\mathbf{a}) \subseteq \mathrm{Unp}(A) \right\rbrace ,  \]
with the convention $\sup \emptyset = 0$. Then, we define the reach of $A$ by the following quantity: 
\[ \mathrm{Reach}(A) = \inf_{\mathbf{a} \in A} \mathrm{Reach}(A,\mathbf{a}). \] 
Finally, we say that $A$ has a positive reach if we have $\mathrm{Reach}(A) > 0$.
\end{definition}

\begin{remark}
\label{remarque_reach_bord}
From Definition \ref{definition_reach}, the reach of a subset of $\mathbb{R}^{n}$ is defined if it is not empty. Consequently, when considering the reach associated with the boundary of an open subset $\Omega$ of $\mathbb{R}^{n}$, we will have to ensure $\partial \Omega \neq \emptyset$ and to do so, we will assume $\Omega$ is not empty and different from $\mathbb{R}^{n}$. Indeed, if $\partial \Omega = \emptyset$, then $\overline{\Omega} = \Omega \cup \partial \Omega = \Omega$ thus $ \Omega =\emptyset$ or $ \Omega = \mathbb{R}^{n}$ because it is both open and closed.
\end{remark}

Then, we also recall the definition of a $C^{1,1}$-hypersurface in terms of local graph. Note that from the Jordan-Brouwer Separation Theorem, any compact topological hypersurface of $\mathbb{R}^{n}$ has a well-defined inner domain, and in particular a well-defined enclosed volume. If instead of being compact, it is connected and closed as a subset of $\mathbb{R}^{n}$, then it remains the boundary of an open set \cite[Theorem 4.16]{MontielRos} \cite[Section 8.15]{Dold}, which is not unique and possibly unbounded in this case.

\begin{definition}
\label{definition_regularite_cunun}
Consider any subset $\mathcal{S}$ of $\mathbb{R}^{n}$. We say that $\mathcal{S}$ is a $C^{1,1}$-hypersurface if there exists an open subset $\Omega$ of $\mathbb{R}^{n}$ such that $\partial \Omega = \mathcal{S}$, and such that for any point $ \mathbf{x}_{0} \in \partial \Omega $, there exists a direct orthonormal frame centred at $\mathbf{x}_{0}$ such that in this local frame, there exists a map $\varphi: D_{r}(\mathbf{0'}) \rightarrow ]-a,a[$ continuously differentiable  with $a > 0$, such that $\varphi$ and its gradient $\nabla \varphi$ are $L$-Lipschitz continuous with $L > 0$, satisfying $\varphi(\mathbf{0}') = 0$, $\nabla \varphi (\mathbf{0}') = \mathbf{0'}$, and also:
\[\left\lbrace \begin{array}{rcl}
\partial \Omega \cap \left(  D_{r} \left(\mathbf{0'}\right) \times ]- a, a [ \right) & = & \left\lbrace \left(\mathbf{x'},\varphi(\mathbf{x'})\right), \quad \mathbf{x'} \in D_{r}(\mathbf{0'}) \right\rbrace \\
& &   \\
\Omega \cap  \left( D_{r} \left(\mathbf{0'}\right) \times   ]- a, a [ \right)  & = & \left\lbrace (\mathbf{x'},x_{n}), \quad \mathbf{x'} \in D_{r}(\mathbf{0'}) ~ \mathrm{and} ~
-a < x_{n} <\varphi(\mathbf{x'}) \right\rbrace , \\
\end{array} \right. \]
where $D_{r}(\mathbf{0'}) = \lbrace \mathbf{x'} \in \mathbb{R}^{n-1}, ~ \Vert \mathbf{x'} \Vert < r \rbrace$ denotes the open ball of $\mathbb{R}^{n-1}$ centred at the origin $\mathbf{0'}$ and of radius $r > 0$.
\end{definition}

Finally, we recall the definition of the uniform cone property introduced by Chenais in \cite{Chenais1975}, and from which the $\varepsilon$-ball condition is inspired. We also refer to \cite[Definition 2.4.1]{HenrotPierre}.
 
\begin{definition}
\label{definition_alpha_cone}
Let $\alpha \in ]0,\frac{\pi}{2}[$ and $\Omega$ be an open subset of $\mathbb{R}^{n}$. We say that $\Omega$ satisfies the $\alpha$-cone condition if for any point $\mathbf{x} \in \partial \Omega$, there exists a unit vector $\mathbf{\xi_{x}}$ of $\mathbb{R}^{n}$ such that:
\[\forall \mathbf{y} \in B_{\alpha}(\mathbf{x}) \cap \Omega, \quad C_{\alpha}(\mathbf{y},\mathbf{\xi_{x}}) \subseteq \Omega ,  \]
where $C_{\alpha}(\mathbf{y}, \mathbf{\xi_{x}}) = \lbrace \mathbf{z} \in B_{\alpha}(\mathbf{y}), ~ \Vert \mathbf{z} - \mathbf{y} \Vert \cos \alpha < \langle  \mathbf{z} - \mathbf{y} ~ \vert ~ \mathbf{\xi_{x}} \rangle \rbrace $ refers to the open cone of corner $\mathbf{y}$, direction $\mathbf{\xi_{x}}$, and span $\alpha$. 
\end{definition}

We are now in position to precisely state the two main regularity results associated with the uniform ball condition.
\begin{theorem}[\textbf{A characterization in terms of positive reach}]
\label{thm_reach_equiv_boule}
Consider any non-empty open subset $\Omega$ of $\mathbb{R}^{n}$ different from $\mathbb{R}^{n}$. Then, the following implications are true:
\begin{itemize}
\item[(i)] if there exists $\varepsilon > 0$ such that $\Omega \in \mathcal{O}_{\varepsilon}(\mathbb{R}^{n})$ as in Definition \ref{definition_epsilon_boule}, then $\partial \Omega$ has a positive reach in the sense of Definition \ref{definition_reach} and we have $\mathrm{Reach}(\partial \Omega) \geqslant \varepsilon $;
\item[(ii)] if $\partial \Omega$ has a positive reach, then $\Omega \in \mathcal{O}_{\varepsilon}(\mathbb{R}^{n})$ for any $\varepsilon \in ]0,\mathrm{Reach}(\partial \Omega)[ $, and moreover, if $\partial \Omega$ has a finite positive reach, then $\Omega$ also satisfies the $\mathrm{Reach}(\partial \Omega)$-ball condition.
\end{itemize}
In other words, we have the following characterization:
\[ \mathrm{Reach}(\partial \Omega) = \sup \left\lbrace \varepsilon > 0 , \quad \Omega \in \mathcal{O}_{\varepsilon}(\mathbb{R}^{n}) \right\rbrace , \]
with the convention $\sup \emptyset = 0$. Moreover, this supremum becomes a maximum if it is not zero and finite. Finally, we get $\mathrm{Reach}(\partial \Omega) = + \infty$ if and only if $\partial \Omega$ is an affine hyperplane of $\mathbb{R}^{n}$. 
\end{theorem}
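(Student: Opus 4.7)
The plan is to derive both implications from a single elementary observation I will call the \emph{tangent-ball lemma}: if two open balls with disjoint interiors have their boundary spheres passing through a common point $\mathbf{p}$, then the two centres and $\mathbf{p}$ are collinear, with $\mathbf{p}$ lying on the segment between the centres at the point dividing it in the ratio of the radii. This is immediate from the equality case of the triangle inequality, since disjointness of the open balls forces the distance between the centres to be at least the sum of the radii, while the triangle inequality via $\mathbf{p}$ forces it to be at most the same quantity.

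For implication (i), I would fix $\mathbf{x}_{0} \in \partial \Omega$ and any $\mathbf{y} \in B_{\varepsilon}(\mathbf{x}_{0})$, and show $\mathbf{y}$ has a unique closest point on $\partial \Omega$. The case $\mathbf{y} \in \partial \Omega$ is trivial; otherwise, say $\mathbf{y} \in \Omega$ (the complementary case is symmetric), and set $d = d(\mathbf{y}, \partial \Omega) < \varepsilon$. Connectedness yields $B_{d}(\mathbf{y}) \subseteq \Omega$, and for any closest point $\mathbf{p}$ with associated unit vector $\mathbf{d_{p}}$, the interior ball $B_{d}(\mathbf{y})$ and the exterior ball $B_{\varepsilon}(\mathbf{p}+\varepsilon \mathbf{d_{p}})$ are disjoint with $\mathbf{p}$ on both spheres. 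The tangent-ball lemma pins down $\mathbf{y} = \mathbf{p}-d\,\mathbf{d_{p}}$, so $\mathbf{p}$ is determined by $\mathbf{y}$ and $\mathbf{d_{p}}$. For uniqueness, if $\mathbf{p}, \mathbf{q} \in \partial \Omega$ were two closest points, testing $\mathbf{q} \notin B_{\varepsilon}(\mathbf{p}-\varepsilon \mathbf{d_{p}})$ yields $\langle \mathbf{q}-\mathbf{p}, \mathbf{d_{p}} \rangle \geqslant -\|\mathbf{p}-\mathbf{q}\|^{2}/(2\varepsilon)$; substituting the identity $\mathbf{p}-\mathbf{q} = d(\mathbf{d_{p}}-\mathbf{d_{q}})$ already derived and simplifying forces $d \geqslant \varepsilon$, contradicting $d < \varepsilon$ unless $\mathbf{d_{p}} = \mathbf{d_{q}}$, hence $\mathbf{p} = \mathbf{q}$. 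This gives $B_{\varepsilon}(\mathbf{x}_{0}) \subseteq \mathrm{Unp}(\partial \Omega)$ and therefore $\mathrm{Reach}(\partial \Omega) \geqslant \varepsilon$.

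For implication (ii), with $\varepsilon < \mathrm{Reach}(\partial \Omega)$ fixed and $\mathbf{x} \in \partial \Omega$, I would approximate $\mathbf{x}$ by a sequence $\mathbf{y}_{k} \in \mathbb{R}^{n} \setminus \overline{\Omega}$, use continuity of the projection $p_{\partial \Omega}$ on $\mathrm{Unp}(\partial \Omega)$ to ensure $\mathbf{p}_{k} := p_{\partial \Omega}(\mathbf{y}_{k}) \to \mathbf{x}$, and invoke the standard property of sets of positive reach that the ray $\mathbf{p}_{k} + t \mathbf{u}_{k}$, with $\mathbf{u}_{k} = (\mathbf{y}_{k} - \mathbf{p}_{k})/\|\mathbf{y}_{k} - \mathbf{p}_{k}\|$, keeps $\mathbf{p}_{k}$ as its unique closest point on $\partial \Omega$ for every $t \in [0, \mathrm{Reach}(\partial \Omega))$. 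By connectedness this places $B_{\varepsilon}(\mathbf{p}_{k} + \varepsilon \mathbf{u}_{k}) \subseteq \mathbb{R}^{n} \setminus \overline{\Omega}$. Compactness of $\mathbb{S}^{n-1}$ provides a subsequential limit $\mathbf{d_{x}^{+}}$ of $\mathbf{u}_{k}$, and the inclusion passes to the limit. Running the mirror argument with $\mathbf{y}_{k} \in \Omega$ produces $\mathbf{d_{x}^{-}}$ with $B_{\varepsilon}(\mathbf{x}+\varepsilon \mathbf{d_{x}^{-}}) \subseteq \Omega$; the tangent-ball lemma applied to these two disjoint balls sharing $\mathbf{x}$ on their spheres then forces $\mathbf{d_{x}^{-}} = -\mathbf{d_{x}^{+}}$, and $\mathbf{d_{x}} := \mathbf{d_{x}^{+}}$ meets the requirements of Definition \ref{definition_epsilon_boule}.

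For the moreover clause of (ii), I would take $\varepsilon_{k} \uparrow r := \mathrm{Reach}(\partial \Omega) < \infty$, extract a spherical subsequence of the associated directions $\mathbf{d_{x}}^{(\varepsilon_{k})}$, and pass to the limit in the ball inclusions. The supremum characterization is then the conjunction of (i), (ii), and this clause, the supremum being attained whenever $r$ is positive and finite. When $r = +\infty$, the uniqueness proved in (i) makes $\mathbf{d_{x}}$ independent of the underlying $\varepsilon$, so letting $\varepsilon \to \infty$ the two balls swell into complementary open half-spaces, forcing $\partial \Omega$ to coincide with the affine hyperplane through $\mathbf{x}$ orthogonal to $\mathbf{d_{x}}$; the converse is trivial. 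The main obstacle I foresee is the uniqueness step of (i): a naive use of the exterior ball alone gives the scalar inequality in the wrong direction, and one must combine the \emph{interior}-ball exclusions at both $\mathbf{p}$ and $\mathbf{q}$ with the identity $\mathbf{p}-\mathbf{q} = d(\mathbf{d_{p}}-\mathbf{d_{q}})$ to close the argument.
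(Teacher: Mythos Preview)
Your proposal is correct and follows essentially the same line as the paper. For (i) you pin down $\mathbf{y}=\mathbf{p}-d\,\mathbf{d_p}$ via tangency of the interior ball $B_d(\mathbf{y})$ and the exterior $\varepsilon$-ball at $\mathbf{p}$, then force uniqueness from the ball exclusions; the paper does the same, only packaging the uniqueness step through the Lipschitz estimate $\Vert\mathbf{d_p}-\mathbf{d_q}\Vert\leqslant\varepsilon^{-1}\Vert\mathbf{p}-\mathbf{q}\Vert$ (its Proposition~\ref{prop_normale_lipschitzienne}) rather than your single interior-ball test---both routes yield exactly the inequality $d\geqslant\varepsilon$ unless $\mathbf{d_p}=\mathbf{d_q}$. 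For (ii) you work directly with $p_{\partial\Omega}$ and Federer's ray property, while the paper splits via $\mathrm{Reach}(\partial\Omega,\mathbf{x})=\min(\mathrm{Reach}(\overline{\Omega},\mathbf{x}),\mathrm{Reach}(\mathbb{R}^{n}\backslash\Omega,\mathbf{x}))$ and handles the two sides separately; your direct approach is a little leaner but relies on the same Federer input (Proposition~\ref{prop_federer_absurde}). One cosmetic point: in the moreover clause no subsequence extraction is needed, since the uniqueness you proved in (i) already makes $\mathbf{d_x}$ independent of $\varepsilon$, and the paper exploits this via $B_{r}(\mathbf{x}\pm r\mathbf{d_x})=\bigcup_{0<\varepsilon<r}\overline{B_{\varepsilon}}(\mathbf{x}\pm\varepsilon\mathbf{d_x})\setminus\{\mathbf{x}\}$.
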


\begin{theorem}[\textbf{A characterization in terms of $C^{1,1}$-regularity}]
\label{thm_boule_equiv_cunun}
Let $\Omega$ be a non-empty open subset of $\mathbb{R}^{n}$ different from $\mathbb{R}^{n}$. If there exists $\varepsilon > 0$ such that $\Omega \in \mathcal{O}_{\varepsilon}(\mathbb{R}^{n})$, then its boundary $\partial \Omega$ is a $C^{1,1}$-hypersurface of $\mathbb{R}^{n}$ in the sense of Definition \ref{definition_regularite_cunun}, where $a = \varepsilon$ and the constants $L$, $r$ depend only on $\varepsilon$. Moreover, we have the following properties:
\begin{itemize}
\item[(i)] $ \Omega$ satisfies the $f^{-1}(\varepsilon)$-cone property as in Definition \ref{definition_alpha_cone} with $f: \alpha \in ]0,\frac{\pi}{2}[ \mapsto \frac{2 \alpha}{\cos \alpha} \in ]0, + \infty[ $;
\item[(ii)] the vector $\mathbf{d_{x}}$ of Definition \ref{definition_epsilon_boule} is the unit outer normal to the hypersurface at the point $\mathbf{x}$;
\item[(iii)] the Gauss map $\mathbf{d}: \mathbf{x} \in \partial \Omega \mapsto \mathbf{d_{x}} \in \mathbb{S}^{n-1}$ is well defined and $\frac{1}{\varepsilon}$-Lipschitz continuous.
\end{itemize}
Conversely, if $\mathcal{S}$ is a non-empty compact $C^{1,1}$-hypersurface of $\mathbb{R}^{n}$ in the sense of Definition \ref{definition_regularite_cunun}, then there exists $\varepsilon > 0$ such that its inner domain $\Omega \in \mathcal{O}_{\varepsilon}(\mathbb{R}^{n})$. In particular, it has a positive reach with $\mathrm{Reach}(\mathcal{S}) = \max \left\lbrace \varepsilon > 0, ~ \Omega \in \mathcal{O}_{\varepsilon}(\mathbb{R}^{n}) \right\rbrace $.
\end{theorem}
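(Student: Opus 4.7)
The plan is to establish the direct implication by an explicit local graph construction, derive the cone property and items (ii)--(iii) as by-products, and then treat the converse via a second-order Taylor expansion combined with compactness. The technical heart is a Lipschitz estimate on the map $\mathbf{d}: \partial\Omega \to \mathbb{S}^{n-1}$. For any $\mathbf{x},\mathbf{y}\in\partial\Omega$, the interior ball at $\mathbf{x}$ and the exterior ball at $\mathbf{y}$ are disjoint (one lies in $\Omega$, the other in its complement), so their centres satisfy $\|(\mathbf{x}-\varepsilon\mathbf{d}_\mathbf{x})-(\mathbf{y}+\varepsilon\mathbf{d}_\mathbf{y})\|\geqslant 2\varepsilon$. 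Squaring this inequality and adding the analogous one obtained by exchanging the roles of $\mathbf{x}$ and $\mathbf{y}$ cancels the cross term involving $\varepsilon(\mathbf{d}_\mathbf{x}+\mathbf{d}_\mathbf{y})$ and leaves $\varepsilon^2\|\mathbf{d}_\mathbf{x}-\mathbf{d}_\mathbf{y}\|^2\leqslant \|\mathbf{x}-\mathbf{y}\|^2$. Setting $\mathbf{x}=\mathbf{y}$ yields the uniqueness of $\mathbf{d}_\mathbf{x}$, so that $\mathbf{d}$ is well defined, while the general case is exactly item (iii).

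Next, I would fix $\mathbf{x}_0\in\partial\Omega$, place the origin there, and align $\mathbf{e}_n$ with $\mathbf{d}_{\mathbf{x}_0}$. Since $\partial\Omega$ avoids both $B_\varepsilon(\pm\varepsilon\mathbf{e}_n)$, it is pinched into the lens region $|x_n|\leqslant \varepsilon-\sqrt{\varepsilon^2-|\mathbf{x}'|^2}$ for $|\mathbf{x}'|<\varepsilon$, which yields simultaneously an $L^\infty$ sandwich and the crucial $O(|\mathbf{x}'|^2)$ tangential control at the apex. To upgrade this squeeze into a graph representation I would choose $r>0$ depending only on $\varepsilon$ small enough that the Lipschitz bound on $\mathbf{d}$ forces $\langle\mathbf{d}_\mathbf{x},\mathbf{e}_n\rangle\geqslant 1/2$ for every $\mathbf{x}\in\partial\Omega$ within distance $r$ of $\mathbf{x}_0$. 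Vertical uniqueness then follows, since two distinct boundary points sharing an $\mathbf{x}'$-coordinate would force the interior ball at the upper one to contain the lower one; existence of a crossing at each $\mathbf{x}'\in D_r(\mathbf{0}')$ is a connectedness argument on the vertical segment joining the two tangent balls. The graph $\varphi$ inherits the $O(|\mathbf{x}'|^2)$ sandwich, so $\varphi(\mathbf{0}')=0$ and $\varphi$ is differentiable at $\mathbf{0}'$ with $\nabla\varphi(\mathbf{0}')=\mathbf{0}'$; reapplying the same construction after rotation at each $(\mathbf{x}',\varphi(\mathbf{x}'))$ delivers differentiability everywhere together with $\nabla\varphi(\mathbf{x}')=-\mathbf{d}'/d_n$, where $\mathbf{d}=(\mathbf{d}',d_n)$ is the Gauss vector at the corresponding boundary point. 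This identification is item (ii), and the smooth rational formula transfers the Lipschitz regularity from $\mathbf{d}$ to $\nabla\varphi$, giving $a=\varepsilon$ and $r$, $L$ depending only on $\varepsilon$.

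For the cone property (i), with $\alpha:=f^{-1}(\varepsilon)$ and $\xi_\mathbf{x}:=-\mathbf{d}_\mathbf{x}$, it suffices to verify the inclusion $C_\alpha(\mathbf{y},\xi_\mathbf{x})\subseteq \Omega$ for every $\mathbf{y}\in B_\alpha(\mathbf{x})\cap\Omega$; a direct distance computation expanding $\|\mathbf{z}-(\mathbf{x}-\varepsilon\mathbf{d}_\mathbf{x})\|^2$ and combining the cone inequality $\langle\mathbf{z}-\mathbf{y},\xi_\mathbf{x}\rangle>\|\mathbf{z}-\mathbf{y}\|\cos\alpha$ with the bound $\|\mathbf{y}-\mathbf{x}\|<\alpha$ reduces the inclusion to an algebraic relation saturated exactly by $\varepsilon=2\alpha/\cos\alpha$. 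For the converse, Definition \ref{definition_regularite_cunun} writes $\mathcal{S}$ locally as the graph of a map with $L$-Lipschitz gradient, so a second-order Taylor expansion confines $\varphi$ between the paraboloids $\pm\frac{L}{2}|\mathbf{x}'|^2$ and tangent balls of radius $1/L$ fit on both sides at the chart point; a finite sub-cover of the compact hypersurface $\mathcal{S}$ yields a uniform $\varepsilon>0$ with $\Omega\in\mathcal{O}_\varepsilon(\mathbb{R}^n)$, and combining this with Theorem \ref{thm_reach_equiv_boule} produces the reach characterisation as a maximum. The main difficulty will be the uniqueness step in the graph construction, since it requires balancing the Lipschitz estimate on $\mathbf{d}$ against the geometric sandwich in a way that keeps $r$ depending only on $\varepsilon$ and the resulting local charts uniform along all of $\partial\Omega$.
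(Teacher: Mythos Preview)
Your overall strategy matches the paper's closely: the $\frac{1}{\varepsilon}$-Lipschitz bound on $\mathbf{d}$ via disjoint interior/exterior balls, the lens-region sandwich $|x_n|\leqslant \varepsilon-\sqrt{\varepsilon^2-\|\mathbf{x}'\|^2}$, the vertical uniqueness giving a graph, the gradient formula $\nabla\varphi=-\mathbf{d}'/d_n$ transferring Lipschitz regularity from $\mathbf{d}$ to $\nabla\varphi$, and the Taylor-plus-compactness argument for the converse are all essentially what the paper does. One minor difference worth noting: the paper uses the cone property to obtain the Lipschitz continuity of $\varphi$ itself (Corollary~\ref{lemme_lipschitz}) before establishing differentiability, whereas you go straight to differentiability via the quadratic sandwich at each point; your route is legitimate but you should make explicit how the rotated-frame estimate translates back to a bound on $\varphi$ in the fixed frame.

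There is, however, a genuine gap in your argument for the cone property~(i). You claim that expanding $\|\mathbf{z}-(\mathbf{x}-\varepsilon\mathbf{d}_\mathbf{x})\|^2$ and using $\langle\mathbf{z}-\mathbf{y},-\mathbf{d}_\mathbf{x}\rangle>\|\mathbf{z}-\mathbf{y}\|\cos\alpha$ together with $\|\mathbf{y}-\mathbf{x}\|<\alpha$ forces $C_\alpha(\mathbf{y},-\mathbf{d}_\mathbf{x})\subseteq B_\varepsilon(\mathbf{x}-\varepsilon\mathbf{d}_\mathbf{x})$. This is false already when $\partial\Omega$ is a hyperplane: take $\mathbf{x}=\mathbf{0}$, $\mathbf{d}_\mathbf{x}=\mathbf{e}_n$, and $\mathbf{y}=(\mathbf{y}',-\delta)$ with $\|\mathbf{y}'\|$ close to $\alpha$ and $\delta>0$ small. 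Points $\mathbf{z}$ in the cone $C_\alpha(\mathbf{y},-\mathbf{e}_n)$ with small $\|\mathbf{z}-\mathbf{y}\|$ have horizontal component $\|\mathbf{z}'\|$ close to $\alpha$ and $z_n$ close to $0^-$, so $\|\mathbf{z}'\|^2+(z_n+\varepsilon)^2\approx \alpha^2+\varepsilon^2>\varepsilon^2$. The cone does lie in $\Omega$ (trivially, since $z_n<0$), but not via the single fixed ball at $\mathbf{x}$. The paper's Proposition~\ref{prop_alpha_cone} avoids this by a translation trick through the graph: given $\mathbf{y}\in B_\alpha(\mathbf{x}_0)\cap\overline{\Omega}$, it lifts $\mathbf{y}$ vertically to the boundary point $\tilde{\mathbf{x}}=(\mathbf{y}',\varphi(\mathbf{y}'))$, proves $C_\alpha(\tilde{\mathbf{x}},-\mathbf{d}_{\mathbf{x}_0})\subseteq B_\varepsilon(\tilde{\mathbf{x}}-\varepsilon\mathbf{d}_{\tilde{\mathbf{x}}})$ using the ball at $\tilde{\mathbf{x}}$ with its \emph{own} normal $\mathbf{d}_{\tilde{\mathbf{x}}}$ (this is where the Lipschitz bound on $\mathbf{d}$ and the estimate $\|\tilde{\mathbf{x}}-\mathbf{x}_0\|<\sqrt{2}\,\alpha$ enter, producing exactly the factor $2\alpha/\cos\alpha$), and then translates back down to conclude for the original cone at $\mathbf{y}$ via the graph description. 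You will need either this argument or an equivalent one that tracks a boundary point near $\mathbf{y}$ rather than the fixed $\mathbf{x}$.
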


\begin{remark}
In the above assertion, note that $a$, $L$, and $r$ only depend on $\varepsilon$ for any point of the hypersurface. This uniform dependence of the $C^{1,1}$-regularity characterizes the class $\mathcal{O}_{\varepsilon}(\mathbb{R}^{n})$. Indeed, the converse part of Theorem \ref{thm_boule_equiv_cunun} also holds if instead of being compact, the non-empty $C^{1,1}$-hypersurface $\mathcal{S}$ satisfies: $ \exists \varepsilon > 0, \forall \mathbf{x}_{0} \in \mathcal{S}, \min(\frac{1}{L},\frac{r}{3},\frac{a}{3}) \geqslant \varepsilon$. In this case, we still have $\Omega \in \mathcal{O}_{\varepsilon}(\mathbb{R}^{n})$ where $\Omega$ is the open set of Definition \ref{definition_regularite_cunun} such that $\partial \Omega = \mathcal{S}$. 
\end{remark}

\begin{remark}
\label{remarque_borne_linfty}
From Point (iii) of Theorem \ref{thm_boule_equiv_cunun}, the Gauss map $\mathbf{d}$ is $\frac{1}{\varepsilon}$-Lipschitz continuous. Hence, it is differentiable almost everywhere and $\Vert D_{\bullet} \mathbf{d} \Vert_{L^{\infty}(\partial \Omega)} \leqslant \frac{1}{\varepsilon}$ \cite[Section 5.2.2]{HenrotPierre}. In particular, the principal curvatures (see Section \ref{section_geometrie} for definitions and \eqref{eqn_kappa} for details) satisfy $\Vert \kappa_{l} \Vert_{L^{\infty}(\partial \Omega)} \leqslant \frac{1}{\varepsilon}$.
\end{remark}

\subsection{The sets of positive reach and the uniform ball condition}
Throughout this section, $\Omega$ refers to any non-empty open subset of $\mathbb{R}^{n}$ different from $\mathbb{R}^{n}$. Hence, its boundary $\partial \Omega$ is not empty and $\mathrm{Reach}(\partial \Omega)$ is well defined (cf. Remark \ref{remarque_reach_bord}). First, we establish some properties that were mentioned in Federer's paper \cite{Federer} and then, we show Theorem \ref{thm_reach_equiv_boule} holds.

\subsubsection{Positive reach implies uniform ball condition}
\begin{lemma}
\label{lemme_reach}
For any point $ \mathbf{x} \in \partial \Omega$, we have: $ \mathrm{Reach} (\partial \Omega, \mathbf{x}) = \min \left( \mathrm{Reach}(\overline{\Omega}, \mathbf{x}), \mathrm{Reach}(\mathbb{R}^{n} \backslash \Omega, \mathbf{x} ) \right) $.
\end{lemma}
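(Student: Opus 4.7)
The plan is to prove both directions of the inequality separately, exploiting one central geometric observation: because $\Omega$ is open, any straight segment joining a point of $\Omega$ to a point of $\mathbb{R}^n\setminus\Omega$ must cross $\partial\Omega$. This will allow me to transfer projections between the three sets $\overline\Omega$, $\mathbb{R}^n\setminus\Omega$ and $\partial\Omega$ essentially for free.

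First, I would record the following preparatory facts, valid for every $\mathbf{y}\in\mathbb{R}^n$. If $\mathbf{y}\in\Omega$, then $d(\mathbf{y},\partial\Omega)=d(\mathbf{y},\mathbb{R}^n\setminus\Omega)$, and any closest point of $\mathbf{y}$ in $\mathbb{R}^n\setminus\Omega$ necessarily belongs to $\partial\Omega$; symmetrically, if $\mathbf{y}\in\mathbb{R}^n\setminus\overline\Omega$, then $d(\mathbf{y},\partial\Omega)=d(\mathbf{y},\overline\Omega)$, and any closest point of $\mathbf{y}$ in $\overline\Omega$ lies in $\partial\Omega$. Both statements follow from the segment argument: given a candidate projection $\mathbf{z}$, the segment $[\mathbf{y},\mathbf{z}]$ starts and ends in sets separated by $\partial\Omega$, so it crosses $\partial\Omega$ at a point $\mathbf{w}$ with $\|\mathbf{y}-\mathbf{w}\|\leqslant\|\mathbf{y}-\mathbf{z}\|$, with equality forcing $\mathbf{w}=\mathbf{z}\in\partial\Omega$.

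Fix now $\mathbf{x}\in\partial\Omega$ and set $r_1=\mathrm{Reach}(\overline\Omega,\mathbf{x})$, $r_2=\mathrm{Reach}(\mathbb{R}^n\setminus\Omega,\mathbf{x})$, $r_3=\mathrm{Reach}(\partial\Omega,\mathbf{x})$. To show $r_3\leqslant\min(r_1,r_2)$, pick $r<r_3$ and $\mathbf{y}\in B_r(\mathbf{x})$; I want $\mathbf{y}\in\mathrm{Unp}(\overline\Omega)$. If $\mathbf{y}\in\overline\Omega$, it is trivially its own unique closest point. Otherwise $\mathbf{y}\in\mathbb{R}^n\setminus\overline\Omega$, so by the preparatory claim every closest point in $\overline\Omega$ already lies in $\partial\Omega$; uniqueness of the projection onto $\partial\Omega$ (which holds since $\mathbf{y}\in\mathrm{Unp}(\partial\Omega)$) then yields uniqueness in $\overline\Omega$. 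The same argument applied to $\mathbb{R}^n\setminus\Omega$ gives $r_3\leqslant r_2$.

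For the reverse inequality, pick $r<\min(r_1,r_2)$ and $\mathbf{y}\in B_r(\mathbf{x})$; I want $\mathbf{y}\in\mathrm{Unp}(\partial\Omega)$. If $\mathbf{y}\in\partial\Omega$, the projection onto $\partial\Omega$ is $\mathbf{y}$ itself. If $\mathbf{y}\in\Omega$, any closest point of $\mathbf{y}$ in $\partial\Omega$ is a closest point of $\mathbf{y}$ in $\mathbb{R}^n\setminus\Omega$ (because $d(\mathbf{y},\partial\Omega)=d(\mathbf{y},\mathbb{R}^n\setminus\Omega)$ and $\partial\Omega\subseteq\mathbb{R}^n\setminus\Omega$), so uniqueness in $\mathbb{R}^n\setminus\Omega$ forces uniqueness in $\partial\Omega$; the case $\mathbf{y}\in\mathbb{R}^n\setminus\overline\Omega$ is symmetric, using $r<r_1$. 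Together these two halves yield the equality. The only subtle point, and the step I would write most carefully, is the segment-crossing argument in the preparatory claim, because I must keep track of when the inequality $\|\mathbf{y}-\mathbf{w}\|\leqslant\|\mathbf{y}-\mathbf{z}\|$ is strict so that candidate projections outside $\partial\Omega$ are genuinely excluded.
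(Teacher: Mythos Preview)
Your proof is correct and rests on the same underlying idea as the paper's. The paper packages the argument more compactly: it first observes the single identity $d(\mathbf{y},\partial\Omega)=\max\bigl(d(\mathbf{y},\overline{\Omega}),\,d(\mathbf{y},\mathbb{R}^{n}\setminus\Omega)\bigr)$ for all $\mathbf{y}\in\mathbb{R}^n$ (which your segment-crossing claim is exactly what is needed to prove), deduces immediately that $\mathrm{Unp}(\partial\Omega)=\mathrm{Unp}(\overline{\Omega})\cap\mathrm{Unp}(\mathbb{R}^{n}\setminus\Omega)$, and then reads off the Reach equality directly from the definitions without any case analysis.
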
 

\begin{proof}
We only sketch the proof. Observe $  d(\mathbf{x},\partial \Omega) = \max ( d(\mathbf{x},\overline{\Omega}),d(\mathbf{x},\mathbb{R}^{n} \backslash \Omega) ) $ for any $\mathbf{x}\in \mathbb{R}^{n}$ to get $\mathrm{Unp}(\partial \Omega) = \mathrm{Unp}(\overline{\Omega}) \cap \mathrm{Unp}(\mathbb{R}^{n} \backslash \Omega)$ and the equality of Lemma \ref{lemme_reach} follows from definitions.
\end{proof}

\begin{proposition}[\textbf{Federer \cite[Theorem 4.8]{Federer}}]
\label{prop_federer_absurde}
Consider any non-empty closed subset $A$ of $\mathbb{R}^{n}$, a point $\mathbf{x} \in A$, and a vector $\mathbf{v}$ of $\mathbb{R}^{n}$. If the set $\lbrace t > 0, ~ \mathbf{x} + t \mathbf{v} \in \mathrm{Unp}(A) ~ \mathrm{and} ~ p_{A}(\mathbf{x} + t \mathbf{v}) = \mathbf{x} \rbrace  $ is not empty and bounded from above, then its supremum $\tau$ is well defined and $\mathbf{x} + \tau \mathbf{v}$ cannot belong to the interior of $\mathrm{Unp}(A)$. 
\end{proposition}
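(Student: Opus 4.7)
The plan is to argue by contradiction: assuming $\mathbf{y}_\tau := \mathbf{x} + \tau\mathbf{v}$ lies in the interior of $\mathrm{Unp}(A)$, I aim to exhibit some $\delta>0$ for which $\tau+\delta$ still belongs to the set, contradicting the definition of $\tau$ as its supremum.

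First I would show that the given set $T := \{t > 0 : \mathbf{x}+t\mathbf{v} \in \mathrm{Unp}(A), p_A(\mathbf{x}+t\mathbf{v}) = \mathbf{x}\}$ is automatically an interval of the form $(0,\tau)$ or $(0,\tau]$: for $t_0 \in T$ and $0 < s < t_0$, any competitor $\mathbf{a} \in A$ with $\|\mathbf{a} - \mathbf{y}_s\| \leqslant s\|\mathbf{v}\|$ satisfies $\|\mathbf{a} - \mathbf{y}_{t_0}\| \leqslant \|\mathbf{a} - \mathbf{y}_s\| + (t_0 - s)\|\mathbf{v}\| \leqslant t_0\|\mathbf{v}\| = \|\mathbf{x} - \mathbf{y}_{t_0}\|$, so uniqueness of the projection at $\mathbf{y}_{t_0}$ forces $\mathbf{a} = \mathbf{x}$; hence $\mathbf{y}_s \in \mathrm{Unp}(A)$ with $p_A(\mathbf{y}_s) = \mathbf{x}$. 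Under the interior assumption, taking $t_n \uparrow \tau$ in $T$ and using the $1$-Lipschitz continuity of $d(\cdot, A)$ yields $d(\mathbf{y}_\tau, A) = \tau\|\mathbf{v}\| = \|\mathbf{y}_\tau - \mathbf{x}\|$, and since $\mathbf{y}_\tau \in \mathrm{Unp}(A)$, uniqueness forces $p_A(\mathbf{y}_\tau) = \mathbf{x}$, hence $\tau \in T$.

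Next, the interior hypothesis provides $\rho > 0$ with $B_\rho(\mathbf{y}_\tau) \subset \mathrm{Unp}(A)$; for any $\delta \in (0, \rho/\|\mathbf{v}\|)$ the projection $\mathbf{a}_\delta := p_A(\mathbf{y}_{\tau+\delta})$ is well defined, and the standard compactness argument (bounded sequence of projections, continuity of $d(\cdot, A)$, uniqueness at $\mathbf{y}_\tau$) gives $\mathbf{a}_\delta \to \mathbf{x}$ as $\delta \to 0^+$. The remaining goal is to produce $\delta > 0$ with $\mathbf{a}_\delta = \mathbf{x}$, which supplies the desired $\tau + \delta \in T$.

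This last step is the main obstacle. My approach would combine two ingredients. On the analytic side, testing the nearest-point characterisation of $p_A(\mathbf{y}_\tau) = \mathbf{x}$ against the competitor $\mathbf{a}_\delta$, and of $p_A(\mathbf{y}_{\tau+\delta}) = \mathbf{a}_\delta$ against $\mathbf{x}$, yields
\[ 2\tau \langle \mathbf{a}_\delta - \mathbf{x},\, \mathbf{v}\rangle \;\leqslant\; \|\mathbf{a}_\delta - \mathbf{x}\|^2 \;\leqslant\; 2(\tau+\delta)\, \langle \mathbf{a}_\delta - \mathbf{x},\, \mathbf{v}\rangle, \]
squeezing $\langle \mathbf{a}_\delta - \mathbf{x}, \mathbf{v}\rangle$ into an interval of length of order $\delta\, \|\mathbf{a}_\delta - \mathbf{x}\|^2$ whenever $\mathbf{a}_\delta \neq \mathbf{x}$. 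On the geometric side, the perpendicular bisector of $\{\mathbf{x},\mathbf{a}_\delta\}$ strictly separates $\mathbf{y}_\tau$ (closer to $\mathbf{x}$) from $\mathbf{y}_{\tau+\delta}$ (closer to $\mathbf{a}_\delta$), so the segment between them meets it at an intermediate point $\mathbf{y}_{s^*} \in B_\rho(\mathbf{y}_\tau) \subset \mathrm{Unp}(A)$ where $\mathbf{x}$ and $\mathbf{a}_\delta$ share the common distance $s^*\|\mathbf{v}\|$; uniqueness of $p_A(\mathbf{y}_{s^*})$ then forbids either of $\mathbf{x}$ or $\mathbf{a}_\delta$ from being its projection and generates a strictly closer third competitor $\mathbf{a}'_\delta \in A$ that also tends to $\mathbf{x}$. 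The hardest part is to leverage this repeated appearance of tangential competitors, together with the quantitative squeeze above, into a clean quantitative contradiction with the uniqueness gap of $\mathbf{y}_\tau$; the rest of the proof is routine continuity.
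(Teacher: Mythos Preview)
The paper does not supply its own proof of this proposition; it simply refers to Federer's original article, noting that the argument there uses Peano's existence theorem for ordinary differential equations. Your approach is therefore different from what the paper invokes, and it is incomplete: you correctly isolate the crux---producing some $\delta>0$ with $p_A(\mathbf{y}_{\tau+\delta})=\mathbf{x}$---but do not resolve it. The perpendicular-bisector construction only manufactures further near-minimizers $\mathbf{a}'_\delta\in A\setminus\{\mathbf{x}\}$ with $d(\mathbf{y}_\tau,\mathbf{a}'_\delta)\to d(\mathbf{y}_\tau,A)$, and this can be iterated indefinitely without ever forcing $\mathbf{a}_\delta=\mathbf{x}$. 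The obstruction is structural: membership of $\mathbf{y}_\tau$ in $\mathrm{Unp}(A)$ asserts only that the nearest point is unique, not that there is any quantitative gap $d(\mathbf{y}_\tau,A\setminus\{\mathbf{x}\})>d(\mathbf{y}_\tau,\mathbf{x})$, so the ``uniqueness gap of $\mathbf{y}_\tau$'' you plan to exploit need not exist.

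For comparison, Federer's ODE argument sidesteps this as follows (normalize $\Vert\mathbf{v}\Vert=1$). On the interior of $\mathrm{Unp}(A)$, away from $A$, the field $F(\mathbf{y})=(\mathbf{y}-p_A(\mathbf{y}))/d(\mathbf{y},A)$ is continuous, and Peano's theorem gives a local $C^1$ solution of $\mathbf{y}'=F(\mathbf{y})$ with $\mathbf{y}(\tau)=\mathbf{y}_\tau$. Along it, $\frac{d}{dt}d(\mathbf{y}(t),A)=\Vert F\Vert^{2}=1$ (using that $\nabla d(\cdot,A)=F$ on $\mathrm{Unp}(A)\setminus A$), so $d(\mathbf{y}(t),A)=t$; and $\frac{d}{dt}\Vert\mathbf{y}(t)-\mathbf{x}\Vert\leqslant 1$ by Cauchy--Schwarz. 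Together with $\Vert\mathbf{y}(t)-\mathbf{x}\Vert\geqslant d(\mathbf{y}(t),A)$ and equality at $t=\tau$, this forces $\Vert\mathbf{y}(t)-\mathbf{x}\Vert=t$ on $[\tau,\tau+\varepsilon)$, hence $p_A(\mathbf{y}(t))=\mathbf{x}$; the ODE then reads $\mathbf{y}'=(\mathbf{y}-\mathbf{x})/t$, whose unique solution through $\mathbf{y}_\tau$ is $\mathbf{y}(t)=\mathbf{x}+t\mathbf{v}$, contradicting the maximality of $\tau$.
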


\begin{proof}
We refer to \cite{Federer} for a proof using Peano's Existence Theorem on differential equations. 
\end{proof}

\begin{corollary}
\label{coro_reach_existence_projete}
For any point $ \mathbf{x} \in \partial \Omega$ satisfying $  \mathrm{Reach}( \partial \Omega,\mathbf{x}) > 0 $, there exists two different points $   \mathbf{y} \in \mathrm{Unp}(\overline{\Omega}) \backslash \lbrace \mathbf{x} \rbrace $ and $\mathbf{\tilde{y}} \in \mathrm{Unp}(\mathbb{R}^{n}  \backslash \Omega) \backslash \lbrace \mathbf{x} \rbrace$ such that $  p_{\overline{\Omega}} (\mathbf{y}) = p_{\mathbb{R}^{n} \backslash \Omega}(\mathbf{\tilde{y}}) = \mathbf{x}$.
\end{corollary}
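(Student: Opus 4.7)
The plan is to invoke Proposition~\ref{prop_federer_absurde} of Federer in order to extend the ``tube'' on which the metric projection is constant, and then pass to the limit along an approximating sequence. By Lemma~\ref{lemme_reach}, the positivity of $\mathrm{Reach}(\partial\Omega,\mathbf{x})$ reduces to $\mathrm{Reach}(\overline{\Omega},\mathbf{x})>0$ and $\mathrm{Reach}(\mathbb{R}^{n}\setminus\Omega,\mathbf{x})>0$, so I can fix $\rho>0$ with $B_{2\rho}(\mathbf{x})\subseteq\mathrm{Unp}(\overline{\Omega})\cap\mathrm{Unp}(\mathbb{R}^{n}\setminus\Omega)$. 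I will detail only the construction of $\mathbf{y}$; that of $\tilde{\mathbf{y}}$ is entirely symmetric, exchanging the role of $\overline{\Omega}$ with that of $\mathbb{R}^{n}\setminus\Omega$ and choosing the approximating sequence inside $\Omega$ instead.

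Since $\mathbf{x}\in\partial\Omega$, first pick $\mathbf{z}_{k}\in B_{\rho}(\mathbf{x})\setminus\overline{\Omega}$ with $\mathbf{z}_{k}\to\mathbf{x}$, and set $\mathbf{x}_{k}:=p_{\overline{\Omega}}(\mathbf{z}_{k})\in\partial\Omega$; the inequality $\|\mathbf{x}_{k}-\mathbf{z}_{k}\|\leqslant\|\mathbf{x}-\mathbf{z}_{k}\|$ forces $\mathbf{x}_{k}\to\mathbf{x}$. Define the unit vectors $\mathbf{v}_{k}:=(\mathbf{z}_{k}-\mathbf{x}_{k})/\|\mathbf{z}_{k}-\mathbf{x}_{k}\|$ and, by compactness of $\mathbb{S}^{n-1}$, extract a subsequence with $\mathbf{v}_{k}\to\mathbf{v}\in\mathbb{S}^{n-1}$. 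Fix $s\in(0,\rho)$ and set $\mathbf{y}_{k}:=\mathbf{x}_{k}+s\mathbf{v}_{k}$, so that $\mathbf{y}_{k}\to\mathbf{x}+s\mathbf{v}=:\mathbf{y}\in B_{2\rho}(\mathbf{x})$.

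The main obstacle is to show that $p_{\overline{\Omega}}(\mathbf{y}_{k})=\mathbf{x}_{k}$ for $k$ large. The segment from $\mathbf{x}_{k}$ to $\mathbf{z}_{k}$ lies in $\mathrm{Unp}(\overline{\Omega})$ and projects onto $\mathbf{x}_{k}$ (standard ``tube'' property, obtained from the triangle inequality and the uniqueness of $p_{\overline{\Omega}}(\mathbf{z}_{k})$), so the set $T_{k}$ of $t>0$ for which $\mathbf{x}_{k}+t\mathbf{v}_{k}\in\mathrm{Unp}(\overline{\Omega})$ with $p_{\overline{\Omega}}(\mathbf{x}_{k}+t\mathbf{v}_{k})=\mathbf{x}_{k}$ is nonempty. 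If its supremum $\tau_{k}$ were finite with $\tau_{k}\leqslant s$, then for $k$ large I would have $\|\mathbf{x}_{k}+\tau_{k}\mathbf{v}_{k}-\mathbf{x}\|\leqslant s+\|\mathbf{x}_{k}-\mathbf{x}\|<2\rho$, so $\mathbf{x}_{k}+\tau_{k}\mathbf{v}_{k}$ would lie in the open ball $B_{2\rho}(\mathbf{x})\subseteq\mathrm{Unp}(\overline{\Omega})$, hence in the interior of $\mathrm{Unp}(\overline{\Omega})$; this contradicts the conclusion of Proposition~\ref{prop_federer_absurde} applied to $A=\overline{\Omega}$, $\mathbf{a}=\mathbf{x}_{k}$, $\mathbf{v}=\mathbf{v}_{k}$. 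Hence $\tau_{k}>s$ and $p_{\overline{\Omega}}(\mathbf{y}_{k})=\mathbf{x}_{k}$.

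To conclude, I invoke the standard continuity of the metric projection on $\mathrm{Unp}(\overline{\Omega})$ (an immediate consequence of the uniqueness of the projection and the closedness of $\overline{\Omega}$): letting $k\to\infty$, $p_{\overline{\Omega}}(\mathbf{y})=\lim_{k}p_{\overline{\Omega}}(\mathbf{y}_{k})=\lim_{k}\mathbf{x}_{k}=\mathbf{x}$. Since $s>0$ and $\|\mathbf{v}\|=1$, $\mathbf{y}\neq\mathbf{x}$; this also forces $\mathbf{y}\notin\overline{\Omega}$ (otherwise $p_{\overline{\Omega}}(\mathbf{y})=\mathbf{y}\neq\mathbf{x}$), and the symmetric construction places $\tilde{\mathbf{y}}$ inside $\Omega$, so $\mathbf{y}\neq\tilde{\mathbf{y}}$ automatically.
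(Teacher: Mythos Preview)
Your proof is correct and follows essentially the same route as the paper's: approximate $\mathbf{x}$ by points $\mathbf{z}_{k}\notin\overline{\Omega}$, push their projections a fixed positive distance along the normal direction using Federer's Proposition~\ref{prop_federer_absurde} to keep the projection constant, and pass to the limit via the continuity of $p_{\overline{\Omega}}$ on $\mathrm{Unp}(\overline{\Omega})$. The only cosmetic differences are that the paper uses the variable parameter $t_{i}=\tfrac{r}{2}+d(\mathbf{x}_{i},\overline{\Omega})$ where you use a fixed $s\in(0,\rho)$, and that you additionally verify $\mathbf{y}\neq\tilde{\mathbf{y}}$ explicitly (the paper leaves this implicit).
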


\begin{proof}
Consider $\mathbf{x} \in \partial \Omega$ satisfying $ \mathrm{Reach}(\partial\Omega,\mathbf{x}) > 0$. From Lemma \ref{lemme_reach}, there exists $r > 0$ such that $ \overline{B_{r}}(\mathbf{x}) \subseteq \mathrm{Unp}(\overline{\Omega})$. Let $(\mathbf{x}_{i})_{i \in \mathbb{N}}$ be a sequence of elements in $B_{\frac{r}{2}} (\mathbf{x}) \backslash \overline{\Omega}$ converging to $\mathbf{x}$. We set:
\[ \forall i \in \mathbb{N},~ \forall t \in \mathbb{R}, \quad \mathbf{z}_{i} (t) = p_{\overline{\Omega}}(\mathbf{x}_{i}) + t \frac{\mathbf{x}_{i} - p_{\overline{\Omega}}(\mathbf{x}_{i})}{\Vert \mathbf{x}_{i} - p_{\overline{\Omega}}(\mathbf{x}_{i}) \Vert } \qquad \mathrm{and} \qquad t_{i} = \dfrac{r}{2} + d(\mathbf{x}_{i},\overline{\Omega}) , \] 
which is well defined since $\mathbf{x}_{i} \in \mathrm{Unp}(\overline{\Omega})$. First, $\mathbf{z}_{i}(t) \in \overline{B_{\frac{r}{2}}}(\mathbf{x}_{i}) \subseteq B_{r}(\mathbf{x}) \subseteq \mathrm{Unp}(\overline{\Omega})$ for any $t \in [0,t_{i}] $. Then, using Federer's result recalled in Proposition \ref{prop_federer_absurde}, one can prove by contradiction that:
\[ \forall t \in [0,t_{i}], \quad p_{\overline{\Omega}}(\mathbf{z}_{i}(t)) = p_{\overline{\Omega}}(\mathbf{x}_{i}). \] 
Finally, the sequence $\mathbf{y}_{i} = \mathbf{\mathbf{z}}_{i}(t_{i})$ satisfies $\Vert \mathbf{y}_{i} - \mathbf{x}_{i} \Vert = \frac{r}{2}$ and also $p_{\overline{\Omega}}(\mathbf{y}_{i}) = p_{\overline{\Omega}}(\mathbf{x}_{i})$. Moreover, since it is bounded, $(\mathbf{y}_{i})_{i \in \mathbb{N}}$ is converging, up to a subsequence, to a point denoted by $\mathbf{y} \in \overline{B_{r}}(\mathbf{x}) \subseteq \mathrm{Unp}(\overline{\Omega})$. Using the continuity of $p_{\overline{\Omega}}$ \cite[Theorem 4.8 (4)]{Federer}, we get $\mathbf{y} \in \mathrm{Unp}(\overline{\Omega}) \backslash \lbrace \mathbf{x} \rbrace$ and $p_{\overline{\Omega}}(\mathbf{y}) = p_{\overline{\Omega}}(\mathbf{x})= \mathbf{x}$. To conclude, similar arguments work when replacing $\overline{\Omega}$ by the set $\mathbb{R}^{n} \backslash \Omega$ so Corollary \ref{coro_reach_existence_projete} holds.
\end{proof}

\begin{proof}[\textbf{Proof of Point (ii) in Theorem \ref{thm_reach_equiv_boule}}]
Since $\Omega \notin \lbrace \emptyset, \mathbb{R}^{n} \rbrace$, $\partial \Omega \neq \emptyset$ thus its reach is well defined. We assume $\mathrm{Reach}(\partial \Omega) > 0$, choose $\varepsilon \in ]0,\mathrm{Reach}(\partial \Omega)[$, and consider $\mathbf{x} \in \partial \Omega$. From Corollary \ref{coro_reach_existence_projete}, there exists $\mathbf{y} \in \mathrm{Unp}(\overline{\Omega}) \backslash \lbrace \mathbf{x} \rbrace$ such that $p_{\overline{\Omega}}(\mathbf{y}) = \mathbf{x}$ so we can set $\mathbf{d_{x}} = \frac{\mathbf{x} - \mathbf{y}}{\Vert \mathbf{x} - \mathbf{y} \Vert}$. From Lemma \ref{lemme_reach}, we get $\mathbf{x} + [0,\varepsilon] \mathbf{d_{x}} \subseteq \mathrm{Unp}(\overline{\Omega})$. Then, we use Proposition \ref{prop_federer_absurde} again to prove by contradiction that:
\[ \forall t \in [0, \varepsilon], \quad p_{\overline{\Omega}}(\mathbf{x}+ t \mathbf{d_{x}}) = \mathbf{x} . \]
In particular, we have $\Vert \mathbf{z} - (\mathbf{x} + \varepsilon \mathbf{d_{x}}) \Vert > \varepsilon$ for any point $\mathbf{z} \in \overline{\Omega} \backslash \lbrace \mathbf{x} \rbrace$ from which we deduce that:
\[  \overline{\Omega} \subseteq \lbrace \mathbf{x} \rbrace \cup \left( \mathbb{R}^{n} \backslash \overline{B_{\varepsilon}}(\mathbf{x}+ \varepsilon \mathbf{d_{x}}) \right) \quad \Longleftrightarrow \quad \overline{B_{\varepsilon}}(\mathbf{x} + \varepsilon \mathbf{d_{x}}) \backslash \lbrace \mathbf{x} \rbrace \subseteq \mathbb{R}^{n} \backslash \overline{\Omega}. \]
Similarly, there exists a unit vector $\mathbf{\xi_{x}}$ of $\mathbb{R}^{n}$ such that we get $\overline{B_{\varepsilon}}(\mathbf{x}+ \varepsilon \mathbf{\xi_{\mathbf{x}}}) \backslash \lbrace \mathbf{x} \rbrace \subseteq \Omega$. Since we have $ \overline{ B_{\varepsilon}}(\mathbf{x} + \varepsilon \mathbf{\xi_{x}}) \cap \overline{B_{\varepsilon}}(\mathbf{x}+ \varepsilon \mathbf{d_{x}}) = \lbrace \mathbf{x} \rbrace $, we obtain $\mathbf{d_{x}} = - \mathbf{\xi_{x}}$. To conclude, if $  \mathrm{Reach}(\partial \Omega) < + \infty $, then observe that $ B_{\mathrm{Reach}(\partial \Omega)}(\mathbf{x} \pm \mathrm{Reach}(\partial \Omega) \mathbf{d_{x}}) = \bigcup_{0 < \varepsilon < \mathrm{Reach}(\partial \Omega)} \overline{B_{\varepsilon}}(\mathbf{x}\pm \varepsilon \mathbf{d_{x}}) \backslash \lbrace \mathbf{x} \rbrace  $ in order to check that $\Omega$ also satisfies the $\mathrm{Reach}(\partial \Omega)$-ball condition.
\end{proof}

\subsubsection{Uniform ball condition implies positive reach}
\label{section_boule_imply_reach}
\begin{proposition}
\label{prop_normale_lipschitzienne}
Assume there exists $\varepsilon > 0$ such that $\Omega \in \mathcal{O}_{\varepsilon}(\mathbb{R}^{n}) $. Then, we have:
\begin{equation}
\label{eqn_lipschitz_normale}
\forall (\mathbf{x},\mathbf{y}) \in \partial \Omega \times \partial \Omega, \quad \Vert \mathbf{d_{x}} - \mathbf{d_{y}} \Vert  \leqslant  \dfrac{1}{\varepsilon} \Vert \mathbf{x} - \mathbf{y} \Vert .   
\end{equation}
In particular, if $\mathbf{x} = \mathbf{y}$, then $\mathbf{d_{x}} = \mathbf{d_{y}}$ which ensures the unit vector $\mathbf{d_{x}}$ of Definition \ref{definition_epsilon_boule} is unique. In other words, the map $\mathbf{d}: \mathbf{x} \in \partial \Omega \mapsto \mathbf{d_{x}}$ is well defined and $ \frac{1}{\varepsilon} $-Lipschitz continuous. 
\end{proposition}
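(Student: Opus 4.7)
The plan rests on the elementary observation that an open ball contained in $\Omega$ and an open ball contained in $\mathbb{R}^{n} \setminus \overline{\Omega}$ must be disjoint, so the distance between their centres is at least the sum of their radii. Applying the $\varepsilon$-ball condition of Definition \ref{definition_epsilon_boule} at two arbitrary points $\mathbf{x}, \mathbf{y} \in \partial \Omega$ yields four balls of radius $\varepsilon$: the pair $B_{\varepsilon}(\mathbf{x}-\varepsilon\mathbf{d_x})$ and $B_{\varepsilon}(\mathbf{y}+\varepsilon\mathbf{d_y})$ sits on opposite sides of $\partial\Omega$, and so does the pair $B_{\varepsilon}(\mathbf{y}-\varepsilon\mathbf{d_y})$ and $B_{\varepsilon}(\mathbf{x}+\varepsilon\mathbf{d_x})$. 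This produces the two geometric inequalities
\[ \bigl\| (\mathbf{y}-\mathbf{x}) + \varepsilon(\mathbf{d_x}+\mathbf{d_y}) \bigr\|^{2} \geqslant 4\varepsilon^{2}, \qquad \bigl\| (\mathbf{y}-\mathbf{x}) - \varepsilon(\mathbf{d_x}+\mathbf{d_y}) \bigr\|^{2} \geqslant 4\varepsilon^{2}. \]

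The key algebraic step is then to expand both squared norms and sum them: the cross terms $\pm 2\varepsilon \langle \mathbf{y}-\mathbf{x} \mid \mathbf{d_x}+\mathbf{d_y}\rangle$ cancel, leaving
\[ 2 \|\mathbf{y}-\mathbf{x}\|^{2} + 2\varepsilon^{2} \|\mathbf{d_x}+\mathbf{d_y}\|^{2} \geqslant 8 \varepsilon^{2}. \]
Using $\|\mathbf{d_x}\|=\|\mathbf{d_y}\|=1$, we have $\|\mathbf{d_x}+\mathbf{d_y}\|^{2}+\|\mathbf{d_x}-\mathbf{d_y}\|^{2}=4$, so the inequality rearranges exactly to $\|\mathbf{y}-\mathbf{x}\|^{2} \geqslant \varepsilon^{2} \|\mathbf{d_x}-\mathbf{d_y}\|^{2}$, which is \eqref{eqn_lipschitz_normale} after taking square roots.

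The uniqueness assertion then follows for free: if at a single boundary point $\mathbf{x}$ two unit vectors $\mathbf{d_x}$ and $\mathbf{d_x}'$ were both to satisfy the $\varepsilon$-ball condition, the same disjointness argument applied to the four associated balls (now all based at $\mathbf{x}$) gives $0 \geqslant \varepsilon \|\mathbf{d_x}-\mathbf{d_x}'\|$, forcing $\mathbf{d_x}=\mathbf{d_x}'$. Hence $\mathbf{d}$ is a well-defined map $\partial \Omega \to \mathbb{S}^{n-1}$, and the estimate above shows it is $\frac{1}{\varepsilon}$-Lipschitz continuous. There is no genuine obstacle here: the only subtle point is making sure the open/closed set-theoretic inclusions in Definition \ref{definition_epsilon_boule} really do force the balls to be disjoint (which they do, since $\Omega$ and $\mathbb{R}^{n}\setminus\overline{\Omega}$ are disjoint), after which the proof is just an expansion and an addition.
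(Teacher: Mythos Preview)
Your proof is correct and follows essentially the same approach as the paper: both use the disjointness of the inner and outer balls at $\mathbf{x}$ and $\mathbf{y}$ to obtain the two inequalities $\Vert \mathbf{x} - \mathbf{y} \pm \varepsilon(\mathbf{d_x}+\mathbf{d_y})\Vert \geqslant 2\varepsilon$, then square and sum so that the cross terms cancel. The only cosmetic difference is that the paper writes the last step as $2\varepsilon^{2} - 2\varepsilon^{2}\langle \mathbf{d_x}\mid\mathbf{d_y}\rangle = \varepsilon^{2}\Vert \mathbf{d_x}-\mathbf{d_y}\Vert^{2}$, while you phrase it via the parallelogram identity $\Vert \mathbf{d_x}+\mathbf{d_y}\Vert^{2}+\Vert \mathbf{d_x}-\mathbf{d_y}\Vert^{2}=4$.
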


\begin{proof}
Let $\varepsilon > 0$ and $\Omega \in \mathcal{O}_{\varepsilon}(\mathbb{R}^{n})$. Since $\Omega \notin \lbrace \emptyset, \mathbb{R}^{n} \rbrace$, $\partial \Omega $ is not empty so choose $ (\mathbf{x},\mathbf{y}) \in \partial \Omega \times \partial \Omega $. First, from the $\varepsilon$-ball condition on $\mathbf{x}$ and $\mathbf{y}$, we have $ B_{\varepsilon}(\mathbf{x}\pm \varepsilon \mathbf{d_{x}}) \cap B_{\varepsilon}(\mathbf{y} \mp \varepsilon \mathbf{d_{y}}) = \emptyset$, from which we deduce $  \Vert \mathbf{x} - \mathbf{y} \pm \varepsilon(\mathbf{d_{x}} + \mathbf{d_{y}}) \Vert \geqslant 2\varepsilon $. Then, squaring these two inequalities and summing them, one obtains the result \eqref{eqn_lipschitz_normale} of the statement: $ \Vert \mathbf{x} - \mathbf{y} \Vert^{2} \geqslant 2\varepsilon^{2} - 2 \varepsilon^{2} \langle \mathbf{d_{x}} ~\vert~ \mathbf{d_{y}} \rangle = \varepsilon^{2} \Vert \mathbf{d_{x}} - \mathbf{d_{y}} \Vert^{2} $.
\end{proof}

\begin{proof}[\textbf{Proof of Point (i) in Theorem \ref{thm_reach_equiv_boule}}]
Let $\varepsilon > 0$ and assume that $\Omega$ satisfies the $\varepsilon$-ball condition. Since $\Omega \notin \lbrace \emptyset, \mathbb{R}^{n} \rbrace$, $\partial \Omega $ is not empty so choose any $\mathbf{x} \in \partial \Omega$ and let us prove $B_{\varepsilon}(\mathbf{x}) \subseteq \mathrm{Unp}(\partial \Omega) $. First, we assume $\mathbf{y} \in B_{\varepsilon}(\mathbf{x} ) \cap \Omega$. Since $\partial \Omega$ is closed, there exists $\mathbf{z} \in \partial \Omega$ such that $d(\mathbf{y},\partial \Omega) = \Vert \mathbf{z} - \mathbf{y} \Vert$. Moreover, we obtain from the $\varepsilon$-ball condition and $\mathbf{y} \in \Omega$:
\[ \left\lbrace \begin{array}{l}
B_{\varepsilon}(\mathbf{z} + \varepsilon \mathbf{d_{z}}) \subseteq \mathbb{R}^{n} \backslash \overline{\Omega} \\
\\
B_{d(\mathbf{y},\partial \Omega)}(\mathbf{y}) \subseteq \Omega
\end{array} \right.  \qquad \Longrightarrow \qquad B_{\varepsilon}(\mathbf{z} + \varepsilon \mathbf{d_{z}}) \cap B_{d(\mathbf{y},\partial \Omega)}(\mathbf{y}) = \emptyset.  \]  
Therefore, we deduce that $\mathbf{y} = \mathbf{z} - d(\mathbf{y},\partial \Omega) \mathbf{d_{z}}$. Then, we show that such a $\mathbf{z}$ is unique. Considering another projection $\mathbf{\tilde{z}}$ of $\mathbf{y}$ on $\partial \Omega$, we get from the foregoing: $\mathbf{y} = \mathbf{z} - d(\mathbf{y},\partial \Omega) \mathbf{d_{z}} = \mathbf{\tilde{z}} - d(\mathbf{y},\partial \Omega) \mathbf{d_{\tilde{z}}} $. Using \eqref{eqn_lipschitz_normale}, we have:
\[ \Vert \mathbf{d_{z}} - \mathbf{d_{\tilde{z}}} \Vert \leqslant \dfrac{1}{\varepsilon} \Vert \mathbf{z} - \mathbf{\tilde{z}} \Vert = \dfrac{d(\mathbf{y},\partial \Omega)}{ \varepsilon} \Vert \mathbf{d_{z}} - \mathbf{d_{\tilde{z}}} \Vert. \] 
Since $d(\mathbf{y},\partial \Omega) \leqslant \Vert \mathbf{x} - \mathbf{y} \Vert < \varepsilon$, the above inequality can only hold if $\Vert \mathbf{d_{z}} - \mathbf{d_{\tilde{z}}} \Vert = 0$ i.e. $ \mathbf{z} = \mathbf{\tilde{z}} $. Hence, we obtain $B_{\varepsilon}(\mathbf{x}) \cap \Omega \subseteq \mathrm{Unp}(\partial \Omega)$ and similarly, one can prove that $ B_{\varepsilon}(\mathbf{x}) \cap (\mathbb{R}^{n} \backslash \overline{ \Omega}) \subseteq \mathrm{Unp}(\partial \Omega)$. Since $\partial \Omega \subseteq \mathrm{Unp}(\partial \Omega)$, we finally get $B_{\varepsilon}(\mathbf{x}) \subseteq \mathrm{Unp}(\partial \Omega)$. To conclude, we have $\mathrm{Reach}(\partial \Omega, \mathbf{x}) \geqslant \varepsilon$ for every $\mathbf{x} \in \partial \Omega$ i.e. $\mathrm{Reach}(\partial \Omega) \geqslant \varepsilon $ as required. 
\end{proof}

\begin{proposition}
\label{prop_inegalites_locale_globale}
Assume there exists $\varepsilon > 0$ such that $\Omega  \in \mathcal{O}_{\varepsilon}(\mathbb{R}^{n}) $. Then, we have:
\begin{equation}
\label{eqn_inegalite_globale}
 \forall (\mathbf{a},\mathbf{x}) \in \partial \Omega \times \partial \Omega, \quad \vert \left\langle \mathbf{x} - \mathbf{a} ~\vert~ \mathbf{d_{a}} \right\rangle \vert \leqslant \frac{1}{2\varepsilon} \Vert \mathbf{x} - \mathbf{a} \Vert^{2} .
\end{equation}
Moreover, introducing the vector $(\mathbf{x} - \mathbf{a})' = (\mathbf{x} - \mathbf{a}) - \langle \mathbf{x} - \mathbf{a} ~\vert~ \mathbf{d_{a}} \rangle \mathbf{d_{a}}  $, if we assume $ \Vert (\mathbf{x} - \mathbf{a})' \Vert < \varepsilon $ and $ \vert \langle \mathbf{x} - \mathbf{a} ~\vert~ \mathbf{d_{a}} \rangle \vert < \varepsilon $, then the following local inequality holds: 
\begin{equation}
\label{eqn_inegalite_locale}
\frac{1}{2 \varepsilon} \Vert \mathbf{x} - \mathbf{a} \Vert^{2} \leqslant \varepsilon - \sqrt{\varepsilon^{2} - \Vert (\mathbf{x} - \mathbf{a})' \Vert^{2}} . 
\end{equation} 
\end{proposition}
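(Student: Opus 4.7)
The plan is to treat both inequalities by elementary algebraic manipulations of the $\varepsilon$-ball inclusions.

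For the global inequality \eqref{eqn_inegalite_globale}, I would fix $\mathbf{a} \in \partial\Omega$ and exploit the two balls $B_{\varepsilon}(\mathbf{a}\pm \varepsilon \mathbf{d_{a}})$ provided by Definition~\ref{definition_epsilon_boule}. Since one sits inside $\Omega$ and the other inside $\mathbb{R}^{n}\setminus\overline{\Omega}$, any point $\mathbf{x}\in\partial\Omega$ avoids both open balls, giving the two inequalities $\|\mathbf{x}-\mathbf{a}\mp\varepsilon\mathbf{d_{a}}\|^{2}\geqslant \varepsilon^{2}$. Expanding each and cancelling $\varepsilon^{2}$ yields $\pm 2\varepsilon\langle\mathbf{x}-\mathbf{a}\mid\mathbf{d_{a}}\rangle\leqslant \|\mathbf{x}-\mathbf{a}\|^{2}$, which is exactly \eqref{eqn_inegalite_globale}.

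For the local inequality \eqref{eqn_inegalite_locale}, the natural step is to decompose $\mathbf{x}-\mathbf{a}$ into its tangential and normal parts with respect to $\mathbf{d_{a}}$. Setting $h=\langle\mathbf{x}-\mathbf{a}\mid\mathbf{d_{a}}\rangle$ and $\rho=\|(\mathbf{x}-\mathbf{a})'\|$, one has $\|\mathbf{x}-\mathbf{a}\|^{2}=\rho^{2}+h^{2}$, so the global inequality rewrites as
\[
|h|^{2}-2\varepsilon|h|+\rho^{2}\geqslant 0.
\]
Viewed as a quadratic in $|h|$, its discriminant $4(\varepsilon^{2}-\rho^{2})$ is nonnegative thanks to the hypothesis $\rho<\varepsilon$, and the two roots are $\varepsilon\pm\sqrt{\varepsilon^{2}-\rho^{2}}$. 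Hence $|h|$ lies outside the open interval between them, and the additional hypothesis $|h|<\varepsilon$ excludes the upper branch (since $\varepsilon+\sqrt{\varepsilon^{2}-\rho^{2}}\geqslant \varepsilon$), forcing
\[
|h|\leqslant \varepsilon-\sqrt{\varepsilon^{2}-\rho^{2}}.
\]

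It then suffices to square this to control $h^{2}$, add $\rho^{2}$, and observe that the cross term simplifies: expanding $(\varepsilon-\sqrt{\varepsilon^{2}-\rho^{2}})^{2}=2\varepsilon^{2}-\rho^{2}-2\varepsilon\sqrt{\varepsilon^{2}-\rho^{2}}$ gives $\rho^{2}+h^{2}\leqslant 2\varepsilon(\varepsilon-\sqrt{\varepsilon^{2}-\rho^{2}})$, hence \eqref{eqn_inegalite_locale} after dividing by $2\varepsilon$. I expect no real obstacle here: the only subtle point is the correct branch selection in the quadratic inequality, for which the hypothesis $|\langle\mathbf{x}-\mathbf{a}\mid\mathbf{d_{a}}\rangle|<\varepsilon$ is precisely tailored, and the role of $\rho<\varepsilon$ is only to make the square root real.
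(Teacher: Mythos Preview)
Your proposal is correct and follows essentially the same route as the paper: both parts use the exclusion of $\mathbf{x}$ from the two $\varepsilon$-balls to get $\Vert \mathbf{x}-\mathbf{a}\mp\varepsilon\mathbf{d_{a}}\Vert\geqslant\varepsilon$, expand and combine for \eqref{eqn_inegalite_globale}, then rewrite this as a quadratic inequality in $\vert\langle\mathbf{x}-\mathbf{a}\mid\mathbf{d_{a}}\rangle\vert$, select the lower root via the hypothesis $\vert\langle\mathbf{x}-\mathbf{a}\mid\mathbf{d_{a}}\rangle\vert<\varepsilon$, and square to obtain \eqref{eqn_inegalite_locale}. The only cosmetic difference is your explicit use of the shorthand $h,\rho$.
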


\begin{proof}
Let $\varepsilon > 0$ and $\Omega \in \mathcal{O}_{\varepsilon}(\mathbb{R}^{n})$. Since $\Omega \notin \lbrace \emptyset, \mathbb{R}^{n} \rbrace$, $\partial \Omega$ is not empty so choose $ (\mathbf{a},\mathbf{x}) \in \partial \Omega \times \partial \Omega $. Observe that the point $\mathbf{x}$ cannot belong neither to $B_{\varepsilon}(\mathbf{a} - \varepsilon \mathbf{d_{a}}) \subseteq \Omega$ nor to $B_{\varepsilon}(\mathbf{a} + \varepsilon \mathbf{d_{a}}) \subseteq \mathbb{R}^{n} \backslash \overline{\Omega} $. Hence, we have $ \Vert \mathbf{x} - \mathbf{a} \mp \varepsilon \mathbf{d_{a}} \Vert \geqslant \varepsilon $. Squaring these two inequalities, we obtain \eqref{eqn_inegalite_globale}: 
\[ \Vert \mathbf{x} - \mathbf{a} \Vert^{2} \geqslant 2 \varepsilon \vert  \left\langle \mathbf{x} - \mathbf{a} ~\vert~ \mathbf{d_{a}} \right\rangle  \vert \Longleftrightarrow \vert \left\langle \mathbf{x} - \mathbf{a} ~\vert~ \mathbf{d_{a}} \right\rangle \vert^{2} - 2 \varepsilon  \vert \left\langle \mathbf{x} - \mathbf{a} ~\vert~ \mathbf{d_{a}} \right\rangle \vert + \Vert (\mathbf{x} - \mathbf{a})' \Vert^{2} \geqslant 0 . \] 
It is a second-order polynomial inequality and we assume that its reduced discriminant is positive: $\Delta' = \varepsilon^{2} - \Vert (\mathbf{x} - \mathbf{a})' \Vert^{2} > 0  $. Hence, the unknown cannot be located between the two roots: either $ \vert \left\langle \mathbf{x} - \mathbf{a} ~\vert~ \mathbf{d_{a}} \right\rangle \vert \leqslant \varepsilon - \sqrt{\Delta'}$ or $ \vert \left\langle \mathbf{x} - \mathbf{a} ~\vert~ \mathbf{d_{a}} \right\rangle \vert \geqslant \varepsilon + \sqrt{\Delta'}$. We assume $ \vert \left\langle \mathbf{x} - \mathbf{a} ~\vert~ \mathbf{d_{a}} \right\rangle \vert < \varepsilon $ and the last case cannot hold. Squaring the remaining relation, we get the local inequality \eqref{eqn_inegalite_locale} of the statement: $ \Vert \mathbf{x} - \mathbf{a} \Vert^{2} =  \vert \left\langle \mathbf{x} - \mathbf{a} ~\vert~ \mathbf{d_{a}} \right\rangle \vert^{2} + \Vert (\mathbf{x} - \mathbf{a})' \Vert^{2} \leqslant 2\varepsilon^{2} - 2 \varepsilon \sqrt{\varepsilon^{2} - \Vert (\mathbf{x} - \mathbf{a})' \Vert^{2}} $.
\end{proof}

\subsection{The uniform ball condition and the compact $C^{1,1}$-hypersurfaces}
In this section, Theorem \ref{thm_boule_equiv_cunun} is proved. First, we show $\partial \Omega$ can be considered locally as the graph of a function whose $C^{1,1}$-regularity is then established. Finally, we demonstrate that the converse statement holds in the compact case. Hence, it is the optimal regularity we can expect from the uniform ball property. The proofs in Sections \ref{section_boule_imply_reach}, \ref{section_parametrization_cunun}, and \ref{section_regularite_cunun} inspire those of Section \ref{section_parametrization_i}. 

\subsubsection{A local parametrization of the boundary $\partial \Omega$}
\label{section_parametrization_cunun}
We now set $\varepsilon > 0$ and assume that the open set $\Omega$ satisfies the $\varepsilon$-ball condition. Since $\Omega \notin \lbrace \emptyset, \mathbb{R}^{n} \rbrace$, $\partial \Omega $ is not empty so we consider any point $ \mathbf{x}_{0} \in \partial \Omega $ and its unique vector $ \mathbf{d}_{\mathbf{x}_{0}} $ from Proposition \ref{prop_normale_lipschitzienne}. We choose a basis $ \mathcal{B}_{\mathbf{x}_{0}} $ of the hyperplane $ \mathbf{d}_{\mathbf{x}_{0}}^{\bot} $ so that $ (\mathbf{x}_{0},\mathcal{B}_{\mathbf{x}_{0}},\mathbf{d}_{\mathbf{x}_{0}} ) $ is a direct orthonormal frame. Inside this frame, any point $ \mathbf{x} \in \mathbb{R}^{n} $ is of the form $ (\mathbf{x'}, x_{n}) $ such that $ \mathbf{x'} = (x_{1}, \ldots, x_{n-1}) \in \mathbb{R}^{n-1} $. The zero vector $ \mathbf{0} $ of $\mathbb{R}^{n}$ is now identified with $\mathbf{x}_{0}$ so we have $ B_{\varepsilon}(\mathbf{0'},-\varepsilon ) \subseteq \Omega $ and $ B_{\varepsilon}(\mathbf{0'},\varepsilon) \subseteq \mathbb{R}^{n} \backslash \overline{\Omega} $.

\begin{proposition}
\label{prop_parametrisations_locales}
The following maps $\varphi^{\pm}$ are well defined on $ D_{\varepsilon}(\mathbf{0'}) = \lbrace \mathbf{x'} \in \mathbb{R}^{n-1}, ~ \Vert \mathbf{x'} \Vert < \varepsilon \rbrace $:
\[ \left\lbrace \begin{array}{rcl}
\varphi^{+} : \quad D_{\varepsilon}(\mathbf{0'}) & \longrightarrow & ]- \varepsilon,\varepsilon[ \\
 \mathbf{x'} & \longmapsto & \sup \lbrace x_{n} \in [-\varepsilon,\varepsilon], \quad (\mathbf{x'},x_{n}) \in \Omega \rbrace \\
 & & \\
\varphi^{-} : \quad  D_{\varepsilon}(\mathbf{0'}) & \longrightarrow & ]- \varepsilon,\varepsilon[ \\
 \mathbf{x'} & \longmapsto & \inf \lbrace x_{n} \in [- \varepsilon,\varepsilon], \quad (\mathbf{x'},x_{n}) \in \mathbb{R}^{n} \backslash \overline{\Omega} \rbrace. \\
\end{array} \right.  \]
Moreover, for any $ \mathbf{x'} \in D_{\varepsilon}(\mathbf{0'}) $, introducing the points $ \mathbf{x^{\pm}} = (\mathbf{x'},\varphi^{\pm}(\mathbf{x'})) $, we have $ \mathbf{x^{\pm}} \in \partial \Omega $ and: 
\begin{equation}
\label{eqn_parametrisation_locale}
\vert \varphi^{\pm}(\mathbf{x'}) \vert \leqslant \dfrac{1}{2 \varepsilon} \Vert \mathbf{x^{\pm}} - \mathbf{x}_{0} \Vert^{2} \leqslant \varepsilon - \sqrt{\varepsilon^{2}-\Vert \mathbf{x'} \Vert^{2}}.
\end{equation}
\end{proposition}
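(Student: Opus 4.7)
The plan is to first check that $\varphi^\pm$ are well defined with values in $(-\varepsilon,\varepsilon)$, then show that their graph points lie on $\partial\Omega$, and finally invoke Proposition \ref{prop_inegalites_locale_globale} to obtain the chain \eqref{eqn_parametrisation_locale}. No elaborate geometric construction is needed, since the defining sup/inf already do the work once the two ball conditions at $\mathbf{x}_0$ are exploited.

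Fix $\mathbf{x'}\in D_\varepsilon(\mathbf{0'})$. The vertical line $\{\mathbf{x'}\}\times\mathbb{R}$ meets the interior ball $B_\varepsilon(\mathbf{0'},-\varepsilon)\subseteq\Omega$ in the non-empty open interval with endpoints $-\varepsilon\pm\sqrt{\varepsilon^2-\|\mathbf{x'}\|^2}$, and meets the exterior ball $B_\varepsilon(\mathbf{0'},+\varepsilon)\subseteq\mathbb{R}^n\setminus\overline{\Omega}$ in the non-empty symmetric interval with endpoints $\varepsilon\pm\sqrt{\varepsilon^2-\|\mathbf{x'}\|^2}$. Since $\|\mathbf{x'}\|<\varepsilon$, this shows that the defining set of $\varphi^+$ is non-empty and disjoint from $(\varepsilon-\sqrt{\varepsilon^2-\|\mathbf{x'}\|^2},\varepsilon]$. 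Hence $\varphi^+(\mathbf{x'})\in[-\varepsilon+\sqrt{\varepsilon^2-\|\mathbf{x'}\|^2},\ \varepsilon-\sqrt{\varepsilon^2-\|\mathbf{x'}\|^2}]\subset(-\varepsilon,\varepsilon)$, and the bounds for $\varphi^-$ follow symmetrically. In particular both maps take values strictly inside $(-\varepsilon,\varepsilon)$.

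Next, to prove $\mathbf{x^+}\in\partial\Omega$, note that by the supremum property there is a sequence $t_k\nearrow\varphi^+(\mathbf{x'})$ with $(\mathbf{x'},t_k)\in\Omega$, so $\mathbf{x^+}\in\overline{\Omega}$. If $\mathbf{x^+}$ belonged to the open set $\Omega$, then some $(\mathbf{x'},\varphi^+(\mathbf{x'})+\delta)$ would lie in $\Omega$ for small $\delta>0$, and by the strict bound $\varphi^+(\mathbf{x'})<\varepsilon$ one can arrange $\varphi^+(\mathbf{x'})+\delta\leqslant\varepsilon$, contradicting the definition of the supremum. Therefore $\mathbf{x^+}\in\overline{\Omega}\setminus\Omega=\partial\Omega$, and a symmetric argument on $\mathbb{R}^n\setminus\overline{\Omega}$ handles $\mathbf{x^-}$.

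Finally, with $\mathbf{x}_0,\mathbf{x^\pm}\in\partial\Omega$, I apply Proposition \ref{prop_inegalites_locale_globale} at $\mathbf{a}=\mathbf{x}_0$ and $\mathbf{x}=\mathbf{x^\pm}$. In the chosen frame, $\langle\mathbf{x^\pm}-\mathbf{x}_0~|~\mathbf{d}_{\mathbf{x}_0}\rangle=\varphi^\pm(\mathbf{x'})$ and $(\mathbf{x^\pm}-\mathbf{x}_0)'=(\mathbf{x'},0)$, so the two hypotheses required for \eqref{eqn_inegalite_locale}, namely $\|(\mathbf{x^\pm}-\mathbf{x}_0)'\|=\|\mathbf{x'}\|<\varepsilon$ and $|\langle\mathbf{x^\pm}-\mathbf{x}_0~|~\mathbf{d}_{\mathbf{x}_0}\rangle|=|\varphi^\pm(\mathbf{x'})|<\varepsilon$, are both provided by the first step. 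The global inequality \eqref{eqn_inegalite_globale} then yields the first bound of \eqref{eqn_parametrisation_locale} and the local inequality \eqref{eqn_inegalite_locale} yields the second. The only mildly delicate point of the plan is the second step, where strict versus non-strict inequalities must be tracked carefully; this is precisely where the strict bound $|\varphi^\pm(\mathbf{x'})|<\varepsilon$ from the two tangent balls becomes essential.
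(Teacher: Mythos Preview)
Your proof is correct and follows essentially the same approach as the paper: both use the inner and outer tangent balls at $\mathbf{x}_0$ to show the defining sets are non-empty and the sup/inf lie strictly inside $(-\varepsilon,\varepsilon)$, then use openness of $\Omega$ (resp.\ $\mathbb{R}^n\setminus\overline{\Omega}$) to place $\mathbf{x}^{\pm}$ on $\partial\Omega$, and finally invoke the global and local inequalities \eqref{eqn_inegalite_globale}--\eqref{eqn_inegalite_locale} of Proposition~\ref{prop_inegalites_locale_globale}. Your version is slightly more explicit about the intersection intervals, but the argument is the same.
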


\begin{proof}
Let $ \mathbf{x'} \in D_{\varepsilon}(\mathbf{0'}) $ and $g: t \in [-\varepsilon,\varepsilon] \mapsto (\mathbf{x'},t)$. Since $-\varepsilon \in g^{-1}(\Omega) \subseteq [-\varepsilon,\varepsilon] $, we can set $\varphi^{+}(\mathbf{x'}) = \sup g^{-1}(\Omega)$. The map $g$ is continuous so $g^{-1}(\Omega)$ is open and $\varphi^{+}(\mathbf{x'}) \neq \varepsilon$ thus we get $\varphi(\mathbf{x'}) \notin g^{-1}(\Omega) $ i.e. $\mathbf{x}^{+} \in \overline{\Omega} \backslash \Omega $. Similarly, the map $ \varphi^{-}$ is well defined and $\mathbf{x}^{-} \in \partial \Omega$. Finally, we use \eqref{eqn_inegalite_globale} and \eqref{eqn_inegalite_locale} on the points $ \mathbf{x}_{0}$ and $\mathbf{x} = \mathbf{x}^{\pm}$ in order to obtain \eqref{eqn_parametrisation_locale}.
\end{proof}

\begin{lemma}
\label{lemme_parametrisation_locale}
Let $ r  = \frac{\sqrt{3}}{2} \varepsilon$ and $\mathbf{x'} \in D_{r}(\mathbf{0'})$. We assume that there exists $x_{n} \in ]- \varepsilon,\varepsilon[$ such that $\mathbf{x} = (\mathbf{x'},x_{n}) \in \partial \Omega$ and $ \tilde{x}_{n} \in \mathbb{R} $ such that $ \vert  \tilde{x}_{n} \vert \leqslant \varepsilon - \sqrt{\varepsilon^{2} - \Vert \mathbf{x'} \Vert^{2} }$. Then, we introduce $ \mathbf{\tilde{x}} = (\mathbf{x'}, \tilde{x}_{n})$ and the two following implications hold: $ (\tilde{x}_{n} < x_{n} \Longrightarrow  \mathbf{\tilde{x}} \in \Omega) \quad \mathrm{and} \quad (\tilde{x}_{n} > x_{n}  \Longrightarrow  \mathbf{\tilde{x}} \in \mathbb{R}^{n} \backslash \overline{\Omega}) $.
\end{lemma}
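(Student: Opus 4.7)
The plan is a direct verification: if $\tilde{x}_n < x_n$, I will exhibit $\tilde{\mathbf{x}}$ as a point of the interior tangent ball $B_{\varepsilon}(\mathbf{x} - \varepsilon \mathbf{d}_{\mathbf{x}}) \subseteq \Omega$ attached to the boundary point $\mathbf{x}$ by Definition \ref{definition_epsilon_boule}. The opposite case $\tilde{x}_n > x_n$ is then identical, swapping the interior ball for the exterior ball $B_{\varepsilon}(\mathbf{x} + \varepsilon \mathbf{d}_{\mathbf{x}}) \subseteq \mathbb{R}^{n} \setminus \overline{\Omega}$. The whole argument hinges on comparing $\mathbf{d}_{\mathbf{x}}$ with $\mathbf{d}_{\mathbf{x}_0} = \mathbf{e}_n$ through the Lipschitz estimate from Proposition \ref{prop_normale_lipschitzienne}.

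I first gather the preliminary bounds. Write $u = \sqrt{\varepsilon^2 - \|\mathbf{x}'\|^2}$; since $\|\mathbf{x}'\| < r = \tfrac{\sqrt{3}}{2}\varepsilon$, one has $u > \varepsilon/2$. Applying Proposition \ref{prop_inegalites_locale_globale} to the pair $(\mathbf{x}_0, \mathbf{x})$ yields at once $|x_n| \leq \varepsilon - u$ and $\|\mathbf{x}\|^2 \leq 2\varepsilon(\varepsilon - u)$. Feeding this last inequality into the Lipschitz control $\|\mathbf{d}_{\mathbf{x}} - \mathbf{e}_n\|^2 \leq \|\mathbf{x}\|^2/\varepsilon^2$ from Proposition \ref{prop_normale_lipschitzienne}, together with the identity $\|\mathbf{d}_{\mathbf{x}} - \mathbf{e}_n\|^2 = 2 - 2(\mathbf{d}_{\mathbf{x}})_n$, produces the key lower bound $(\mathbf{d}_{\mathbf{x}})_n \geq u/\varepsilon$.

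For the main computation, assume $\tilde{x}_n < x_n$ and set $\delta = x_n - \tilde{x}_n > 0$. Since $\tilde{\mathbf{x}} - \mathbf{x} = -\delta \mathbf{e}_n$ in the chosen frame, expanding gives
\[ \|\tilde{\mathbf{x}} - (\mathbf{x} - \varepsilon \mathbf{d}_{\mathbf{x}})\|^2 = \varepsilon^2 + \delta^2 - 2\varepsilon \delta (\mathbf{d}_{\mathbf{x}})_n = \varepsilon^2 - \delta \bigl( 2\varepsilon (\mathbf{d}_{\mathbf{x}})_n - \delta \bigr), \]
so it suffices to check that $\delta < 2\varepsilon (\mathbf{d}_{\mathbf{x}})_n$. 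The bounds $|x_n|, |\tilde{x}_n| \leq \varepsilon - u$ give $\delta \leq 2(\varepsilon - u)$, while the previous paragraph provides $2\varepsilon (\mathbf{d}_{\mathbf{x}})_n \geq 2u$.

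The one point requiring care, and hence the main obstacle, is the calibration of the radius $r = \tfrac{\sqrt{3}}{2}\varepsilon$. It is chosen precisely so that $u > \varepsilon/2$, which is exactly what makes the chain $\delta \leq 2(\varepsilon - u) < 2u \leq 2\varepsilon (\mathbf{d}_{\mathbf{x}})_n$ strict, placing $\tilde{\mathbf{x}}$ inside the open ball $B_{\varepsilon}(\mathbf{x} - \varepsilon \mathbf{d}_{\mathbf{x}}) \subseteq \Omega$. For any $r'$ larger than $\tfrac{\sqrt{3}}{2}\varepsilon$ the two quantities $2(\varepsilon - u)$ and $2u$ would cross, and the direct ball inclusion would fail in the extremal configuration $x_n = \varepsilon - u$, $\tilde{x}_n = -(\varepsilon - u)$. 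The second implication ($\tilde{x}_n > x_n \Rightarrow \tilde{\mathbf{x}} \in \mathbb{R}^n \setminus \overline{\Omega}$) follows from the identical computation with $\mathbf{x} + \varepsilon \mathbf{d}_{\mathbf{x}}$ in place of $\mathbf{x} - \varepsilon \mathbf{d}_{\mathbf{x}}$, placing $\tilde{\mathbf{x}}$ inside the exterior tangent ball.
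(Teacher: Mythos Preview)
Your proof is correct and follows essentially the same approach as the paper's: both place $\tilde{\mathbf{x}}$ inside the appropriate tangent ball $B_\varepsilon(\mathbf{x}\pm\varepsilon\mathbf{d}_{\mathbf{x}})$ by expanding $\|\tilde{\mathbf{x}}-\mathbf{x}\mp\varepsilon\mathbf{d}_{\mathbf{x}}\|^2-\varepsilon^2$ and bounding it via Propositions~\ref{prop_normale_lipschitzienne} and~\ref{prop_inegalites_locale_globale}. Your organization---first isolating $(\mathbf{d}_{\mathbf{x}})_n\geq u/\varepsilon$ and then reducing to $\delta\leq 2(\varepsilon-u)<2u\leq 2\varepsilon(\mathbf{d}_{\mathbf{x}})_n$---is a slightly cleaner packaging of the same computation the paper does in one chain of inequalities.
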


\begin{proof}
Let $ \mathbf{x'} \in D_{r}(\mathbf{0'}) $. Since $  \mathbf{\tilde{x}} - \mathbf{x}  = (\tilde{x}_{n}- x_{n})\mathbf{d}_{\mathbf{x}_{0}} $, if we assume  $\tilde{x}_{n} > x_{n} $, then we have:
\[ \begin{array}{rcl}
\Vert \mathbf{\tilde{x}} - \mathbf{x} - \varepsilon \mathbf{d_{x}} \Vert^{2}  - \varepsilon^{2} & = &  \vert \tilde{x}_{n} - x_{n} \vert \left( \vert \tilde{x}_{n} - x_{n} \vert + \varepsilon \Vert \mathbf{d_{x}} - \mathbf{d}_{\mathbf{x}_{0}} \Vert^{2} - 2 \varepsilon \right) \\
 & &\\
&\leqslant &  \vert \tilde{x}_{n} - x_{n} \vert \left( \vert \tilde{x}_{n} \vert + \vert x_{n} \vert + \frac{1}{\varepsilon} \Vert \mathbf{x} - \mathbf{x}_{0} \Vert^{2} - 2 \varepsilon \right) \\
 & & \\
 & \leqslant&  \vert \tilde{x}_{n} - x_{n} \vert \left( 2 \varepsilon - 4 \sqrt{\varepsilon^{2} - \Vert \mathbf{x'} \Vert^{2}} \right) < \vert \tilde{x}_{n} - x_{n} \vert \left( 2 \varepsilon - 4 \sqrt{\varepsilon^{2} - r^{2}} \right) = 0. \\
\end{array} \]
Indeed, we used \eqref{eqn_lipschitz_normale} with $\mathbf{x} \in \partial \Omega$ and $\mathbf{y} = \mathbf{x}_{0}$, \eqref{eqn_inegalite_globale} and \eqref{eqn_inegalite_locale} applied to $\mathbf{x} \in \partial \Omega$ and $\mathbf{a} = \mathbf{x}_{0}$, and also the hypothesis made on  $\tilde{x}_{n}$. Hence, we proved that if $ \tilde{x}_{n} > x_{n}$, then $\mathbf{\tilde{x}} \in B_{\varepsilon}(\mathbf{x} + \varepsilon \mathbf{d_{x}}) \subseteq \mathbb{R}^{n} \backslash \overline{\Omega} $. Similarly, one can prove that if $ \tilde{x}_{n} < x_{n}$, then we have $\mathbf{\tilde{x}} \in B_{\varepsilon}(\mathbf{x} - \varepsilon \mathbf{d_{x}}) \subseteq \Omega   $. 
\end{proof}

\begin{proposition} 
\label{propo_parametrisation_locale}
Set $r = \frac{\sqrt{3}}{2} \varepsilon $. Then, the two maps $\varphi^{\pm}$ of Proposition \ref{prop_parametrisations_locales} coincide on $ D_{r}(\mathbf{0'}) $. We denote by $ \varphi $ their common restriction. Moreover, we have $\varphi(\mathbf{0'}) = 0$ and also:
\[\left\lbrace \begin{array}{rcl}
\partial \Omega \cap  \left( D_{r}(\mathbf{0'}) \times ]- \varepsilon, \varepsilon [ \right) & = & \lbrace (\mathbf{x'},\varphi(\mathbf{x'})), \quad \mathbf{x'} \in D_{r}(\mathbf{0'}) \rbrace \\
  \\
\Omega \cap \left(  D_{r}(\mathbf{0'}) \times   ]- \varepsilon, \varepsilon [ \right)  & = &   \lbrace (\mathbf{x'},x_{n}), \quad \mathbf{x'} \in D_{r}(\mathbf{0'}) ~ \mathrm{and} ~
-\varepsilon < x_{n} <\varphi(\mathbf{x'}) \rbrace. \\
\end{array} \right. \]
\end{proposition}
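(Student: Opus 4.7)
The plan is to establish the proposition in three successive stages leveraging Lemma \ref{lemme_parametrisation_locale} together with the estimate \eqref{eqn_parametrisation_locale}: first the coincidence $\varphi^+ = \varphi^-$ on $D_r(\mathbf{0'})$, then the identification of $\partial \Omega$ with the graph of the common function $\varphi$ inside the local cylinder, and finally the identification of $\Omega$ with the strict subgraph by a connectedness argument.

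For the coincidence, I would fix $\mathbf{x'} \in D_r(\mathbf{0'})$ and argue by contradiction. Both points $\mathbf{x}^\pm = (\mathbf{x'}, \varphi^\pm(\mathbf{x'}))$ belong to $\partial \Omega$ and satisfy $|\varphi^\pm(\mathbf{x'})| \leq \varepsilon - \sqrt{\varepsilon^2 - \Vert \mathbf{x'} \Vert^2}$ by \eqref{eqn_parametrisation_locale}. If $\varphi^+(\mathbf{x'}) > \varphi^-(\mathbf{x'})$ (the other ordering being symmetric), applying Lemma \ref{lemme_parametrisation_locale} with $\mathbf{x} = \mathbf{x}^-$ and $\tilde{x}_n = \varphi^+(\mathbf{x'})$ is legitimate because the admissibility bound on $\tilde{x}_n$ is exactly the estimate above; the lemma then forces $\mathbf{x}^+ \in \mathbb{R}^n \setminus \overline{\Omega}$, contradicting $\mathbf{x}^+ \in \partial \Omega$. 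Writing $\varphi$ for the common value, $\varphi(\mathbf{0'}) = 0$ follows at once from \eqref{eqn_parametrisation_locale} at the origin.

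For the graph description of $\partial \Omega$, the inclusion $\supseteq$ is immediate from Proposition \ref{prop_parametrisations_locales} together with $|\varphi(\mathbf{x'})| < \varepsilon$. For the reverse inclusion, I would take $(\mathbf{x'}, x_n) \in \partial \Omega$ with $\mathbf{x'} \in D_r(\mathbf{0'})$ and $|x_n| < \varepsilon$; if $x_n \neq \varphi(\mathbf{x'})$, Lemma \ref{lemme_parametrisation_locale} applied to this boundary point with $\tilde{x}_n = \varphi(\mathbf{x'})$ (whose bound is ensured by \eqref{eqn_parametrisation_locale}) places $(\mathbf{x'}, \varphi(\mathbf{x'}))$ into either $\Omega$ or $\mathbb{R}^n \setminus \overline{\Omega}$, both of which contradict $(\mathbf{x'}, \varphi(\mathbf{x'})) \in \partial \Omega$.

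The subgraph description of $\Omega$ is handled by connectedness. Fixing $\mathbf{x'} \in D_r(\mathbf{0'})$, the open vertical segment $S = \lbrace \mathbf{x'} \rbrace \times ]-\varepsilon, \varphi(\mathbf{x'})[$ is non-empty since \eqref{eqn_parametrisation_locale} forces $\varphi(\mathbf{x'}) > -\varepsilon$, and it avoids $\partial \Omega$ by the graph characterization just obtained; being connected, it must lie entirely in one of the two disjoint open sets $\Omega$ and $\mathbb{R}^n \setminus \overline{\Omega}$. Choosing $\delta > 0$ with $\delta^2 + \Vert \mathbf{x'} \Vert^2 < \varepsilon^2$ puts the point $(\mathbf{x'}, -\varepsilon + \delta) \in S$ inside the interior ball $B_\varepsilon(\mathbf{0'}, -\varepsilon) \subseteq \Omega$, forcing $S \subseteq \Omega$; the reverse inclusion follows by identical reasoning applied to $\lbrace \mathbf{x'} \rbrace \times ]\varphi(\mathbf{x'}), \varepsilon[$ using the exterior ball $B_\varepsilon(\mathbf{0'}, \varepsilon) \subseteq \mathbb{R}^n \setminus \overline{\Omega}$. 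The only genuine obstacle throughout is verifying the quantitative bound on $\tilde{x}_n$ at each invocation of Lemma \ref{lemme_parametrisation_locale}, which is precisely why the domain must be shrunk from $D_\varepsilon(\mathbf{0'})$ to $D_r(\mathbf{0'})$ with $r = \frac{\sqrt{3}}{2}\varepsilon$.
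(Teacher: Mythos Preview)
Your proof is correct and uses the same essential ingredients as the paper's: the estimate \eqref{eqn_parametrisation_locale} to verify the hypotheses of Lemma~\ref{lemme_parametrisation_locale}, and the interior/exterior balls at $(\mathbf{0'},\pm\varepsilon)$ to determine which side of the graph is $\Omega$. The organization differs slightly. For the boundary characterization, the paper fixes the graph point $(\mathbf{x'},\varphi(\mathbf{x'}))$ as the anchor $\mathbf{x}$ in Lemma~\ref{lemme_parametrisation_locale} and lets $\tilde{x}_n$ range over $]-\varepsilon,\varepsilon[$, splitting into a region covered by the ball $B_\varepsilon(\mathbf{0'},\pm\varepsilon)$ and a region where the $|\tilde{x}_n|$-bound of the lemma holds; this yields the subgraph and boundary descriptions simultaneously. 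You instead swap the roles (taking an arbitrary boundary point as the anchor and $\tilde{x}_n=\varphi(\mathbf{x'})$) to isolate the graph identity first, and then replace the paper's case split by a connectedness argument on the vertical segment. Your route avoids the explicit interval splitting at the cost of one extra step; both are equally valid and of comparable length.
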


\begin{proof}
Assume by contradiction that there exists $ \mathbf{x'} \in D_{r}(\mathbf{0'}) $ such that $ \varphi^{-}(\mathbf{x'}) \neq \varphi^{+}(\mathbf{x'}) $. We set $ \mathbf{x}=(\mathbf{x'},\varphi^{+}(\mathbf{x'})) $ and $ \mathbf{\tilde{x}}=(\mathbf{x'},\varphi^{-}(\mathbf{x'})) $. By using \eqref{eqn_parametrisation_locale}, the hypothesis of Lemma \ref{lemme_parametrisation_locale} are satisfied for $\mathbf{x}$ and $\mathbf{\tilde{x}}$. Hence, either $ (\varphi^{-}(\mathbf{x'}) < \varphi^{+}(\mathbf{x'}) \Rightarrow  \mathbf{\tilde{x}} \in \Omega ) $ or $ (\varphi^{-}(\mathbf{x'}) > \varphi^{+}(\mathbf{x'}) \Rightarrow  \mathbf{\tilde{x}} \in \mathbb{R}^{n} \backslash \overline{\Omega}) $ whereas $ \mathbf{\tilde{x}} \in \partial \Omega $. We deduce $ \varphi^{-}(\mathbf{x'}) = \varphi^{+}(\mathbf{x'}) $ for any $\mathbf{x'} \in D_{r}(\mathbf{0'})$. Now consider $ \mathbf{x'} \in D_{r}(\mathbf{0'}) $ and $ x_{n} \in ]-\varepsilon,\varepsilon[$. We set $ \mathbf{x} = (\mathbf{x'},\varphi(\mathbf{x'}))  $ and $\mathbf{\tilde{x}} = (\mathbf{x'},x_{n})$. If $x_{n} = \varphi(\mathbf{x'})$, then Proposition \ref{prop_parametrisations_locales} ensures that $\mathbf{x} \in \partial \Omega$. Moreover, if $ - \varepsilon < x_{n} < -\varepsilon + \sqrt{\varepsilon^{2}- \Vert \mathbf{x'} \Vert^{2}}$, then $\mathbf{\tilde{x}} \in B_{\varepsilon}(\mathbf{0'},-\varepsilon) \subseteq \Omega $, and if $ - \varepsilon + \sqrt{\varepsilon^{2} - \Vert \mathbf{x'} \Vert^{2}} \leqslant x_{n} < \varphi(\mathbf{x'}) $, then apply Lemma \ref{lemme_parametrisation_locale} to get $\mathbf{\tilde{x}} \in \Omega$. Consequently, we proved $ ( - \varepsilon < x_{n} <  \varphi (\mathbf{x'}) \Longrightarrow  (\mathbf{x'},x_{n}) \in \Omega) $ for any $\mathbf{x'} \in D_{r}(\mathbf{0'})$. Similar arguments hold when $ \varepsilon > x_{n} > \varphi(\mathbf{x'})$ and imply $(\mathbf{x'},x_{n}) \in \mathbb{R}^{n} \backslash \overline{\Omega}$. To conclude, note that $\mathbf{x}_{0} = \mathbf{0} = (\mathbf{0'},\varphi(\mathbf{0'}))$.
\end{proof}

\subsubsection{The $C^{1,1}$-regularity of the local graph}
\label{section_regularite_cunun} 
\begin{lemma}
\label{lemme_alpha_cone}
The map $f: \alpha \in ]0,\frac{\pi}{2}[ \mapsto \frac{2 \alpha}{\cos \alpha} \in ]0, + \infty[ $ is well defined, continuous, surjective and increasing. In particular, it is an homeomorphism and its inverse $f^{-1}$ satisfies:
\begin{equation}
\label{eqn_lemme_alpha_cone}
\forall \varepsilon > 0 , \quad f^{-1}(\varepsilon) < \dfrac{\varepsilon }{2} .
\end{equation}
\end{lemma}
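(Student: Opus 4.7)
The plan is to verify the four asserted properties of $f$ in sequence and then derive the inequality \eqref{eqn_lemme_alpha_cone} as a one-line consequence of the defining formula. All of this is elementary single-variable calculus on $]0,\pi/2[$, so no step should present any real obstacle; the only thing to be careful about is checking that the limits at the endpoints give exactly the open interval $]0,+\infty[$.

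First, I would observe that since $\cos\alpha > 0$ on $]0,\pi/2[$, the quotient $\frac{2\alpha}{\cos\alpha}$ is well defined and strictly positive, so $f$ maps into $]0,+\infty[$ and is continuous as a quotient of continuous functions with nonvanishing denominator. To show $f$ is strictly increasing, I would differentiate:
\[
 f'(\alpha) \;=\; \frac{2\cos\alpha + 2\alpha\sin\alpha}{\cos^{2}\alpha},
\]
and note that on $]0,\pi/2[$ both $\cos\alpha$ and $\sin\alpha$ are positive, hence $f'(\alpha)>0$.

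Next, for surjectivity, I would compute the boundary behaviour: $f(\alpha)\to 0$ as $\alpha\to 0^{+}$ (numerator tends to $0$, denominator to $1$) and $f(\alpha)\to +\infty$ as $\alpha\to (\pi/2)^{-}$ (numerator tends to $\pi$, denominator to $0^{+}$). Combined with continuity, the intermediate value theorem gives $f(\,]0,\pi/2[\,)=\,]0,+\infty[\,$. A continuous strictly monotone bijection between open intervals is a homeomorphism, with continuous (and also strictly increasing) inverse $f^{-1}:\,]0,+\infty[\,\to\,]0,\pi/2[$.

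Finally, for the inequality, given $\varepsilon>0$, set $\alpha=f^{-1}(\varepsilon)\in\,]0,\pi/2[$, so that $\varepsilon=\frac{2\alpha}{\cos\alpha}$. Since $0<\cos\alpha<1$ on $]0,\pi/2[$, we obtain
\[
 \frac{\varepsilon}{2} \;=\; \frac{\alpha}{\cos\alpha} \;>\; \alpha \;=\; f^{-1}(\varepsilon),
\]
which is precisely \eqref{eqn_lemme_alpha_cone}. The argument requires nothing more than the positivity of $\cos\alpha$ and $\sin\alpha$ on the open interval, so there is no genuinely hard step.
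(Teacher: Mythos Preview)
Your proof is correct and follows exactly the elementary calculus argument the paper has in mind; the paper itself simply writes ``The proof is basic calculus'' without spelling out the details you have given.
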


\begin{proof}
The proof is basic calculus.
\end{proof}

\begin{proposition}[\textbf{Point (i) of Theorem \ref{thm_boule_equiv_cunun}}]
\label{prop_alpha_cone} 
Consider any $\alpha \in ]0,f^{-1}(\varepsilon)]$ where $f$ is defined in Lemma \ref{lemme_alpha_cone}. Then, we have $ C_{\alpha}(\mathbf{x},-\mathbf{d}_{\mathbf{x}_{0}}) \subseteq \Omega $ for any $ \mathbf{x} \in B_{\alpha}(\mathbf{x}_{0}) \cap \overline{\Omega} $. In particular, the set $\Omega$ satisfies the $f^{-1}(\varepsilon)$-cone property in the sense of Definition \ref{definition_alpha_cone}.
\end{proposition}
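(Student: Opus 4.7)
The plan is to prove $C_{\alpha}(\mathbf{x}, -\mathbf{d}_{\mathbf{x}_{0}}) \subseteq \Omega$ by a direct ball-containment argument based on Definition \ref{definition_epsilon_boule} and the $\tfrac{1}{\varepsilon}$-Lipschitz continuity of the Gauss map $\mathbf{d}$ established in Proposition \ref{prop_normale_lipschitzienne}. The crucial numerical identity exploited throughout is the reformulation of $\alpha \leqslant f^{-1}(\varepsilon)$ as $2\alpha \leqslant \varepsilon \cos \alpha$. Fix $\mathbf{x} \in B_{\alpha}(\mathbf{x}_{0}) \cap \overline{\Omega}$ and $\mathbf{z} \in C_{\alpha}(\mathbf{x}, -\mathbf{d}_{\mathbf{x}_{0}})$, write $\mathbf{v} = \mathbf{z} - \mathbf{x}$ and $\rho = \Vert \mathbf{v} \Vert < \alpha$, and exploit the strict cone inequality $\langle \mathbf{v} \vert \mathbf{d}_{\mathbf{x}_{0}} \rangle < -\rho \cos \alpha$. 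The argument then splits naturally into two cases according to whether $\mathbf{x}$ lies on $\partial\Omega$ or in the open set $\Omega$.

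If $\mathbf{x} \in \partial \Omega$, I will show $\mathbf{z} \in B_{\varepsilon}(\mathbf{x} - \varepsilon \mathbf{d}_{\mathbf{x}}) \subseteq \Omega$. Expanding
\[ \Vert \mathbf{z} - (\mathbf{x} - \varepsilon \mathbf{d}_{\mathbf{x}}) \Vert^{2} - \varepsilon^{2} = \rho^{2} + 2\varepsilon \langle \mathbf{v} \vert \mathbf{d}_{\mathbf{x}} \rangle, \]
decomposing $\langle \mathbf{v} \vert \mathbf{d}_{\mathbf{x}} \rangle = \langle \mathbf{v} \vert \mathbf{d}_{\mathbf{x}_{0}} \rangle + \langle \mathbf{v} \vert \mathbf{d}_{\mathbf{x}} - \mathbf{d}_{\mathbf{x}_{0}} \rangle$, and applying Cauchy-Schwarz together with $\Vert \mathbf{d}_{\mathbf{x}} - \mathbf{d}_{\mathbf{x}_{0}} \Vert \leqslant \Vert \mathbf{x} - \mathbf{x}_{0} \Vert / \varepsilon < \alpha / \varepsilon$, the required strict negativity of the right-hand side reduces to the elementary fact $2\alpha + \rho < 2\varepsilon \cos \alpha$, which holds thanks to $\rho < \alpha$ and $4\alpha \leqslant 2\varepsilon \cos \alpha$.

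If $\mathbf{x} \in \Omega$, I argue by contradiction, assuming $\mathbf{z} \notin \Omega$. Continuity of the segment $\gamma(t) := \mathbf{x} + t \mathbf{v}$ and openness of $\Omega$ produce a first exit time $t_{*} = \inf \{t \in [0,1] : \gamma(t) \notin \Omega \} > 0$ at which $\mathbf{y} := \gamma(t_{*})$ belongs to $\partial \Omega$ (as a limit of points of $\Omega$ that itself fails to lie in $\Omega$). The triangle inequality yields the strict bound $\Vert \mathbf{y} - \mathbf{x}_{0} \Vert < 2\alpha \leqslant \varepsilon \cos \alpha$; combined with Proposition \ref{prop_normale_lipschitzienne} and the strict cone condition, this forces $A := - \langle \mathbf{v} \vert \mathbf{d}_{\mathbf{y}} \rangle$ to be a strictly positive number. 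The identity
\[ \Vert \gamma(t_{*} - \delta) - (\mathbf{y} + \varepsilon \mathbf{d}_{\mathbf{y}}) \Vert^{2} - \varepsilon^{2} = \delta^{2} \rho^{2} - 2\delta \varepsilon A \]
then shows that any sufficiently small $\delta > 0$ (precisely $\delta < 2 \varepsilon A / \rho^{2}$) places $\gamma(t_{*} - \delta)$ inside the exterior ball $B_{\varepsilon}(\mathbf{y} + \varepsilon \mathbf{d}_{\mathbf{y}}) \subseteq \mathbb{R}^{n} \backslash \overline{\Omega}$, contradicting $\gamma(t_{*} - \delta) \in \Omega$. The final ``in particular'' assertion then follows at once by specialising to $\alpha = f^{-1}(\varepsilon)$ and taking $\xi_{\mathbf{x}_{0}} := -\mathbf{d}_{\mathbf{x}_{0}}$ in Definition \ref{definition_alpha_cone}.

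The main obstacle I anticipate is the saturated value $\alpha = f^{-1}(\varepsilon)$, where $2\alpha = \varepsilon \cos \alpha$ and every marginal inequality becomes an equality: both halves of the argument rely essentially on the strict bounds $\Vert \mathbf{x} - \mathbf{x}_{0} \Vert < \alpha$ and $\rho < \alpha$ coming from the openness of $B_{\alpha}(\mathbf{x}_{0})$ and of the cone, in order to produce containment in the \emph{open} ball rather than on its closure.
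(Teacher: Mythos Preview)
Your proof is correct, but it takes a genuinely different route from the paper's. The paper relies on the local graph parametrisation $\varphi$ of Proposition~\ref{propo_parametrisation_locale}: given $\mathbf{x}\in B_{\alpha}(\mathbf{x}_{0})\cap\overline{\Omega}$ and $\mathbf{y}\in C_{\alpha}(\mathbf{x},-\mathbf{d}_{\mathbf{x}_{0}})$, it vertically lifts $\mathbf{x}$ to the boundary point $\tilde{\mathbf{x}}=(\mathbf{x'},\varphi(\mathbf{x'}))$, proves the cone inclusion $C_{\alpha}(\tilde{\mathbf{x}},-\mathbf{d}_{\mathbf{x}_{0}})\subseteq B_{\varepsilon}(\tilde{\mathbf{x}}-\varepsilon\mathbf{d}_{\tilde{\mathbf{x}}})\subseteq\Omega$ at that boundary point (your Case~1 computation is essentially this step), and then uses the graph description once more to translate the conclusion back down to~$\mathbf{y}$. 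Your argument avoids the graph altogether: the boundary case is handled by the same direct cone-into-ball estimate, while for $\mathbf{x}\in\Omega$ you replace the vertical lift by a first-exit-time argument along the segment $[\mathbf{x},\mathbf{z}]$, producing a boundary point $\mathbf{y}$ at which the exterior ball $B_{\varepsilon}(\mathbf{y}+\varepsilon\mathbf{d}_{\mathbf{y}})$ swallows nearby interior points, a contradiction. Your approach is more elementary and self-contained---it uses only Definition~\ref{definition_epsilon_boule} and the Lipschitz bound \eqref{eqn_lipschitz_normale}---whereas the paper's approach is constructive (no contradiction) and dovetails with the graph machinery that is being developed anyway for Theorem~\ref{thm_boule_equiv_cunun}.
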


\begin{proof}
We set $r = \frac{\sqrt{3}}{2} \varepsilon $ and $\mathcal{C}_{r,\varepsilon} = D_{r}(\mathbf{0'})\times ]-\varepsilon,\varepsilon[$. We choose any $ \alpha  \in ]0,f^{-1}(\varepsilon)] $ then consider $\mathbf{x} = (\mathbf{x'},x_{n}) \in B_{\alpha}(\mathbf{x}_{0}) \cap \overline{\Omega} $ and $ \mathbf{y} = (\mathbf{y'},y_{n}) \in C_{\alpha}(\mathbf{x},- \mathbf{d}_{\mathbf{x}_{0}}) $. The proof of the assertion $\mathbf{y} \in \Omega$ is divided into three steps:
\begin{itemize}
\item check that $\mathbf{x} \in \mathcal{C}_{r,\varepsilon} $ so as to introduce the point $\mathbf{\tilde{x}} = (\mathbf{x'},\varphi(\mathbf{x'})) $ of $ \partial \Omega $ satisfying $ x_{n} \leqslant \varphi (\mathbf{x'} )$;
\item consider $\mathbf{\tilde{y}} = (\mathbf{y'},y_{n} + \varphi(\mathbf{x'}) - x_{n})$ and prove that  $\mathbf{\tilde{y}} \in C_{\alpha}(\mathbf{\tilde{x}},- \mathbf{d}_{\mathbf{x}_{0}} ) \subseteq B_{\varepsilon}(\mathbf{\tilde{x}} - \varepsilon \mathbf{d}_{\mathbf{\tilde{x}}}) \subseteq \Omega$;
\item show that $(\mathbf{\tilde{y}},\mathbf{y}) \in \mathcal{C}_{r,\varepsilon} \times \mathcal{C}_{r,\varepsilon} $ in order to deduce $ y_{n} + \varphi(\mathbf{x'}) - x_{n} < \varphi (\mathbf{y'})$ and conclude $ \mathbf{y} \in \Omega$.
\end{itemize}
First, from \eqref{eqn_lemme_alpha_cone}, we have: $ \max ( \Vert \mathbf{x'} \Vert, \vert x_{n} \vert ) \leqslant \Vert \mathbf{x} - \mathbf{x}_{0} \Vert < \alpha \leqslant f^{-1}(\varepsilon) < \frac{\varepsilon}{2} $. Hence, we get $\mathbf{x} \in \overline{\Omega} \cap \mathcal{C}_{r,\varepsilon}$ and applying Proposition \ref{propo_parametrisation_locale}, it comes $ x_{n} \leqslant \varphi (\mathbf{x'})$. We set $\mathbf{\tilde{x}} = (\mathbf{x'},\varphi(\mathbf{x'})) \in \partial \Omega \cap \mathcal{C}_{r,\varepsilon}$. Note that $\mathbf{\tilde{x}} \in B_{\alpha \sqrt{2} }(\mathbf{x}_{0})$ because Relation \eqref{eqn_parametrisation_locale} applied to $\mathbf{\tilde{x}} = (\mathbf{x'},\varphi(\mathbf{x'}))$ gives: 
\[ \Vert \mathbf{\tilde{x}} - \mathbf{x}_{0} \Vert^{2} \leqslant 2 \varepsilon^{2} - 2 \varepsilon \sqrt{\varepsilon^{2} - \Vert \mathbf{x'} \Vert^{2}} = \frac{4 \varepsilon^{2} \Vert \mathbf{x'} \Vert^{2}}{2 \varepsilon^{2} + 2 \varepsilon  \sqrt{\varepsilon^{2} - \Vert \mathbf{x'} \Vert^{2}} } \leqslant 2 \Vert \mathbf{x'} \Vert^{2} \leqslant 2 \Vert \mathbf{x} - \mathbf{x}_{0} \Vert^{2} < 2 \alpha^{2}. \]
Then, we prove $ C_{\alpha}(\mathbf{\tilde{x}}, -\mathbf{d}_{\mathbf{x}_{0}}) \subseteq B_{\varepsilon}(\mathbf{\tilde{x}} - \varepsilon \mathbf{d}_{\mathbf{\tilde{x}}} )  $ so consider any point $\mathbf{z} \in C_{\alpha}(\mathbf{\tilde{x}}, - \mathbf{d}_{\mathbf{x}_{0}} ) $. Using the Cauchy-Schwartz inequality, \eqref{eqn_lipschitz_normale} applied to $ \mathbf{\tilde{x}} \in \partial \Omega$ and $\mathbf{y} = \mathbf{x}_{0}$, the fact that $\mathbf{z}  \in C_{\alpha}(\mathbf{\tilde{x}}, - \mathbf{d}_{\mathbf{x}_{0}} ) $, and the foregoing observation $ \mathbf{\tilde{x}} \in  B_{\alpha \sqrt{2} }(\mathbf{x}_{0})  $, we have successively:
\[  \begin{array}{rcl}
 \Vert \mathbf{z} - \mathbf{\tilde{x}} + \varepsilon \mathbf{d_{\tilde{x}}} \Vert^{2} - \varepsilon^{2} & \leqslant &  \Vert \mathbf{z} - \mathbf{\tilde{x}} \Vert^{2} + 2 \varepsilon \Vert \mathbf{z} - \mathbf{\tilde{x}} \Vert \Vert \mathbf{d_{\tilde{x}}} - \mathbf{d}_{\mathbf{x}_{0}} \Vert + 2 \varepsilon \left\langle \mathbf{z} - \mathbf{\tilde{x}} ~\vert~ \mathbf{d}_{\mathbf{x}_{0}} \right\rangle  \\
  & & \\
  & < &  \Vert \mathbf{z} - \mathbf{\tilde{x}} \Vert^{2} + 2 \Vert \mathbf{z} - \mathbf{\tilde{x}} \Vert \Vert \mathbf{\tilde{x}} - \mathbf{x}_{0} \Vert - 2 \varepsilon  \Vert \mathbf{z} - \mathbf{\tilde{x}} \Vert \cos  \alpha \\
  \\
  & < &  \Vert \mathbf{z} - \mathbf{\tilde{x}} \Vert \left[ \left( 1 + 2\sqrt{2} \right) \alpha - 2 \varepsilon \cos \alpha \right] ~< ~ 2  \Vert \mathbf{z} - \mathbf{\tilde{x}} \Vert \cos \alpha  \left( f(\alpha) - \varepsilon \right) \leqslant 0 . \\
\end{array} \]
Hence, we get $\mathbf{z} \in B_{\varepsilon}(\mathbf{\tilde{x}} - \varepsilon \mathbf{d}_{\mathbf{\tilde{x}}})$ i.e. $C_{\alpha}(\mathbf{\tilde{x}}, -\mathbf{d}_{\mathbf{x}_{0}}) \subseteq B_{\varepsilon}(\mathbf{\tilde{x}} - \varepsilon \mathbf{d}_{\mathbf{\tilde{x}}} ) \subseteq \Omega$ using the $\varepsilon$-ball condition. Moreover, since $\mathbf{\tilde{y}} - \mathbf{\tilde{x}} = \mathbf{y} - \mathbf{x}$ and $\mathbf{y} \in C_{\alpha}(\mathbf{x},-\mathbf{d}_{\mathbf{x}_{0}})$, we obtain $\mathbf{\tilde{y}} \in C_{\alpha}(\mathbf{\tilde{x}},-\mathbf{d}_{\mathbf{x}_{0}}) $ and thus $\mathbf{\tilde{y}} \in \Omega$. Finally, we show that $(\mathbf{y},\mathbf{\tilde{y}}) \in \mathcal{C}_{r,\varepsilon}\times \mathcal{C}_{r, \varepsilon}$. We have successively: 
\[  \left\lbrace \begin{array}{l}
\Vert \mathbf{y'} \Vert  \leqslant \Vert \mathbf{y'} - \mathbf{x'} \Vert + \Vert \mathbf{x'} \Vert < \sqrt{\alpha^{2} - \alpha^{2} \cos^{2} \alpha} + \alpha = \dfrac{\alpha}{\cos \alpha} \left(  \dfrac{1}{2} \sin 2 \alpha + \cos \alpha \right) \leqslant \frac{3  f(\alpha) }{4}  \leqslant \frac{3 \varepsilon }{4} < r \\
\\
\vert y_{n} \vert \leqslant \vert y_{n} - x_{n} \vert + \vert x_{n} \vert \leqslant \Vert \mathbf{y} - \mathbf{x} \Vert + \Vert \mathbf{x} - \mathbf{x}_{0} \Vert  < 2 \alpha < f(\alpha) \leqslant \varepsilon \\
\\
\vert y_{n} + \varphi(\mathbf{x'}) - x_{n} \vert \leqslant \Vert  \mathbf{y} - \mathbf{x} \Vert + \varepsilon - \sqrt{\varepsilon^{2} - \Vert \mathbf{x'} \Vert^{2}} < \alpha + \dfrac{\Vert \mathbf{x'} \Vert^{2}}{\varepsilon + \sqrt{\varepsilon^{2} - \Vert \mathbf{x'} \Vert^{2}}}  \leqslant \alpha + \frac{\alpha^{2}}{\varepsilon}  < \frac{3}{2} \alpha \leqslant \varepsilon. \\
\end{array} \right. \]
We used \eqref{eqn_parametrisation_locale}, \eqref{eqn_lemme_alpha_cone}, the fact that $\mathbf{y} \in C_{\alpha}(\mathbf{x},-\mathbf{d}_{\mathbf{x}_{0}})$, and $\mathbf{x} \in B_{\alpha}(\mathbf{x}_{0})$. To conclude, apply Proposition \ref{propo_parametrisation_locale} to $\mathbf{\tilde{y}} \in \Omega \cap \mathcal{C}_{r,\varepsilon}$ in order to obtain $ y_{n} + \varphi(\mathbf{x'}) - x_{n} < \varphi (\mathbf{y'})$. Since we firstly proved $ x_{n} \leqslant \varphi (\mathbf{x'})$, we deduce $ y_{n} < \varphi(\mathbf{y'}) $. Applying Proposition \ref{propo_parametrisation_locale} to $\mathbf{y} \in \mathcal{C}_{r,\varepsilon}$, we get $\mathbf{y} \in \Omega $ as required.
\end{proof}

\begin{corollary}
\label{lemme_lipschitz}
The map $\varphi$ restricted to $ D_{\frac{\sqrt{2}}{4}f^{-1}(\varepsilon)}(\mathbf{0'}) $ is $ \frac{1}{\tan[f^{-1}(\varepsilon)]} $-Lipschitz continuous.
\end{corollary}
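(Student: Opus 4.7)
The approach is to exploit the $f^{-1}(\varepsilon)$-cone property from Proposition~\ref{prop_alpha_cone}: at any point of $\partial\Omega$ close enough to $\mathbf{x}_0$, there is a solid downward cone of opening $\alpha:=f^{-1}(\varepsilon)$ and axis $-\mathbf{d}_{\mathbf{x}_0}$ entirely contained in $\Omega$. Since the graph of $\varphi$ lies on $\partial\Omega$, no graph point can sit inside such a cone anchored at another graph point; this directly bounds the slope by $\cot\alpha=1/\tan\alpha$.

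More precisely, set $\rho=\tfrac{\sqrt{2}}{4}\alpha$ and pick $\mathbf{x'},\mathbf{y'}\in D_{\rho}(\mathbf{0'})$ with, without loss of generality, $\varphi(\mathbf{y'})\geq \varphi(\mathbf{x'})$. Form the graph points $\mathbf{x}=(\mathbf{x'},\varphi(\mathbf{x'}))$ and $\mathbf{y}=(\mathbf{y'},\varphi(\mathbf{y'}))$, both in $\partial\Omega$ by Proposition~\ref{propo_parametrisation_locale}. Using the bound $\|\mathbf{y}-\mathbf{x}_0\|^{2}\leq 2\|\mathbf{y'}\|^{2}$ already derived in the proof of Proposition~\ref{prop_alpha_cone}, I get $\|\mathbf{y}-\mathbf{x}_0\|^{2}<\alpha^{2}/4$, so $\mathbf{y}\in B_{\alpha}(\mathbf{x}_0)\cap\overline{\Omega}$; the same estimate gives $|\varphi(\mathbf{x'})|\leq \|\mathbf{x'}\|<\rho$ and the analogous bound at $\mathbf{y'}$, whence
\[
\|\mathbf{x}-\mathbf{y}\|^{2}\leq \|\mathbf{x'}-\mathbf{y'}\|^{2}+\bigl(|\varphi(\mathbf{x'})|+|\varphi(\mathbf{y'})|\bigr)^{2}<(2\rho)^{2}+(2\rho)^{2}=\alpha^{2}.
\]
Proposition~\ref{prop_alpha_cone} applied at $\mathbf{y}$ then yields $C_{\alpha}(\mathbf{y},-\mathbf{d}_{\mathbf{x}_0})\subseteq\Omega$, whereas $\mathbf{x}\in\partial\Omega\cap B_{\alpha}(\mathbf{y})$ is disjoint from $\Omega$. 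Consequently $\mathbf{x}\notin C_{\alpha}(\mathbf{y},-\mathbf{d}_{\mathbf{x}_0})$, which, $\mathbf{x}$ being already in $B_{\alpha}(\mathbf{y})$, forces the angular inequality
\[
\langle \mathbf{x}-\mathbf{y}\,|\,-\mathbf{d}_{\mathbf{x}_0}\rangle\leq \|\mathbf{x}-\mathbf{y}\|\cos\alpha.
\]

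In the local frame the left-hand side equals $\varphi(\mathbf{y'})-\varphi(\mathbf{x'})\geq 0$, so squaring and isolating this nonnegative quantity gives $(\varphi(\mathbf{y'})-\varphi(\mathbf{x'}))^{2}\sin^{2}\alpha\leq \|\mathbf{x'}-\mathbf{y'}\|^{2}\cos^{2}\alpha$, that is $|\varphi(\mathbf{y'})-\varphi(\mathbf{x'})|\leq \|\mathbf{x'}-\mathbf{y'}\|/\tan\alpha$. The reverse case $\varphi(\mathbf{x'})\geq\varphi(\mathbf{y'})$ is identical upon swapping $\mathbf{x}$ and $\mathbf{y}$, which is precisely why the domain is taken symmetric in the two arguments. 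The only delicate point is the numerology: one needs simultaneously $\mathbf{y}\in B_{\alpha}(\mathbf{x}_0)$ so that Proposition~\ref{prop_alpha_cone} applies at $\mathbf{y}$, \emph{and} $\mathbf{x}\in B_{\alpha}(\mathbf{y})$ so that its exclusion from the cone translates into an angular rather than a trivial radial constraint. The factor $\tfrac{\sqrt{2}}{4}$ appearing in the domain radius is exactly what is needed, via \eqref{eqn_parametrisation_locale}, to keep both the horizontal and vertical excursions of the graph small enough for these two inclusions to hold at once.
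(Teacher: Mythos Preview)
Your proof is correct and follows essentially the same approach as the paper's: both show the two graph points lie in $B_{\alpha}(\mathbf{x}_0)$ via the estimate $\|\mathbf{x}-\mathbf{x}_0\|^2\leq 2\|\mathbf{x'}\|^2$ from \eqref{eqn_parametrisation_locale}, verify they are within distance $\alpha$ of each other, and then invoke Proposition~\ref{prop_alpha_cone} to exclude one from the cone based at the other, extracting the $\cot\alpha$ Lipschitz bound. The only cosmetic differences are that the paper bounds $\|\mathbf{x}-\mathbf{y}\|$ by the triangle inequality through $\mathbf{x}_0$ (yielding $2\sqrt{2}\rho=\alpha$) rather than splitting into horizontal and vertical components, and it applies the cone exclusion in both directions symmetrically instead of using a WLOG reduction.
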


\begin{proof}
We set $\alpha = f^{-1}(\varepsilon)$, $ r = \frac{\sqrt{3}}{2} \varepsilon$, and $\tilde{r} = \frac{\sqrt{2}}{4} f^{-1}(\varepsilon)$. We choose any $( \mathbf{x_{+}'}, \mathbf{x_{-}'}) \in D_{\tilde{r}}(\mathbf{0'}) \times D_{\tilde{r}}(\mathbf{0'}) $. From \eqref{eqn_lemme_alpha_cone}, we get  $\tilde{r} < r $ so we can consider $\mathbf{x}_{\pm} = (\mathbf{x'_{\pm}},\varphi(\mathbf{x'_{\pm}}))$ and Proposition \ref{prop_parametrisations_locales} gives:
\[ \Vert \mathbf{x_{\pm}} - \mathbf{x}_{0} \Vert^{2} \leqslant 2 \varepsilon^{2} - 2 \varepsilon \sqrt{\varepsilon^{2}- \Vert \mathbf{x_{\pm}'} \Vert^{2} } = \frac{4 \varepsilon^{2} \Vert \mathbf{x_{\pm}'} \Vert^{2}}{2 \varepsilon^{2} +  2 \varepsilon \sqrt{\varepsilon^{2} - \Vert \mathbf{x_{\pm}'} \Vert^{2}} } \leqslant 2 \Vert \mathbf{x_{\pm}'} \Vert^{2} < 2 \tilde{r}^{2} < \alpha^{2}.  \]
Hence, we obtain $\mathbf{x_{\pm}} \in B_{\alpha}(\mathbf{x}_{0}) \cap \partial \Omega$. We also have: $ \Vert \mathbf{x_{+}} - \mathbf{x_{-}} \Vert \leqslant \Vert \mathbf{x_{+}} - \mathbf{x}_{0} \Vert + \Vert \mathbf{x}_{0} - \mathbf{x_{-}} \Vert  < 2 \tilde{r} \sqrt{2} = \alpha $. Finally, applying Proposition \ref{prop_alpha_cone}, the points $\mathbf{x_{\pm}} $ cannot belong to the cones $ C_{\alpha}(\mathbf{x_{\mp}},- \mathbf{d}_{\mathbf{x}_{0}}) \subseteq \Omega $ thus we get: $ \vert \langle \mathbf{x_{+}} - \mathbf{x_{-}} ~\vert~ \mathbf{d}_{\mathbf{x}_{0}} \rangle \vert \leqslant \cos \alpha \Vert \mathbf{x_{+}} - \mathbf{x_{-}} \Vert = \cos \alpha \sqrt{\Vert \mathbf{x_{+}'} - \mathbf{x_{-}'} \Vert^{2} + \vert \langle \mathbf{x_{+}} - \mathbf{x_{-}} ~\vert~ \mathbf{d}_{\mathbf{x}_{0}} \rangle \vert^{2}} $. Consequently, one can re-arrange these terms in order to obtain the result of the statement: $  \vert \varphi(\mathbf{x_{+}'}) - \varphi(\mathbf{x_{-}'}) \vert = \vert \langle \mathbf{x_{+}} - \mathbf{x_{-}} ~\vert~ \mathbf{d}_{\mathbf{x}_{0}} \rangle \vert \leqslant  \frac{1}{\tan \alpha} \Vert \mathbf{x_{+}'} - \mathbf{x_{-}'} \Vert   $.
\end{proof}

\begin{proposition}
\label{prop_regularite_cunun}
Set $\tilde{r} = \frac{\sqrt{2}}{4} f^{-1}(\varepsilon) $. The map $\varphi$ of Proposition \ref{propo_parametrisation_locale} restricted to $ D_{\tilde{r}} (\mathbf{0'})$ is differentiable and its gradient $\nabla \varphi: D_{ \tilde{r}}(\mathbf{0'}) \rightarrow \mathbb{R}^{n-1}$ is $L$-Lipschitz continuous where $L > 0$ depends only on $\varepsilon$.  Moreover, we have $\nabla \varphi (\mathbf{0'}) = \mathbf{0'}$ and also:
\[ \forall \mathbf{a'} \in D_{\tilde{r}}(\mathbf{0'}), \quad \nabla \varphi (\mathbf{a'}) = \dfrac{-1}{ \langle \mathbf{d_{a}}~\vert~ \mathbf{d}_{\mathbf{x}_{0}} \rangle} \mathbf{d_{a}'},  \qquad \mathrm{where} \quad \mathbf{a}= (\mathbf{a'},\varphi(\mathbf{a'})). \]  
\end{proposition}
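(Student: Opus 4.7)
The plan is to compute $\nabla\varphi(\mathbf{a'})$ by applying the global quadratic inequality \eqref{eqn_inegalite_globale} to the pair $(\mathbf{a},\mathbf{x}) \in \partial\Omega \times \partial\Omega$ with $\mathbf{a} = (\mathbf{a'},\varphi(\mathbf{a'}))$ and $\mathbf{x} = (\mathbf{x'},\varphi(\mathbf{x'}))$, where $\mathbf{x'} \in D_{\tilde{r}}(\mathbf{0'})$. Decomposing $\mathbf{d_a} = \mathbf{d_a'} + \langle\mathbf{d_a}\mid\mathbf{d}_{\mathbf{x}_0}\rangle \mathbf{d}_{\mathbf{x}_0}$, this inequality reads
\[ \bigl|\langle \mathbf{x'} - \mathbf{a'} \mid \mathbf{d_a'}\rangle + \bigl(\varphi(\mathbf{x'}) - \varphi(\mathbf{a'})\bigr)\langle \mathbf{d_a}\mid \mathbf{d}_{\mathbf{x}_0}\rangle\bigr| \leqslant \tfrac{1}{2\varepsilon}\bigl(\Vert \mathbf{x'} - \mathbf{a'}\Vert^{2} + \vert\varphi(\mathbf{x'}) - \varphi(\mathbf{a'})\vert^{2}\bigr). \]
Using the Lipschitz estimate from Corollary \ref{lemme_lipschitz}, the right-hand side is $O(\Vert \mathbf{x'} - \mathbf{a'}\Vert^{2})$, so after dividing by $\langle \mathbf{d_a}\mid \mathbf{d}_{\mathbf{x}_0}\rangle$ one reads off the differential at $\mathbf{a'}$.

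The preliminary step is to check that $\langle \mathbf{d_a}\mid \mathbf{d}_{\mathbf{x}_0}\rangle$ is uniformly bounded away from $0$ on $D_{\tilde{r}}(\mathbf{0'})$. The bound $\Vert \mathbf{a} - \mathbf{x}_{0}\Vert^{2} \leqslant 2\Vert \mathbf{a'}\Vert^{2}$ derived from \eqref{eqn_parametrisation_locale} (already used in the proof of Corollary \ref{lemme_lipschitz}) combined with \eqref{eqn_lemme_alpha_cone} gives $\Vert \mathbf{a} - \mathbf{x}_{0}\Vert < \tfrac{1}{2}f^{-1}(\varepsilon) < \varepsilon/4$; the Lipschitz property \eqref{eqn_lipschitz_normale} of the Gauss map then forces $\Vert \mathbf{d_a} - \mathbf{d}_{\mathbf{x}_0}\Vert < 1/4$, whence $\langle \mathbf{d_a}\mid \mathbf{d}_{\mathbf{x}_0}\rangle \geqslant 1 - \tfrac{1}{32}$. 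In particular, the candidate formula
\[ \Phi(\mathbf{a'}) := \dfrac{-1}{\langle \mathbf{d_a}\mid \mathbf{d}_{\mathbf{x}_0}\rangle}\mathbf{d_a'} \]
is well defined on $D_{\tilde{r}}(\mathbf{0'})$, and clearly reduces to $\mathbf{0'}$ at $\mathbf{a'} = \mathbf{0'}$ since $\mathbf{d}_{\mathbf{x}_0}' = \mathbf{0'}$ in the frame chosen at $\mathbf{x}_{0}$.

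To conclude differentiability at any $\mathbf{a'} \in D_{\tilde{r}}(\mathbf{0'})$, I rewrite the displayed inequality as
\[ \bigl|\varphi(\mathbf{x'}) - \varphi(\mathbf{a'}) - \langle \mathbf{x'} - \mathbf{a'}\mid \Phi(\mathbf{a'})\rangle\bigr| \leqslant \dfrac{1}{2\varepsilon\,\langle \mathbf{d_a}\mid \mathbf{d}_{\mathbf{x}_0}\rangle}\bigl(1 + \mathrm{Lip}(\varphi)^{2}\bigr)\Vert \mathbf{x'} - \mathbf{a'}\Vert^{2}, \]
which is precisely the statement that $\varphi$ is differentiable at $\mathbf{a'}$ with $\nabla\varphi(\mathbf{a'}) = \Phi(\mathbf{a'})$ and moreover provides a quadratic (not merely $o(\Vert \mathbf{x'} - \mathbf{a'}\Vert)$) remainder. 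For Lipschitz regularity of $\nabla\varphi$, I observe that $\mathbf{a'} \mapsto \mathbf{a}$ is Lipschitz on $D_{\tilde{r}}(\mathbf{0'})$ (Corollary \ref{lemme_lipschitz}), and $\mathbf{a} \mapsto \mathbf{d_a}$ is $\tfrac{1}{\varepsilon}$-Lipschitz by Proposition \ref{prop_normale_lipschitzienne}; composing and dividing by the scalar $\langle \mathbf{d_a}\mid \mathbf{d}_{\mathbf{x}_0}\rangle$ (Lipschitz and bounded below by a constant depending only on $\varepsilon$) yields a Lipschitz constant $L$ for $\nabla\varphi$ depending only on $\varepsilon$, as required.

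The main obstacle I expect is purely bookkeeping: making sure the chain of estimates controlling $\Vert \mathbf{a} - \mathbf{x}_{0}\Vert$, $\Vert \mathbf{d_a} - \mathbf{d}_{\mathbf{x}_0}\Vert$, and the denominator $\langle \mathbf{d_a}\mid \mathbf{d}_{\mathbf{x}_0}\rangle$ really hold uniformly on $D_{\tilde{r}}(\mathbf{0'})$, so that the radius $\tilde{r} = \tfrac{\sqrt{2}}{4}f^{-1}(\varepsilon)$ from Corollary \ref{lemme_lipschitz} is sufficient and no further shrinking is needed. Everything else is then a direct consequence of the two fundamental inequalities \eqref{eqn_lipschitz_normale} and \eqref{eqn_inegalite_globale}.
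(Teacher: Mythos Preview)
Your proposal is correct and follows essentially the same route as the paper: apply the global inequality \eqref{eqn_inegalite_globale} at $(\mathbf{a},\mathbf{x})$, use the Lipschitz bound on $\varphi$ from Corollary \ref{lemme_lipschitz} to control the right-hand side, bound $\langle \mathbf{d_a}\mid \mathbf{d}_{\mathbf{x}_0}\rangle$ from below via \eqref{eqn_lipschitz_normale} (the paper also obtains the threshold $\tfrac{31}{32}$), and then deduce Lipschitz continuity of $\nabla\varphi$ by composing the Lipschitz maps $\mathbf{a'}\mapsto\mathbf{a}\mapsto\mathbf{d_a}$ with division by a scalar bounded away from zero. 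The only cosmetic difference is that the paper explicitly restricts $\mathbf{x'}$ to $\overline{D_{\tilde{r}-\Vert\mathbf{a'}\Vert}}(\mathbf{a'})$ to stay inside $D_{\tilde{r}}(\mathbf{0'})$, which you leave implicit.
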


\begin{proof}
Let $\mathbf{a'}\in D_{\tilde{r}}(\mathbf{0'})$ and $ \mathbf{x'} \in  \overline{D_{\tilde{r} - \Vert \mathbf{a'} \Vert}}(\mathbf{a'}) $. Consequently, we have $(\mathbf{a'},\mathbf{x'}) \in D_{\tilde{r}}(\mathbf{0'}) \times  D_{\tilde{r}}(\mathbf{0'}) $ and from \eqref{eqn_lemme_alpha_cone}, we get $  \tilde{r} < \frac{\sqrt{3}}{2} \varepsilon $. Hence, using Proposition \ref{propo_parametrisation_locale}, we can introduce $ \mathbf{x} :=(\mathbf{x'},\varphi(\mathbf{x'})) $ and $ \mathbf{a} :=(\mathbf{a'},\varphi(\mathbf{a'})) $. Applying \eqref{eqn_inegalite_globale} to $(\mathbf{a},\mathbf{x}) \in \partial \Omega \times \partial \Omega$ and using the Lipschitz continuity of $\varphi$ on $D_{\tilde{r}}(\mathbf{0'})$ proved in Corollary \ref{lemme_lipschitz}, we deduce that:
\[  \vert \left( \varphi(\mathbf{x'}) - \varphi( \mathbf{a'}) \right) \mathbf{d_{a}}_{n} + \langle \mathbf{d_{a}'} ~\vert ~ \mathbf{x'} - \mathbf{a'} \rangle \vert  \leqslant  \frac{1}{2 \varepsilon} \Vert \mathbf{x} - \mathbf{a} \Vert^{2}   \leqslant \underbrace{  \dfrac{1}{2\varepsilon} \left( 1 + \dfrac{1}{\tan^{2}[f^{-1}(\varepsilon)]} \right) }_{:= C(\varepsilon) > 0} \Vert \mathbf{x'} - \mathbf{a'} \Vert^{2}, \]
where we set $\mathbf{d}_{\mathbf{a}} = (\mathbf{d_{a}'},\mathbf{d_{a}}_{n})$ with $\mathbf{d_{a}}_{n} = \langle \mathbf{d_{a}}~\vert~ \mathbf{d}_{\mathbf{x}_{0}} \rangle$. It represents a first-order Taylor expansion of the map $\varphi$ if we can divide the above inequality by a uniform positive constant smaller than $\mathbf{d_{a}}_{n} $. Let us justify this assertion. Apply \eqref{eqn_lipschitz_normale} to $\mathbf{x} = \mathbf{a}$ and $\mathbf{y} = \mathbf{x}_{0}$, then use \eqref{eqn_parametrisation_locale} to get:
\[ \mathbf{d_{a}}_{n}  =  1 - \frac{1}{2} \Vert \mathbf{d_{a}} - \mathbf{d}_{\mathbf{x}_{0}} \Vert^{2} \geqslant  1 - \frac{1}{2 \varepsilon^{2}} \Vert \mathbf{a} - \mathbf{x}_{0} \Vert^{2} \geqslant 1 - \frac{\varepsilon - \sqrt{\varepsilon^{2} - \Vert \mathbf{a'} \Vert^{2}}}{\varepsilon}  = 1 - \frac{\Vert \mathbf{a'} \Vert^{2}}{\varepsilon ( \varepsilon + \sqrt{\varepsilon^{2} - \Vert \mathbf{a'} \Vert^{2}} )} . \] 
Hence, using \eqref{eqn_lemme_alpha_cone}, we obtain $\mathbf{d_{a}}_{n} > 1 - \frac{\tilde{r}^{2}}{\varepsilon^{2}} > \frac{31}{32} > 0$. Therefore, $ \varphi $ is a differentiable map at any point $ \mathbf{a'} \in D_{\tilde{r}}(\mathbf{0'}) $ and its gradient is the one given in the statement:
\[ \forall \mathbf{x'} \in \overline{D_{\tilde{r} - \Vert \mathbf{a'} \Vert }}(\mathbf{a'}), \quad \begin{array}{|c|}   \varphi(\mathbf{x'}) - \varphi(\mathbf{a'}) + \left\langle \dfrac{\mathbf{d_{a}'}}{\mathbf{d_{a}}_{n}} ~~\vert~ \mathbf{x'} - \mathbf{a'} \right\rangle \\ \end{array} \leqslant  \dfrac{32}{31} C(\varepsilon)  \Vert \mathbf{x'} - \mathbf{a'} \Vert^{2} . \]
Moreover, for any $(\mathbf{a'},\mathbf{x'}) \in D_{\tilde{r}}(\mathbf{0'}) \times D_{\tilde{r}}(\mathbf{0'})  $, we have successively: 
\[ \begin{array}{rcl}
\Vert \nabla \varphi (\mathbf{x'}) - \nabla \varphi (\mathbf{a'}) \Vert & \leqslant & \begin{array}{|c|} \dfrac{1}{\mathbf{d_{a}}_{n} } -  \dfrac{1}{\mathbf{d_{x}}_{n} } \\ \end{array} ~   \Vert \mathbf{d_{x}'} \Vert +  \begin{array}{|c|} \dfrac{1}{ \mathbf{d_{a}}_{n}  } \\ \end{array} ~ \Vert \mathbf{d_{a}'} - \mathbf{d_{x}'} \Vert \leqslant   \left(  \dfrac{ 32^{2}}{31^{2}}  + \dfrac{32}{31} \right) \Vert \mathbf{d_{a}} - \mathbf{d_{x}} \Vert \\
 & & \\
 & \leqslant & \dfrac{32}{31\varepsilon} \left( 1 + \dfrac{32}{31} \right) \Vert \mathbf{x} - \mathbf{a} \Vert   \leqslant  \dfrac{32}{31\varepsilon} \left( 1 + \dfrac{32}{31} \right) \sqrt{1 + \dfrac{1}{\tan^{2}[f^{-1}(\varepsilon)]}} \Vert \mathbf{x'} - \mathbf{a'} \Vert. \\
\end{array} \]
We applied \eqref{eqn_lipschitz_normale} to $\mathbf{x} $ and $\mathbf{y} = \mathbf{a}$, then used the Lipschitz continuity of $\varphi$ proved in Corollary \ref{lemme_lipschitz}. Hence, $ \nabla \varphi:  \mathbf{a'} \in  D_{\tilde{r}}(\mathbf{0'}) \mapsto \nabla \varphi (\mathbf{a'})  $ is $L$-Lipschitz continuous with $L > 0$ depending only on $\varepsilon$.
\end{proof}

\begin{corollary}[\textbf{Points (ii) and (iii) of Theorem \ref{thm_boule_equiv_cunun}}] 
\label{coro_normale_lipschitzienne}
The unit vector $\mathbf{d}_{\mathbf{x}_{0}}$ of Definition \ref{definition_epsilon_boule} is the outer normal to $\partial \Omega$ at the point $\mathbf{x}_{0}$. In particular, the $\frac{1}{\varepsilon}$-Lipschitz continuous map $\mathbf{d}: \mathbf{x} \mapsto \mathbf{d_{x}} $ of Proposition \ref{prop_normale_lipschitzienne} is the Gauss map of the $C^{1,1}$-hypersurface  $\partial \Omega$.
\end{corollary}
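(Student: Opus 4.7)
The plan is to assemble the corollary directly from Proposition \ref{prop_regularite_cunun} and the $\varepsilon$-ball condition, treating the two assertions separately. Fix any $\mathbf{x}_0 \in \partial\Omega$ with its associated unit vector $\mathbf{d}_{\mathbf{x}_0}$ given by Definition \ref{definition_epsilon_boule}, and adopt the direct orthonormal frame $(\mathbf{x}_0, \mathcal{B}_{\mathbf{x}_0}, \mathbf{d}_{\mathbf{x}_0})$ used throughout Sections \ref{section_parametrization_cunun} and \ref{section_regularite_cunun}, so that in this frame $\mathbf{d}_{\mathbf{x}_0}$ is identified with the last basis vector $(\mathbf{0'},1)$.

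First I would identify the tangent hyperplane. By Proposition \ref{propo_parametrisation_locale} and Proposition \ref{prop_regularite_cunun}, there is a radius $\tilde r > 0$ (depending only on $\varepsilon$) such that $\partial\Omega$ agrees locally with the graph of a $C^{1,1}$ map $\varphi : D_{\tilde r}(\mathbf{0'}) \to \mathbb{R}$ with $\varphi(\mathbf{0'}) = 0$ and $\nabla\varphi(\mathbf{0'}) = \mathbf{0'}$. Since the graph of $\varphi$ is differentiable at $\mathbf{0'}$ with vanishing gradient, the tangent hyperplane to $\partial\Omega$ at $\mathbf{x}_0$ in this frame is $\mathbb{R}^{n-1}\times\{0\}$, which coincides with $\mathbf{d}_{\mathbf{x}_0}^{\bot}$. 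Consequently, the unit normal line to $\partial\Omega$ at $\mathbf{x}_0$ is exactly $\mathbb{R}\,\mathbf{d}_{\mathbf{x}_0}$, and the unit normal is $\pm\mathbf{d}_{\mathbf{x}_0}$.

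Next I would pin down the orientation using Definition \ref{definition_epsilon_boule}. The $\varepsilon$-ball condition provides $B_{\varepsilon}(\mathbf{x}_0 + \varepsilon\mathbf{d}_{\mathbf{x}_0}) \subseteq \mathbb{R}^n\setminus\overline{\Omega}$; in particular, for every $t \in (0,2\varepsilon)$ the point $\mathbf{x}_0 + t\mathbf{d}_{\mathbf{x}_0}$ lies in this ball and hence outside $\overline{\Omega}$. Therefore the one-sided trajectory $t\mapsto \mathbf{x}_0+t\mathbf{d}_{\mathbf{x}_0}$ leaves $\overline{\Omega}$ immediately, which means $\mathbf{d}_{\mathbf{x}_0}$ is the \emph{outer} of the two unit normals to $\partial\Omega$ at $\mathbf{x}_0$. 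This proves Point (ii) of Theorem \ref{thm_boule_equiv_cunun}.

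Finally, Point (iii) follows by globalizing the argument. The above discussion applies at every $\mathbf{x}_0\in\partial\Omega$, so the map $\mathbf{d} : \mathbf{x}\in\partial\Omega \mapsto \mathbf{d}_{\mathbf{x}}$ is exactly the Gauss map of the $C^{1,1}$-hypersurface $\partial\Omega$ (in the sense of Notation \ref{notation_geometrie_rtrois} when $n=3$). Its $\frac{1}{\varepsilon}$-Lipschitz continuity is already furnished by Proposition \ref{prop_normale_lipschitzienne}, which also established well-posedness of $\mathbf{d}$. There is no real obstacle here: the only subtle point is checking the orientation, and it is handled directly by exhibiting an explicit outward segment inside the exterior ball $B_{\varepsilon}(\mathbf{x}_0+\varepsilon\mathbf{d}_{\mathbf{x}_0})$.
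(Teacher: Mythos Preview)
Your proof is correct and follows essentially the same approach as the paper: identify the tangent hyperplane via the local graph $\varphi$, conclude that the unit normal is $\pm\mathbf{d}_{\mathbf{x}_0}$, and fix the sign using the exterior ball. The only cosmetic difference is that the paper works at a generic point $\mathbf{x}'$ in a single chart using the gradient formula $\nabla\varphi(\mathbf{a}') = -\mathbf{d}_{\mathbf{a}}'/\langle\mathbf{d}_{\mathbf{a}}\,|\,\mathbf{d}_{\mathbf{x}_0}\rangle$ from Proposition~\ref{prop_regularite_cunun}, whereas you work only at the chart center $\mathbf{0'}$ (where $\nabla\varphi(\mathbf{0'})=\mathbf{0'}$) and then vary $\mathbf{x}_0$ over $\partial\Omega$; both are fine since every boundary point can be taken as a chart center.
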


\begin{proof}
Consider the map $\varphi : D_{\tilde{r}}(\mathbf{0'}) \rightarrow ]-\varepsilon,\varepsilon[$ whose $C^{1,1}$-regularity comes from Proposition \ref{prop_regularite_cunun}. We define the $C^{1,1}$-map $ X: D_{\tilde{r}}(\mathbf{0'}) \rightarrow \partial \Omega $ by $ X(\mathbf{x'}) = (\mathbf{x'},\varphi(\mathbf{x'})) $ then we consider $ \mathbf{x'} \in D_{\tilde{r}}(\mathbf{0'}) $. We denote by $ (e_{k})_{1 \leqslant k \leqslant n-1} $ the first vectors of our local basis. The tangent plane of $\partial \Omega$ at $  X(\mathbf{x'}) $ is spanned by the vectors $ \partial_{k}X(\mathbf{x'})= e_{k} + (\mathbf{0}', \partial_{k} \varphi (\mathbf{x'})) $. Since any normal vector $\mathbf{u} = (u_{1},\ldots,u_{n}) $ to this hyperplane is orthogonal to this $ (n-1) $ vectors, we have: $ \langle \mathbf{u} ~\vert~ \partial_{k} X(\mathbf{x'}) \rangle = 0    \Leftrightarrow   u_{k} = \frac{u_{n}}{\mathbf{d_{x}}_{n}} \mathbf{d_{x}}_{k} $. Hence, we obtain $\mathbf{u} = \frac{u_{n}}{\mathbf{d_{x}}_{n}} \mathbf{d_{x}}$ so $\mathbf{u}$ is collinear to $\mathbf{d_{x}}$. Now, if we impose that $\mathbf{u}$ points outwards $ \Omega$ and if we assume $ \Vert \mathbf{u} \Vert = 1$, then we get $\mathbf{u} = \mathbf{d_{x}} $. 
\end{proof}

\subsubsection{The compact case: when $C^{1,1}$-regularity implies the uniform ball condition}

\begin{proof}[\textbf{Proof of Theorem \ref{thm_boule_equiv_cunun}}]
Combining Proposition \ref{prop_alpha_cone} and Corollary \ref{coro_normale_lipschitzienne}, it remains to prove the converse part of Theorem \ref{thm_boule_equiv_cunun}. Consider any non-empty compact $C^{1,1}$-hypersurface $\mathcal{S}$ of $\mathbb{R}^{n}$ and its associated inner domain $ \Omega$. Choose any $\mathbf{x}_{0} \in \partial \Omega$ and its local frame as in Definition \ref{definition_regularite_cunun}. First, we have for any $(\mathbf{x'},\mathbf{y'}) \in D_{r}(\mathbf{0'}) \times D_{r}(\mathbf{0'})$ with $g: t \in [0,1] \mapsto \varphi(\mathbf{x'}+ t (\mathbf{y'} - \mathbf{x'}))$:
\[ \begin{array}{rcl} 
\vert \varphi(\mathbf{y'}) - \varphi(\mathbf{x'}) - \langle \nabla \varphi(\mathbf{x'}) ~\vert~ \mathbf{y'} - \mathbf{x'} \rangle  \vert & = &  \begin{array}{|c|} \displaystyle{  \int_{0}^{1} \left[ g'(t) - g'(0) \right] dt} \\ \end{array} \quad \leqslant \quad \displaystyle{ \int_{0}^{1} \vert g'(t) - g'(0) \vert dt } \\
& & \\
& \leqslant & \displaystyle{   \int_{0}^{1} \Vert  \nabla \varphi \left(\mathbf
x'+t(\mathbf{y'}-\mathbf{x'}) \right) - \nabla \varphi(\mathbf{x'}) \Vert  \Vert \mathbf{y'} - \mathbf{x'} \Vert dt } \\
& & \\
& \leqslant &  \dfrac{L}{2} \Vert \mathbf{y'} - \mathbf{x'} \Vert^{2}.   \\
\end{array} \]
Then, we set $\varepsilon_{0} = \min(\frac{1}{L}, \frac{r}{3}, \frac{a}{3}) $ and consider any $\mathbf{x} \in B_{\varepsilon_{0}}(\mathbf{x}_{0}) \cap \partial \Omega $. Since $\varepsilon_{0} \leqslant \min(r,a)$, there exists $\mathbf{x'} \in D_{r}(\mathbf{0'})$ such that $\mathbf{x} = (\mathbf{x'},\varphi(\mathbf{x'}))$. We introduce the notation $\mathbf{d_{x}}_{n} = (1 + \Vert \nabla \varphi(\mathbf{x'}) \Vert^{2})^{-\frac{1}{2}}$ and $\mathbf{d_{x}'} = - \mathbf{d_{x}}_{n} \nabla \varphi(\mathbf{x'})$ so that $\mathbf{d_{x}} := (\mathbf{d_{x}'},\mathbf{d_{x}}_{n})$ is a unit vector. Now, let us show that $ \Omega$ satisfy the $\varepsilon_{0}$-ball condition at the point $\mathbf{x}$ so choose any $\mathbf{y} \in B_{\varepsilon_{0}}(\mathbf{x}+ \varepsilon_{0} \mathbf{d_{x}}) \subseteq B_{2 \varepsilon_{0}}(\mathbf{x}) \subseteq B_{3 \varepsilon_{0}}(\mathbf{x}_{0})$. Since $ 3\varepsilon_{0} \leqslant \min(r,a)$, there exists $\mathbf{y'} \in D_{r}(\mathbf{0'})$ and $y_{n} \in ]-a,a[$ such that $\mathbf{y} = (\mathbf{y'}, y_{n})$. Moreover, we have $\mathbf{y} \in \mathbb{R}^{n} \backslash \overline{\Omega}$ iff $y_{n} > \varphi(\mathbf{y'})$. Observing that $\Vert \mathbf{y} - \mathbf{x} - \varepsilon_{0} \mathbf{d_{x}} \Vert < \varepsilon_{0} \Leftrightarrow \frac{1}{2 \varepsilon_{0}} \Vert \mathbf{y} - \mathbf{x} \Vert^{2} < \langle \mathbf{y} - \mathbf{x} ~\vert~ \mathbf{d_{x}} \rangle $, we obtain successively:
\[ \begin{array}{rcl}
y_{n} - \varphi(\mathbf{y'}) & = & \dfrac{1}{\mathbf{d_{x}}_{n}} \left[ \mathbf{d_{x}}_{n} \left( y_{n} - \varphi(\mathbf{x'}) \right) + \langle \mathbf{d_{x}'}  ~\vert~ \mathbf{y'} - \mathbf{x'} \rangle - \langle \mathbf{d_{x}'}  ~\vert~ \mathbf{y'} - \mathbf{x'} \rangle + \mathbf{d_{x}}_{n} \left( \varphi ( \mathbf{x'}) - \varphi(\mathbf{y'}) \right) \right] \\
& & \\
& = & \dfrac{1}{\mathbf{d_{x}}_{n}} \langle \mathbf{y} - \mathbf{x} ~\vert~ \mathbf{d_{x}} \rangle \quad - \quad  \varphi(\mathbf{y'})   \quad + \quad  \varphi(\mathbf{x'}) \quad + \quad \langle \nabla \varphi(\mathbf{x'}) ~\vert~ \mathbf{y'} - \mathbf{x'} \rangle \\
& & \\
&  > &  \dfrac{\Vert \mathbf{y} - \mathbf{x} \Vert^{2}}{2 \varepsilon_{0} \mathbf{d_{x}}_{n}} \quad  -  \quad \dfrac{L}{2} \Vert \mathbf{y'} - \mathbf{x'} \Vert^{2} \quad >  \quad \dfrac{1}{2 \mathbf{d_{x}}_{n}} \Vert \mathbf{y'} - \mathbf{x'} \Vert^{2} \left( \dfrac{1}{\varepsilon_{0}} - L \right) \quad  \geqslant \quad 0. \\
\end{array} \]
Consequently, we get $\mathbf{y} \notin \overline{\Omega}$ and we proved $B_{\varepsilon_{0}}(\mathbf{x} + \varepsilon_{0} \mathbf{d_{x}}) \subseteq \mathbb{R}^{n} \backslash \overline{\Omega}  $. Similarly, we can obtain $B_{\varepsilon_{0}}(\mathbf{x} - \varepsilon_{0} \mathbf{d_{x}}) \subseteq \Omega $. Hence, for any $\mathbf{x}_{0} \in \partial \Omega$, there exists $\varepsilon_{0} > 0$ such that $ \Omega \cap B_{\varepsilon_{0}}(\mathbf{x}_{0})$ satisfies the $\varepsilon_{0}$-ball condition. Finally, as $\partial \Omega$ is compact, it is included in a finite reunion of such balls $B_{\varepsilon_{0}}(\mathbf{x}_{0})$. Define $\varepsilon > 0$ as the minimum of this finite number of $\varepsilon_{0}$ and $ \Omega $ will satisfy the $\varepsilon$-ball property.
\end{proof}

%%%%%%%%%%%%%%%%%%%%%%%%%%%%%%%%%%%%%%%%%%%%%%%%%%%%%%%%%%%%%%%%%%%%%%%%%%%%%%%%%%%%%%%%%%%%%%%%%%%%%%%%%%%%%%%%%%%%
\section{Parametrization of a converging sequence from $\mathcal{O}_{\varepsilon}(B)$} 
\label{section_parametrization_i}
In this section, we first recall a known compactness result about the uniform cone property \cite{Chenais1975}. Since we know from Point (i) of Theorem \ref{thm_boule_equiv_cunun} that every set satisfying the $\varepsilon$-ball condition also satisfies the $f^{-1}(\varepsilon)$-cone property, we only have to check that $\mathcal{O}_{\varepsilon}(B)$ is closed under the Hausdorff convergence to get its compactness. Hence, we obtain the following result.

\begin{proposition}
\label{prop_compacite_boule}
Let $\varepsilon > 0$ and $B \subset \mathbb{R}^{n}$ a bounded open set, large enough to contain an open ball of radius $3 \varepsilon$, and smooth enough so that $\partial B$ has zero $n$-dimensional Lebesgue measure. If $(\Omega_{i})_{i \in \mathbb{N}}$ is a sequence of elements from $\mathcal{O}_{\varepsilon}(B)$, then there exists $\Omega \in \mathcal{O}_{\varepsilon}(B)$ such that a subsequence $(\Omega_{\psi(i)})_{i \in \mathbb{N}}$ converges to $\Omega$ in the following sense (see Definition \ref{def_convergence} for the various modes of convergence):
\begin{itemize}
\item[(i)] $(\Omega_{\psi(i)})_{i \in \mathbb{N}}$ converges to $\Omega$ in the Hausdorff sense;
\item[(ii)] $( \partial \Omega_{\psi( i)}) _{i \in \mathbb{N}}$ converges to $\partial \Omega$ for the Hausdorff distance;
\item[(iii)] $(\overline{\Omega_{\psi(i)}})_{i \in \mathbb{N}}$ converges to $\overline{\Omega}$ for the Hausdorff distance;
\item[(iv)] $(B \backslash \overline{\Omega_{\psi(i)}})_{i \in \mathbb{N}}$ converges to $B \backslash \overline{\Omega}$ in the Hausdorff sense;
\item[(v)] $(\Omega_{\psi(i)})_{i \in \mathbb{N}}$ converges to $\Omega$ in the sense of compact sets;
\item[(vi)] $(\Omega_{\psi(i)})_{i \in \mathbb{N}}$ converges to $\Omega$ in the sense of characteristic functions.
\end{itemize} 
\end{proposition}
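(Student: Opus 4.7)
The plan is to reduce the proof to Chenais's classical compactness theorem for the uniform cone property and then check that the ball condition survives in the limit. By Point (i) of Theorem \ref{thm_boule_equiv_cunun}, every $\Omega_i \in \mathcal{O}_{\varepsilon}(B)$ satisfies the $f^{-1}(\varepsilon)$-cone condition with an opening independent of $i$. Chenais's theorem in \cite{Chenais1975} states that the family of open subsets of $B$ satisfying a fixed uniform cone condition is compact under each of the six modes of convergence listed (the smoothness assumption on $B$ and its size being used precisely there to keep the limit non-trivial and to prevent mass leaking to $\partial B$). This produces a subsequence $(\Omega_{\psi(i)})$ and an open set $\Omega \subseteq B$ such that items (i)--(vi) all hold simultaneously.

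The remaining task is to verify $\Omega \in \mathcal{O}_{\varepsilon}(B)$. If $\partial \Omega = \emptyset$ there is nothing to check, so fix $\mathbf{x} \in \partial \Omega$. By the Hausdorff convergence of boundaries (Point (ii)), one can pick $\mathbf{x}_i \in \partial \Omega_{\psi(i)}$ with $\mathbf{x}_i \to \mathbf{x}$, and each $\mathbf{x}_i$ carries a unit vector $\mathbf{d}_i$ realising the $\varepsilon$-ball condition for $\Omega_{\psi(i)}$. Extracting a further subsequence along the compact sphere $\mathbb{S}^{n-1}$, we may assume $\mathbf{d}_i \to \mathbf{d}$. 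We claim $\mathbf{d}$ realises the $\varepsilon$-ball condition at $\mathbf{x}$.

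For the interior inclusion, let $\mathbf{y} \in B_{\varepsilon}(\mathbf{x} - \varepsilon \mathbf{d})$. Since this ball is open and $\mathbf{x}_i - \varepsilon \mathbf{d}_i \to \mathbf{x} - \varepsilon \mathbf{d}$, we have $\mathbf{y} \in B_{\varepsilon}(\mathbf{x}_i - \varepsilon \mathbf{d}_i) \subseteq \Omega_{\psi(i)}$ for all $i$ large. Convergence of characteristic functions (Point (vi)), or equivalently the Hausdorff convergence of complements (Point (iv)), then forces $\mathbf{y} \in \Omega$: otherwise $\mathbf{y}$ would lie in the open set $B \setminus \overline{\Omega}$ and hence, for large $i$, in $B \setminus \overline{\Omega_{\psi(i)}}$, contradicting the previous inclusion. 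The exterior inclusion $B_{\varepsilon}(\mathbf{x} + \varepsilon \mathbf{d}) \subseteq B \setminus \overline{\Omega}$ follows by the symmetric argument using Point (iii) (Hausdorff convergence of closures).

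The main obstacle is the careful bookkeeping of the different convergence notions so that an open ball known to sit inside $\Omega_{\psi(i)}$ (respectively outside $\overline{\Omega_{\psi(i)}}$) genuinely passes into $\Omega$ (respectively into $B \setminus \overline{\Omega}$) in the limit. Chenais's theorem is essential here because it supplies simultaneously the Hausdorff convergence of $\Omega_{\psi(i)}$, $\overline{\Omega_{\psi(i)}}$, $\partial \Omega_{\psi(i)}$, and $B \setminus \overline{\Omega_{\psi(i)}}$, which is precisely what is needed to propagate both inclusions of the ball condition at once. Without the uniform cone bound coming from Theorem \ref{thm_boule_equiv_cunun}(i), the limit $\Omega$ could fail to be open at $\mathbf{x} \pm \varepsilon \mathbf{d}$ and the argument would collapse.
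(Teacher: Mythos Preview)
Your proof follows the paper's approach exactly: reduce to Chenais's compactness theorem via Theorem~\ref{thm_boule_equiv_cunun}(i), then verify that the $\varepsilon$-ball condition is closed under the resulting convergence by picking $\mathbf{x}_i\in\partial\Omega_{\psi(i)}$ with $\mathbf{x}_i\to\mathbf{x}$, extracting a limit direction $\mathbf{d}$ on $\mathbb{S}^{n-1}$, and passing the two ball inclusions to the limit.

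There is one small logical slip in your limiting argument. When you write ``otherwise $\mathbf{y}$ would lie in the open set $B\setminus\overline{\Omega}$'', you have tacitly excluded the possibility $\mathbf{y}\in\partial\Omega$; from $\mathbf{y}\notin\Omega$ alone you only get $\mathbf{y}\in(B\setminus\overline{\Omega})\cup\partial\Omega$. The paper sidesteps this by invoking directly the stability of inclusion under Hausdorff convergence (applied to the compact complements $\overline{B}\setminus\Omega_{\psi(i)}\subseteq\overline{B}\setminus B_\varepsilon(\mathbf{x}_i-\varepsilon\mathbf{d}_i)$), which yields $B_\varepsilon(\mathbf{x}-\varepsilon\mathbf{d})\subseteq\Omega$ in one stroke. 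Your argument is easily repaired the same way, or alternatively by noting that $\mathbf{y}\in\partial\Omega$ would, via Point~(ii), produce points $\mathbf{z}_i\in\partial\Omega_{\psi(i)}$ with $\mathbf{z}_i\to\mathbf{y}$; since $\Vert\mathbf{z}_i-(\mathbf{x}_i-\varepsilon\mathbf{d}_i)\Vert\to\Vert\mathbf{y}-(\mathbf{x}-\varepsilon\mathbf{d})\Vert<\varepsilon$, eventually $\mathbf{z}_i\in B_\varepsilon(\mathbf{x}_i-\varepsilon\mathbf{d}_i)\subseteq\Omega_{\psi(i)}$, contradicting $\mathbf{z}_i\in\partial\Omega_{\psi(i)}$.
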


Then, in the rest of this section, we consider a sequence $(\Omega_{i})_{i \in \mathbb{N}} $ of elements from $ \mathcal{O}_{\varepsilon}(B)$ converging to $\Omega \in \mathcal{O}_{\varepsilon}(B)$ in the sense of Proposition \ref{prop_compacite_boule}, and we prove that locally the boundaries $\partial \Omega_{i}$ can be parametrized simultaneously by $C^{1,1}$-graphs in a fixed local frame associated with $\partial \Omega$. Finally, we get the $C^{1}$-strong and $W^{2,\infty}$-weak-star convergence of these local graphs as follows.

\begin{theorem}
\label{thm_parametrisation_locale_i}
Let $(\Omega_{i})_{i \in \mathbb{N}}\subset \mathcal{O}_{\varepsilon}(B)$ converge to $\Omega \in \mathcal{O}_{\varepsilon}(B)$ in the sense of Proposition \ref{prop_compacite_boule} (i)-(vi). Then, for any point $\mathbf{x}_{0} \in \partial \Omega$, there exists a direct orthonormal frame centred at $\mathbf{x}_{0}$, and also $I \in \mathbb{N}$ depending only on $\mathbf{x}_{0}$, $\varepsilon$, $\Omega$, and $(\Omega_{i})_{i \in \mathbb{N}}$, such that inside this frame, for any integer $i \geqslant I$, there exists a continuously differentiable map $\varphi_{i}: \overline{D_{\tilde{r}}}(\mathbf{0'}) \rightarrow ]-\varepsilon,\varepsilon[ $, whose gradient $\nabla \varphi_{i}$ and $\varphi_{i}$ are $L$-Lipschitz continuous with $L > 0 $ and $  \tilde{r} > 0$ depending only on $\varepsilon$, and such that:
\[ \left\lbrace \begin{array}{rcl}
\partial \Omega_{i} \cap \left( \overline{D_{ \tilde{r}}}(\mathbf{0'}) \cap [-\varepsilon,\varepsilon] \right) &=& \left\lbrace (\mathbf{x'},\varphi_{i}(\mathbf{x'})), \quad \mathbf{x'} \in \overline{D_{ \tilde{r}}}(\mathbf{0'}) \right\rbrace \\
& & \\
\Omega_{i} \cap \left( \overline{D_{ \tilde{r}}}(\mathbf{0'}) \cap [-\varepsilon,\varepsilon] \right) &=& \left\lbrace (\mathbf{x'},x_{n}), \quad \mathbf{x'} \in \overline{D_{ \tilde{r}}}(\mathbf{0'}) ~\mathrm{and}~ - \varepsilon \leqslant x_{n} <\varphi_{i}(\mathbf{x'}) \right\rbrace . \\
\end{array} \right. \]  
Moreover, considering the map $\varphi$ of Definition \ref{definition_regularite_cunun} associated with the point $\mathbf{x}_{0} $ of $\partial \Omega$, we have:
\begin{equation}
\label{eqn_convergence}
 \varphi_{i} \rightarrow \varphi \quad \mathrm{in} ~ C^{1}\left(\overline{D_{ \tilde{r}}}(\mathbf{0'})\right)  \qquad \mathrm{and} \qquad \varphi_{i} \rightharpoonup \varphi \quad \mathrm{weak-star}~\mathrm{in}~ W^{2,\infty}\left(D_{ \tilde{r}}(\mathbf{0'})\right). 
\end{equation} 
\end{theorem}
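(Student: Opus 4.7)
The plan is to re-run the local parametrization machinery of Sections \ref{section_parametrization_cunun} and \ref{section_regularite_cunun} in a single frame anchored at the limit point $\mathbf{x}_0 \in \partial \Omega$, and then prove that every $\partial \Omega_i$ (for $i$ large) is also a graph in that same frame with uniform $C^{1,1}$-bounds. First I would apply Propositions \ref{propo_parametrisation_locale} and \ref{prop_regularite_cunun} to $\Omega$ at $\mathbf{x}_0$ to obtain the direct orthonormal frame $(\mathbf{x}_0,\mathcal{B}_{\mathbf{x}_0},\mathbf{d}_{\mathbf{x}_0})$ and the $C^{1,1}$-graph $\varphi$, then shrink the radius to some $\tilde{r} > 0$ (depending only on $\varepsilon$) to keep a safety margin. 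The crucial preliminary is the \emph{convergence of normals}: if $\mathbf{y}_i \in \partial \Omega_i$ and $\mathbf{y}_i \to \mathbf{y} \in \partial \Omega$, any subsequential limit $\mathbf{d}^{*}$ of $(\mathbf{d}_{\mathbf{y}_i})$ satisfies $B_\varepsilon(\mathbf{y}+\varepsilon \mathbf{d}^{*}) \subseteq B \backslash \overline{\Omega}$ and $B_\varepsilon(\mathbf{y}-\varepsilon \mathbf{d}^{*}) \subseteq \Omega$ (push the $\varepsilon$-ball inclusions for $\Omega_i$ through the Hausdorff convergences (i), (iv) and (v)), hence $\mathbf{d}^{*} = \mathbf{d}_{\mathbf{y}}$ by uniqueness in Proposition \ref{prop_normale_lipschitzienne}. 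Combined with the $\frac{1}{\varepsilon}$-Lipschitz continuity of each map $\mathbf{d}$, this yields: for every $\eta > 0$, there exists $I \in \mathbb{N}$ such that for $i \geqslant I$ and every $\mathbf{y} \in \partial \Omega_i \cap \overline{B_{3\tilde{r}}}(\mathbf{x}_0)$, one has $\Vert \mathbf{d}_{\mathbf{y}} - \mathbf{d}_{\mathbf{x}_0} \Vert < \eta$.

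Next I would show that for $i \geqslant I$ and each $\mathbf{x'} \in \overline{D_{\tilde{r}}}(\mathbf{0'})$, the vertical fibre $\{\mathbf{x'}\} \times [-\varepsilon,\varepsilon]$ meets $\partial \Omega_i$ in exactly one point, thereby defining $\varphi_i(\mathbf{x'})$. Existence follows from the fact that $(\mathbf{x'},-\varepsilon) \in B_\varepsilon(\mathbf{0'},-\varepsilon) \subseteq \Omega$ and $(\mathbf{x'},+\varepsilon) \in B_\varepsilon(\mathbf{0'},\varepsilon) \subseteq B \backslash \overline{\Omega}$, which by the Hausdorff convergences (v)-(vi) transfer to $\Omega_i$ for $i$ large, and the fibre being connected forces a boundary crossing. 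Uniqueness is a perturbed version of Lemma \ref{lemme_parametrisation_locale}: if the fibre met $\partial \Omega_i$ at two points $\mathbf{y}$ and $\mathbf{\tilde{y}}$, writing the $\varepsilon$-ball condition at $\mathbf{y}$ with its normal $\mathbf{d}_{\mathbf{y}}$ (close to $\mathbf{d}_{\mathbf{x}_0}$ by the previous step) and mimicking the computation of Lemma \ref{lemme_parametrisation_locale} with an extra error term $\varepsilon\Vert \mathbf{d}_{\mathbf{y}} - \mathbf{d}_{\mathbf{x}_0} \Vert \leqslant \varepsilon\eta$, one forces $\mathbf{\tilde{y}}$ into $B_\varepsilon(\mathbf{y}\pm \varepsilon \mathbf{d}_{\mathbf{y}})$, contradicting $\mathbf{\tilde{y}} \in \partial \Omega_i$. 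The identical strategy adapted to Proposition \ref{prop_regularite_cunun} delivers the uniform $C^{1,1}$-bound: $\Vert \nabla \varphi_i \Vert_{L^\infty}$ and $\mathrm{Lip}(\nabla \varphi_i)$ are controlled by some $L > 0$ depending only on $\varepsilon$, since $\langle \mathbf{d}_{\mathbf{y}} \,\vert\, \mathbf{d}_{\mathbf{x}_0} \rangle \geqslant 1 - \eta^2/2$ is uniformly bounded away from zero.

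For the convergences in \eqref{eqn_convergence}, the uniform $W^{2,\infty}$-bound just obtained together with Banach--Alaoglu yields a weak-star cluster point $\psi \in W^{2,\infty}(D_{\tilde{r}}(\mathbf{0'}))$, and Arzel\`a--Ascoli (applied both to $(\varphi_i)$ and to $(\nabla \varphi_i)$, which are uniformly bounded and uniformly Lipschitz) upgrades this to $C^1$-convergence on $\overline{D_{\tilde{r}}}(\mathbf{0'})$ of a subsequence. The limit $\psi$ must coincide with $\varphi$: the graph of $\psi$ sits in $\limsup_i \partial \Omega_i = \partial \Omega$ by the Hausdorff convergence (ii), and the local graph of $\partial \Omega$ in this fixed frame is unique by Proposition \ref{propo_parametrisation_locale}. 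A standard subsequence-of-subsequence argument then promotes the convergence to the full sequence $(\varphi_i)$. The main obstacle is the combined existence/uniqueness step: transferring the graph property of $\partial \Omega_i$ from its natural frame at each $\mathbf{y} \in \partial \Omega_i$ to the fixed limit frame at $\mathbf{x}_0$, where the quantitative normal-closeness derived above is exactly what is needed to close the uniqueness argument and produce uniform $C^{1,1}$-estimates with constants depending only on $\varepsilon$.
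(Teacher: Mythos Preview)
Your strategy is essentially the paper's: fix the limit frame at $\mathbf{x}_0$, transfer the $\varepsilon$-ball inclusions at $\mathbf{x}_0$ to $\Omega_i$ via compact convergence (the paper's Assumption \ref{hypotheses_parametrization_i}), obtain closeness of $\mathbf{d}_{\mathbf{y}}$ to $\mathbf{d}_{\mathbf{x}_0}$ for $\mathbf{y}\in\partial\Omega_i$ near $\mathbf{x}_0$, rerun the existence/uniqueness/Lipschitz/$C^{1,1}$ arguments of Lemma \ref{lemme_parametrisation_locale}, Corollary \ref{lemme_lipschitz}, Proposition \ref{prop_regularite_cunun} with the extra error term, and conclude by Arzel\`a--Ascoli on $(\varphi_i)$ and $(\nabla\varphi_i)$ plus identification of the limit through the Hausdorff convergence of boundaries. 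The one slip is your normal-closeness claim: with $\tilde r$ fixed, $\Vert \mathbf{d}_{\mathbf{y}}-\mathbf{d}_{\mathbf{x}_0}\Vert$ cannot be driven below roughly $3\tilde r/\varepsilon$ uniformly over $\partial\Omega_i\cap \overline{B_{3\tilde r}}(\mathbf{x}_0)$ (this bound is already saturated on $\partial\Omega$ itself by Proposition \ref{prop_normale_lipschitzienne}), so ``for every $\eta>0$'' is false as written; you only get, and only need, some fixed small $\eta$ once $\tilde r$ is taken small enough in terms of $\varepsilon$. The paper avoids this by deriving the quantitative bound $\Vert \mathbf{d}_{\mathbf{x}_i}-\mathbf{d}_{\mathbf{x}_0}\Vert^{2}\leqslant \varepsilon^{-2}\Vert\mathbf{x}_i-\mathbf{x}_0\Vert^{2}+\varepsilon^{-2}\bigl[(2\varepsilon)^2-(2\varepsilon-\eta)^2\bigr]$ directly from the shrunk-ball inclusions (Proposition \ref{prop_inegalite_normale_lipschitzienne_i}), which is cleaner and makes the dependence of all later constants on $\varepsilon$ alone explicit.
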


Hence, the rest of this section is devoted to the proof of Theorem \ref{thm_parametrisation_locale_i}, which is done in the same spirit as Sections \ref{section_boule_imply_reach}, \ref{section_parametrization_cunun}, and \ref{section_regularite_cunun}. It is organized as follows.
\begin{itemize}
\item Some global and local geometric inequalities are established. 
\item The boundary $\partial \Omega_{i}$ is locally parametrized by a graph. 
\item We obtain the $C^{1,1}$-regularity of the local graph associated with $\partial \Omega_{i}$. 
\item We prove that \eqref{eqn_convergence} holds for the local graphs.
\end{itemize}

\begin{remark}
\label{remarque_hypothese_faible_i}
Only Point (v) of Proposition \ref{prop_compacite_boule} is needed to obtain the first part of Theorem \ref{thm_parametrisation_locale_i}. To get the second part, we also need to assume Point (ii) of Proposition \ref{prop_compacite_boule}. Indeed, this hypothesis ensures that the converging sequence of local graphs converges to the one associated with $\partial \Omega$. 
\end{remark}

\subsection{Compactness of the class $\mathcal{O}_{\varepsilon}(B)$}
\label{section_compacite}
First, we quickly define the modes of convergence given in Proposition \ref{prop_compacite_boule}. Then, we state the compactness theorem associated with the uniform cone property. Finally, Proposition \ref{prop_compacite_boule} is proved.

\begin{definition}
\label{def_convergence}
The Hausdorff distance $d_{H}$ between two compact sets $X, Y \subset  \mathbb{R}^{n}$ is defined by $d_{H}(X,Y) = \max ( \max_{\mathbf{x} \in X} d(\mathbf{x},Y), \max_{\mathbf{y} \in Y} d(\mathbf{y},X) )$. We say that a sequence of compacts sets $(K_{i})_{i \in \mathbb{N}}$ converges to a compact set $K$ for the Hausdorff distance if $d_{H}(K_{i},K) \rightarrow 0$. Let $B$ be any non-empty bounded open subset of $\mathbb{R}^{n}$. A sequence of open sets $(\Omega_{i})_{i \in \mathbb{N}} \subset B$ converges to $\Omega \subset B$:
\begin{itemize}
\item in the Hausdorff sense if $(\overline{B} \backslash \Omega_{i})_{i \in \mathbb{N}}$ converges to $\overline{B} \backslash \Omega$ for the Hausdorff distance;
\item in the sense of compact sets if for any compact sets $K$ and $ L $ such that $K \subset \Omega$ and $L \subset B \backslash \overline{\Omega}$, there exists $I \in \mathbb{N}$ such that for any integer $i \geqslant I$, we have $K \subset \Omega_{i}$ and $L \subset B \backslash \overline{\Omega_{i}}$;
\item in the sense of characteristic functions if we have $ \int_{\overline{B}} \vert \mathbf{1}_{\Omega_{i}}(\mathbf{x}) - \mathbf{1}_{\Omega}(\mathbf{x}) \vert d \mathbf{x} \rightarrow 0$, where $\mathbf{1}_{X}$ is the characteristic function of $X$, valued one for the points of $X$, otherwise zero.
\end{itemize}
\end{definition}

In \cite[Theorem 2.8]{GuoYang2012}, Point (i) of Proposition \ref{prop_compacite_boule} is proved. However, we can prove Proposition \ref{prop_compacite_boule} by applying Theorem \ref{thm_boule_equiv_cunun} (i) and the following result.

\begin{theorem}[\textbf{Chenais \cite[Theorem 2.4.10]{HenrotPierre}}]
\label{thm_compacite_cone}
Let $\alpha \in ]0,\frac{\pi}{2}[$ and $B$ be as in Proposition \ref{prop_compacite_boule}. We set $\mathfrak{O}_{\alpha}(B)$ the class of non-empty open sets $\Omega \subseteq B$ that satisfy the $ \alpha$-cone property as in Definition \ref{definition_alpha_cone}. If $(\Omega_{i})_{i \in \mathbb{N}}$ is a sequence of elements from $\mathfrak{O}_{\alpha}(B)$, then there exists $\Omega \in \mathfrak{O}_{\alpha}(B)$ such that a subsequence $(\Omega_{\psi(i)})_{i \in \mathbb{N}}$ converges to $\Omega$ in the sense of Proposition \ref{prop_compacite_boule} (i)-(vi).
\end{theorem}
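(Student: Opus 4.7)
The plan is to deduce this compactness statement from Chenais's compactness Theorem~\ref{thm_compacite_cone} applied to the strictly larger class of sets satisfying a uniform cone condition, and then to verify that the $\varepsilon$-ball property is preserved under the resulting limiting procedure.

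First, Point~(i) of Theorem~\ref{thm_boule_equiv_cunun} directly yields the inclusion $\mathcal{O}_{\varepsilon}(B)\subseteq\mathfrak{O}_{f^{-1}(\varepsilon)}(B)$, with $f$ as in Lemma~\ref{lemme_alpha_cone}. I would then apply Theorem~\ref{thm_compacite_cone} to $(\Omega_{i})_{i\in\mathbb{N}}$ to produce some $\Omega\in\mathfrak{O}_{f^{-1}(\varepsilon)}(B)$ together with a subsequence $(\Omega_{\psi(i)})_{i\in\mathbb{N}}$ that converges to $\Omega$ simultaneously in each of the six senses (i)--(vi) listed in the statement. The only thing left to establish is $\Omega\in\mathcal{O}_{\varepsilon}(B)$: for every $\mathbf{x}\in\partial\Omega$ I must exhibit a unit vector $\mathbf{d}_{\mathbf{x}}$ realizing the two $\varepsilon$-ball inclusions of Definition~\ref{definition_epsilon_boule}.

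To build the candidate $\mathbf{d}_{\mathbf{x}}$, I first invoke the Hausdorff convergence of boundaries~(ii) to select $\mathbf{x}_{i}\in\partial\Omega_{\psi(i)}$ with $\mathbf{x}_{i}\to\mathbf{x}$. For each $i$, Proposition~\ref{prop_normale_lipschitzienne} furnishes a unique unit vector $\mathbf{d}_{\mathbf{x}_{i}}$ satisfying the inclusions for $\Omega_{\psi(i)}$, and a further diagonal extraction in the compact sphere $\mathbb{S}^{n-1}$ (still denoted $\psi(i)$) produces $\mathbf{d}_{\mathbf{x}_{i}}\to\mathbf{d}_{\mathbf{x}}$ with $\|\mathbf{d}_{\mathbf{x}}\|=1$. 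The centres $\mathbf{x}_{i}\pm\varepsilon\mathbf{d}_{\mathbf{x}_{i}}$ thus converge to $\mathbf{x}\pm\varepsilon\mathbf{d}_{\mathbf{x}}$, and the open balls $B_{\varepsilon}(\mathbf{x}\pm\varepsilon\mathbf{d}_{\mathbf{x}})$ are exhausted from inside by the approximating balls $B_{\varepsilon}(\mathbf{x}_{i}\pm\varepsilon\mathbf{d}_{\mathbf{x}_{i}})$.

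The crux of the argument is then to push each ball inclusion to the limit. Taking $\mathbf{y}\in B_{\varepsilon}(\mathbf{x}-\varepsilon\mathbf{d}_{\mathbf{x}})$, for $i$ large one has $\mathbf{y}\in B_{\varepsilon}(\mathbf{x}_{i}-\varepsilon\mathbf{d}_{\mathbf{x}_{i}})\subseteq\Omega_{\psi(i)}$. If one had $\mathbf{y}\notin\Omega$, two cases would arise: either $\mathbf{y}\in B\setminus\overline{\Omega}$, in which case convergence in the sense of compact sets~(v) applied to the compact singleton $\{\mathbf{y}\}$ would force $\mathbf{y}\in B\setminus\overline{\Omega_{\psi(i)}}$ eventually, a contradiction; or $\mathbf{y}\in\partial\Omega$, in which case (ii) would provide $\mathbf{y}_{i}\in\partial\Omega_{\psi(i)}$ with $\mathbf{y}_{i}\to\mathbf{y}$, and since $\mathbf{y}$ lies in the open ball $B_{\varepsilon}(\mathbf{x}-\varepsilon\mathbf{d}_{\mathbf{x}})$ the sequence $\mathbf{y}_{i}$ would eventually enter $B_{\varepsilon}(\mathbf{x}_{i}-\varepsilon\mathbf{d}_{\mathbf{x}_{i}})\subseteq\Omega_{\psi(i)}$, again contradicting $\mathbf{y}_{i}\in\partial\Omega_{\psi(i)}$. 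Hence $\mathbf{y}\in\Omega$, and the symmetric argument with the competing case $\mathbf{y}\in\Omega$ disposed of by~(v) delivers $B_{\varepsilon}(\mathbf{x}+\varepsilon\mathbf{d}_{\mathbf{x}})\subseteq B\setminus\overline{\Omega}$. The main obstacle is precisely this limit step: each inclusion must be transferred both from the inside and the outside, and the point of appealing to Chenais's theorem in its full strength is that it supplies all six complementary modes of convergence at once, making both directions of the transfer available.
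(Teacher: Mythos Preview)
You have misidentified the target statement. Theorem~\ref{thm_compacite_cone} is Chenais's compactness theorem for the class $\mathfrak{O}_{\alpha}(B)$ of open sets satisfying the uniform \emph{cone} condition; it makes no reference to the $\varepsilon$-ball condition or to $\mathcal{O}_{\varepsilon}(B)$. Your entire plan, however, is to \emph{invoke} Theorem~\ref{thm_compacite_cone} as a black box and then verify closure of $\mathcal{O}_{\varepsilon}(B)$ under the limiting procedure. That is precisely the content of Proposition~\ref{prop_compacite_boule}, not of Theorem~\ref{thm_compacite_cone}; as a proof of the latter it is circular, since the very first step assumes what is to be shown.

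For the record, the paper only sketches Theorem~\ref{thm_compacite_cone} and refers to \cite[Theorem~2.4.10]{HenrotPierre}: one first uses compactness of the Hausdorff distance on closed subsets of $\overline{B}$ to extract a subsequence converging in sense~(i); then one exploits the uniform cone condition to show $\Omega\in\mathfrak{O}_{\alpha}(B)$ and to upgrade to convergences (ii)--(v); finally, since $\partial\Omega$ is a finite union of Lipschitz graphs and $\partial B$ has zero Lebesgue measure, convergence in the sense of compact sets implies~(vi). None of these steps appear in your proposal. What you have written is, in fact, a correct and fairly detailed proof of Proposition~\ref{prop_compacite_boule} (and indeed matches the paper's own argument for that proposition almost exactly, with the limit step for the ball inclusions spelled out more carefully than the paper's appeal to stability of inclusion under Hausdorff convergence). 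But it does not address the statement you were asked to prove.
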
  

\begin{proof}
We only sketch the proof and refer to \cite[Theorem 2.4.10]{HenrotPierre} for further details. First, consider any $(\Omega_{i})_{i \in \mathbb{N}} \subset B$ and show that, up to a subsequence, it is converging to $\Omega \subset B$ in the Hausdorff sense. Then, use the uniform cone condition to get $\Omega \in \mathfrak{O}_{\alpha}(B)$ and $\lim_{i \rightarrow + \infty} d_{H}(\partial \Omega_{i},\partial \Omega) = \lim_{i \rightarrow + \infty} d_{H}(\overline{ \Omega_{i}},\overline{ \Omega}) = 0$. Next, deduce that $(B \backslash \overline{\Omega_{i}})_{i  \in \mathbb{N}}$ converges to $B \backslash \overline{\Omega}$ in the Hausdorff sense, and $(\Omega_{i})_{i \in \mathbb{N}}$ to $\Omega$ in the sense of compact sets. Finally, since $\Omega \in \mathfrak{O}_{\alpha}(B)$, $\partial \Omega$ is a finite reunion of Lipschitz graphs so it has zero $n$-dimensional Lebesgue measure \cite[Section 2.4.2 Theorem 2]{EvansGariepy} and so does $\partial B$ by assumption. Combining this observation with the convergence in the sense of compacts, we obtain the convergence in the sense of characteristic functions.   
\end{proof}

\begin{proof}[\textbf{Proof of Proposition \ref{prop_compacite_boule}}]
Since $\mathcal{O}_{\varepsilon}(B) \subset \mathfrak{O}_{f^{-1}(\varepsilon)}(B)$ (Point (i) of Theorem \ref{thm_boule_equiv_cunun}), Theorem \ref{thm_compacite_cone} holds and we only have to check $ \Omega \in \mathcal{O}_{\varepsilon}(B)$. Consider any $\mathbf{x} \in \partial \Omega$. From \cite[Proposition 2.2.14]{HenrotPierre}, there exists a sequence of points $\mathbf{x}_{i} \in \partial \Omega_{i}$ converging to $\mathbf{x}$. Then, we can apply the $\varepsilon$-ball condition on each point $\mathbf{x}_{i}$ so there exists a sequence of unit vector $\mathbf{d}_{\mathbf{x}_{i}}$ of $\mathbb{R}^{n}$ such that:
\[ \forall i \in \mathbb{N}, \quad \left\lbrace \begin{array}{l}
B_{\varepsilon}(\mathbf{x}_{i} - \varepsilon \mathbf{d}_{\mathbf{x}_{i}}) \subseteq \Omega_{i} \\
\\
B_{\varepsilon}(\mathbf{x}_{i} + \varepsilon \mathbf{d}_{\mathbf{x}_{i}}) \subseteq B \backslash \overline{\Omega_{i}}. \\
\end{array} \right. \] 
Since $ \Vert \mathbf{d}_{\mathbf{x}_{i}} \Vert =1$, there exists a unit vector $\mathbf{d}_{\mathbf{x}}$ of $\mathbb{R}^{n}$ such that, up to a subsequence, $(\mathbf{d}_{\mathbf{x}_{i}})_{i \in \mathbb{N}}$ converges to $\mathbf{d_{x}}$. Finally, the inclusion is stable under the Hausdorff convergence \cite[(2.16)]{HenrotPierre} and we get the $\varepsilon$-ball condition of Definition \ref{definition_epsilon_boule} by letting $i \rightarrow + \infty $ in the above inclusions.
\end{proof}

\subsection{Some global and local geometric inequalities}
In the rest of Section \ref{section_parametrization_i}, we consider a sequence $  (\Omega_{i})_{i \in \mathbb{N}} \subset \mathcal{O}_{\varepsilon}(B)$ converging to $\Omega \in \mathcal{O}_{\varepsilon}(B)$ in the sense of Proposition \ref{prop_compacite_boule} (i)-(vi) and we make the following hypothesis.

\begin{assumption}
\label{hypotheses_parametrization_i}
Let $ \mathbf{x}_{0} \in \partial \Omega $ henceforth set. From the $\varepsilon$-ball condition, a unit vector $\mathbf{d}_{\mathbf{x}_{0}}$ is associated with the point $\mathbf{x}_{0}$ (which is unique from Proposition \ref{prop_normale_lipschitzienne}). Moreover, we have:
\[ \left\lbrace \begin{array}{l}
B_{\varepsilon}(\mathbf{x}_{0} - \varepsilon \mathbf{d}_{\mathbf{x}_{0}}) \subseteq \Omega \\
 \\
B_{\varepsilon}(\mathbf{x}_{0} + \varepsilon \mathbf{d}_{\mathbf{x}_{0}}) \subseteq B \backslash \overline{\Omega} . \\
\end{array} \right. \]
Then, we also consider $\eta \in ]0,\varepsilon[$. Since we assume Point (v) of Proposition \ref{prop_compacite_boule}, there exists $I \in \mathbb{N}$ depending on $(\Omega_{i})_{i \in \mathbb{N}}$, $\Omega$, $\mathbf{x}_{0}$, $\varepsilon$ and $\eta$, such that for any integer $i \geqslant I$, we have:
\begin{equation}
\label{expression_condition_epsilon_boule_xo_omega_i}
\left\lbrace \begin{array}{l}
\overline{B_{\varepsilon - \eta}}(\mathbf{x}_{0} - \varepsilon \mathbf{d}_{\mathbf{x}_{0}}) \subseteq \Omega_{i} \\
\\
\overline{B_{\varepsilon - \eta}}(\mathbf{x}_{0} + \varepsilon \mathbf{d}_{\mathbf{x}_{0}}) \subseteq B \backslash \overline{ \Omega_{i} }. \\
\end{array} \right. 
\end{equation}
Finally, we consider any integer $i \geqslant I$.
\end{assumption}

\begin{proposition}
\label{prop_inegalite_normale_lipschitzienne_i}
Assume \eqref{expression_condition_epsilon_boule_xo_omega_i}. For any point $\mathbf{x}_{i} \in \partial \Omega_{i}$, we have the following inequality:
\begin{equation}
\label{eqn_a}
\Vert \mathbf{d}_{\mathbf{x}_{i}} - \mathbf{d}_{\mathbf{x}_{0}} \Vert^{2} \leqslant \dfrac{1}{\varepsilon^{2}} \Vert \mathbf{x}_{i} - \mathbf{x}_{0} \Vert^{2} + \dfrac{(2 \varepsilon)^{2} - (2\varepsilon - \eta)^{2}}{\varepsilon^{2}}. 
\end{equation}
\end{proposition}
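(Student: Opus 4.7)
The plan is to adapt the argument of Proposition \ref{prop_normale_lipschitzienne} (which handles two points on the \emph{same} boundary) to the mixed situation where $\mathbf{x}_0 \in \partial \Omega$ but $\mathbf{x}_i \in \partial \Omega_i$. The only new ingredient needed is the hypothesis \eqref{expression_condition_epsilon_boule_xo_omega_i}, which says that the two balls centred at $\mathbf{x}_0 \mp \varepsilon \mathbf{d}_{\mathbf{x}_0}$, once shrunk by $\eta$, still lie on the correct side of $\partial \Omega_i$. This is precisely what allows a disjointness argument between balls associated with $\mathbf{x}_0$ and balls associated with $\mathbf{x}_i$, at the cost of an additional $\eta$-dependent error term.

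Concretely, I would first apply the $\varepsilon$-ball condition at $\mathbf{x}_i \in \partial \Omega_i$ to obtain $B_\varepsilon(\mathbf{x}_i - \varepsilon \mathbf{d}_{\mathbf{x}_i}) \subseteq \Omega_i$ and $B_\varepsilon(\mathbf{x}_i + \varepsilon \mathbf{d}_{\mathbf{x}_i}) \subseteq B \setminus \overline{\Omega_i}$. Combined with \eqref{expression_condition_epsilon_boule_xo_omega_i}, the pairs
\[
\overline{B_{\varepsilon - \eta}}(\mathbf{x}_0 - \varepsilon \mathbf{d}_{\mathbf{x}_0}) \text{ and } B_\varepsilon(\mathbf{x}_i + \varepsilon \mathbf{d}_{\mathbf{x}_i}),\qquad \overline{B_{\varepsilon - \eta}}(\mathbf{x}_0 + \varepsilon \mathbf{d}_{\mathbf{x}_0}) \text{ and } B_\varepsilon(\mathbf{x}_i - \varepsilon \mathbf{d}_{\mathbf{x}_i}),
\]
are disjoint since one is contained in $\Omega_i$ and the other in $B \setminus \overline{\Omega_i}$. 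Disjointness of balls amounts to a lower bound on the distance between their centres, which gives the two inequalities
\[
\bigl\Vert (\mathbf{x}_0 - \mathbf{x}_i) \mp \varepsilon(\mathbf{d}_{\mathbf{x}_0} + \mathbf{d}_{\mathbf{x}_i}) \bigr\Vert \geqslant (\varepsilon - \eta) + \varepsilon = 2\varepsilon - \eta.
\]

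Squaring both, the cross terms $\mp 2\varepsilon \langle \mathbf{x}_0 - \mathbf{x}_i ~\vert~ \mathbf{d}_{\mathbf{x}_0} + \mathbf{d}_{\mathbf{x}_i} \rangle$ have opposite signs and cancel upon summation, leaving
\[
\Vert \mathbf{x}_0 - \mathbf{x}_i \Vert^2 + \varepsilon^2 \Vert \mathbf{d}_{\mathbf{x}_0} + \mathbf{d}_{\mathbf{x}_i} \Vert^2 \geqslant (2\varepsilon - \eta)^2.
\]
Using the polarization identity $\Vert \mathbf{d}_{\mathbf{x}_0} + \mathbf{d}_{\mathbf{x}_i} \Vert^2 = 4 - \Vert \mathbf{d}_{\mathbf{x}_0} - \mathbf{d}_{\mathbf{x}_i} \Vert^2$ (valid since both vectors are unit) and rearranging directly yields \eqref{eqn_a}. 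There is no real obstacle: the argument is exactly the one of Proposition \ref{prop_normale_lipschitzienne} with the lower bound $2\varepsilon$ replaced by $2\varepsilon - \eta$ on one side of each pair, which generates precisely the extra additive term $\frac{(2\varepsilon)^2 - (2\varepsilon - \eta)^2}{\varepsilon^2}$ and correctly recovers the original estimate in the limit $\eta \to 0$.
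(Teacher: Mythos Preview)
Your proof is correct and follows essentially the same approach as the paper: both combine the $\varepsilon$-ball condition at $\mathbf{x}_i$ with \eqref{expression_condition_epsilon_boule_xo_omega_i} to obtain the two disjointness relations $\overline{B_{\varepsilon-\eta}}(\mathbf{x}_0 \pm \varepsilon \mathbf{d}_{\mathbf{x}_0}) \cap B_\varepsilon(\mathbf{x}_i \mp \varepsilon \mathbf{d}_{\mathbf{x}_i}) = \emptyset$, deduce $\Vert \mathbf{x}_i - \mathbf{x}_0 \mp \varepsilon(\mathbf{d}_{\mathbf{x}_i} + \mathbf{d}_{\mathbf{x}_0}) \Vert \geqslant 2\varepsilon - \eta$, square and sum to cancel the cross terms, and conclude via the identity $\Vert \mathbf{d}_{\mathbf{x}_0} + \mathbf{d}_{\mathbf{x}_i} \Vert^2 = 4 - \Vert \mathbf{d}_{\mathbf{x}_0} - \mathbf{d}_{\mathbf{x}_i} \Vert^2$.
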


\begin{proof}
Combine \eqref{expression_condition_epsilon_boule_xo_omega_i} with the $\varepsilon$-ball condition at $\mathbf{x}_{i} \in \partial \Omega_{i}$ to get $\overline{B_{\varepsilon - \eta}}(\mathbf{x}_{0} \pm \varepsilon \mathbf{d}_{\mathbf{x}_{0}}) \cap B_{\varepsilon}(\mathbf{x}_{i} \mp \varepsilon \mathbf{d}_{\mathbf{x}_{i}}) = \emptyset $. We deduce $\Vert \mathbf{x}_{i} - \mathbf{x}_{0} \mp \varepsilon (\mathbf{d}_{\mathbf{x}_{i}} + \mathbf{d}_{\mathbf{x}_{0}}) \Vert \geqslant 2 \varepsilon - \eta  $. Squaring these two inequalities and summing them, we obtain the required one: $ \Vert \mathbf{x}_{i} - \mathbf{x}_{0} \Vert^{2} + 4 \varepsilon^{2} - (2 \varepsilon - \eta)^{2} \geqslant 2 \varepsilon^{2} - 2 \varepsilon^{2} \langle \mathbf{d}_{\mathbf{x}_{i}} ~\vert~ \mathbf{d}_{\mathbf{x}_{0}} \rangle = \varepsilon^{2} \Vert \mathbf{d}_{\mathbf{x}_{i}} - \mathbf{d}_{\mathbf{x}_{0}} \Vert^{2} $.
\end{proof}

\begin{proposition}
\label{prop_inegalites_locale_globale_i}
Under assumption \ref{hypotheses_parametrization_i}, for any $\mathbf{x}_{i} \in \partial \Omega_{i}$, we have the following global inequality:
\begin{equation}
\label{eqn_d}
\vert \langle \mathbf{x}_{i} - \mathbf{x}_{0} ~\vert~ \mathbf{d}_{\mathbf{x}_{0}} \rangle \vert < \dfrac{1}{2\varepsilon} \Vert \mathbf{x}_{i} - \mathbf{x}_{0} \Vert^{2} + \dfrac{\varepsilon^{2} - (\varepsilon - \eta)^{2}}{2\varepsilon}.   
\end{equation}
Moreover, if we introduce the vector $(\mathbf{x}_{i} - \mathbf{x}_{0})' = (\mathbf{x}_{i} - \mathbf{x}_{0}) - \langle \mathbf{x}_{i} - \mathbf{x}_{0} ~\vert~ \mathbf{d}_{\mathbf{x}_{0}} \rangle \mathbf{d}_{\mathbf{x}_{0}} $ and if we assume that $\Vert (\mathbf{x}_{i} - \mathbf{x}_{0})' \Vert \leqslant \varepsilon - \eta  $ and $\vert \langle \mathbf{x}_{i} - \mathbf{x}_{0} ~\vert~ \mathbf{d}_{\mathbf{x}_{0}} \rangle \vert \leqslant \varepsilon$, then we have the following local inequality:
\begin{equation}
\label{eqn_b}
 \dfrac{1}{2\varepsilon} \Vert \mathbf{x}_{i} - \mathbf{x}_{0} \Vert^{2} + \dfrac{\varepsilon^{2} - (\varepsilon - \eta)^{2}}{2\varepsilon} < \varepsilon - \sqrt{(\varepsilon - \eta)^{2} - \Vert (\mathbf{x}_{i} - \mathbf{x}_{0})' \Vert^{2}}.
\end{equation} 
\end{proposition}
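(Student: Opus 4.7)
The plan is to mirror the argument of Proposition \ref{prop_inegalites_locale_globale}, where the role played there by the two $\varepsilon$-balls centred at $\mathbf{a} \pm \varepsilon \mathbf{d_{a}}$ is now played by the two slightly shrunken balls of radius $\varepsilon - \eta$ furnished by the inclusions \eqref{expression_condition_epsilon_boule_xo_omega_i}. Since $\mathbf{x}_{i}$ belongs to $\partial \Omega_{i} = \overline{\Omega_{i}} \setminus \Omega_{i}$, it cannot lie in the open set $\Omega_{i}$ nor in $B \setminus \overline{\Omega_{i}}$; in particular it belongs to neither of the closed balls $\overline{B_{\varepsilon-\eta}}(\mathbf{x}_{0} \mp \varepsilon \mathbf{d}_{\mathbf{x}_{0}})$, which yields the two strict inequalities
\[ \Vert \mathbf{x}_{i} - \mathbf{x}_{0} \mp \varepsilon \mathbf{d}_{\mathbf{x}_{0}} \Vert > \varepsilon - \eta . \]
Squaring each one produces
\[ \pm 2\varepsilon \langle \mathbf{x}_{i} - \mathbf{x}_{0} ~\vert~ \mathbf{d}_{\mathbf{x}_{0}} \rangle < \Vert \mathbf{x}_{i} - \mathbf{x}_{0} \Vert^{2} + \varepsilon^{2} - (\varepsilon - \eta)^{2}, \]
so dividing by $2\varepsilon$ and taking the absolute value delivers the global estimate \eqref{eqn_d} directly.

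For the local estimate \eqref{eqn_b}, I will set $t = \vert \langle \mathbf{x}_{i} - \mathbf{x}_{0} ~\vert~ \mathbf{d}_{\mathbf{x}_{0}} \rangle \vert$ and $s = \Vert (\mathbf{x}_{i} - \mathbf{x}_{0})' \Vert$, so that $\Vert \mathbf{x}_{i} - \mathbf{x}_{0} \Vert^{2} = t^{2} + s^{2}$, and rewrite \eqref{eqn_d} as the strict quadratic inequality
\[ t^{2} - 2\varepsilon t + s^{2} + \varepsilon^{2} - (\varepsilon - \eta)^{2} > 0 . \]
Its reduced discriminant $\Delta' = (\varepsilon - \eta)^{2} - s^{2}$ is non-negative thanks to the first hypothesis $s \leqslant \varepsilon - \eta$, so the polynomial vanishes at $\varepsilon \pm \sqrt{\Delta'}$ and is positive only outside the interval $[\varepsilon - \sqrt{\Delta'}, \varepsilon + \sqrt{\Delta'}]$. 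The second hypothesis $t \leqslant \varepsilon$ rules out the branch $t > \varepsilon + \sqrt{\Delta'}$, so necessarily $t < \varepsilon - \sqrt{\Delta'}$; since $\sqrt{\Delta'} \leqslant \varepsilon - \eta < \varepsilon$, both sides are non-negative and squaring gives $t^{2} < \varepsilon^{2} + \Delta' - 2\varepsilon \sqrt{\Delta'}$, which after substituting the value of $\Delta'$ and rearranging is precisely \eqref{eqn_b}.

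The whole argument is a straightforward adaptation of Proposition \ref{prop_inegalites_locale_globale}, and I do not anticipate any genuine difficulty. The only step deserving minor care is preserving the strict inequality throughout: strictness at the starting point relies on the fact that the \emph{closed} balls $\overline{B_{\varepsilon-\eta}}(\mathbf{x}_{0} \pm \varepsilon \mathbf{d}_{\mathbf{x}_{0}})$ sit inside the open sets $\Omega_{i}$ and $B \setminus \overline{\Omega_{i}}$ respectively, both disjoint from $\partial \Omega_{i}$; this strictness then propagates cleanly through the squarings in both parts of the proof.
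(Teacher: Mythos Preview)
Your proof is correct and follows essentially the same approach as the paper's own proof: both exclude $\mathbf{x}_{i}$ from the closed balls $\overline{B_{\varepsilon-\eta}}(\mathbf{x}_{0}\pm\varepsilon\mathbf{d}_{\mathbf{x}_{0}})$ via \eqref{expression_condition_epsilon_boule_xo_omega_i}, square the resulting strict distance inequalities to obtain \eqref{eqn_d}, and then analyse the quadratic in $\vert \langle \mathbf{x}_{i}-\mathbf{x}_{0}\,\vert\,\mathbf{d}_{\mathbf{x}_{0}}\rangle\vert$ with reduced discriminant $(\varepsilon-\eta)^{2}-\Vert(\mathbf{x}_{i}-\mathbf{x}_{0})'\Vert^{2}$ to select the correct root and square again for \eqref{eqn_b}. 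Your remark on preserving strictness is apt and matches the paper's treatment.
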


\begin{proof}
From \eqref{expression_condition_epsilon_boule_xo_omega_i}, any point $\mathbf{x}_{i} \in \partial \Omega_{i}$ cannot belong to the sets $\overline{B_{\varepsilon - \eta}}(\mathbf{x}_{0} \pm \varepsilon \mathbf{d}_{\mathbf{x}_{0}}) $. Hence, we have: $\Vert \mathbf{x}_{i} - \mathbf{x}_{0} \mp \varepsilon \mathbf{d}_{\mathbf{x}_{0}} \Vert > \varepsilon - \eta $. Squaring these two inequalities, we get the first required relation \eqref{eqn_d}: $\Vert \mathbf{x}_{i} - \mathbf{x}_{0} \Vert^{2} + \varepsilon^{2} - (\varepsilon - \eta)^{2} > 2 \varepsilon \vert \langle \mathbf{x}_{i} - \mathbf{x}_{0} ~\vert~ \mathbf{d}_{\mathbf{x}_{0}} \rangle \vert $. Then, by introducing the vector $( \mathbf{x}_{i} - \mathbf{x}_{0} )'$ of the statement, the previous inequality now takes the following form:
\[ \vert \langle \mathbf{x}_{i} - \mathbf{x}_{0} ~\vert~ \mathbf{d}_{\mathbf{x}_{0}} \rangle \vert^{2} - 2 \varepsilon \vert \langle \mathbf{x}_{i} - \mathbf{x}_{0} ~\vert~ \mathbf{d}_{\mathbf{x}_{0}} \rangle \vert + \Vert ( \mathbf{x}_{i} - \mathbf{x}_{0})' \Vert^{2} + \varepsilon^{2} - (\varepsilon-\eta)^{2} > 0.  \] 
We assume that its left member is a second-order polynomial whose discriminant is non-negative: $\Delta' := (\varepsilon - \eta)^{2} - \Vert ( \mathbf{x}_{i} - \mathbf{x}_{0})' \Vert^{2} \geqslant 0 $. Hence, the unknown satisfies either $ \vert \langle \mathbf{x}_{i} - \mathbf{x}_{0} ~\vert~ \mathbf{d}_{\mathbf{x}_{0}} \rangle \vert < \varepsilon - \sqrt{\Delta '}$ or $ \vert \langle \mathbf{x}_{i} - \mathbf{x}_{0} ~\vert~ \mathbf{d}_{\mathbf{x}_{0}} \rangle \vert > \varepsilon + \sqrt{\Delta '}$. We assume $ \vert \langle \mathbf{x}_{i} - \mathbf{x}_{0} ~\vert~ \mathbf{d}_{\mathbf{x}_{0}} \rangle \vert \leqslant \varepsilon$ and the last case cannot hold. Squaring the remaining inequality, we get: $  \vert \langle \mathbf{x}_{i} - \mathbf{x}_{0} ~\vert~ \mathbf{d}_{\mathbf{x}_{0}} \rangle \vert^{2} +  \Vert ( \mathbf{x}_{i} - \mathbf{x}_{0})' \Vert^{2}  < \varepsilon^{2} + (\varepsilon - \eta)^{2} - 2 \varepsilon \sqrt{\Delta '} $, which is the second required relation \eqref{eqn_b} since its left member is equal to $\Vert \mathbf{x}_{i} - \mathbf{x}_{0} \Vert^{2}$.
\end{proof}

\begin{corollary}
\label{coro_inegalites_i}
With the same assumptions and notation as in Propositions \ref{prop_inegalite_normale_lipschitzienne_i} and \ref{prop_inegalites_locale_globale_i}, we have:
\begin{gather}
\Vert \mathbf{x}_{i} - \mathbf{x}_{0} \Vert < 2 \eta + 2 \Vert (\mathbf{x}_{i} - \mathbf{x}_{0})' \Vert , \label{eqn_h} \\
\varepsilon \Vert \mathbf{d}_{\mathbf{x}_{i}} - \mathbf{d}_{\mathbf{x}_{0}} \Vert < 2 \sqrt{2 \varepsilon \eta} + \sqrt{2} \Vert (\mathbf{x}_{i} - \mathbf{x}_{0})' \Vert . \label{eqn_g}
\end{gather} 
\end{corollary}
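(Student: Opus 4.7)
The plan is to derive both inequalities by purely algebraic manipulation of the bounds already supplied by Propositions~\ref{prop_inegalite_normale_lipschitzienne_i} and~\ref{prop_inegalites_locale_globale_i}. There is no conceptual difficulty; the only mildly non-routine move is recognising a Pythagorean decomposition hidden in the right-hand side of \eqref{eqn_b}.

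For \eqref{eqn_h}, I would first rewrite the upper bound in \eqref{eqn_b}. Set $u = \Vert (\mathbf{x}_i - \mathbf{x}_0)' \Vert$ and $s = \sqrt{(\varepsilon-\eta)^2 - u^2}$, which is well defined by the assumption $u \leqslant \varepsilon - \eta$. Multiplying \eqref{eqn_b} by $2\varepsilon$ and using $(\varepsilon-\eta)^2 = s^2 + u^2$, I obtain
\[
\Vert \mathbf{x}_i - \mathbf{x}_0 \Vert^2 < \varepsilon^2 + (\varepsilon-\eta)^2 - 2\varepsilon s = (\varepsilon-s)^2 + u^2.
\]
The elementary inequality $\sqrt{a^2+b^2}\leqslant a+b$ for $a,b\geqslant 0$ then yields $\Vert \mathbf{x}_i - \mathbf{x}_0\Vert < (\varepsilon-s) + u$, and it remains only to bound $\varepsilon - s$ by $\eta + u$. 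Writing $\varepsilon - s = \eta + ((\varepsilon-\eta)-s)$ and applying the inequality $a-\sqrt{a^2-b^2}\leqslant b$ (valid for $0\leqslant b\leqslant a$, as one sees by squaring) with $a=\varepsilon-\eta$ and $b=u$ produces $\varepsilon - s \leqslant \eta + u$, so $\Vert \mathbf{x}_i - \mathbf{x}_0\Vert < \eta + 2u \leqslant 2\eta + 2u$, which is \eqref{eqn_h}.

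For \eqref{eqn_g}, I would plug the sharper precursor $\Vert \mathbf{x}_i - \mathbf{x}_0\Vert^2 < (\eta+u)^2 + u^2 = \eta^2 + 2\eta u + 2u^2$ into \eqref{eqn_a}. Since $(2\varepsilon)^2 - (2\varepsilon-\eta)^2 = 4\varepsilon\eta - \eta^2 \leqslant 4\varepsilon\eta$, inequality \eqref{eqn_a} gives
\[
\varepsilon^2 \Vert \mathbf{d}_{\mathbf{x}_i} - \mathbf{d}_{\mathbf{x}_0}\Vert^2 < \eta^2 + 2\eta u + 2u^2 + 4\varepsilon\eta.
\]
The target $(2\sqrt{2\varepsilon\eta} + \sqrt{2}\,u)^2 = 8\varepsilon\eta + 8u\sqrt{\varepsilon\eta} + 2u^2$ dominates the above bound once one checks
\[
\eta(\eta - 4\varepsilon) + 2u\bigl(\eta - 4\sqrt{\varepsilon\eta}\bigr) < 0.
\]
Both summands are negative: $\eta < \varepsilon$ from Assumption~\ref{hypotheses_parametrization_i} gives $\eta - 4\varepsilon < 0$, and the same hypothesis implies $\sqrt{\eta} < \sqrt{\varepsilon} < 4\sqrt{\varepsilon}$, whence $\eta - 4\sqrt{\varepsilon\eta} < 0$. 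Taking square roots then delivers \eqref{eqn_g}.

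The only real obstacle is spotting the Pythagorean identity $\varepsilon^2 + (\varepsilon-\eta)^2 - 2\varepsilon s = (\varepsilon-s)^2 + u^2$, which turns an awkward square-root manipulation into a clean $\sqrt{a^2+b^2}\leqslant a+b$ step and makes \eqref{eqn_g} follow by routine polynomial algebra; without this observation one can still push through with more tedious estimates, but they obscure the structure.
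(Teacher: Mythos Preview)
Your proof is correct. Both inequalities follow from your manipulations, and the hypotheses $0<\eta<\varepsilon$ and $u\leqslant \varepsilon-\eta$ are used appropriately.

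Your route differs from the paper's in the algebraic mechanism. For \eqref{eqn_h}, the paper rationalises $\varepsilon^{2}+(\varepsilon-\eta)^{2}-2\varepsilon s$ via the conjugate $\varepsilon^{2}+(\varepsilon-\eta)^{2}+2\varepsilon s$, arriving at the bound $[(\varepsilon^{2}-(\varepsilon-\eta)^{2})/\varepsilon]^{2}+4u^{2}<4\eta^{2}+4u^{2}$, and then takes square roots. Your Pythagorean rewriting $(\varepsilon-s)^{2}+u^{2}$ is neater and in fact yields the sharper intermediate estimate $\Vert\mathbf{x}_{i}-\mathbf{x}_{0}\Vert<\eta+2u$ before you relax it to $2\eta+2u$. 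For \eqref{eqn_g}, the paper again uses a conjugate trick: it combines \eqref{eqn_a} with \eqref{eqn_b} directly, simplifies to $2\varepsilon(\varepsilon+\eta-s)$, rationalises to $2\varepsilon(4\varepsilon\eta+u^{2})/(\varepsilon+\eta+s)$, and bounds the denominator below by $\varepsilon$ to get $8\varepsilon\eta+2u^{2}$, whence $\sqrt{8\varepsilon\eta+2u^{2}}\leqslant 2\sqrt{2\varepsilon\eta}+\sqrt{2}\,u$. You instead recycle your bound on $\Vert\mathbf{x}_{i}-\mathbf{x}_{0}\Vert^{2}$ and finish by a direct polynomial comparison. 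The paper's approach has the virtue of using the same rationalisation device twice; yours trades that uniformity for the clean geometric identity $s^{2}+u^{2}=(\varepsilon-\eta)^{2}$, which makes the first estimate transparent.
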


\begin{proof}
Consider any $\mathbf{x}_{i} \in \partial \Omega_{i}$. We set  $(\mathbf{x}_{i} - \mathbf{x}_{0})' = (\mathbf{x}_{i} - \mathbf{x}_{0}) - \langle \mathbf{x}_{i} - \mathbf{x}_{0} ~\vert~ \mathbf{d}_{\mathbf{x}_{0}} \rangle \mathbf{d}_{\mathbf{x}_{0}} $. We assume $\Vert (\mathbf{x}_{i} - \mathbf{x}_{0})' \Vert \leqslant \varepsilon - \eta  $ and $\vert \langle \mathbf{x}_{i} - \mathbf{x}_{0} ~\vert~ \mathbf{d}_{\mathbf{x}_{0}} \rangle \vert \leqslant \varepsilon$. The local estimation \eqref{eqn_b} of Proposition \ref{prop_inegalites_locale_globale_i} gives: 
\[ \begin{array}{rcl}
\Vert \mathbf{x}_{i} - \mathbf{x}_{0} \Vert^{2} & < & \varepsilon^{2} + (\varepsilon - \eta)^{2} - 2 \varepsilon \sqrt{(\varepsilon - \eta)^{2} - \Vert (\mathbf{x}_{i} - \mathbf{x}_{0})' \Vert^{2}} \\
& & \\
& = & \dfrac{\left[ \varepsilon^{2} + \left( \varepsilon - \eta \right)^{2} \right]^{2} - 4 \varepsilon^{2}(\varepsilon - \eta)^{2} + 4 \varepsilon^{2}\Vert (\mathbf{x}_{i} - \mathbf{x}_{0})' \Vert^{2} }{\varepsilon^{2} + (\varepsilon - \eta)^{2} + 2 \varepsilon \sqrt{(\varepsilon - \eta)^{2} - \Vert (\mathbf{x}_{i} - \mathbf{x}_{0})' \Vert^{2}} } \\
 & & \\
  & < & \left[ \dfrac{\varepsilon^{2} - (\varepsilon - \eta)^{2}}{\varepsilon} \right]^{2} + 4 \Vert (\mathbf{x}_{i} - \mathbf{x}_{0})' \Vert^{2} \quad <  \quad 4 \eta^{2} + 4  \Vert (\mathbf{x}_{i} - \mathbf{x}_{0})' \Vert^{2}. \\
\end{array} \]
Hence, we get: $ \Vert \mathbf{x}_{i} - \mathbf{x}_{0} \Vert <  2 \eta + 2  \Vert (\mathbf{x}_{i} - \mathbf{x}_{0})' \Vert$. Then, using \eqref{eqn_a}, we also have:
\[ \varepsilon \Vert \mathbf{d}_{\mathbf{x}_{i}} - \mathbf{d}_{\mathbf{x}_{0}} \Vert \leqslant \sqrt{ 4\varepsilon^{2} - ( 2\varepsilon - \eta)^{2} + \Vert \mathbf{x}_{i} - \mathbf{x}_{0} \Vert^{2}}. \]
Combining the above inequality with \eqref{eqn_b}, we obtain:
\[ \begin{array}{rcl}
\varepsilon \Vert \mathbf{d}_{\mathbf{x}_{i}} - \mathbf{d}_{\mathbf{x}_{0}} \Vert  & < & \sqrt{ 4 \varepsilon \eta - \eta^{2} + \varepsilon^{2} + (\varepsilon - \eta)^{2} - 2 \varepsilon \sqrt{(\varepsilon - \eta)^{2} - \Vert (\mathbf{x}_{i} - \mathbf{x}_{0})' \Vert^{2}}  } \\
 & & \\
  & = & \sqrt{2 \varepsilon \dfrac{ 4 \varepsilon \eta + \Vert (\mathbf{x}_{i} - \mathbf{x}_{0})' \Vert^{2}}{ \varepsilon + \eta + \sqrt{(\varepsilon - \eta)^{2} -\Vert (\mathbf{x}_{i} - \mathbf{x}_{0})' \Vert^{2}}}} \quad < \quad  2 \sqrt{2 \varepsilon \eta} + \sqrt{2}\Vert (\mathbf{x}_{i} - \mathbf{x}_{0})' \Vert. \\
\end{array} \]
Consequently, the two required inequalities \eqref{eqn_h} and \eqref{eqn_g} are established so Corollary \ref{coro_inegalites_i} holds. 
\end{proof}

\subsection{A local parametrization of the boundary $\partial \Omega_{i}$}
Henceforth, we consider a basis $\mathcal{B}_{\mathbf{x}_{0}}$ of the hyperplane $\mathbf{d}_{\mathbf{x}_{0}}^{\perp}$ such that $(  \mathbf{x}_{0}, \mathcal{B}_{\mathbf{x}_{0}}, \mathbf{d}_{\mathbf{x}_{0}} )$ is a direct orthonormal frame. The position of any point is now determined in this local frame associated with $\mathbf{x}_{0}$. More precisely, for any point $\mathbf{x} \in \mathbb{R}^{n}$, we set $\mathbf{x'} = (x_{1}, \ldots, x_{n-1}) $ such that $\mathbf{x} = (\mathbf{x'},x_{n})$. In particular, the symbols $\mathbf{0}$ and $\mathbf{0'}$ respectively refer to the zero vector of $\mathbb{R}^{n}$ and $\mathbb{R}^{n-1}$. Moreover, since $\mathbf{x}_{0}$ is identified with $\mathbf{0}$ in this new frame, Relations \eqref{expression_condition_epsilon_boule_xo_omega_i} of Assumption \ref{hypotheses_parametrization_i} take new forms:
\begin{equation}
\label{eqn_c}
 \left\lbrace \begin{array}{l}
\overline{B_{\varepsilon - \eta}}(\mathbf{0'},-\varepsilon) \subseteq \Omega_{i} \\
\\
\overline{B_{\varepsilon - \eta}}(\mathbf{0'},\varepsilon) \subseteq B \backslash \overline{ \Omega_{i}}. \\
\end{array} \right. 
\end{equation}
We introduce two functions defined on $\overline{D_{\varepsilon - \eta}}(\mathbf{0'}) = \lbrace \mathbf{x'} \in \mathbb{R}^{n-1}, ~\Vert \mathbf{x'} \Vert \leqslant \varepsilon - \eta \rbrace  $. The first one determine around $\mathbf{x}_{0}$ the position of the boundary $\partial \Omega_{i}$ thanks to some exterior points, the other one with interior points. Then, we show these two maps coincide even if it means reducing $\eta$. 

\begin{proposition}
\label{prop_parametrisations_locales_i}
Under Assumption \ref{hypotheses_parametrization_i}, the two following maps $\varphi^{\pm}_{i}$ are well defined:
\[ \left\lbrace \begin{array}{rcl}
\varphi_{i}^{+} :~ \overline{D_{\varepsilon - \eta}}(\mathbf{0'}) & \longrightarrow & ]- \varepsilon,\varepsilon[ \\
\mathbf{x'} & \longmapsto & \varphi_{i}^{+}(\mathbf{x'}) = \sup \lbrace x_{n} \in [- \varepsilon,\varepsilon], \quad (\mathbf{x'},x_{n}) \in \Omega_{i} \rbrace \\
 & & \\ 
\varphi_{i}^{-} :~ \overline{D_{\varepsilon - \eta}}(\mathbf{0'}) & \longrightarrow & ]- \varepsilon,\varepsilon[ \\
\mathbf{x'} & \longmapsto & \varphi_{i}^{-}(\mathbf{x'}) = \inf \lbrace x_{n} \in [- \varepsilon,\varepsilon], \quad (\mathbf{x'},x_{n}) \in B \backslash \overline{\Omega_{i}} \rbrace, \\
\end{array} \right.  \]
Moreover, for any $\mathbf{x'} \in \overline{D_{\varepsilon - \eta}}(\mathbf{0'})$, introducing the points $\mathbf{x}_{i}^{\pm} = (\mathbf{x'},\varphi_{i}^{\pm}(\mathbf{x'}))$, we have $\mathbf{x}_{i}^{\pm} \in \partial \Omega_{i}$ and also the following inequalities:
\begin{equation}
\label{eqn_e}
 \vert  \varphi_{i}^{\pm}(\mathbf{x'}) \vert < \dfrac{1}{2 \varepsilon} \Vert \mathbf{x}_{i}^{\pm} - \mathbf{x}_{0} \Vert^{2} + \dfrac{\varepsilon^{2}- (\varepsilon - \eta)^{2}}{2 \varepsilon} < \varepsilon - \sqrt{(\varepsilon - \eta)^{2}- \Vert \mathbf{x'}\Vert^{2}}. 
 \end{equation}
\end{proposition}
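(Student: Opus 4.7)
The plan is to mirror the proof of Proposition~\ref{prop_parametrisations_locales} (the analogous non-limit statement for $\Omega$), substituting the relaxed inclusions \eqref{eqn_c} for the strict $\varepsilon$-ball condition at $\mathbf{x}_0$. Fix $\mathbf{x'} \in \overline{D_{\varepsilon-\eta}}(\mathbf{0'})$ and introduce the continuous map $g : t \in [-\varepsilon, \varepsilon] \mapsto (\mathbf{x'}, t)$. Since $\Omega_i$ and $B \setminus \overline{\Omega_i}$ are open, the preimages $S_i^+ := g^{-1}(\Omega_i)$ and $S_i^- := g^{-1}(B \setminus \overline{\Omega_i})$ are open in $[-\varepsilon, \varepsilon]$. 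Intersecting the two closed balls in \eqref{eqn_c} with the vertical fibre $\{\mathbf{x'}\} \times [-\varepsilon, \varepsilon]$ yields
\[ \bigl[-\varepsilon,\; -\varepsilon + \sqrt{(\varepsilon-\eta)^2 - \Vert \mathbf{x'}\Vert^2}\bigr] \subseteq S_i^+ \qquad \textrm{and} \qquad \bigl[\varepsilon - \sqrt{(\varepsilon-\eta)^2 - \Vert \mathbf{x'}\Vert^2},\; \varepsilon\bigr] \subseteq S_i^-. \]
In particular both $S_i^\pm$ are non-empty, and since they are disjoint, one has $\sup S_i^+ \leq \varepsilon - \sqrt{(\varepsilon-\eta)^2 - \Vert \mathbf{x'}\Vert^2}$ and $\inf S_i^- \geq -\varepsilon + \sqrt{(\varepsilon-\eta)^2 - \Vert \mathbf{x'}\Vert^2}$. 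Combining these bounds with the openness of $\Omega_i$ and of $B \setminus \overline{\Omega_i}$ (which, at the edge case where the square root vanishes, still furnishes a neighbourhood of $(\mathbf{x'}, \mp\varepsilon)$ inside the respective open set) pushes the extrema strictly into $]-\varepsilon, \varepsilon[$; hence $\varphi_i^\pm(\mathbf{x'})$ are well defined with values in $]-\varepsilon, \varepsilon[$.

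Next I would check that $\mathbf{x}_i^\pm := (\mathbf{x'}, \varphi_i^\pm(\mathbf{x'}))$ actually belongs to $\partial \Omega_i$. Because $\varphi_i^+(\mathbf{x'})$ sits strictly inside $[-\varepsilon, \varepsilon]$, the open set $S_i^+$ is open in $\mathbb{R}$ near it, so the supremum cannot lie in $S_i^+$ and $\mathbf{x}_i^+ \notin \Omega_i$; on the other hand $\varphi_i^+(\mathbf{x'})$ is the limit of an increasing sequence from $S_i^+$, hence $\mathbf{x}_i^+ \in \overline{\Omega_i} \setminus \Omega_i = \partial \Omega_i$. The identical reasoning applied to $S_i^-$ gives $\mathbf{x}_i^- \in \partial \Omega_i$.

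Finally, the two inequalities of \eqref{eqn_e} follow at once from Proposition~\ref{prop_inegalites_locale_globale_i} applied to $\mathbf{x}_i = \mathbf{x}_i^\pm \in \partial \Omega_i$. In the frame centred at $\mathbf{x}_0 = \mathbf{0}$ one reads off $(\mathbf{x}_i^\pm - \mathbf{x}_0)' = \mathbf{x'}$ and $\langle \mathbf{x}_i^\pm - \mathbf{x}_0 \,|\, \mathbf{d}_{\mathbf{x}_0}\rangle = \varphi_i^\pm(\mathbf{x'})$, so that \eqref{eqn_d} is literally the first inequality. For the second I would verify the two hypotheses of \eqref{eqn_b}: $\Vert (\mathbf{x}_i^\pm - \mathbf{x}_0)'\Vert = \Vert \mathbf{x'}\Vert \leq \varepsilon - \eta$ by assumption on $\mathbf{x'}$, and $|\varphi_i^\pm(\mathbf{x'})| \leq \varepsilon - \sqrt{(\varepsilon-\eta)^2 - \Vert \mathbf{x'}\Vert^2} \leq \varepsilon$ from the first paragraph; \eqref{eqn_b} then yields the second inequality. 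No serious obstacle arises, only the mild bookkeeping needed to ensure that the extrema live strictly inside $]-\varepsilon, \varepsilon[$ rather than merely in $[-\varepsilon, \varepsilon]$, since the cross-sections of the closed balls $\overline{B_{\varepsilon-\eta}}$ alone deliver only closed subintervals and it is the openness of $\Omega_i$ and of $B \setminus \overline{\Omega_i}$ that does the final push.
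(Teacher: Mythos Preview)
Your proof is correct and follows essentially the same approach as the paper's: introduce the continuous fibre map $g$, use the inclusions \eqref{eqn_c} to see that $g^{-1}(\Omega_i)$ and $g^{-1}(B\setminus\overline{\Omega_i})$ are non-empty with sup/inf strictly in $]-\varepsilon,\varepsilon[$, deduce $\mathbf{x}_i^{\pm}\in\partial\Omega_i$ from openness, and then invoke \eqref{eqn_d} and \eqref{eqn_b} of Proposition~\ref{prop_inegalites_locale_globale_i}. You are simply more explicit than the paper (computing the fibre cross-sections of the closed balls and verifying the hypotheses of \eqref{eqn_b}), but the argument is the same.
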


\begin{proof}
Let $ \mathbf{x'} \in \overline{D_{\varepsilon- \eta}}(\mathbf{0'}) $ and $g: t \in [-\varepsilon,\varepsilon] \mapsto (\mathbf{x'},t)$. Since $-\varepsilon \in g^{-1}(\Omega_{i}) \subseteq [-\varepsilon,\varepsilon] $, we can set $\varphi_{i}^{+}(\mathbf{x'}) = \sup g^{-1}(\Omega_{i})$. The map $g$ is continuous so $g^{-1}(\Omega_{i})$ is open and $\varphi_{i}^{+}(\mathbf{x'}) \neq \varepsilon$ thus we get $\varphi_{i}^{+}(\mathbf{x'}) \notin g^{-1}(\Omega_{i}) $ i.e. $\mathbf{x}_{i}^{+} \in \overline{\Omega_{i}} \backslash \Omega_{i} $. Similarly, the map $ \varphi_{i}^{-}$ is well defined and $\mathbf{x}_{i}^{-} \in \partial \Omega_{i}$. Finally, we use \eqref{eqn_d} and \eqref{eqn_b} on the points $ \mathbf{x}_{0}$ and $\mathbf{x}_{i} = \mathbf{x}_{i}^{\pm}$ in order to obtain \eqref{eqn_e}.
\end{proof}

\begin{lemma}
\label{lemme_parametrisation_locale_i}
We make Assumption \ref{hypotheses_parametrization_i} and assume $\eta < \frac{\varepsilon}{3} $. We set $r = \frac{1}{2} \sqrt{4 (\varepsilon - \eta)^{2} - (\varepsilon + \eta)^{2}}$ and $\mathbf{x'} \in \overline{D_{r}}(\mathbf{0'})$. Assume there exists $x_{n} \in [- \varepsilon, \varepsilon]$ such that $\mathbf{x}_{i} := (\mathbf{x'},x_{n})$ belongs to $\partial \Omega_{i}$. We also consider $\tilde{x}_{n} \in  \mathbb{R}$ satisfying the inequality $\vert \tilde{x}_{n} \vert < \varepsilon - \sqrt{(\varepsilon - \eta)^{2} - \Vert \mathbf{x'} \Vert^{2}}$. Introducing $\mathbf{\tilde{x}}_{i} = (\mathbf{x'},\tilde{x}_{n})$, then we have: $ \left( \tilde{x}_{n} < x_{n} \Longrightarrow \mathbf{\tilde{x}}_{i} \in \Omega_{i} \right) ~\mathrm{and}~  \left( \tilde{x}_{n} > x_{n}  \Longrightarrow  \mathbf{\tilde{x}}_{i} \in B \backslash \overline{\Omega_{i}} \right) $.
\end{lemma}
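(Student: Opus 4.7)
The plan follows the pattern of Lemma \ref{lemme_parametrisation_locale}. Since $\mathbf{x}_i \in \partial \Omega_i$, the $\varepsilon$-ball condition provides two balls $B_\varepsilon(\mathbf{x}_i \pm \varepsilon \mathbf{d}_{\mathbf{x}_i})$ contained respectively in $\Omega_i$ and in $B \setminus \overline{\Omega_i}$. I will show that, in the case $\tilde{x}_n > x_n$, the point $\tilde{\mathbf{x}}_i$ lies in $B_\varepsilon(\mathbf{x}_i + \varepsilon \mathbf{d}_{\mathbf{x}_i})$, so that it belongs to $B \setminus \overline{\Omega_i}$; the case $\tilde{x}_n < x_n$ is then handled symmetrically by the identical computation with signs reversed and the inclusion $B_\varepsilon(\mathbf{x}_i - \varepsilon \mathbf{d}_{\mathbf{x}_i}) \subseteq \Omega_i$.

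The central identity, obtained from $\tilde{\mathbf{x}}_i - \mathbf{x}_i = (\tilde{x}_n - x_n)\mathbf{d}_{\mathbf{x}_0}$ together with the polarization $\langle \mathbf{d}_{\mathbf{x}_0} \mid \mathbf{d}_{\mathbf{x}_i} \rangle = 1 - \tfrac{1}{2}\|\mathbf{d}_{\mathbf{x}_i} - \mathbf{d}_{\mathbf{x}_0}\|^2$, is
\[ \|\tilde{\mathbf{x}}_i - \mathbf{x}_i - \varepsilon \mathbf{d}_{\mathbf{x}_i}\|^2 - \varepsilon^2 \;=\; |\tilde{x}_n - x_n|\Bigl(|\tilde{x}_n - x_n| + \varepsilon \|\mathbf{d}_{\mathbf{x}_i} - \mathbf{d}_{\mathbf{x}_0}\|^2 - 2\varepsilon \Bigr), \]
so the whole argument reduces to showing that the bracket on the right is strictly negative. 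I would control the first summand by the triangle inequality $|\tilde{x}_n - x_n| \leq |\tilde{x}_n| + |x_n|$ combined with the standing hypothesis on $|\tilde{x}_n|$ and the bound $|x_n| < \varepsilon - \sqrt{(\varepsilon - \eta)^2 - \|\mathbf{x}'\|^2}$ derived from the global estimate \eqref{eqn_d} and the local one \eqref{eqn_b} applied at $\mathbf{x}_i \in \partial \Omega_i$ (observing that $(\mathbf{x}_i - \mathbf{x}_0)' = \mathbf{x}'$ and $\langle \mathbf{x}_i - \mathbf{x}_0 \mid \mathbf{d}_{\mathbf{x}_0}\rangle = x_n$ in the chosen local frame). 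For the normal term, the uniform Lipschitz-type estimate \eqref{eqn_a} gives $\varepsilon \|\mathbf{d}_{\mathbf{x}_i} - \mathbf{d}_{\mathbf{x}_0}\|^2 \leq \tfrac{1}{\varepsilon}\|\mathbf{x}_i - \mathbf{x}_0\|^2 + \tfrac{4\varepsilon\eta - \eta^2}{\varepsilon}$, and invoking \eqref{eqn_b} once more converts $\tfrac{1}{\varepsilon}\|\mathbf{x}_i - \mathbf{x}_0\|^2$ into a bound involving only $\|\mathbf{x}'\|$ and $\eta$.

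Summing these contributions, the $\eta^2/\varepsilon$ corrections cancel and the bracket is dominated by $2\varepsilon + 2\eta - 4\sqrt{(\varepsilon - \eta)^2 - \|\mathbf{x}'\|^2}$; demanding strict negativity is equivalent to $4[(\varepsilon - \eta)^2 - \|\mathbf{x}'\|^2] > (\varepsilon + \eta)^2$, which is exactly the condition $\|\mathbf{x}'\| \leq r$ of the statement (the intermediate inequalities being strict). The hypothesis $\eta < \varepsilon/3$ is precisely what makes the radicand $4(\varepsilon - \eta)^2 - (\varepsilon + \eta)^2$ positive, so that $r$ is a genuine real positive radius. I anticipate no conceptual obstacle beyond the one already resolved in Lemma \ref{lemme_parametrisation_locale}; the only real work is the careful bookkeeping of $\eta$-dependent correction terms, ensuring that the threshold on $\|\mathbf{x}'\|$ collapses exactly to the stated $r$ rather than some looser quantity.
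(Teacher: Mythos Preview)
Your proposal is correct and follows essentially the same approach as the paper's proof: the same central identity, the same use of \eqref{eqn_a} for the normal deviation and of \eqref{eqn_d}--\eqref{eqn_b} for the coordinate bounds, leading to the bracket being strictly less than $4\bigl(\tfrac{\varepsilon+\eta}{2} - \sqrt{(\varepsilon-\eta)^2 - \|\mathbf{x}'\|^2}\bigr)$, which is nonpositive precisely when $\|\mathbf{x}'\| \leq r$. Your bookkeeping of the $\eta$-corrections (in particular the cancellation of the $\eta^2/\varepsilon$ terms) matches the paper's computation exactly.
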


\begin{proof}
We assume $\eta < \frac{\varepsilon}{3}$ so we can set $r= \frac{1}{2} \sqrt{4(\varepsilon - \eta)^{2} - (\varepsilon + \eta)^{2}}$. Consider any $\mathbf{x'} \in \overline{D_{r}}(\mathbf{0'})$ and also $(x_{n},\tilde{x}_{n}) \in [- \varepsilon, \varepsilon]^{2}$ such that $\mathbf{x}_{i} := (\mathbf{x'},x_{n}) \in\partial \Omega_{i}$ and $\mathbf{\tilde{x}}_{i} := (\mathbf{x'},\tilde{x}_{n}) \notin \overline{B_{\varepsilon - \eta}}(\mathbf{0'},\pm \varepsilon)$. We need to show that if $\tilde{x}_{n} \gtrless x_{n}$, then $\mathbf{\tilde{x}}_{i} \in B_{\varepsilon}(\mathbf{x}_{i} \pm \varepsilon \mathbf{d}_{\mathbf{x}_{i}})$. The $\varepsilon$-ball condition on $\Omega_{i}$ will give the result. Since $\mathbf{x}_{i} - \mathbf{\tilde{x}}_{i} = (x_{n} - \tilde{x}_{n}) \mathbf{d}_{\mathbf{x}_{0}}$, if we assume $\tilde{x}_{n} > x_{n}$, then we have:
\[ \begin{array}{rcl}
\Vert \mathbf{\tilde{x}}_{i} - \mathbf{x}_{i}  - \varepsilon \mathbf{d}_{\mathbf{x}_{i}} \Vert^{2} - \varepsilon^{2}  &=&  (\tilde{x}_{n} - x_{n})^{2} - 2 \varepsilon (\tilde{x}_{n} - x_{n}) \langle \mathbf{d}_{\mathbf{x}_{0}} ~\vert~ \mathbf{d}_{\mathbf{x}_{i}} \rangle \\
& & \\
& = & \vert \tilde{x}_{n} - x_{n} \vert \left( \vert \tilde{x}_{n} - x_{n} \vert + \varepsilon \Vert \mathbf{d}_{\mathbf{x}_{i}} - \mathbf{d}_{\mathbf{x}_{0}} \Vert^{2} - 2 \varepsilon \right) \\
 & & \\
 & \leqslant &  \vert \tilde{x}_{n} - x_{n} \vert \left( \vert \tilde{x}_{n} \vert + \vert x_{n} \vert + \frac{\Vert \mathbf{x}_{i} - \mathbf{x}_{0} \Vert^{2} + (2 \varepsilon)^{2} - (2 \varepsilon - \eta)^{2}}{\varepsilon} - 2 \varepsilon \right), \\
\end{array} \]
where the last inequality comes from Proposition \ref{prop_inegalite_normale_lipschitzienne_i} \eqref{eqn_a} applied to $\mathbf{x}_{i} \in \partial \Omega_{i}$. Finally, we use the inequality involving $\tilde{x}_{n}$ and the ones \eqref{eqn_d}-\eqref{eqn_b} of Proposition \ref{prop_inegalites_locale_globale_i} applied to $\mathbf{x}_{i} \in \partial \Omega_{i}$ to obtain:
\[ \Vert \mathbf{\tilde{x}}_{i} - \mathbf{x}_{i}  - \varepsilon \mathbf{d}_{\mathbf{x}_{i}} \Vert^{2} - \varepsilon^{2} <  4 \vert x_{n} - \tilde{x}_{n} \vert \underbrace{ \left( \dfrac{\varepsilon + \eta}{2} - \sqrt{(\varepsilon - \eta)^{2} - \Vert \mathbf{x'} \Vert^{2}} \right) }_{ \leqslant \left( \frac{\varepsilon + \eta}{2} - \sqrt{(\varepsilon - \eta)^{2} -r^{2}} \right)~ = ~0 }. \] 
Hence, if $\tilde{x}_{n} > x_{n}$, then we get $\mathbf{\tilde{x}}_{i} \in B_{\varepsilon}(\mathbf{x}_{i} + \varepsilon \mathbf{d}_{\mathbf{x}_{i}}) \subseteq B \backslash \overline{ \Omega_{i}}$. Similarly, one can prove that if $\tilde{x}_{n} < x_{n}$, then we have $\mathbf{\tilde{x}}_{i} \in B_{\varepsilon}(\mathbf{x}_{i} - \varepsilon \mathbf{d}_{\mathbf{x}_{i}}) \subseteq \Omega_{i}$. 
\end{proof}

\begin{proposition}
\label{prop_parametrisation_locale_i}
Let $\eta $, $r$ be as in Lemma \ref{lemme_parametrisation_locale_i}. Then, the two functions $\varphi_{i}^{\pm}$ of Proposition \ref{prop_parametrisations_locales_i} coincide on $\overline{D_{r}}(\mathbf{0'})$. The map $\varphi_{i}$ refers to their common restrictions and it satisfies:
\[ \left\lbrace \begin{array}{rcl}
\partial \Omega_{i} \cap \left( \overline{D_{r}}(\mathbf{0'}) \cap [-\varepsilon,\varepsilon] \right) &=& \left\lbrace (\mathbf{x'},\varphi_{i}(\mathbf{x'})), \quad \mathbf{x'} \in \overline{D_{r}}(\mathbf{0'}) \right\rbrace \\
& & \\
\Omega_{i} \cap \left( \overline{D_{r}}(\mathbf{0'}) \cap [-\varepsilon,\varepsilon] \right) &=& \left\lbrace (\mathbf{x'},x_{n}), \quad \mathbf{x'} \in \overline{D_{r}}(\mathbf{0'}) ~\mathrm{and}~ - \varepsilon \leqslant x_{n} <\varphi_{i}(\mathbf{x'}) \right\rbrace. \\
\end{array} \right. \]
\end{proposition}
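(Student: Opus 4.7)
The plan is to imitate the proof of Proposition \ref{propo_parametrisation_locale} almost verbatim, replacing the unperturbed local inequalities by their $\eta$-perturbed counterparts \eqref{eqn_d}--\eqref{eqn_b} and Lemma \ref{lemme_parametrisation_locale} by Lemma \ref{lemme_parametrisation_locale_i}. The precise value $r = \frac{1}{2}\sqrt{4(\varepsilon-\eta)^{2}-(\varepsilon+\eta)^{2}}$ is chosen exactly so that the last factor appearing in the estimate inside Lemma \ref{lemme_parametrisation_locale_i} is nonpositive throughout $\overline{D_{r}}(\mathbf{0'})$, which is what guarantees that every local geometric inequality propagates across the whole disc.

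First I would establish the coincidence $\varphi_{i}^{+}=\varphi_{i}^{-}$ on $\overline{D_{r}}(\mathbf{0'})$ by contradiction. For $\mathbf{x'}\in\overline{D_{r}}(\mathbf{0'})$ with $\varphi_{i}^{-}(\mathbf{x'})\neq\varphi_{i}^{+}(\mathbf{x'})$, Proposition \ref{prop_parametrisations_locales_i} places both points $\mathbf{x}_{i}^{\pm}=(\mathbf{x'},\varphi_{i}^{\pm}(\mathbf{x'}))$ on $\partial\Omega_{i}$, and the chain \eqref{eqn_e} bounds each $|\varphi_{i}^{\pm}(\mathbf{x'})|$ strictly by $\varepsilon-\sqrt{(\varepsilon-\eta)^{2}-\Vert\mathbf{x'}\Vert^{2}}$, so the height hypothesis of Lemma \ref{lemme_parametrisation_locale_i} is met for each choice of $(\mathbf{x}_{i},\tilde{\mathbf{x}}_{i})\in\{(\mathbf{x}_{i}^{+},\mathbf{x}_{i}^{-}),(\mathbf{x}_{i}^{-},\mathbf{x}_{i}^{+})\}$. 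The lemma then forces $\mathbf{x}_{i}^{-}\in\Omega_{i}$ when $\varphi_{i}^{-}<\varphi_{i}^{+}$ and $\mathbf{x}_{i}^{-}\in B\setminus\overline{\Omega_{i}}$ when $\varphi_{i}^{-}>\varphi_{i}^{+}$, each contradicting $\mathbf{x}_{i}^{-}\in\partial\Omega_{i}$. Denote the common value by $\varphi_{i}$.

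For the two set equalities, fix $\mathbf{x'}\in\overline{D_{r}}(\mathbf{0'})$, $x_{n}\in[-\varepsilon,\varepsilon]$, and $\mathbf{z}=(\mathbf{x'},x_{n})$. If $x_{n}=\varphi_{i}(\mathbf{x'})$, Proposition \ref{prop_parametrisations_locales_i} gives $\mathbf{z}\in\partial\Omega_{i}$. If $-\varepsilon\leqslant x_{n}\leqslant-\varepsilon+\sqrt{(\varepsilon-\eta)^{2}-\Vert\mathbf{x'}\Vert^{2}}$, then $\mathbf{z}\in\overline{B_{\varepsilon-\eta}}(\mathbf{0'},-\varepsilon)\subseteq\Omega_{i}$ directly from \eqref{eqn_c}. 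In the remaining range $-\varepsilon+\sqrt{(\varepsilon-\eta)^{2}-\Vert\mathbf{x'}\Vert^{2}}<x_{n}<\varphi_{i}(\mathbf{x'})$, the bound $\varphi_{i}(\mathbf{x'})<\varepsilon-\sqrt{(\varepsilon-\eta)^{2}-\Vert\mathbf{x'}\Vert^{2}}$ from \eqref{eqn_e}, combined with the assumed lower bound on $x_{n}$, produces $|x_{n}|<\varepsilon-\sqrt{(\varepsilon-\eta)^{2}-\Vert\mathbf{x'}\Vert^{2}}$; applying Lemma \ref{lemme_parametrisation_locale_i} with $\mathbf{x}_{i}=(\mathbf{x'},\varphi_{i}(\mathbf{x'}))$ and $\tilde{\mathbf{x}}_{i}=\mathbf{z}$ then yields $\mathbf{z}\in\Omega_{i}$. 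The symmetric argument with the ball $\overline{B_{\varepsilon-\eta}}(\mathbf{0'},+\varepsilon)$ handles $\varphi_{i}(\mathbf{x'})<x_{n}\leqslant\varepsilon$ and gives $\mathbf{z}\in B\setminus\overline{\Omega_{i}}$, so by elimination $\partial\Omega_{i}$ meets the slab exactly along the graph of $\varphi_{i}$. The only slightly subtle point — the real bookkeeping step — is verifying that whenever $x_{n}$ sits strictly between the ``safe ball'' boundary and $\varphi_{i}(\mathbf{x'})$, it still fits under the hypothesis of Lemma \ref{lemme_parametrisation_locale_i}; the bound \eqref{eqn_e} is precisely the ingredient that closes this gap, and no further geometric input is needed.
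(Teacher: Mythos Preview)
Your proof is correct and follows essentially the same approach as the paper: contradiction via Lemma \ref{lemme_parametrisation_locale_i} and \eqref{eqn_e} to get $\varphi_{i}^{+}=\varphi_{i}^{-}$, then the same three-range split (the closed ball from \eqref{eqn_c}, the intermediate strip handled by Lemma \ref{lemme_parametrisation_locale_i}, and the symmetric upper half) to identify $\Omega_{i}$ and $\partial\Omega_{i}$ in the slab. Your write-up is slightly more explicit about why the height hypothesis of the lemma holds in the intermediate range, but otherwise the two arguments are the same.
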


\begin{proof}
First, we assume by contradiction that there exists $ \mathbf{x'} \in \overline{D_{r}}(\mathbf{0'}) $ such that $ \varphi_{i}^{-}(\mathbf{x'}) \neq \varphi_{i}^{+}(\mathbf{x'}) $. The hypothesis of Lemma \ref{lemme_parametrisation_locale_i} are satisfied for the points $ \mathbf{x}_{i}:=(\mathbf{x'},\varphi_{i}^{+}(\mathbf{x'})) $ and $ \mathbf{\tilde{x}}_{i}:=(\mathbf{x'},\varphi^{-}_{i}(\mathbf{x'})) $ by using \eqref{eqn_e}. Hence, either $ (\varphi_{i}^{-}(\mathbf{x'}) < \varphi_{i}^{+}(\mathbf{x'}) \Rightarrow  \mathbf{\tilde{x}}_{i} \in \Omega_{i} ) $ or $ (\varphi_{i}^{-}(\mathbf{x'}) > \varphi_{i}^{+}(\mathbf{x'}) \Rightarrow  \mathbf{\tilde{x}}_{i} \in B \backslash \overline{\Omega}_{i}) $ whereas $ \mathbf{\tilde{x}}_{i} \in \partial \Omega_{i} $. We deduce that $ \varphi_{i}^{-}(\mathbf{x'}) = \varphi_{i}^{+}(\mathbf{x'}) $ for any $\mathbf{x'} \in \overline{D_{r}}(\mathbf{0'})$. Then, we consider $ \mathbf{x'} \in \overline{D_{r}}(\mathbf{0'}) $ and $ x_{n} \in [-\varepsilon,\varepsilon]$. We set $ \mathbf{x}_{i} = (\mathbf{x'},\varphi_{i}(\mathbf{x'}))  $ and $\mathbf{\tilde{x}}_{i} = (\mathbf{x'},x_{n})$. Proposition \ref{prop_parametrisations_locales_i} ensures that if $x_{n} = \varphi_{i}(\mathbf{x'})$, then $\mathbf{x}_{i} \in \partial \Omega_{i}$. Moreover, if $ - \varepsilon \leqslant x_{n} \leqslant -\varepsilon + \sqrt{(\varepsilon- \eta)^{2}- \Vert \mathbf{x'} \Vert^{2}}$, then $\mathbf{\tilde{x}}_{i} \in \overline{B_{\varepsilon - \eta}}(\mathbf{0'},-\varepsilon) \subseteq \Omega_{i} $ and if $ - \varepsilon + \sqrt{(\varepsilon-  \eta)^{2} - \Vert \mathbf{x'} \Vert^{2}} < x_{n} < \varphi(\mathbf{x'}) $, then apply Lemma \ref{lemme_parametrisation_locale_i} in order to get $\mathbf{\tilde{x}}_{i} \in \Omega_{i}$. Consequently, we proved: $ \forall \mathbf{x'} \in \overline{D_{r}}(\mathbf{0'}),~ - \varepsilon \leqslant x_{n} <  \varphi_{i}(\mathbf{x'}) \Longrightarrow  (\mathbf{x'},x_{n}) \in \Omega_{i} $. To conclude, similar arguments hold when $ \varepsilon \geqslant x_{n} > \varphi_{i}(\mathbf{x'})$ and imply $(\mathbf{x'},x_{n}) \in B \backslash \overline{\Omega_{i}}$.
\end{proof}

\subsection{The $C^{1,1}$-regularity of the local graph $\varphi_{i}$}
We previously showed that the boundary $\partial \Omega_{i}$ is locally described by the graph of a well-defined map $\varphi_{i} : \overline{D_{r}}(\mathbf{0'}) \rightarrow ]- \varepsilon, \varepsilon[$. Now we prove its $C^{1,1}$-regularity even if it means reducing $\eta$ and $r$.

\begin{lemma}
\label{lemme_alpha_cone_i}
The following map is well defined, smooth, surjective and increasing:
\[ \begin{array}{rcl}
f_{\eta}: \quad ]0,\frac{\pi}{2}[ & \longrightarrow & ]2\sqrt{2 \varepsilon \eta}, + \infty[ \\
\alpha & \longmapsto & \dfrac{3 \alpha + 2 \sqrt{2 \varepsilon \eta}}{\cos \alpha}. \\
\end{array} \]
In particular, it is an homeomorphism and its inverse $f^{-1}_{\eta}$ satisfies the following inequality:
\begin{equation}
\label{eqn_f}
\forall \varepsilon > 0,~ \forall \eta \in \left] 0,\dfrac{\varepsilon}{8}\right[, \quad   f_{\eta}^{-1}(\varepsilon) < \dfrac{\varepsilon}{3}.
\end{equation}
\end{lemma}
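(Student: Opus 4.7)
The plan is to treat this as the routine calculus exercise it is, mirroring the proof of Lemma \ref{lemme_alpha_cone}. First I would dispose of the structural claims: on $]0,\pi/2[$, $\cos\alpha>0$, so $f_\eta$ is a ratio of smooth functions with nonvanishing denominator, hence smooth (and in particular continuous). A direct derivative computation gives
\[
f_\eta'(\alpha)=\frac{3\cos\alpha+(3\alpha+2\sqrt{2\varepsilon\eta})\sin\alpha}{\cos^2\alpha},
\]
which is strictly positive on $]0,\pi/2[$ since every term in the numerator is nonnegative and the first is strictly positive. Hence $f_\eta$ is strictly increasing.

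Next I would read off the boundary behavior: as $\alpha\to 0^+$, $f_\eta(\alpha)\to 2\sqrt{2\varepsilon\eta}$, and as $\alpha\to(\pi/2)^-$, $f_\eta(\alpha)\to+\infty$. Combined with continuity and strict monotonicity, the intermediate value theorem yields that $f_\eta$ is a bijection from $]0,\pi/2[$ onto $]2\sqrt{2\varepsilon\eta},+\infty[$; its inverse is automatically continuous (and in fact smooth by the inverse function theorem, since $f_\eta'$ never vanishes), so $f_\eta$ is a homeomorphism.

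Finally, for the estimate \eqref{eqn_f}, fix $\varepsilon>0$ and $\eta\in\,]0,\varepsilon/8[$. The hypothesis $\eta<\varepsilon/8$ is exactly $\varepsilon>2\sqrt{2\varepsilon\eta}$, which guarantees that $\varepsilon$ lies in the image of $f_\eta$ so that $f_\eta^{-1}(\varepsilon)$ is defined. I would then split into two cases. If $\varepsilon/3\geqslant\pi/2$, the conclusion is immediate since $f_\eta^{-1}(\varepsilon)<\pi/2\leqslant\varepsilon/3$. Otherwise $\varepsilon/3\in\,]0,\pi/2[$, and by strict monotonicity it suffices to show $f_\eta(\varepsilon/3)>\varepsilon$; but
\[
f_\eta(\varepsilon/3)=\frac{\varepsilon+2\sqrt{2\varepsilon\eta}}{\cos(\varepsilon/3)}>\varepsilon+2\sqrt{2\varepsilon\eta}>\varepsilon,
\]
using $\cos(\varepsilon/3)<1$ in the first strict inequality. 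There is no real obstacle here: the only point worth flagging is verifying that the inverse is actually defined at $\varepsilon$, which is precisely what the hypothesis $\eta<\varepsilon/8$ secures.
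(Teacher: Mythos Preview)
Your proof is correct and is exactly the kind of routine calculus argument the paper has in mind: the paper's own proof reads, in its entirety, ``The proof is basic calculus.'' Your write-up simply fills in those details, including the useful observation that the hypothesis $\eta<\varepsilon/8$ is precisely what guarantees $\varepsilon$ lies in the range of $f_\eta$.
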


\begin{proof}
The proof is basic calculus.
\end{proof}

\begin{proposition}
\label{prop_alpha_cone_i}
In Assumption \ref{hypotheses_parametrization_i}, let $\eta < \frac{\varepsilon}{8}$ and consider $\alpha \in ]0,f^{-1}_{\eta}(\varepsilon)]$, where $f^{-1}_{\eta}$ has been introduced in Lemma \ref{lemme_alpha_cone_i}. Then, we have:
\[ \forall \mathbf{x}_{i} \in B_{\alpha}(\mathbf{x}_{0}) \cap \overline{\Omega_{i}}, \quad C_{\alpha}(\mathbf{x}_{i}, - \mathbf{d}_{\mathbf{x}_{0}}) \subseteq \Omega_{i}, \]
where $C_{\alpha}(\mathbf{x}_{i}, - \mathbf{d}_{\mathbf{x}_{0}}) $ is defined in Definition \ref{definition_alpha_cone}.
\end{proposition}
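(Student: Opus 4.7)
The argument will follow the three-step pattern of Proposition~\ref{prop_alpha_cone}, replacing every geometric inequality based on the $\varepsilon$-ball condition at $\mathbf{x}_0$ by its $\eta$-perturbed analogue from Corollary~\ref{coro_inegalites_i}. The shape of $f_\eta$ is tailored to this substitution: whereas Proposition~\ref{prop_alpha_cone} relied on $\varepsilon\Vert\mathbf{d}_{\tilde{\mathbf{x}}}-\mathbf{d}_{\mathbf{x}_{0}}\Vert\leqslant\sqrt{2}\alpha$ coming from \eqref{eqn_lipschitz_normale} and \eqref{eqn_parametrisation_locale}, here \eqref{eqn_g} only yields $\varepsilon\Vert\mathbf{d}_{\tilde{\mathbf{x}}_i}-\mathbf{d}_{\mathbf{x}_{0}}\Vert<2\sqrt{2\varepsilon\eta}+\sqrt{2}\alpha$, and the extra additive $2\sqrt{2\varepsilon\eta}$ is exactly what forces the numerator of $f_\eta(\alpha)$ to read $3\alpha+2\sqrt{2\varepsilon\eta}$ rather than $2\alpha$.

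To begin, I fix $\mathbf{x}_i\in B_\alpha(\mathbf{x}_0)\cap\overline{\Omega_i}$ and $\mathbf{y}\in C_\alpha(\mathbf{x}_i,-\mathbf{d}_{\mathbf{x}_0})$. Since \eqref{eqn_f} gives $\alpha<\varepsilon/3$, one has $\Vert\mathbf{x}_i'\Vert\leqslant\Vert\mathbf{x}_i-\mathbf{x}_0\Vert<\alpha$, which lies below the radius $r$ of Proposition~\ref{prop_parametrisation_locale_i}. One may therefore set $\tilde{\mathbf{x}}_i:=(\mathbf{x}_i',\varphi_i(\mathbf{x}_i'))\in\partial\Omega_i$, and the inclusion $\mathbf{x}_i\in\overline{\Omega_i}$ combined with Proposition~\ref{prop_parametrisation_locale_i} forces $x_{i,n}\leqslant\varphi_i(\mathbf{x}_i')$. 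A further application of \eqref{eqn_e} at $\tilde{\mathbf{x}}_i$ then controls $\Vert\tilde{\mathbf{x}}_i-\mathbf{x}_0\Vert$ by a quantity of order $\alpha+\sqrt{\varepsilon\eta}$, which is exactly what the next step needs.

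Then I would prove $C_\alpha(\tilde{\mathbf{x}}_i,-\mathbf{d}_{\mathbf{x}_0})\subseteq B_\varepsilon(\tilde{\mathbf{x}}_i-\varepsilon\mathbf{d}_{\tilde{\mathbf{x}}_i})$, which will yield $C_\alpha(\tilde{\mathbf{x}}_i,-\mathbf{d}_{\mathbf{x}_0})\subseteq\Omega_i$ by the $\varepsilon$-ball condition at $\tilde{\mathbf{x}}_i\in\partial\Omega_i$. For $\mathbf{z}$ in that cone, expanding
\[ \Vert\mathbf{z}-\tilde{\mathbf{x}}_i+\varepsilon\mathbf{d}_{\tilde{\mathbf{x}}_i}\Vert^{2}-\varepsilon^{2}\leqslant\Vert\mathbf{z}-\tilde{\mathbf{x}}_i\Vert^{2}+2\varepsilon\Vert\mathbf{z}-\tilde{\mathbf{x}}_i\Vert\,\Vert\mathbf{d}_{\tilde{\mathbf{x}}_i}-\mathbf{d}_{\mathbf{x}_{0}}\Vert+2\varepsilon\langle\mathbf{z}-\tilde{\mathbf{x}}_i\mid\mathbf{d}_{\mathbf{x}_{0}}\rangle \]
and combining $\Vert\mathbf{z}-\tilde{\mathbf{x}}_i\Vert<\alpha$, the cone inequality $\langle\mathbf{z}-\tilde{\mathbf{x}}_i\mid\mathbf{d}_{\mathbf{x}_{0}}\rangle<-\Vert\mathbf{z}-\tilde{\mathbf{x}}_i\Vert\cos\alpha$, and \eqref{eqn_g}, one bounds the right-hand side by $\Vert\mathbf{z}-\tilde{\mathbf{x}}_i\Vert\,[(1+2\sqrt{2})\alpha+4\sqrt{2\varepsilon\eta}-2\varepsilon\cos\alpha]$. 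Since $1+2\sqrt{2}<6$ and the hypothesis $\alpha\leqslant f_\eta^{-1}(\varepsilon)$ reads precisely as $6\alpha+4\sqrt{2\varepsilon\eta}\leqslant 2\varepsilon\cos\alpha$, this bracket is strictly negative, which concludes this step.

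Finally I would transport back to $\mathbf{x}_i$ by the vertical shift $\tilde{\mathbf{y}}:=(\mathbf{y}',\,y_n+\varphi_i(\mathbf{x}_i')-x_{i,n})$, noting $\tilde{\mathbf{y}}-\tilde{\mathbf{x}}_i=\mathbf{y}-\mathbf{x}_i$, so that $\tilde{\mathbf{y}}\in C_\alpha(\tilde{\mathbf{x}}_i,-\mathbf{d}_{\mathbf{x}_0})\subseteq\Omega_i$. I then check that $\mathbf{y}$ and $\tilde{\mathbf{y}}$ both lie in $\overline{D_r}(\mathbf{0'})\times[-\varepsilon,\varepsilon]$ by estimating $\Vert\mathbf{y}'-\mathbf{x}_i'\Vert<\alpha\sin\alpha$, $|y_n-x_{i,n}|<\alpha$, and using \eqref{eqn_e} to bound $|\varphi_i(\mathbf{x}_i')|$; applying Proposition~\ref{prop_parametrisation_locale_i} to $\tilde{\mathbf{y}}\in\Omega_i$ yields $\tilde{y}_n<\varphi_i(\mathbf{y}')$, and combined with $\varphi_i(\mathbf{x}_i')\geqslant x_{i,n}$ this gives $y_n<\varphi_i(\mathbf{y}')$, hence $\mathbf{y}\in\Omega_i$ by the same proposition applied to $\mathbf{y}$. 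The main obstacle is the bookkeeping in this last step: one must simultaneously control that both $\mathbf{y}$ and its vertical shift remain inside the chart cylinder, which is precisely where the numerical constants in $f_\eta$ and the restriction $\eta<\varepsilon/8$ are tight.
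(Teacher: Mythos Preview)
Your proposal is correct and follows essentially the same three-step approach as the paper's proof: lift $\mathbf{x}_i$ to $\tilde{\mathbf{x}}_i\in\partial\Omega_i$ via Proposition~\ref{prop_parametrisation_locale_i}, show $C_\alpha(\tilde{\mathbf{x}}_i,-\mathbf{d}_{\mathbf{x}_0})\subseteq B_\varepsilon(\tilde{\mathbf{x}}_i-\varepsilon\mathbf{d}_{\tilde{\mathbf{x}}_i})$ using \eqref{eqn_g}, then transport back by the vertical shift and conclude through the graph description. Your Step~2 computation, bounding the bracket by $(1+2\sqrt{2})\alpha+4\sqrt{2\varepsilon\eta}-2\varepsilon\cos\alpha$ and invoking $1+2\sqrt{2}<6$, is exactly the paper's estimate (the paper factors out a $2$ and writes the same thing as $2\cos\alpha(f_\eta(\alpha)-\varepsilon)$); the numerical checks in Step~3 that you leave as ``bookkeeping'' are carried out explicitly in the paper and do go through with the stated constants.
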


\begin{proof}
Since we have $\eta < \frac{\varepsilon}{3}$, we can set $r = \frac{1}{2} \sqrt{4 (\varepsilon - \eta)^{2} - (\varepsilon + \eta)^{2}}$ and $\mathcal{C}_{r,\varepsilon} = \overline{D_{r}}(\mathbf{0'})\times [-\varepsilon,\varepsilon]$. Moreover, we assume $\eta < \frac{\varepsilon}{8}$ i.e. $2 \sqrt{2 \varepsilon \eta} < \varepsilon$ so $f_{\eta}^{-1}(\varepsilon)$ is well defined. Choose $ \alpha  \in ]0,f_{\eta}^{-1}(\varepsilon)] $ then consider $\mathbf{x}_{i} = (\mathbf{x'},x_{n}) \in B_{\alpha}(\mathbf{x}_{0}) \cap \overline{\Omega_{i}} $ and $ \mathbf{y}_{i} = (\mathbf{y'},y_{n}) \in C_{\alpha}(\mathbf{x}_{i},- \mathbf{d}_{\mathbf{x}_{0}}) $. The proof of the assertion $\mathbf{y}_{i} \in \Omega_{i}$ is divided into the three following steps.
\begin{enumerate}
\item Check $\mathbf{x}_{i} \in \mathcal{C}_{r,\varepsilon} $ so as to introduce the point $\mathbf{\tilde{x}}_{i} = (\mathbf{x'},\varphi_{i}(\mathbf{x'})) $ of $ \partial \Omega_{i} $ satisfying $ x_{n} \leqslant \varphi_{i} (\mathbf{x'} )$.
\item Consider $\mathbf{\tilde{y}}_{i} = (\mathbf{y'},y_{n} + \varphi_{i}(\mathbf{x'}) - x_{n})$ and prove $\mathbf{\tilde{y}}_{i} \in C_{\alpha}(\mathbf{\tilde{x}}_{i},- \mathbf{d}_{\mathbf{x}_{0}} ) \subseteq B_{\varepsilon}(\mathbf{\tilde{x}}_{i} - \varepsilon \mathbf{d}_{\mathbf{\tilde{x}}_{i}}) \subseteq \Omega_{i}$.
\item Show $(\mathbf{\tilde{y}}_{i},\mathbf{y}_{i}) \in \mathcal{C}_{r,\varepsilon} \times \mathcal{C}_{r,\varepsilon} $ in order to deduce $ y_{n} + \varphi_{i}(\mathbf{x'}) - x_{n} < \varphi_{i} (\mathbf{y'})$ and conclude $ \mathbf{y}_{i} \in \Omega_{i}$.
\end{enumerate}
First, from \eqref{eqn_f}, we have: $ \max ( \Vert \mathbf{x'} \Vert, \vert x_{n} \vert ) \leqslant \Vert \mathbf{x}_{i} - \mathbf{x}_{0} \Vert < \alpha \leqslant f^{-1}_{\eta}(\varepsilon) < \frac{\varepsilon}{3} $. Since $\eta < \frac{\varepsilon}{8}$, we get $r > \frac{1}{2} [4 (\frac{7\varepsilon}{8})^{2} - (\frac{9\varepsilon}{8})^{2}]^{\frac{1}{2}} > \frac{\varepsilon}{2}  $ thus $\mathbf{x}_{i} \in \overline{\Omega_{i}} \cap \mathcal{C}_{r,\varepsilon}$. Hence, from Proposition \ref{prop_parametrisation_locale_i}, it comes $ x_{n} \leqslant \varphi_{i} (\mathbf{x'})$. We set $\mathbf{\tilde{x}}_{i} = (\mathbf{x'},\varphi_{i}(\mathbf{x'})) \in \partial \Omega_{i} \cap \mathcal{C}_{r,\varepsilon}$. Then, we prove $ C_{\alpha}(\mathbf{\tilde{x}}_{i}, -\mathbf{d}_{\mathbf{x}_{0}}) \subseteq B_{\varepsilon}(\mathbf{\tilde{x}}_{i} - \varepsilon \mathbf{d}_{\mathbf{\tilde{x}}_{i}} )  $ so consider any $\mathbf{y} \in C_{\alpha}(\mathbf{\tilde{x}}_{i}, - \mathbf{d}_{\mathbf{x}_{0}} ) $. Combining the Cauchy-Schwartz inequality and $\mathbf{y}  \in C_{\alpha}(\mathbf{\tilde{x}}_{i}, - \mathbf{d}_{\mathbf{x}_{0}} ) $, we get:
\[ \begin{array}{rcl}  
\Vert \mathbf{y} - \mathbf{\tilde{x}}_{i} + \varepsilon \mathbf{d}_{\mathbf{\tilde{x}}_{i}} \Vert^{2} - \varepsilon^{2} & \leqslant &  \Vert \mathbf{y} - \mathbf{\tilde{x}}_{i} \Vert^{2} + 2 \varepsilon \Vert \mathbf{y} - \mathbf{\tilde{x}}_{i} \Vert \Vert \mathbf{d}_{\mathbf{\tilde{x}}_{i}} - \mathbf{d}_{\mathbf{x}_{0}} \Vert - 2 \varepsilon  \Vert \mathbf{y} - \mathbf{\tilde{x}}_{i} \Vert \cos  \alpha \\
& \\
& < &  2 \Vert \mathbf{y} - \mathbf{\tilde{x}}_{i} \Vert \left(  \dfrac{\alpha}{2} + 2 \sqrt{2 \varepsilon \eta} + \sqrt{2} \Vert \mathbf{x'} \Vert - \varepsilon \cos \alpha \right)  <  2 \alpha \cos \alpha \underbrace{ \left( f_{\eta}(\alpha) - \varepsilon \right)}_{ \leqslant 0}, \\
\end{array} \]
where we used \eqref{eqn_g} on $\mathbf{\tilde{x}}_{i} \in \partial \Omega_{i} \cap \mathcal{C}_{r,\varepsilon}$ and $\Vert \mathbf{x'} \Vert \leqslant \Vert \mathbf{x}_{i} - \mathbf{x}_{0} \Vert < \alpha$. Hence, $\mathbf{y} \in B_{\varepsilon}(\mathbf{\tilde{x}}_{i} - \varepsilon \mathbf{d}_{\mathbf{\tilde{x}}_{i}})$ so $C_{\alpha}(\mathbf{\tilde{x}}_{i}, -\mathbf{d}_{\mathbf{x}_{0}}) \subseteq B_{\varepsilon}(\mathbf{\tilde{x}}_{i} - \varepsilon \mathbf{d}_{\mathbf{\tilde{x}}_{i}} ) \subseteq \Omega_{i}$, using the $\varepsilon$-ball condition. Moreover, since $\mathbf{\tilde{y}}_{i} - \mathbf{\tilde{x}}_{i} = \mathbf{y}_{i} - \mathbf{x}_{i}$ and $\mathbf{y}_{i} \in C_{\alpha}(\mathbf{x}_{i},-\mathbf{d}_{\mathbf{x}_{0}})$, we get $\mathbf{\tilde{y}}_{i} \in C_{\alpha}(\mathbf{\tilde{x}}_{i},-\mathbf{d}_{\mathbf{x}_{0}}) $, which ends the proof of $\mathbf{\tilde{y}}_{i} \in \Omega_{i}$. Finally, we check that $(\mathbf{y}_{i},\mathbf{\tilde{y}}_{i}) \in \mathcal{C}_{r,\varepsilon}\times \mathcal{C}_{r, \varepsilon}$. We have successively: 
\[  \left\lbrace \begin{array}{l}
\Vert \mathbf{y'} \Vert  \leqslant \Vert \mathbf{y'} - \mathbf{x'} \Vert + \Vert \mathbf{x'} \Vert < \sqrt{\alpha^{2} - \alpha^{2} \cos^{2} \alpha} + \alpha = \dfrac{\alpha}{\cos \alpha} \left(  \dfrac{1}{2} \sin 2 \alpha + \cos \alpha \right) < \dfrac{  f_{\eta}(\alpha) }{2} \leqslant \dfrac{\varepsilon}{2} < r \\
\\
 \vert y_{n} \vert \leqslant \vert y_{n} - x_{n} \vert + \vert x_{n} \vert \leqslant \Vert \mathbf{y}_{i} - \mathbf{x}_{i} \Vert + \Vert \mathbf{x}_{i} - \mathbf{x}_{0} \Vert  < 2 \alpha < f(\alpha) \leqslant \varepsilon \\
\\
\vert \tilde{y}_{n} \vert = \vert y_{n} + \varphi_{i}(\mathbf{x'}) - x_{n} \vert \leqslant \Vert  \mathbf{y}_{i} - \mathbf{x}_{i} \Vert + \varepsilon - \sqrt{(\varepsilon- \eta)^{2} - \Vert \mathbf{x'} \Vert^{2}} < \alpha + \dfrac{\eta(2 \varepsilon - \eta) + \Vert \mathbf{x'} \Vert^{2}}{\varepsilon + \sqrt{(\varepsilon - \eta)^{2} - \Vert \mathbf{x'} \Vert^{2}}}. \\
\end{array} \right. \]
Here, we used Relation \eqref{eqn_e}, the fact that $\mathbf{y}_{i} \in C_{\alpha}(\mathbf{x}_{i},-\mathbf{d}_{\mathbf{x}_{0}})$ and $\mathbf{x}_{i} \in B_{\alpha}(\mathbf{x}_{0})$. Hence, we obtain: $\vert \tilde{y}_{n} \vert < 2 \alpha + 2 \eta  < 2 f_{\eta}^{-1}(\varepsilon) + 2 \frac{\varepsilon}{8} \leqslant \frac{2 \varepsilon}{3} + \frac{\varepsilon}{4} < \varepsilon$. To conclude, apply Proposition \ref{prop_parametrisation_locale_i} to $\mathbf{\tilde{y}}_{i} \in \Omega_{i} \cap \mathcal{C}_{r,\varepsilon}$ in order to get $ y_{n} + \varphi_{i}(\mathbf{x'}) - x_{n} < \varphi (\mathbf{y'})$. Since we firstly proved $ x_{n} \leqslant \varphi_{i} (\mathbf{x'})$, we have $ y_{n} < \varphi_{i}(\mathbf{y'}) $. Applying Proposition \ref{prop_parametrisation_locale_i} to $\mathbf{y}_{i} \in \mathcal{C}_{r,\varepsilon}$, we get $\mathbf{y}_{i} \in \Omega_{i} $ as required.
\end{proof}

\begin{lemma}
\label{lemme_lipschitz_i}
The following map is well defined, smooth, surjective and increasing:
\[ \begin{array}{rcl}
g: \quad ]0,\frac{\pi}{8}[ & \longrightarrow & ]0, + \infty[ \\
\eta & \longmapsto & \dfrac{32 \eta}{\cos^{2}(4 \eta)}. \\
\end{array} \]
In particular, it is an homeomorphism and its inverse $g^{-1}$ satisfies the following relations:
\begin{equation}
\label{eqn_i}
 \forall \varepsilon > 0, \quad g^{-1}(\varepsilon) < \frac{\varepsilon}{32} \qquad \mathrm{and} \qquad g^{-1}(\varepsilon) < \frac{1}{4} f^{-1}_{g^{-1}(\varepsilon)}(\varepsilon),  
 \end{equation}
where $f^{-1}_{\eta}$ is defined in Lemma \ref{lemme_alpha_cone_i}.
\end{lemma}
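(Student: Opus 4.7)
The map $g$ is smooth on $]0,\pi/8[$ since $\cos(4\eta)$ stays bounded away from zero there, and a direct computation gives $g'(\eta)=32\sec^{2}(4\eta)\bigl[1+8\eta\tan(4\eta)\bigr]$, which is strictly positive on the interval. The boundary behavior $g(\eta)\to 0$ as $\eta\to 0^{+}$ and $g(\eta)\to+\infty$ as $\eta\to (\pi/8)^{-}$ then yields a smooth, increasing bijection onto $]0,+\infty[$, hence a homeomorphism.

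For the first estimate in \eqref{eqn_i}, it is enough to note that $\cos^{2}(4\eta)<1$ for every $\eta\in ]0,\pi/8[$, so $g(\eta)>32\eta$; applying this to $\eta=g^{-1}(\varepsilon)$ gives $\varepsilon>32\,g^{-1}(\varepsilon)$.

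The key point is the second inequality. Set $\eta=g^{-1}(\varepsilon)$, so that $\varepsilon\cos^{2}(4\eta)=32\eta$. Since $f_{\eta}$ is increasing (Lemma \ref{lemme_alpha_cone_i}), showing $4\eta<f_{\eta}^{-1}(\varepsilon)$ amounts to showing $f_{\eta}(4\eta)<\varepsilon$. The definition gives
\[
f_{\eta}(4\eta)\cos(4\eta)=12\eta+2\sqrt{2\varepsilon\eta},
\]
and the identity $\varepsilon\cos^{2}(4\eta)=32\eta$ yields the crucial simplification $2\sqrt{2\varepsilon\eta}\,\cos(4\eta)=16\eta$. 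Multiplying the displayed equality by $\cos(4\eta)$ and using this identity, together with $\cos(4\eta)<1$, one gets
\[
f_{\eta}(4\eta)\cos^{2}(4\eta)=12\eta\cos(4\eta)+16\eta<12\eta+16\eta=28\eta<32\eta=\varepsilon\cos^{2}(4\eta),
\]
and dividing by $\cos^{2}(4\eta)>0$ gives exactly $f_{\eta}(4\eta)<\varepsilon$, as required.

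The only mildly delicate step is finding the clean substitution $2\sqrt{2\varepsilon\eta}\,\cos(4\eta)=16\eta$ that eliminates the square root; once that is spotted, everything else is indeed basic calculus, which is presumably why the authors only state the result.
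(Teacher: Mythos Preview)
Your proof is correct and follows essentially the same approach as the paper: both reduce the second inequality to showing $f_{\eta}(4\eta)<\varepsilon$ for $\eta=g^{-1}(\varepsilon)$, and both exploit the defining relation $32\eta=\varepsilon\cos^{2}(4\eta)$ to close the estimate. The only difference is cosmetic: the paper factors $f_{\eta}(4\eta)=\frac{2\sqrt{2\eta\varepsilon}}{\cos(4\eta)}\bigl(3\sqrt{2\eta/\varepsilon}+1\bigr)$, bounds the bracket by $2$ via $\eta<\varepsilon/32$, and recognises the result as $\sqrt{g(\eta)\varepsilon}=\varepsilon$, whereas you multiply through by $\cos^{2}(4\eta)$ and use the identity $2\sqrt{2\varepsilon\eta}\cos(4\eta)=16\eta$ together with $\cos(4\eta)<1$ --- an equivalent and arguably cleaner manipulation.
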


\begin{proof}
We only prove the inequality $ g^{-1}(\varepsilon) < \frac{1}{4} f^{-1}_{g^{-1}(\varepsilon)}(\varepsilon)$. The remaining part is basic calculus. Consider any $\varepsilon > 0$. There exists a unique $\eta \in ]0,\frac{\pi}{8}[$ such that $g(\eta) = \varepsilon$ or equivalently $\eta = g^{-1}(\varepsilon)$. Hence, we have $4 \eta \in ]0,\frac{\pi}{2}[$ so we can compute, using the first inequality $\eta < \frac{\varepsilon}{32}$:
\[ f_{\eta}( 4 \eta) =  \dfrac{2 \sqrt{2 \eta \varepsilon}}{\cos (4 \eta)} \left( 3 \sqrt{\dfrac{2 \eta}{\varepsilon}} + 1 \right) < \dfrac{2 \sqrt{2 \eta \varepsilon}}{\cos (4 \eta)} \left( 3 \sqrt{\dfrac{2}{32}} + 1 \right) < \dfrac{4 \sqrt{2 \varepsilon \eta}}{\cos (4 \eta)} = \sqrt{g(\eta) \varepsilon} = \varepsilon.  \]
Since $f_{\eta}$ is an increasing homeomorphism, so does $f^{-1}_{\eta}$ and the inequality follows: $4 \eta < f_{\eta}^{-1}(\varepsilon)$.
\end{proof}

\begin{corollary}
\label{coro_lipschitz_i}
In Assumption \ref{hypotheses_parametrization_i}, we set $\eta = g^{-1}(\varepsilon)$, then consider $\alpha = f^{-1}_{\eta}(\varepsilon)$ and $\tilde{r} = \frac{1}{4} \alpha - \eta $. The restriction to $\overline{D_{\tilde{r}}}(\mathbf{0'})$ of the map $\varphi_{i}$ defined in Proposition \ref{prop_parametrisation_locale_i} is $\frac{1}{\tan \alpha}$-Lipschitz continuous.
\end{corollary}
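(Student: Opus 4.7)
The plan is to imitate the argument of Corollary \ref{lemme_lipschitz} (the analogous $C^{1}$-regularity fact for a single $\Omega \in \mathcal{O}_{\varepsilon}(\mathbb{R}^{n})$), but keeping track of the extra slack $\eta$ coming from \eqref{expression_condition_epsilon_boule_xo_omega_i}. Fix $\eta = g^{-1}(\varepsilon)$, $\alpha = f_{\eta}^{-1}(\varepsilon)$, and $\tilde{r} = \frac{\alpha}{4}-\eta$. By \eqref{eqn_i}, $\eta < \frac{\varepsilon}{32}$ and $4\eta < \alpha$, so $\tilde{r} > 0$; in particular $\eta < \varepsilon/8$, so Proposition \ref{prop_alpha_cone_i} is available. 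Given $(\mathbf{x}_{+}',\mathbf{x}_{-}')\in \overline{D_{\tilde{r}}}(\mathbf{0'})\times\overline{D_{\tilde{r}}}(\mathbf{0'})$, the goal is to bound $|\varphi_{i}(\mathbf{x}_{+}')-\varphi_{i}(\mathbf{x}_{-}')|$ by $\tfrac{1}{\tan\alpha}\|\mathbf{x}_{+}'-\mathbf{x}_{-}'\|$.

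First I would check that $\tilde{r}$ is small enough for $\varphi_{i}$ to be defined on $\overline{D_{\tilde{r}}}(\mathbf{0'})$ via Proposition \ref{prop_parametrisation_locale_i}: this amounts to $\tilde{r} \leqslant \tfrac{1}{2}\sqrt{4(\varepsilon-\eta)^{2}-(\varepsilon+\eta)^{2}}$, which follows from $\tilde{r} < \alpha < \varepsilon/3$ and $\eta < \varepsilon/8$. Next, set $\mathbf{x}_{i,\pm} = (\mathbf{x}_{\pm}', \varphi_{i}(\mathbf{x}_{\pm}')) \in \partial\Omega_{i}$. The key geometric localisation is the upgraded estimate \eqref{eqn_h} from Corollary \ref{coro_inegalites_i}, applied to $\mathbf{x}_{i,\pm}$ (for which $(\mathbf{x}_{i,\pm}-\mathbf{x}_{0})' = \mathbf{x}_{\pm}'$): this yields
\[ \|\mathbf{x}_{i,\pm}-\mathbf{x}_{0}\| < 2\eta + 2\|\mathbf{x}_{\pm}'\| \leqslant 2\eta + 2\tilde{r} = \tfrac{\alpha}{2}, \]
so $\mathbf{x}_{i,\pm}\in B_{\alpha}(\mathbf{x}_{0})\cap\partial\Omega_{i}$. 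This is exactly the point where the choice $\tilde{r} = \alpha/4 - \eta$ is tuned to absorb the $\eta$-defect.

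Then I would invoke Proposition \ref{prop_alpha_cone_i}: since $\mathbf{x}_{i,\pm}\in B_{\alpha}(\mathbf{x}_{0})\cap\overline{\Omega_{i}}$, the open cones $C_{\alpha}(\mathbf{x}_{i,\pm},-\mathbf{d}_{\mathbf{x}_{0}})$ are contained in $\Omega_{i}$. Since $\mathbf{x}_{i,\mp}\in\partial\Omega_{i}$ lies outside $\Omega_{i}$, it cannot belong to $C_{\alpha}(\mathbf{x}_{i,\pm},-\mathbf{d}_{\mathbf{x}_{0}})$. The triangle inequality gives $\|\mathbf{x}_{i,+}-\mathbf{x}_{i,-}\| < \alpha$, so $\mathbf{x}_{i,\mp}\in B_{\alpha}(\mathbf{x}_{i,\pm})$, and the cone exclusion reduces to the angular condition
\[ \bigl|\langle \mathbf{x}_{i,+}-\mathbf{x}_{i,-} \mid \mathbf{d}_{\mathbf{x}_{0}}\rangle\bigr| \leqslant \cos\alpha \, \|\mathbf{x}_{i,+}-\mathbf{x}_{i,-}\|. \]

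Finally, since in the local frame $\langle \mathbf{x}_{i,+}-\mathbf{x}_{i,-}\mid\mathbf{d}_{\mathbf{x}_{0}}\rangle = \varphi_{i}(\mathbf{x}_{+}')-\varphi_{i}(\mathbf{x}_{-}')$ and $\|\mathbf{x}_{i,+}-\mathbf{x}_{i,-}\|^{2} = \|\mathbf{x}_{+}'-\mathbf{x}_{-}'\|^{2} + (\varphi_{i}(\mathbf{x}_{+}')-\varphi_{i}(\mathbf{x}_{-}'))^{2}$, squaring the displayed inequality and rearranging yields
\[ |\varphi_{i}(\mathbf{x}_{+}')-\varphi_{i}(\mathbf{x}_{-}')|\,\sin\alpha \leqslant \cos\alpha \, \|\mathbf{x}_{+}'-\mathbf{x}_{-}'\|, \]
which is precisely the $\tfrac{1}{\tan\alpha}$-Lipschitz bound. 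No step is really difficult; the only delicate point is the bookkeeping that ensures $2\eta + 2\tilde{r}\leqslant \alpha/2$ and $\eta < \varepsilon/8$, which is exactly why Lemma \ref{lemme_lipschitz_i} was introduced to calibrate $\eta = g^{-1}(\varepsilon)$ and $\tilde{r} = \alpha/4 - \eta$.
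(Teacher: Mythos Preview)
Your proof is correct and follows essentially the same route as the paper: you verify $\tilde r < r$ so that $\varphi_i$ is defined, use \eqref{eqn_h} to get $\mathbf{x}_{i,\pm}\in B_{\alpha}(\mathbf{x}_0)\cap\partial\Omega_i$, invoke Proposition~\ref{prop_alpha_cone_i} for the cone inclusion, use the triangle inequality to ensure $\|\mathbf{x}_{i,+}-\mathbf{x}_{i,-}\|<\alpha$, and then rearrange the resulting angular inequality into the $\tfrac{1}{\tan\alpha}$-Lipschitz bound. Your computation $2\eta+2\tilde r=\alpha/2$ is in fact the exact value (the paper only records the weaker bound $<\alpha$, then uses $4\tilde r+4\eta=\alpha$ for the distance between the two boundary points), but the argument is otherwise identical.
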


\begin{proof}
Let $\eta = g^{-1}(\varepsilon)$ and using \eqref{eqn_i}, we have $\eta < \frac{\varepsilon}{32}$ so we can set $r = \frac{1}{2}\sqrt{ 4(\varepsilon - \eta)^{2} - (\varepsilon + \eta)^{2}}$ and $\alpha = f^{-1}_{\eta}(\varepsilon)$, but we also have $\tilde{r} := \frac{1}{4} \alpha - \eta > 0 $. We consider any $(\mathbf{x_{+}'},\mathbf{x_{-}'}) \in \overline{D_{\tilde{r}}}(\mathbf{0'}) \times \overline{D_{\tilde{r}}}(\mathbf{0'})  $. Using \eqref{eqn_f}-\eqref{eqn_i}, we get $\tilde{r} < \frac{1}{4} f_{\eta}^{-1}(\varepsilon) < \frac{\varepsilon}{12}  < \frac{1}{2} [ 4 (\frac{31 \varepsilon}{32})^{2} - (\frac{33 \varepsilon}{32})^{2} ]^{\frac{1}{2}} < r$. From Proposition \ref{prop_parametrisation_locale_i}, we can define $\mathbf{x}_{i}^{\pm}: = (\mathbf{x_{\pm}'},\varphi_{i}(\mathbf{x_{\pm}'})) \in \partial \Omega_{i}$. Then, we show that $\mathbf{x}_{i}^{\pm} \in \partial \Omega_{i} \cap B_{\alpha}(\mathbf{x}_{0}) \cap B_{\alpha}(\mathbf{x}_{i}^{\mp})$. Relation \eqref{eqn_h} ensures that $ \Vert \mathbf{x}_{i}^{\pm} - \mathbf{x}_{0} \Vert < 2 \Vert \mathbf{x_{\pm}'} \Vert + 2 \eta \leqslant 2 \tilde{r} + 2 \eta < \alpha$ and the triangle inequality gives $\Vert \mathbf{x}_{i}^{+} - \mathbf{x}_{i}^{-} \Vert \leqslant \Vert \mathbf{x}_{i}^{+} - \mathbf{x}_{0} \Vert  + \Vert \mathbf{x}_{0} - \mathbf{x}_{i}^{-} \Vert < 4 \tilde{r} + 4 \eta = \alpha  $. Finally, we apply Proposition \ref{prop_alpha_cone_i} to $\mathbf{x}_{i}^{\pm} \in \partial \Omega_{i} \cap B_{\alpha}(\mathbf{x}_{0})$, which cannot belong to the cone $C_{\alpha}(\mathbf{x}_{i}^{\mp}, - \mathbf{d}_{\mathbf{x}_{0}}) \subseteq \Omega_{i}$. Hence, we obtain:
\[ \vert \langle \mathbf{x}_{i}^{+} - \mathbf{x}_{i}^{-} ~\vert~ \mathbf{d}_{\mathbf{x}_{0}} \rangle \vert \leqslant \cos \alpha \Vert \mathbf{x}_{i}^{+} - \mathbf{x}_{i}^{-} \Vert =  \cos \alpha \sqrt{ \Vert \mathbf{x_{+}'} - \mathbf{x_{-}'} \Vert^{2} +  \vert \langle \mathbf{x}_{i}^{+} - \mathbf{x}_{i}^{-} ~\vert~ \mathbf{d}_{\mathbf{x}_{0}} \rangle \vert^{2} }.  \] 
Re-arranging the above inequality, we deduce that the map $\varphi_{i}$ is $L$-Lipschitz continuous with $L > 0$ depending only on $\varepsilon$ as required: $\vert \varphi_{i}(\mathbf{x_{+}'}) - \varphi_{i}(\mathbf{x_{-}'}) \vert =  \vert \langle \mathbf{x}_{i}^{+} - \mathbf{x}_{i}^{-} ~\vert~ \mathbf{d}_{\mathbf{x}_{0}} \rangle \vert \leqslant \frac{1}{\tan \alpha} \Vert \mathbf{x_{+}'} - \mathbf{x_{-}'} \Vert $.
\end{proof}

\begin{proposition}
\label{prop_regularite_cunun_i}
We set $\tilde{r} = \frac{1}{4} f^{-1}_{g^{-1}(\varepsilon)}(\varepsilon) - g^{-1}(\varepsilon) $, where $f$ and $g$ are defined in Lemmas \ref{lemme_alpha_cone_i} and \ref{lemme_lipschitz_i}. Then, the restriction to $D_{\tilde{r}}(\mathbf{0'})$ of the map $\varphi_{i}$ defined in Proposition \ref{prop_parametrisation_locale_i} is differentiable:
\[ \forall \mathbf{a'} \in D_{\tilde{r}}(\mathbf{0'}), \quad \nabla \varphi_{i}(\mathbf{a'}) =  \dfrac{-1}{\langle \mathbf{d}_{\mathbf{a}_{i}}~\vert~ \mathbf{d}_{\mathbf{x}_{0}} \rangle } \mathbf{d}_{\mathbf{a}_{i}}' \qquad \mathrm{where} \quad \mathbf{a}_{i} := (\mathbf{a'},\varphi_{i}(\mathbf{a'})). \]
Moreover, $\nabla \varphi_{i}:  D_{\tilde{r}}(\mathbf{0'}) \rightarrow \mathbb{R}^{n-1}$ is $L$-Lipschitz continuous with $L > 0$ depending only on $\varepsilon$.  
\end{proposition}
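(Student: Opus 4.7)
The proof mirrors that of Proposition \ref{prop_regularite_cunun} applied to $\Omega_i$ in place of $\Omega$, with the linearization now performed at an arbitrary point $\mathbf{a}_i := (\mathbf{a'}, \varphi_i(\mathbf{a'}))$ of $\partial \Omega_i$ rather than at $\mathbf{x}_0 \in \partial \Omega$. The only genuine work lies in verifying that all the relevant constants are uniform in $i$ on the fixed disk $D_{\tilde{r}}(\mathbf{0'})$, which is exactly why $\eta = g^{-1}(\varepsilon)$ and $\tilde{r} = \frac{1}{4} f^{-1}_\eta(\varepsilon) - \eta$ were chosen as they were: Lemmas \ref{lemme_alpha_cone_i}--\ref{lemme_lipschitz_i} and Corollary \ref{coro_lipschitz_i} are tailored to make the forthcoming numerical bounds depend only on $\varepsilon$.

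I would fix $\mathbf{a'} \in D_{\tilde{r}}(\mathbf{0'})$ and $\mathbf{x'} \in \overline{D_{\tilde{r} - \Vert \mathbf{a'} \Vert}}(\mathbf{a'}) \subset \overline{D_{\tilde{r}}}(\mathbf{0'})$, then introduce $\mathbf{a}_i := (\mathbf{a'}, \varphi_i(\mathbf{a'}))$ and $\mathbf{x}_i := (\mathbf{x'}, \varphi_i(\mathbf{x'}))$, both of which lie in $\partial \Omega_i$ by Proposition \ref{prop_parametrisation_locale_i}. Since $\Omega_i \in \mathcal{O}_\varepsilon(B)$, Proposition \ref{prop_inegalites_locale_globale} applied to this pair gives $\vert \langle \mathbf{x}_i - \mathbf{a}_i \vert \mathbf{d}_{\mathbf{a}_i} \rangle \vert \leq \frac{1}{2\varepsilon} \Vert \mathbf{x}_i - \mathbf{a}_i \Vert^2$. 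Writing $\mathbf{d}_{\mathbf{a}_i} = (\mathbf{d}_{\mathbf{a}_i}', \mathbf{d}_{\mathbf{a}_i, n})$ in the frame of $\mathbf{x}_0$ and invoking the Lipschitz continuity of $\varphi_i$ from Corollary \ref{coro_lipschitz_i} to bound $\Vert \mathbf{x}_i - \mathbf{a}_i \Vert^2$ by $(1 + \frac{1}{\tan^2 \alpha}) \Vert \mathbf{x'} - \mathbf{a'} \Vert^2$, this becomes
\[ \bigl\vert \bigl( \varphi_i(\mathbf{x'}) - \varphi_i(\mathbf{a'}) \bigr) \mathbf{d}_{\mathbf{a}_i, n} + \langle \mathbf{d}_{\mathbf{a}_i}' ~\vert~ \mathbf{x'} - \mathbf{a'} \rangle \bigr\vert \leq C(\varepsilon) \Vert \mathbf{x'} - \mathbf{a'} \Vert^2, \]
for a constant $C(\varepsilon) > 0$ depending only on $\varepsilon$.

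The heart of the proof is then the uniform lower bound on $\mathbf{d}_{\mathbf{a}_i, n}$. Using the identity $\mathbf{d}_{\mathbf{a}_i, n} = 1 - \frac{1}{2} \Vert \mathbf{d}_{\mathbf{a}_i} - \mathbf{d}_{\mathbf{x}_0} \Vert^2$ together with \eqref{eqn_a} applied to $\mathbf{a}_i \in \partial \Omega_i$, and then the bound $\Vert \mathbf{a}_i - \mathbf{x}_0 \Vert < 2\eta + 2 \Vert \mathbf{a'} \Vert < 2\eta + 2 \tilde{r}$ from Corollary \ref{coro_inegalites_i} \eqref{eqn_h} (whose hypotheses $\Vert \mathbf{a'} \Vert \leq \varepsilon - \eta$ and $\vert \varphi_i(\mathbf{a'}) \vert \leq \varepsilon$ hold trivially here), the problem reduces to a numerical inequality. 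Substituting the explicit a priori bounds $\eta < \frac{\varepsilon}{32}$ and $\tilde{r} < \frac{\varepsilon}{12}$ provided by \eqref{eqn_f} and \eqref{eqn_i} delivers a universal constant $c > 0$ (depending only on $\varepsilon$) such that $\mathbf{d}_{\mathbf{a}_i, n} > c$ for every $i$ and every $\mathbf{a'} \in D_{\tilde{r}}(\mathbf{0'})$. Dividing the displayed inequality by $\mathbf{d}_{\mathbf{a}_i, n}$ then provides the first-order Taylor expansion of $\varphi_i$ at $\mathbf{a'}$, hence differentiability and the announced gradient formula.

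Finally, to establish the Lipschitz continuity of $\nabla \varphi_i$, I would argue as at the end of the proof of Proposition \ref{prop_regularite_cunun}: applying the explicit gradient formula at two points $\mathbf{a'}, \mathbf{x'} \in D_{\tilde{r}}(\mathbf{0'})$ reduces matters to controlling $\Vert \mathbf{d}_{\mathbf{a}_i} - \mathbf{d}_{\mathbf{x}_i} \Vert$ and $\vert \mathbf{d}_{\mathbf{a}_i, n} - \mathbf{d}_{\mathbf{x}_i, n} \vert$, the latter through a common denominator argument. Proposition \ref{prop_normale_lipschitzienne} applied to $\Omega_i$ gives $\Vert \mathbf{d}_{\mathbf{a}_i} - \mathbf{d}_{\mathbf{x}_i} \Vert \leq \frac{1}{\varepsilon} \Vert \mathbf{a}_i - \mathbf{x}_i \Vert$, and combining this with the Lipschitz bound from Corollary \ref{coro_lipschitz_i} and the uniform lower bound $\mathbf{d}_{\mathbf{a}_i, n} > c$ yields the desired $L$-Lipschitz estimate with $L$ depending only on $\varepsilon$. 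The main obstacle throughout is purely numerical bookkeeping: one must verify that the calibration of $\eta$ and $\tilde{r}$ in terms of $\varepsilon$ alone suffices to keep $\mathbf{d}_{\mathbf{a}_i, n}$ uniformly away from zero, which is precisely the point of the explicit constants fixed in Lemmas \ref{lemme_alpha_cone_i} and \ref{lemme_lipschitz_i}.
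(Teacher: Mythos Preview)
Your proposal is correct and follows essentially the same approach as the paper's proof: apply \eqref{eqn_inegalite_globale} to the pair $(\mathbf{a}_i,\mathbf{x}_i)\in\partial\Omega_i\times\partial\Omega_i$, use the Lipschitz bound from Corollary~\ref{coro_lipschitz_i} to get the Taylor-type inequality, establish a uniform positive lower bound on $(\mathbf{d}_{\mathbf{a}_i})_n$ via \eqref{eqn_a} and the numerical calibration $\eta<\tfrac{\varepsilon}{32}$, $\tilde r<\tfrac{\varepsilon}{12}$, and finish with Proposition~\ref{prop_normale_lipschitzienne} for the Lipschitz gradient. The only cosmetic difference is that the paper bounds $\Vert\mathbf{a}_i-\mathbf{x}_0\Vert$ directly via \eqref{eqn_b} (obtaining the explicit constant $(\mathbf{d}_{\mathbf{a}_i})_n>\tfrac{29}{32}$) whereas you invoke its corollary \eqref{eqn_h}; both routes yield an admissible uniform constant.
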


\begin{proof}
Let $\eta = g^{-1}(\varepsilon)$ and using \eqref{eqn_i}, we have $\eta < \frac{\varepsilon}{32}$ so we can set $r = \frac{1}{2}\sqrt{ 4(\varepsilon - \eta)^{2} - (\varepsilon + \eta)^{2}}$ and $\alpha = f^{-1}_{\eta}(\varepsilon)$, but we also have $\tilde{r} := \frac{1}{4} \alpha - \eta > 0 $. Let $\mathbf{a'} \in D_{\tilde{r}} (\mathbf{0'})$ and $\mathbf{x'} \in \overline{D_{\tilde{r} - \Vert \mathbf{a'} \Vert}}(\mathbf{a'})$. Hence, $(\mathbf{a'},\mathbf{x'}) \in D_{\tilde{r}}(\mathbf{0'}) \times D_{\tilde{r}}(\mathbf{0'})  $. Using \eqref{eqn_f}-\eqref{eqn_i}, we get $\tilde{r} < \frac{1}{4} f_{\eta}^{-1}(\varepsilon) < \frac{\varepsilon}{12}  < \frac{1}{2} [ 4 (\frac{31 \varepsilon}{32})^{2} - (\frac{33 \varepsilon}{32})^{2} ]^{\frac{1}{2}} < r$. From Proposition \ref{prop_parametrisation_locale_i}, we can define $\mathbf{x}_{i}^{\pm}: = (\mathbf{x_{\pm}'},\varphi_{i}(\mathbf{x_{\pm}'})) \in \partial \Omega_{i}$. Then, we apply \eqref{eqn_inegalite_globale} to $ \Omega_{i}$ thus:
\[ \vert \langle \mathbf{x}_{i} - \mathbf{a}_{i} ~\vert~ \mathbf{d}_{\mathbf{a}_{i}} \rangle \vert \leqslant \dfrac{1}{2 \varepsilon} \Vert \mathbf{x}_{i} - \mathbf{a}_{i} \Vert^{2} = \dfrac{1}{2 \varepsilon} \left( \Vert \mathbf{x'} - \mathbf{a'} \Vert^{2} + \vert \varphi_{i}( \mathbf{x'} )  - \varphi_{i} (\mathbf{a'}) \vert^{2} \right) \leqslant \underbrace{\dfrac{1}{2 \varepsilon} \left( 1 + \dfrac{1}{\tan^{2} \alpha} \right)}_{ : = C(\varepsilon) > 0} \Vert \mathbf{x'} - \mathbf{a'} \Vert^{2}, \]    
where we also used the Lipschitz continuity of $\varphi_{i}$ on $\overline{D_{\tilde{r}}}(\mathbf{0'})$ established in Corollary \ref{coro_lipschitz_i}. We note that $\mathbf{d}_{\mathbf{a}_{i}} = (\mathbf{d}_{\mathbf{a}_{i}}', (\mathbf{d}_{\mathbf{a}_{i}})_{n})$ where $(\mathbf{d}_{\mathbf{a}_{i}})_{n} = \langle \mathbf{d}_{\mathbf{a}_{i}} ~\vert~ \mathbf{d}_{\mathbf{x}_{0}} \rangle$. Hence, the above inequality takes the form:
\[ \vert \left( \varphi_{i}(\mathbf{x'}) - \varphi_{i} (\mathbf{a'} ) \right) (\mathbf{d}_{\mathbf{a}_{i}})_{n} + \langle \mathbf{d}_{\mathbf{a}_{i}}' ~\vert~ \mathbf{x'} - \mathbf{a'} \rangle \vert \leqslant C(\varepsilon) \Vert \mathbf{x'} - \mathbf{a'} \Vert^{2}.  \]
This last inequality is a first-order Taylor expansion of $\varphi_{i}$ if it can be divided by a uniform positive constant smaller than $(\mathbf{d}_{\mathbf{a}_{i}})_{n}$. Let us justify this last assertion. From \eqref{eqn_a} and \eqref{eqn_b}, we deduce:
\[ (\mathbf{d}_{\mathbf{a}_{i}})_{n} = 1 - \dfrac{1}{2} \Vert \mathbf{d}_{\mathbf{a}_{i}} - \mathbf{d}_{\mathbf{x}_{0}} \Vert^{2} \geqslant 1 - \dfrac{1}{2 \varepsilon^{2}} \Vert \mathbf{a}_{i} - \mathbf{x}_{0} \Vert^{2} - \dfrac{4 \varepsilon \eta - \eta^{2}}{ 2 \varepsilon^{2}} > \dfrac{1}{\varepsilon} \sqrt{(\varepsilon - \eta)^{2} - \Vert \mathbf{a'} \Vert^{2} } - \dfrac{\eta}{\varepsilon}.   \]
Then, Inequality \eqref{eqn_i} gives $ \frac{\eta}{\varepsilon}  < \frac{1}{32}$ and from \eqref{eqn_f}, it comes $\Vert \mathbf{a'} \Vert < \tilde{r} < \frac{\alpha}{4} < \frac{\varepsilon}{12}$. Consequently, we get $(\mathbf{d}_{\mathbf{a}_{i}})_{n} > [(\frac{31 }{32})^{2} - (\frac{1}{12})^{2} ]^{\frac{1}{2}} - \frac{1}{32} > \frac{29}{32}  $ and from the foregoing, we obtain:
\[ \forall \mathbf{x'} \in \overline{D_{\tilde{r} - \Vert \mathbf{a'} \Vert}}(\mathbf{a'}), \quad \begin{array}{|c|} 
\displaystyle{ \varphi_{i}(\mathbf{x'}) - \varphi_{i} (\mathbf{a'} )  + \left\langle \frac{\mathbf{d}_{\mathbf{a}_{i}}'}{(\mathbf{d}_{\mathbf{a}_{i}})_{n}}  ~\vert~ \mathbf{x'} - \mathbf{a'} \right\rangle } \\
\end{array} \leqslant \dfrac{32 C(\varepsilon)}{29} \Vert \mathbf{x'} - \mathbf{a'} \Vert^{2}.  \]
Therefore, $\varphi_{i}$ is differentiable at any point $\mathbf{a'} \in D_{\tilde{r}}(\mathbf{0'})$ with $\nabla \varphi_{i}(\mathbf{a'}) = - \mathbf{d}_{\mathbf{a}_{i}}' / (\mathbf{d}_{\mathbf{a}_{i}})_{n}$. Finally, we show that $\nabla \varphi_{i}: D_{\tilde{r}}(\mathbf{0'}) \rightarrow \mathbb{R}^{n-1}$ is Lipschitz continuous. Let $(\mathbf{x'},\mathbf{a'}) \in D_{\tilde{r}}(\mathbf{0'}) \times D_{\tilde{r}}(\mathbf{0'})  $. We have:
\[ \begin{array}{rcl}
\Vert \nabla \varphi_{i}(\mathbf{x'}) - \nabla \varphi_{i}(\mathbf{a'}) \Vert & \leqslant & \vert \frac{1}{(\mathbf{d}_{\mathbf{x}_{i}})_{n}} - \frac{1}{(\mathbf{d}_{\mathbf{a}_{i}})_{n}} \vert \Vert \mathbf{d}_{\mathbf{x}_{i}}^{'} \Vert + \frac{1}{(d_{\mathbf{a}_{i}})_{n}} \Vert \mathbf{d}_{\mathbf{a}_{i}}^{'} - \mathbf{d}_{\mathbf{x}_{i}}^{'} \Vert  \\
& & \\
 & \leqslant & \dfrac{32}{29} \left( \dfrac{32}{29} \vert (\mathbf{d}_{\mathbf{a}_{i}})_{n} - (\mathbf{d}_{\mathbf{x}_{i}})_{n} \vert + \Vert \mathbf{d}_{\mathbf{a}_{i}} - \mathbf{d}_{\mathbf{x}_{i}} \Vert \right) \\
&  & \\
& \leqslant & \dfrac{32}{29 \varepsilon} \left( 1 + \dfrac{32}{29} \right) \Vert \mathbf{x}_{i} - \mathbf{a}_{i} \Vert ~~ \leqslant ~~ \dfrac{32}{29 \varepsilon} \left( 1 + \dfrac{32}{29} \right) \sqrt{1 + \dfrac{1}{\tan^{2} \alpha}} \Vert \mathbf{x'} - \mathbf{a'} \Vert. \\
\end{array} \]
We used the fact that $(\mathbf{d}_{\mathbf{a}_{i}})_{n} < \frac{29}{32}$, the Lipschitz continuity of $\varphi_{i}$ proved in Corollary \ref{coro_lipschitz_i} and the one of the map $\mathbf{x}_{i} \in \partial \Omega_{i} \mapsto \mathbf{d}_{\mathbf{x}_{i}}$ coming from Proposition \ref{prop_normale_lipschitzienne} applied to $\Omega_{i} \in \mathcal{O}_{\varepsilon}(B)$. To conclude, $\nabla \varphi_{i}$ is an $L$-Lipschitz continuous map, where $L > 0$ depends only on $\varepsilon$.
\end{proof}

\begin{proof}[\textbf{Proof of Theorem \ref{thm_parametrisation_locale_i}}]
Set  $K  = \overline{D_{\frac{\tilde{r}}{2}}}(\mathbf{0'})$ where $\tilde{r}: = \frac{1}{4} f^{-1}_{g^{-1}(\varepsilon)}(\varepsilon) - g^{-1}(\varepsilon) $ is positive from \eqref{eqn_i}. From Propositions \ref{prop_parametrisation_locale_i}, \ref{prop_regularite_cunun_i} and Corollary \ref{coro_lipschitz_i}, we proved that each $\Omega_{i}$ is parametrized by a local graph $\varphi_{i}: K \rightarrow ]- \varepsilon,\varepsilon[ $ as in Theorem \ref{thm_parametrisation_locale_i}. Hence, it remains to prove the convergence of these graphs. Since the sequence $(\varphi_{i})_{i \in \mathbb{N}}$ is uniformly bounded and equi-Lipschitz continuous, from the Arzel\`{a}-Ascoli Theorem and up to a subsequence, it is converging to a continuous function $\tilde{\varphi}: K \rightarrow ]- \varepsilon,\varepsilon[$. Considering the local map $\varphi: K \rightarrow ]- \varepsilon,\varepsilon[ $ associated with $\partial \Omega$, we now show that $\varphi \equiv \tilde{ \varphi}$. Considering any $\mathbf{x'} \in K$, we set $\mathbf{x} = (\mathbf{x'},\tilde{\varphi}(\mathbf{x'}))$ and $\mathbf{x}_{i} = (\mathbf{x'},\varphi_{i}(\mathbf{x'}))$. There exists $\mathbf{y} \in \partial \Omega$ such that $d(\mathbf{x}_{i},\partial \Omega) = \Vert \mathbf{x}_{i} - \mathbf{y} \Vert$. Then, we have:
\[ \begin{array}{rcl} 
d(\mathbf{x},\partial \Omega) & \leqslant & \Vert \mathbf{x} - \mathbf{y} \Vert \leqslant \Vert \mathbf{x} - \mathbf{x}_{i} \Vert + \Vert \mathbf{x}_{i} - \mathbf{y} \Vert = \vert \varphi_{i}(\mathbf{x'}) - \tilde{\varphi}(\mathbf{x'}) \vert + d(\mathbf{x_{i}},\partial \Omega)  \\
 & & \\
 & \leqslant & \Vert \varphi_{i} - \tilde{\varphi} \Vert_{C^{0}(K)} + d_{H}(\partial \Omega_{i},\partial \Omega). \\
 \end{array} \]
By letting $i \rightarrow + \infty$, we obtain $\mathbf{x} \in \partial \Omega \cap (K \times [- \varepsilon,\varepsilon]) $. Hence, Proposition \ref{prop_parametrisation_locale_i} gives $\mathbf{x} = (\mathbf{x'},\varphi(\mathbf{x'}))$ so $\varphi(\mathbf{x'}) = \tilde{\varphi}(\mathbf{x'})$ for any $\mathbf{x'} \in K$. This also show that $\varphi$ is the unique limit of any converging subsequence of $(\varphi_{i})_{i \in \mathbb{N}}$. Hence, the whole sequence $(\varphi_{i})_{i \in \mathbb{N}}$ is converging to $\varphi$ uniformly on $K$. Similarly, $(\nabla \varphi_{i})_{i \in \mathbb{N}}$ is uniformly bounded and equi-Lipschitz continuous, so it converges uniformly on $K$ to a map, which must be $\nabla  \varphi$ (use the convergence in the sense of distributions). To conclude, using \cite[Section 5.2.2]{HenrotPierre}, each coefficient of the Hessian matrix of $\varphi_{i}$ is uniformly bounded in $L^{\infty}(K)$. Hence \cite[Lemma 2.2.27]{HenrotPierre}, each of them weakly-star converges in $L^{\infty}(K)$ to the one of $\varphi$.
\end{proof}

%%%%%%%%%%%%%%%%%%%%%%%%%%%%%%%%%%%%%%%%%%%%%%%%%%%%%%%%%%%%%%%%%%%%%%%%%%%%%%%%%%%%%%%%%%%%%%%%%%%%%%%%%%%%%%%%%%%%%
\section{Continuity of some geometric functionals in the class $\mathcal{O}_{\varepsilon}(B)$}
\label{section_continuite}
In this section, we prove that the convergence properties and the uniform $C^{1,1}$-regularity of the class $\mathcal{O}_{\varepsilon}(B)$ ensure the continuity of some geometric functionals. More precisely, with a suitable partition of unity, we show how to use the local convergence results of Theorem \ref{thm_parametrisation_locale_i} to obtain the global continuity of linear integrals in the elementary symmetric polynomials of the principal curvatures. Throughout this section, we make the following hypothesis.

\begin{assumption}
\label{hypothese_continuite}
We assume that $(\Omega_{i})_{i \in \mathbb{N}}$ is a sequence of elements from $\mathcal{O}_{\varepsilon}(B)$ converging to $\Omega \in \mathcal{O}_{\varepsilon}(B)$ in the sense of Proposition \ref{prop_compacite_boule} (i)-(vi), where $\varepsilon $ and $B$ are as in Proposition \ref{prop_compacite_boule}.
\end{assumption}

\begin{remark}
Note that in this section, the proofs are based on the results of Theorem \ref{thm_parametrisation_locale_i}, so we only need to assume Points (ii) and (v) of Proposition \ref{prop_compacite_boule} in the Assumption \ref{hypothese_continuite} (see Remark \ref{remarque_hypothese_faible_i}).
\end{remark}

\begin{definition}
\label{definition_convergence_diagonale}
Let $ f$, $ (f_{i})_{i \in \mathbb{N}} : E \to F$ be some continuous maps between two metric spaces. We say that $(f_{i})_{i \in \mathbb{N}}$ diagonally converges to $f$ if for any sequence $(t_{i})_{i \in \mathbb{N}}$ converging to $t$ in $E$, the sequence $(f_{i}(t_{i}))_{i \in \mathbb{N}}$ converges to $f(t)$ in $F$. 
\end{definition}

\begin{remark}
Note that the uniform convergence implies the diagonal convergence implying itself the pointwise convergence. Conversely, any sequence of equi-continuous maps converging pointwise is diagonally convergent. Moreover, from the Arzel\`{a}-Ascoli Theorem, it is uniformly convergent if in addition, it is uniformly bounded.
\end{remark}

The section is organized as follows. First, we recall the basic notions related to the geometry of hypersurfaces. Then, we study the continuity of functionals which depend on the position and the normal. Next, we consider linear functionals in the scalar mean curvature. Finally, we treat the case of the Gaussian curvature in $\mathbb{R}^{3}$ and we prove in  $\mathbb{R}^{n}$ the following continuity result. 

\begin{theorem}
\label{thm_continuite_rn}
Let $\varepsilon, B, \Omega$, $(\Omega_{i})_{i \in \mathbb{N}}$ be as in Assumption \ref{hypothese_continuite}. We consider some continuous maps $j^{l}, j^{l}_{i} : \mathbb{R}^{n} \times \mathbb{S}^{n-1} \rightarrow \mathbb{R} $ such that each sequence $(j^{l}_{i})_{i \in \mathbb{N}}$ is uniformly bounded on $\overline{B} \times \mathbb{S}^{n-1}$ and diagonally converges to $j^{l}$ for any $l \in \lbrace 0, \ldots, n-1 \rbrace$. Then, the following functional is continuous:
\[ J  \left( \partial \Omega_{i} \right) : = \sum_{l = 0}^{n-1} \int_{\partial \Omega_{i}} \left[ \sum_{1 \leqslant n_{1} < \ldots < n_{l} \leqslant n-1} \kappa_{n_{1}}^{\partial \Omega_{i}} \left( \mathbf{x} \right) \ldots \kappa^{\partial \Omega_{i}}_{n_{l}} \left( \mathbf{x} \right) \right] j^{l}_{i} \left[ \mathbf{x}, \mathbf{n}^{\partial \Omega_{i}} \left( \mathbf{x} \right) \right] dA \left( \mathbf{x} \right) \underset{i \rightarrow +  \infty}{\longrightarrow} J(\partial \Omega), \]
where $\kappa_{1}, \ldots \kappa_{n-1}$ are the principal curvatures, $\mathbf{n}$ the unit outer normal field to the hypersurface, and where the integration is done with respect to the $(n-1)$-dimensional Hausdorff measure $A(.)$.
\end{theorem}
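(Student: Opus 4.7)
The plan is to localize the integral via a finite open cover of $\partial \Omega$ and a subordinate partition of unity, then to pass to the limit chart-by-chart by representing each boundary piece as a graph through Theorem \ref{thm_parametrisation_locale_i}. Since $\partial \Omega \subset \overline B$ is compact, I would apply that theorem at each of its points and extract a finite subcover $(U_k)_{k=1}^N$. By the Hausdorff convergence $\partial \Omega_i \to \partial \Omega$ furnished by Proposition \ref{prop_compacite_boule}~(ii), the same family $(U_k)$ will contain $\partial \Omega_i$ for all $i$ large enough, and a smooth partition of unity $(\xi_k)$ subordinate to $(U_k)$ then reduces the problem to proving, for each $k$ and each $l \in \{0, \ldots, n-1\}$, that
\[
\int_{\partial \Omega_i \cap U_k} \sigma_l\bigl[\kappa^{\partial \Omega_i}(\mathbf{x})\bigr]\, j^l_i\bigl[\mathbf{x},\mathbf{n}^{\partial \Omega_i}(\mathbf{x})\bigr]\, \xi_k(\mathbf{x})\, dA(\mathbf{x})
\]
converges to its analogue on $\partial \Omega$, where $\sigma_l$ stands for the $l$-th elementary symmetric polynomial.

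In the graph parametrization of Theorem \ref{thm_parametrisation_locale_i}, the localized integral becomes an integral over $\overline{D_{\tilde r}}(\mathbf{0}')$ of a product $P_l\bigl(\nabla \varphi_i, \nabla^2 \varphi_i\bigr)\, G^{k,l}_i(\mathbf{x}', \varphi_i, \nabla \varphi_i)$, in which $G^{k,l}_i$ gathers $\xi_k$ together with $j^l_i$ evaluated at the local point and at the normal $\nu(\nabla \varphi_i) = (-\nabla \varphi_i, 1)/\sqrt{1+|\nabla \varphi_i|^2}$, whereas $P_l$ absorbs $\sigma_l$ of the shape operator together with the area element $\sqrt{1+|\nabla \varphi_i|^2}$. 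The factor $P_l$ is a polynomial of total degree $l$ in the entries of $\nabla^2 \varphi_i$ whose coefficients are smooth functions of $\nabla \varphi_i$. By the $C^1$-convergence $\varphi_i \to \varphi$ of Theorem \ref{thm_parametrisation_locale_i}, together with the uniform boundedness and diagonal convergence assumed on $(j^l_i)$, the factor $G^{k,l}_i$ converges uniformly on the closed disk. For $l = 0$ one has $P_0 \equiv 1$, so the convergence is immediate. For $l = 1$, the polynomial $P_1$ is linear in $\nabla^2 \varphi_i$, and the $W^{2,\infty}$-weak-$\ast$ convergence $\nabla^2 \varphi_i \rightharpoonup \nabla^2 \varphi$ paired with the uniform convergence of the coefficient $G^{k,1}_i$ is enough to conclude.

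The hard part will be the case $l \geq 2$, since $P_l$ is then genuinely nonlinear in the Hessian and weak-$\ast$ convergence does not commute with nonlinearities. To overcome this I would exploit the classical divergence (null-Lagrangian) structure enjoyed by the elementary symmetric polynomials of the Hessian, which rests on the Piola-type identity for principal minors of a gradient field and allows one to write
\[
P_l(\nabla \varphi, \nabla^2 \varphi) = \mathrm{div}_{\mathbf{x}'}\, V_l(\nabla \varphi, \nabla^2 \varphi),
\]
where every component of $V_l$ is a polynomial in $\nabla^2 \varphi$ of degree at most $l-1$ with coefficients smooth in $\nabla \varphi$. Integrating by parts against the compactly supported factor $G^{k,l}_i$ transfers one derivative onto this coefficient, whose gradient still converges uniformly thanks to the $C^1$-strong convergence of $\varphi_i$, and produces an integrand that is polynomial of degree only $l-1$ in $\nabla^2 \varphi_i$ with strongly convergent coefficients. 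Iterating this reduction $l-1$ times brings the problem back to the linear case already handled; summing over $k$ and $l$ then yields $J(\partial \Omega_i) \to J(\partial \Omega)$. The paper's separate treatment of the Gaussian curvature case in $\mathbb{R}^3$ is precisely the first instance ($n=3$, $l=2$) of this reduction and captures the core of the difficulty.
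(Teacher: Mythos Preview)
Your localization via a partition of unity and your handling of the cases $l=0,1$ match the paper. The gap is in your treatment of $l\geqslant 2$: the integration-by-parts step against $G^{k,l}_i$ does not go through. First, $G^{k,l}_i(\mathbf{x}')=\xi_k(X_i(\mathbf{x}'))\,j^l_i[X_i(\mathbf{x}'),\nu(\nabla\varphi_i(\mathbf{x}'))]$ depends on $j^l_i$, which is only assumed continuous, so $\nabla G^{k,l}_i$ need not exist. Second, even if $j^l_i$ were smooth, differentiating $\nu(\nabla\varphi_i)$ produces a factor of $\nabla^2\varphi_i$; hence the product $V_l\cdot\nabla G^{k,l}_i$ is again of degree $l$ in the Hessian, not $l-1$, and its ``coefficients'' do not converge uniformly but only weak-$*$. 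Your iteration therefore never gets off the ground.

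A repair along your lines is possible but requires a different organization: one should first establish that $P_l(\nabla\varphi_i,\nabla^2\varphi_i)\rightharpoonup^{*}P_l(\nabla\varphi,\nabla^2\varphi)$ in $L^\infty$ by testing against $C^\infty_c$ functions (where the divergence structure and iterated integration by parts are legitimate), and only afterwards pair this weak-$*$ limit with the $L^1$-strongly converging factor $G^{k,l}_i$. The paper achieves exactly this weak-$*$ convergence of the curvature polynomials, but by a different mechanism: it invokes Tartar's compensated-compactness lemma (Proposition~\ref{prop_tartar}), using the Codazzi--Mainardi equations \eqref{expression_codazzi_mainardi_equation} to show that the relevant first-order differential constraints on the Weingarten coefficients $(h_{pq})$ lie in a compact set of $H^{-1}$, and identifying the wave cone as the set of $(n-1)\times(n-1)$ matrices of rank $\leqslant 1$, on which every $2\times 2$ minor vanishes. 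This gives weak-$*$ convergence of all second-order minors of $(h_{pq})$, and an inductive application of the same idea (now to the minors themselves) yields the weak-$*$ convergence of $H^{(l)}\circ X^k_i$ for every $l$; the continuous factor $G^{k,l}_i$ is then handled by dominated convergence with no differentiability needed.
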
  

\begin{remark}
In the specific case of compact $C^{1,1}$-hypersurfaces, note that the above theorem is stronger than Federer's one on sets of positive reach \cite[Theorem 5.9]{Federer}. Indeed, in Theorem \ref{thm_continuite_rn}, taking $j^{l}_{i}(\mathbf{x},\mathbf{n}(\mathbf{x})) = j^{l}(\mathbf{x})$ yields to the convergence of the curvature measures associated with $\partial \Omega_{i}$ to the ones of $\partial \Omega$ in the sense of Radon measures.
\end{remark}

\subsection{On the geometry of hypersurfaces with $C^{1,1}$-regularity}
\label{section_geometrie}
Let us consider a non-empty compact $C^{1,1}$-hypersurface $\mathcal{S} \subset \mathbb{R}^{n}$. Merely speaking, for any point $\mathbf{x}_{0} \in \mathcal{S}$, there exists $r_{\mathbf{x}_{0}} > 0$, $ a_{\mathbf{x}_{0}} > 0$, and a unit vector $\mathbf{d}_{\mathbf{x}_{0}}$ such that in the cylinder defined by:
\begin{equation}
\label{eqn_cylinder}
\mathcal{C}_{r_{\mathbf{x}_{0}},a_{\mathbf{x}_{0}}}(\mathbf{x}_{0}) = \left\lbrace \mathbf{x} \in \mathbb{R}^{n}, \quad \vert \langle \mathbf{x} - \mathbf{x}_{0} ~\vert~ \mathbf{d}_{\mathbf{x}_{0}} \rangle \vert < a_{\mathbf{x}_{0}} ~\mathrm{and}~ \Vert (\mathbf{x} - \mathbf{x}_{0}) - \langle \mathbf{x} - \mathbf{x}_{0} ~\vert~ \mathbf{d}_{\mathbf{x}_{0}} \rangle \mathbf{d}_{\mathbf{x}_{0}} \Vert < r_{\mathbf{x}_{0}} \right\rbrace,  
\end{equation}
the hypersurface $\mathcal{S}$ is the graph of a $C^{1,1}$-map. Introducing the orthogonal projection on the affine hyperplane $\mathbf{x}_{0} + \mathbf{d}_{\mathbf{x}_{0}}^{\perp}$:
\begin{equation} 
\label{eqn_projection}
\begin{array}{rrcl} 
\Pi_{\mathbf{x}_{0}} : & \mathbb{R}^{n} & \longrightarrow & \mathbf{x}_{0} + \mathbf{d}_{\mathbf{x}_{0}}^{\perp} \\
& \mathbf{x} & \longmapsto & \mathbf{x} - \langle \mathbf{x} - \mathbf{x}_{0} ~\vert~ \mathbf{d}_{\mathbf{x}_{0}} \rangle \mathbf{d}_{\mathbf{x}_{0}}, \\
\end{array}  
\end{equation}
and considering the set $D_{r_{\mathbf{x}_{0}}}(\mathbf{x}_{0}) = \Pi_{\mathbf{x}_{0}} ( \mathcal{C}_{r_{\mathbf{x}_{0}},a_{\mathbf{x}_{0}}}(\mathbf{x}_{0})) $, this means that there exists a continuously differentiable map $\varphi_{\mathbf{x}_{0}} : \mathbf{x'} \in D_{r_{\mathbf{x}_{0}}}(\mathbf{x}_{0}) \mapsto  \varphi_{\mathbf{x}_{0}}(\mathbf{x'})  \in  ]-a_{\mathbf{x}_{0}},a_{\mathbf{x}_{0}}[  $ such that its gradient $ \nabla \varphi_{\mathbf{x}_{0}}$ and $\varphi_{\mathbf{x}_{0}}$ are $L_{\mathbf{x}_{0}}$-Lipschitz continuous maps, and such that:
\[ \mathcal{S} \cap C_{r_{\mathbf{x}_{0}},a_{\mathbf{x}_{0}}}(\mathbf{x}_{0}) = \lbrace \mathbf{x'} + \varphi_{\mathbf{x}_{0}}(\mathbf{x'}) \mathbf{d}_{\mathbf{x}_{0}}, \quad \mathbf{x'} \in D_{r_{\mathbf{x}_{0}}}(\mathbf{x}_{0}) \rbrace. \]
Hence, we can introduce the local parametrization:
\[ \begin{array}{rrcl}
X_{\mathbf{x}_{0}} : &  D_{r_{\mathbf{x}_{0}}}(\mathbf{x}_{0}) & \longrightarrow & \mathcal{S} \cap C_{r_{\mathbf{x}_{0}},a_{\mathbf{x}_{0}}}(\mathbf{x}_{0}) \\
& \mathbf{x'} & \longmapsto & \mathbf{x'} + \varphi_{\mathbf{x}_{0}}(\mathbf{x'}) \mathbf{d}_{\mathbf{x}_{0}}  \\
\end{array} \]
and $ \mathcal{S}$ is a $C^{1,1}$-hypersurface in the sense of \cite[Definition 2.2]{MontielRos}. Indeed, $X_{\mathbf{x}_{0}}$ is an homeomorphism, its inverse map is the restriction of $\Pi_{\mathbf{x}_{0}}$ to $ C_{r_{\mathbf{x}_{0}},a_{\mathbf{x}_{0}}}(\mathbf{x}_{0}) $, and $X_{\mathbf{x}_{0}}$ is an immersion of class $C^{1,1}$.
\bigskip

We usually drop the dependence in $\mathbf{x}_{0}$ to lighten the notation, and consider a direct orthonormal frame $(\mathbf{x}_{0},\mathcal{B}_{\mathbf{x}_{0}},\mathbf{d}_{\mathbf{x}_{0}})$ where $\mathcal{B}_{\mathbf{x}_{0}}$ is a basis of $\mathbf{d}_{\mathbf{x}_{0}}^{\perp}$. In this local frame, the point $\mathbf{x}_{0}$ is identified with the zero vector $\mathbf{0} \in \mathbb{R}^{n} $, the affine hyperplane $\mathbf{x}_{0} + \mathbf{d}_{\mathbf{x}_{0}}^{\perp}$ with $\mathbb{R}^{n-1}$ and $\mathbf{x}_{0} + \mathbb{R} \mathbf{d}_{\mathbf{x}_{0}}$ with $\mathbb{R}$. Hence, the cylinder $\mathcal{C}_{r_{\mathbf{x}_{0}},a_{\mathbf{x}_{0}}}(\mathbf{x}_{0}) $ becomes $D_{r}(\mathbf{0'}) \times ]-a,a[$, $\varphi_{\mathbf{x}_{0}}$ is the $C^{1,1}$-map $\varphi : D_{r}(\mathbf{0'}) \rightarrow ]-a,a[$, the projection $\Pi_{\mathbf{x}_{0}}$ is $ X^{-1}: (\mathbf{x'},x_{n}) \mapsto \mathbf{x'}$, and the parametrization $X_{\mathbf{x}_{0}}$ becomes the $C^{1,1}$-map $X : \mathbf{x'} \in D_{r}(\mathbf{0'}) \mapsto (\mathbf{x'},\varphi(\mathbf{x'})) \in \mathcal{S} \cap (D_{r}(\mathbf{0'}) \times ]-a,a[) $. In this setting, $\mathcal{S}$ is a $C^{1,1}$-hypersurface in the sense of Definition \ref{definition_regularite_cunun}.
\bigskip

Since $\mathbf{x'} \in D_{r}(\mathbf{0'}) \mapsto D_{\mathbf{x'}} X$ is injective, the vectors $\partial_{1} X$, $\ldots$, $\partial_{n-1} X$ are linearly independent. For any point $\mathbf{x} \in \mathcal{S} \cap (D_{r}(\mathbf{0'}) \times ]-a,a[)$, we define the tangent hyperplane $T_{\mathbf{x}} \mathcal{S}$ by $ D_{X^{-1}(\mathbf{x})} X (\mathbb{R}^{n-1})$. It is an $(n-1)$-dimensional vector space so $(\partial_{1} X$, $\ldots$, $\partial_{n-1} X)$ forms a basis of $T_{\mathbf{x}} \mathcal{S}$. However, this basis is not necessarily orthonormal. Consequently, the first fundamental form of $\mathcal{S}$ at $\mathbf{x}$ is defined as the restriction of the usual scalar product in $\mathbb{R}^{n}$ to the tangent hyperplane $T_{\mathbf{x} } \mathcal{S}$, i.e. as $\mathrm{\mathbf{I}}(\mathbf{x}): (\mathbf{v},\mathbf{w}) \in T_{\mathbf{x}} \mathcal{S} \times T_{\mathbf{x}} \mathcal{S}  \mapsto \langle \mathbf{v} ~\vert~ \mathbf{w} \rangle$. In the basis $(\partial_{1}X, \ldots , \partial_{n-1}X)$, it is represented by a positive-definite symmetric matrix usually referred to as $(g_{ij})_{1 \leqslant i,j\leqslant n-1}$ and its inverse denoted by $(g^{ij})_{1 \leqslant i,j \leqslant n-1}$ is also explicitly given in this case:
\begin{gather}
g_{ij}  = \left\langle \partial_{i} X ~\vert~ \partial_{j} X \right\rangle  =  \delta_{ij} + \partial_{i} \varphi \partial_{j} \varphi,   \label{expression_premiere_forme_fondamentale} \\
g^{ij} = \delta_{ij} - \dfrac{\partial_{i} \varphi \partial_{j} \varphi}{1+ \Vert \nabla \varphi \Vert^{2}}. \label{expression_inverse_premiere_forme_fondamentale} 
\end{gather}
As a function of $\mathbf{x'}$, note that each coefficient of these two matrices is Lipschitz continuous so it is a $W^{1,\infty}$-map \cite[Section 4.2.3]{EvansGariepy}, and from Rademacher's Theorem \cite[Section 3.1.2]{EvansGariepy}, its differential exists almost everywhere. Moreover, any $\mathbf{v} \in T_{\mathbf{x}}\mathcal{S}$ can be decomposed in the basis $(\partial_{1} X, \ldots, \partial_{n-1}X)$. Denoting by $V_{i}$ the component of $\partial_{i}X$ and $v_{i} = \langle \mathbf{v} ~\vert~ \partial_{i} X \rangle$, we have:
\begin{equation}
\label{eqn_composantes}
\mathbf{v} = \sum_{i = 1}^{n-1} V_{i} \partial_{i} X ~ \Longrightarrow ~v_{j} = \sum_{i = 1}^{n} V_{i} g_{ij} ~\Longrightarrow~ V_{i} = \sum_{j = 1}^{n-1} g^{ij}v_{j} ~\Longrightarrow~ \mathbf{v} =  \sum_{i = 1}^{n-1} \left( \sum_{j =1}^{n-1} g^{ij} v_{j} \right) \partial_{i} X.
\end{equation}    
In particular, we deduce $\mathrm{\mathbf{I}}(\mathbf{v},\mathbf{w}) = \sum_{i,j = 1}^{n-1} g^{ij} v_{i} w_{j}$. Then, the orthogonal of the tangent hyperplane is one dimensional. Hence, there exists a unique unit vector $\mathbf{n}$ orthogonal to the $(n-1)$ vectors $\partial_{1} X$, $\ldots$, $\partial_{n-1} X$ and pointing outwards the inner domain of $\mathcal{S}$ i.e. $\mathrm{det}(\partial_{1} X$, $\ldots$, $\partial_{n-1} X,\mathbf{n} ) > 0$. It is called the unit outer normal to the hypersurface and we have its explicit expression:
\begin{equation}
\label{expression_normale} 
\forall \mathbf{x'} \in D_{r}(\mathbf{0'}), \quad \mathbf{n}\circ X(\mathbf{x'} )  = \dfrac{1}{\sqrt{1 + \Vert \nabla \varphi (\mathbf{x'}) \Vert^{2}}} \left( \begin{array}{c} - \nabla \varphi(\mathbf{x'}) \\ 1 \\ \end{array} \right). 
\end{equation}
It is a Lipschitz continuous map, like the coefficients of the first fundamental form. In particular, it is differentiable almost everywhere and introducing the Gauss map $\mathbf{n}: \mathbf{x} \in \mathcal{S} \mapsto \mathbf{n}(\mathbf{x}) \in \mathbb{S}^{n-1}$, we can compute its differential almost everywhere called the Weingarten map:
\begin{equation}
\label{expression_weingarten_map}
\begin{array}{rrcl} 
D_{\mathbf{x}} \mathbf{n} : &  T_{\mathbf{x}} \mathcal{S} = D_{X^{-1}(\mathbf{x})}X(\mathbb{R}^{2}) & \longrightarrow & T_{\mathbf{n}(\mathbf{x})} \mathbb{S}^{n-1} = D_{X^{-1}(\mathbf{x})}(\mathbf{n} \circ X)(\mathbb{R}^{2})  \\
&  \mathbf{v} =  D_{X^{-1}(\mathbf{x})}X(\mathbf{w}) & \longmapsto & D_{\mathbf{x}} \mathbf{n}(\mathbf{v}) = D_{X^{-1}(\mathbf{x})}(\mathbf{n} \circ X)(\mathbf{w}). \\
 \end{array}  
\end{equation}
Note that $T_{\mathbf{n}(\mathbf{x})} \mathbb{S}^{n-1} = D_{X^{-1}(\mathbf{x})}(\mathbf{n} \circ X)(\mathbb{R}^{2})$ because $\mathbf{n} \circ X$ is a Lipschitz parametrization of $\mathbb{S}^{n-1}$. Since $T_{\mathbf{n}(\mathbf{x})} \mathbb{S}^{n-1} \sim \mathbf{n}(\mathbf{x})^{\perp} $ can be identified with $ T_{\mathbf{x}} \mathcal{S} $, the map $D_{\mathbf{x}} \mathbf{n}$ is an endomorphism of $T_{\mathbf{x}} \mathcal{S}$. Moreover, one can prove it is self-adjoint so it can be diagonalized to obtain $n-1$ eigenvalues denoted by $\kappa_{1}(\mathbf{x})$, $\ldots$, $\kappa_{n-1}(\mathbf{x})$ and called the principal curvatures. Recall that the eigenvalues of an endomorphism do not depend on the chosen basis and thus are really properties of the operator. This assertion also holds for the coefficients of the characteristic polynomial associated with $D_{\mathbf{x}} \mathbf{n}$ so we can introduce them:
\begin{equation} 
\label{expression_symmetric_polynomial_curvature}
\forall l \in \lbrace 0, \ldots, n-1 \rbrace, \quad H^{(l)}(\mathbf{x}) = \sum_{1 \leqslant n_{1} < \ldots < n_{l} \leqslant n-1} \kappa_{n_{1}} \left( \mathbf{x} \right) \ldots \kappa_{n_{l}} \left( \mathbf{x} \right).  
\end{equation}
In particular, $H^{(0)} = 1$, $H^{(1)} = H $ is called the scalar mean curvature, and $H^{(n-1)} = K$ refers to the Gaussian curvature:
\begin{equation}
\label{expression_scalar_mean_curvature} 
H(\mathbf{x}) = \kappa_{1}(\mathbf{x}) + \ldots + \kappa_{n-1}(\mathbf{x}) \qquad \mathrm{and} \qquad K(\mathbf{x}) = \kappa_{1}(\mathbf{x}) \kappa _{2}(\mathbf{x})\ldots \kappa_{n-1}(\mathbf{x}).  
\end{equation}
Moreover, introducing the symmetric matrix $(b_{ij})_{1 \leqslant i,j \leqslant n-1}$ defined by:
\begin{equation}
\label{expression_seconde_forme_fondamentale} 
b_{ij} =  - \langle D \mathbf{n} (\partial_{i} X) ~\vert~ \partial_{j} X \rangle= - \left\langle \partial_{i} (\mathbf{n} \circ X) ~\vert~ \partial_{j} X \right\rangle =  \dfrac{ \mathrm{Hess} ~ \varphi}{ \sqrt{1  + \Vert \nabla \varphi \Vert^{2}}} = \left\langle  \mathbf{n} \circ X  ~\vert~  \partial_{ij} X  \right\rangle ,
\end{equation}
we get from \eqref{eqn_composantes} that the Weingarten map $D \mathbf{n}$ is represented in the local basis $(\partial_{1}X, \ldots, \partial_{n-1}X)$ by the following symmetric matrix:
\begin{equation} 
\label{expression_weingarten_map_local_def}
(h_{ij})_{1 \leqslant i,j \leqslant n-1} = \left( - \sum_{k=1}^{n-1} g^{ik}b_{kj} \right) =  \left( - \sum_{k = 1}^{n-1} \left( \delta_{ik} - \dfrac{\partial_{i} \varphi \partial_{j} \varphi}{1 + \Vert \nabla \varphi \Vert^{2}} \right) \dfrac{\partial_{kj} \varphi}{\sqrt{1 + \Vert \nabla \varphi\Vert^{2}}} \right) . 
\end{equation}
Finally, we introduce the symmetric bilinear form whose representation in the local basis is $(b_{ij})$. It is called the second fundamental form of the hypersurface and it is defined by:
\begin{equation}
\label{eqn_seconde_forme_fondamentale_def}
\begin{array}{rcl} 
\mathrm{\mathbf{II}}(\mathbf{x}) :  ~  T_{\mathbf{x}}(\mathcal{S}) \times T_{\mathbf{x}}  (\mathcal{S}) & \longrightarrow & \mathbb{R} \\
 (\mathbf{v},\mathbf{w}) & \longmapsto & \displaystyle{ \langle -D_{\mathbf{x}}\mathbf{n}(\mathbf{v}) ~\vert~ \mathbf{w} \rangle = \sum_{i,j,k,l = 1}^{n-1} g^{ij} v_{j} g^{kl} w_{l} b_{il} = \sum_{i,j,k = 1}^{n-1} g^{ij} v_{j} v_{k} h_{ki} } . \\
\end{array}
\end{equation} 
We can also decompose $\partial_{ij} X$ in the basis $(\partial_{1}X, \ldots, \partial_{n-1}X,\mathbf{n})$ and its coefficients in the tangent space are the Christoffel symbols:
\[ \partial_{ij} X =  \sum_{k=1}^{n-1} \Gamma^{k}_{ij} \partial_{k} X + b_{ij} \mathbf{n}   \]   
Note that the Christoffel symbols are symmetric with respect to the lower indices: $\Gamma_{ij}^{k} = \Gamma^{k}_{ji}$. They can be expressed only in terms of coefficients of the first fundamental form:
\begin{equation}
\label{expression_christoffel_symbols} 
\Gamma^{k}_{ij} = \dfrac{1}{2} \sum_{l = 1}^{n-1} g^{kl} \left( \partial_{j} g_{li} +  \partial_{i} g_{lj} - \partial_{l} g_{ij} \right). 
\end{equation}
Like the first fundamental form, it is an intrinsic notion, which in particular do not depend on the orientation chosen for the hypersurface, while the Gauss map, the Weingarten map, and the second fundamental form does. Note that in local coordinates, the coefficients of the first fundamental form and the Gauss map are Lipschitz continuous functions i.e. $ \mathbf
n \circ X, g_{ij}, g^{ij} \in W^{1,\infty}(D_{r}(\mathbf{0'}))$. Hence, the Christoffel symbols, the Weingarten map and the coefficients of the second fundamental form exist almost everywhere and $\Gamma^{k}_{ij}, b_{ij}, h_{ij} \in L^{\infty}(D_{r}(\mathbf{0'}))$. Furthermore, one can prove that a $C^{1,1}$-hypersurface satisfies the following relations in the sense of distributions, respectively called the Gauss and Codazzi-Mainardi equations:
\begin{gather}
 \partial_{l} \Gamma_{ij}^{k} -  \partial_{j} \Gamma_{il}^{k}  + \sum_{m=1}^{n-1} \left( \Gamma^{m}_{ij} \Gamma_{ml}^{k} - \Gamma_{il}^{m} \Gamma_{mj}^{k} \right) = \sum_{m=1}^{n-1} g^{km} \left( b_{ij} b_{ml}  - b_{il} b_{mj}  \right)  \label{expression_gauss_equation} \\
 \partial_{k} b_{ij} - \partial_{j} b_{ik} = \sum_{l = 1}^{n-1} \left( \Gamma_{ik}^{l} b_{lj} - \Gamma_{ij}^{l} b_{lk} \right) . \label{expression_codazzi_mainardi_equation}
\end{gather}
In fact, the converse statement is also true in $\mathbb{R}^{3}$: these equations characterize uniquely a surface and it is referred as the Fundamental Theorem of Surface Theory, valid with $C^{1,1}$-regularity \cite{Mardare}. Given a simply-connected open subset $\omega \subseteq \mathbb{R}^{2}$, a symmetric positive-definite matrix $(g_{ij})_{1 \leqslant i, j \leqslant 2} \in W^{1, \infty}(\omega)$ and a symmetric matrix $(b_{ij})_{1 \leqslant i,j \leqslant 2} \in L^{\infty}(\omega)$ satisfying \eqref{expression_gauss_equation} and \eqref{expression_codazzi_mainardi_equation} in the sense of distributions, then there exists an injective $C^{1,1}$-immersion $X : \omega \rightarrow \mathbb{R}^{3}$, unique up to proper isometries of $\mathbb{R}^{3}$, such that the surface $\mathcal{S}: = X(\omega)$ has $(g_{ij})$ and $(b_{ij})$ as coefficients of the first and second fundamental forms. To conclude, we recall that $A(.)$ (respectively $V(.)$) refers to the $n-1$(resp. $n$)-dimensional Hausdorff measure. The integration is always be done with respect to $A$ and the infinitesimal area element is given by $(dA \circ X) (\mathbf{x'}) = \sqrt{\mathrm{\mathrm{det}}(g_{ij})} d\mathbf{x'} = \sqrt{1 + \Vert  \nabla \varphi(\mathbf{x'}) \Vert^{2}} d\mathbf{x'}$. We refer to \cite{DoCarmo,MontielRos} for a more detailed exposition on all the notions quickly introduced here.

\subsection{Geometric functionals involving the position and the normal}  
\begin{proposition}
\label{prop_continuite_ordre_zero}
Under assumption \ref{hypothese_continuite}, for any continuous map $j: \mathbb{R}^{n} \times \mathbb{S}^{n-1} \rightarrow \mathbb{R}$, we have: 
\[ \lim_{i \rightarrow + \infty} \int_{\partial \Omega_{i}} j \left[ \mathbf{x},\mathbf{n} \left(\mathbf{x}\right) \right] dA \left( \mathbf{x} \right) = \int_{\partial \Omega} j \left[ \mathbf{x},\mathbf{n} \left(\mathbf{x}\right) \right] dA \left( \mathbf{x} \right). \]
In particular, the area and the volume are continuous: $A( \partial \Omega_{i} ) \longrightarrow A( \partial \Omega )$ and $V( \Omega_{i} ) \longrightarrow V( \Omega)$.
\end{proposition}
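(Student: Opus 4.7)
The plan is to reduce the global statement to a finite family of local integrals by means of a partition of unity, and then to pass to the limit in each local integral using the $C^{1}$-strong convergence of the graphs provided by Theorem \ref{thm_parametrisation_locale_i}.

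First, I would apply Theorem \ref{thm_parametrisation_locale_i} at every point $\mathbf{x}_{0}\in\partial\Omega$ so as to obtain, in a suitable local frame, an open cylinder $\mathcal{C}_{\mathbf{x}_{0}}=D_{\tilde{r}}(\mathbf{0'})\times ]{-}\varepsilon,\varepsilon[$ in which $\partial\Omega$ is the graph of a $C^{1,1}$-map $\varphi_{\mathbf{x}_{0}}$. Since $\partial\Omega$ is compact (as a closed subset of $\overline{B}$), I can extract a finite subcover $\mathcal{C}_{1},\ldots,\mathcal{C}_{K}$ of $\partial\Omega$, associated with points $\mathbf{x}_{0,1},\ldots,\mathbf{x}_{0,K}$, such that $\bigcup_{k}\mathcal{C}_{k}$ contains an open neighbourhood $N$ of $\partial\Omega$ in $\mathbb{R}^{n}$. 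By the Hausdorff convergence of $\partial\Omega_{i}$ to $\partial\Omega$ (Proposition \ref{prop_compacite_boule}(ii)) and by taking the maximum of the finitely many integers $I$ given by Theorem \ref{thm_parametrisation_locale_i} at each $\mathbf{x}_{0,k}$, there exists $I_{0}\in\mathbb{N}$ such that for any $i\geqslant I_{0}$: $\partial\Omega_{i}\subset N$ and, in the local frame centred at $\mathbf{x}_{0,k}$, the set $\partial\Omega_{i}\cap\mathcal{C}_{k}$ is the graph of a $C^{1,1}$-function $\varphi_{i,k}$ converging to $\varphi_{k}$ in $C^{1}(\overline{D_{\tilde{r}}}(\mathbf{0'}))$.

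Next, I would construct a smooth partition of unity $(\theta_{k})_{1\leqslant k\leqslant K}$ subordinate to $(\mathcal{C}_{k})$ with $\sum_{k}\theta_{k}\equiv 1$ on $N$. For any $i\geqslant I_{0}$, the integral splits as
\[ \int_{\partial\Omega_{i}} j\bigl[\mathbf{x},\mathbf{n}(\mathbf{x})\bigr]\,dA(\mathbf{x}) = \sum_{k=1}^{K} \int_{\partial\Omega_{i}} \theta_{k}(\mathbf{x})\, j\bigl[\mathbf{x},\mathbf{n}(\mathbf{x})\bigr]\,dA(\mathbf{x}), \]
and in the frame centred at $\mathbf{x}_{0,k}$, using the local parametrization $X_{i,k}(\mathbf{x'})=(\mathbf{x'},\varphi_{i,k}(\mathbf{x'}))$, the expression \eqref{expression_normale} for the normal, and the area element $\sqrt{1+\|\nabla\varphi_{i,k}\|^{2}}\,d\mathbf{x'}$, the $k$-th term becomes
\[ \int_{\overline{D_{\tilde{r}}}(\mathbf{0'})} (\theta_{k}\circ X_{i,k})(\mathbf{x'})\, j\Bigl[X_{i,k}(\mathbf{x'}),\tfrac{1}{\sqrt{1+\|\nabla\varphi_{i,k}(\mathbf{x'})\|^{2}}}(-\nabla\varphi_{i,k}(\mathbf{x'}),1)\Bigr]\sqrt{1+\|\nabla\varphi_{i,k}(\mathbf{x'})\|^{2}}\,d\mathbf{x'}. \]

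Finally, from Theorem \ref{thm_parametrisation_locale_i}, $\varphi_{i,k}\to\varphi_{k}$ and $\nabla\varphi_{i,k}\to\nabla\varphi_{k}$ uniformly on $\overline{D_{\tilde{r}}}(\mathbf{0'})$, so by uniform continuity of $j$ and of $\theta_{k}$ on the relevant compact sets, the whole integrand converges uniformly to the corresponding one built from $\varphi_{k}$, and the $k$-th local integral passes to the limit. Summing over $k$ yields the required convergence. The particular case $j\equiv 1$ gives the continuity of the area, while for the volume the same cylinder decomposition does not help, and I would simply invoke Point (vi) of Proposition \ref{prop_compacite_boule}: $V(\Omega_{i})=\int_{\overline{B}}\mathbf{1}_{\Omega_{i}}\to\int_{\overline{B}}\mathbf{1}_{\Omega}=V(\Omega)$. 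The main delicacy is the setup of the finite cover together with the uniform $I_{0}$: once it is in place, the fact that $\mathrm{supp}(\theta_{k})\cap\partial\Omega_{i}$ is entirely described by the single graph $\varphi_{i,k}$ for all $i\geqslant I_{0}$ is precisely the content of the first half of Theorem \ref{thm_parametrisation_locale_i}.
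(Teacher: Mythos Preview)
Your proof is correct and follows essentially the same route as the paper: finite cover of $\partial\Omega$ by the cylinders furnished by Theorem~\ref{thm_parametrisation_locale_i}, a subordinate partition of unity, Hausdorff convergence of the boundaries to capture all $\partial\Omega_{i}$ for $i$ large, and uniform convergence of the integrands via the $C^{1}$-convergence of the local graphs. The only notable difference is the treatment of the volume: the paper expresses it as a surface integral via the Divergence Theorem, taking $j[\mathbf{x},\mathbf{n}(\mathbf{x})]=\tfrac{1}{n}\langle\mathbf{x}\mid\mathbf{n}(\mathbf{x})\rangle$, so that it falls directly under the general statement, whereas you invoke the $L^{1}$-convergence of characteristic functions (Point~(vi) of Proposition~\ref{prop_compacite_boule}); both arguments are valid, the paper's being slightly more self-contained within the proposition.
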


\begin{remark}
Note that the above result states the convergence of $(\partial \Omega_{i})_{i  \in \mathbb{N}}$ to $\partial \Omega$ in the sense of oriented varifolds \cite[Appendix B]{BellettiniMugnai} \cite{SimonBook}. Similar results were obtained in \cite{GuoYang2013}. Moreover, the continuity of volume and the lower semi-continuity of area were already implied by the convergence in the sense of characteristic functions (Point (vi) in Proposition \ref{prop_compacite_boule}) \cite[Proposition 2.3.6]{HenrotPierre}.
\end{remark}

\begin{proof}
Consider Assumption \ref{hypothese_continuite}. Hence, from Theorem \ref{thm_parametrisation_locale_i}, the boundaries $(\partial \Omega_{i})_{i \in \mathbb{N}}$ are locally parametrized by graphs of $C^{1,1}$-maps $\varphi_{i}$ that converge strongly in $C^{1}$ and weakly-star in $W^{2,\infty}$ to the map $\varphi$ associated with $\partial \Omega$. We now detail the procedure which allows to pass from this local result to the global one thanks to a suitable partition of unity. For any $\mathbf{x} \in \partial \Omega$, we introduce the cylinder $\mathcal{C}_{\tilde{r},\varepsilon}(\mathbf{x})$ defined by \eqref{eqn_cylinder} and we assume that $\tilde{r} > 0$ is the one given in Theorem \ref{thm_parametrisation_locale_i}. In particular, it only depends on $\varepsilon$. Since $\partial \Omega$ is compact, there exists a finite number $K \geqslant 1$ of points written $\mathbf{x}_{1}, \ldots, \mathbf{x}_{K}$, such that $ \partial \Omega \subseteq \bigcup_{k=1}^{K} \mathcal{C}_{\frac{\tilde{r}}{2},\frac{\varepsilon}{2}}(\mathbf{x}_{k}) $. We set $\delta = \min(\frac{\tilde{r}}{2},\frac{\varepsilon}{2}) > 0$. From the triangle inequality, the tubular neighbourhood $\mathcal{V}_{\delta}(\partial \Omega) = \lbrace \mathbf{y} \in \mathbb{R}^{n}, \quad d(\mathbf{y},\partial \Omega) < \delta \rbrace $ has its closure embedded in $ \bigcup_{k=1}^{K} \mathcal{C}_{\tilde{r},\varepsilon}(\mathbf{x}_{k}) $. Then, we can introduce a partition of unity on this set. There exists $K$ non-negative $C^{\infty}$-maps $\xi^{k} $ with compact support in $ \mathcal{C}_{\tilde{r},\varepsilon}(\mathbf{x}_{k}) $ and such that $\sum_{k=1}^{K} \xi^{k}(\mathbf{x}) = 1$ for any point $\mathbf{x} \in \mathcal{V}_{\delta}(\partial \Omega)$. Now, we can apply Theorem \ref{thm_parametrisation_locale_i} to the $K$ points $\mathbf{x}_{k}$. There exists $K$ integers $I_{k} \in \mathbb{N}$ and some maps $\varphi_{i}^{k} : \overline{D_{\tilde{r}}}(\mathbf{x}_{k}) \mapsto ]-\varepsilon, \varepsilon[$, with $i \geqslant I_{k}$ and $K \geqslant k \geqslant 1$, such that:
\[ \left\lbrace \begin{array}{rcl}
\displaystyle { \partial \Omega_{i} \cap \overline{\mathcal{C}_{\tilde{r},\varepsilon}(\mathbf{x}_{k})} } &=& \displaystyle{ \left\lbrace (\mathbf{x'},\varphi_{i}^{k}(\mathbf{x'})), \quad \mathbf{x'} \in \overline{D_{\tilde{r}}}(\mathbf{x}_{k}) \right\rbrace } \\
& & \\
\displaystyle { \Omega_{i} \cap \overline{\mathcal{C}_{\tilde{r},\varepsilon}(\mathbf{x}_{k})} } &=& \displaystyle{ \left\lbrace (\mathbf{x'},x_{n}), \quad \mathbf{x'} \in \overline{D_{\tilde{r}}}(\mathbf{x}_{k}) \quad \mathrm{and} \quad -\varepsilon \leqslant x_{n} < \varphi_{i}^{k}(\mathbf{x'}) \right\rbrace }.  \\
\end{array} \right. \] 
Moreover, the $K$ sequences of functions $(\varphi_{i}^{k})_{i \geqslant I_{k}}$ and $(\nabla \varphi_{i}^{k})_{i \geqslant I_{k}}$ converge uniformly on $\overline{D_{\tilde{r}}}(\mathbf{x}_{k})$ respectively to the maps $\varphi^{k}$ and $\nabla \varphi^{k}$ associated with $\partial \Omega$ at each point $\mathbf{x}_{k}$. From the Hausdorff convergence of the boundaries (Point (ii) in Proposition \ref{prop_compacite_boule}), there also exists $I_{0} \in \mathbb{N}$ such that for any integer $i \geqslant I_{0} $, we have $\partial \Omega_{i} \in \mathcal{V}_{\delta}(\partial \Omega)$. Hence, we set $I = \max_{0 \leqslant k \leqslant K } I_{k}$, which thus only depends on $(\Omega_{i})_{i \in \mathbb{N}}$, $\Omega$ and $\varepsilon$. Then, we deduce that for any integer $i \geqslant I$, we have:
\[ \begin{array}{l} 
\displaystyle{ J(\partial \Omega_{i} ) \quad : = \quad \int_{\partial \Omega_{i}} j \left[ \mathbf{x}, \mathbf{n} \left(\mathbf{x} \right) \right] dA(\mathbf{x}) \quad = \quad  \int_{\partial \Omega_{i} \cap \mathcal{V}_{\delta}(\partial \Omega)} j \left[ \mathbf{x},\mathbf{n}\left(\mathbf{x} \right) \right] dA(\mathbf{x}) } \\
 \\
\displaystyle{ \qquad \quad = \quad \int_{\partial \Omega_{i}} \left( \sum_{k=1}^{K} \xi^{k}\left(\mathbf{x}\right) \right) j \left[ \mathbf{x}, \mathbf{n}\left(\mathbf{x}\right) \right] dA(\mathbf{x}) \quad  = \quad   \sum_{k=1}^{K}  \int_{\partial \Omega_{i} \cap \mathcal{C}_{r,\varepsilon}(\mathbf{x}_{k})} \xi^{k} \left(\mathbf{x} \right) j \left[ \mathbf{x},\mathbf{n}\left(\mathbf{x}\right) \right] dA(\mathbf{x}) } \\
 \\
 \displaystyle{ \quad = \quad \sum_{k=1}^{K}  \int_{D_{\tilde{r}}(\mathbf{x}_{k})} \xi^{k} \left( \begin{array}{c} \mathbf{x'} \\  \varphi_{i}^{k}\left( \mathbf{x'} \right) \\ \end{array} \right)   j \left[ \left(  \begin{array}{c} \mathbf{x'} \\ \varphi_{i}^{k} \left(\mathbf{x'} \right) \\ \end{array} \right), \left( \begin{array}{c} \frac{ -\nabla \varphi^{k}_{i} \left( \mathbf{x'} \right)}{\sqrt{1 + \Vert \nabla \varphi^{k}_{i} \left( \mathbf{x'} \right) \Vert^{2}}} \\ \frac{1}{\sqrt{1 + \Vert \nabla \varphi^{k}_{i} \left( \mathbf{x'} \right) \Vert^{2}}} \\ \end{array} \right)  \right] \sqrt{1 + \Vert \nabla \varphi^{k}_{i} \left( \mathbf{x'} \right) \Vert^{2}} d\mathbf{x'} } \\
\end{array}    \]
The last equality comes from \cite[Proposition 5.13]{MontielRos} and Relation \eqref{expression_normale}. The uniform convergence of the $K$ sequences $(\varphi^{k}_{i})_{i \geqslant I}$ and $( \nabla \varphi^{k}_{i})_{i \geqslant I}$ on the compact set $\overline{D_{\tilde{r}}}(\mathbf{x}_{k})$ combined with the continuity of $j$ and $(\xi^{k})_{1 \leqslant k \leqslant K}$ allows one to let $i \rightarrow \infty$ in the above expression. Observing that the limit expression obtained is equal to $J(\partial \Omega)$, we proved that the functional $J$ is continuous. Finally, for the area, take $j \equiv 1$ and for the volume, applying the Divergence Theorem, take $j[\mathbf{x},\mathbf{n}(\mathbf{x})] = \frac{1}{n} \langle \mathbf{x} ~\vert~ \mathbf{n}(\mathbf{x}) \rangle$.
\end{proof}

\begin{proposition}
\label{prop_continuite_ordre_zero_i}
Consider Assumption \ref{hypothese_continuite} and some continuous maps $j, j_{i}: \mathbb{R}^{n} \times \mathbb{S}^{n-1} \rightarrow \mathbb{R}$ such that $(j_{i})_{i \in \mathbb{N}}$ is uniformly bounded on $\overline{B} \times \mathbb{S}^{n-1}$ and diagonally converges to $j$ in the sense of Definition \ref{definition_convergence_diagonale}. Then, we have: 
\[ \lim_{i \rightarrow + \infty} \int_{\partial \Omega_{i}} j_{i} \left[ \mathbf{x},\mathbf{n} \left(\mathbf{x}\right) \right] dA \left( \mathbf{x} \right) = \int_{\partial \Omega} j \left[ \mathbf{x},\mathbf{n} \left(\mathbf{x}\right) \right] dA \left( \mathbf{x} \right). \]
\end{proposition}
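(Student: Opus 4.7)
The plan is to mimic the proof of Proposition \ref{prop_continuite_ordre_zero} using the same partition-of-unity machinery, but to replace the uniform continuity argument at the end with Lebesgue's dominated convergence theorem, where the hypotheses on $(j_{i})_{i \in \mathbb{N}}$ provide both the pointwise convergence of the integrand and its uniform domination.

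First I would reproduce the geometric setup verbatim from the previous proof: choose a finite family of points $\mathbf{x}_{1},\ldots,\mathbf{x}_{K} \in \partial \Omega$ such that the cylinders $\mathcal{C}_{\tilde{r}/2,\varepsilon/2}(\mathbf{x}_{k})$ (with $\tilde{r}$ given by Theorem \ref{thm_parametrisation_locale_i}) cover $\partial\Omega$; introduce a smooth partition of unity $(\xi^{k})_{1\leqslant k\leqslant K}$ on a tubular neighbourhood $\mathcal{V}_{\delta}(\partial\Omega)$ (with $\delta=\min(\tilde r/2,\varepsilon/2)$) such that $\mathrm{supp}\,\xi^{k} \subset \mathcal{C}_{\tilde{r},\varepsilon}(\mathbf{x}_{k})$; and apply Theorem \ref{thm_parametrisation_locale_i} at each $\mathbf{x}_{k}$ to obtain, for $i \geqslant I:=\max_{0\leqslant k\leqslant K} I_{k}$, local graphs $\varphi_{i}^{k}:\overline{D_{\tilde{r}}}(\mathbf{x}_{k})\to \,]-\varepsilon,\varepsilon[$ with $\varphi_{i}^{k}\to \varphi^{k}$ in $C^{1}(\overline{D_{\tilde{r}}}(\mathbf{x}_{k}))$. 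The rank $I_{0}$ ensuring $\partial \Omega_{i}\subset \mathcal{V}_{\delta}(\partial\Omega)$ comes as before from the Hausdorff convergence of the boundaries.

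Then, for $i \geqslant I$, unfolding the partition of unity and the local parametrizations as in the proof of Proposition \ref{prop_continuite_ordre_zero}, we can write
\[ \int_{\partial \Omega_{i}} j_{i}\bigl[\mathbf{x},\mathbf{n}(\mathbf{x})\bigr] dA(\mathbf{x}) = \sum_{k=1}^{K} \int_{D_{\tilde{r}}(\mathbf{x}_{k})} F_{i}^{k}(\mathbf{x'})\, d\mathbf{x'}, \]
where $F_{i}^{k}(\mathbf{x'})$ is the product of $\xi^{k}\bigl(\mathbf{x'},\varphi_{i}^{k}(\mathbf{x'})\bigr)$, the value $j_{i}\bigl[(\mathbf{x'},\varphi_{i}^{k}(\mathbf{x'})),\mathbf{n}_{i}^{k}(\mathbf{x'})\bigr]$, and the area element $\sqrt{1+\Vert\nabla\varphi_{i}^{k}(\mathbf{x'})\Vert^{2}}$, with $\mathbf{n}_{i}^{k}(\mathbf{x'})$ given by the formula \eqref{expression_normale} applied to $\varphi_{i}^{k}$.

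Next I would pass to the limit inside each integral using dominated convergence. For the domination: the Lipschitz constant $L$ of $\nabla\varphi_{i}^{k}$ depends only on $\varepsilon$ (Theorem \ref{thm_parametrisation_locale_i}), so $\sqrt{1+\Vert\nabla\varphi_{i}^{k}\Vert^{2}}\leqslant \sqrt{1+L^{2}}$ uniformly in $i$; combined with the uniform bound $\Vert j_{i}\Vert_{L^{\infty}(\overline{B}\times\mathbb{S}^{n-1})} \leqslant M$ and $\Vert\xi^{k}\Vert_{\infty}$, we obtain a constant integrable bound on $F_{i}^{k}$ over the bounded set $D_{\tilde{r}}(\mathbf{x}_{k})$. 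For the pointwise convergence: fix $\mathbf{x'}\in D_{\tilde{r}}(\mathbf{x}_{k})$; the $C^{1}$-strong convergence of $\varphi_{i}^{k}$ implies $(\mathbf{x'},\varphi_{i}^{k}(\mathbf{x'}))\to(\mathbf{x'},\varphi^{k}(\mathbf{x'}))$ in $\mathbb{R}^{n}$ and $\mathbf{n}_{i}^{k}(\mathbf{x'})\to\mathbf{n}^{k}(\mathbf{x'})$ in $\mathbb{S}^{n-1}$; then the diagonal convergence hypothesis on $(j_{i})$ (Definition \ref{definition_convergence_diagonale}) yields $j_{i}\bigl[(\mathbf{x'},\varphi_{i}^{k}(\mathbf{x'})),\mathbf{n}_{i}^{k}(\mathbf{x'})\bigr]\to j\bigl[(\mathbf{x'},\varphi^{k}(\mathbf{x'})),\mathbf{n}^{k}(\mathbf{x'})\bigr]$, while the continuous factors $\xi^{k}$ and $\sqrt{1+\Vert\cdot\Vert^{2}}$ pass through trivially. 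Lebesgue's theorem then gives $F_{i}^{k}\to F^{k}$ in $L^{1}(D_{\tilde{r}}(\mathbf{x}_{k}))$, and summing over $k$ recovers $\int_{\partial\Omega} j[\mathbf{x},\mathbf{n}(\mathbf{x})]\,dA(\mathbf{x})$ by the same partition-of-unity identity written for $\partial\Omega$.

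The only real subtlety is the invocation of diagonal convergence: it is essential here since pointwise convergence of $(j_{i})$ to $j$ would not suffice to evaluate it at the moving points $(\mathbf{x'},\varphi_{i}^{k}(\mathbf{x'}))$ and normals $\mathbf{n}_{i}^{k}(\mathbf{x'})$. Once this is granted, the rest is a routine dominated convergence argument, and no genuinely new difficulty arises compared with the proof of Proposition \ref{prop_continuite_ordre_zero}.
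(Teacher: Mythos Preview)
Your proposal is correct and follows exactly the paper's own proof: reuse the partition of unity and local parametrizations from Proposition~\ref{prop_continuite_ordre_zero}, then replace the uniform-convergence argument by Lebesgue's dominated convergence theorem, using the diagonal convergence of $(j_i)$ for pointwise convergence of the integrand and the uniform bound on $(j_i)$ (together with the uniform $C^{1,1}$-bounds on the graphs) for the domination. One cosmetic slip: the bound $\sqrt{1+\Vert\nabla\varphi_i^k\Vert^2}\leqslant\sqrt{1+L^2}$ follows from the $L$-Lipschitz continuity of $\varphi_i^k$ (hence $\Vert\nabla\varphi_i^k\Vert\leqslant L$), not from that of $\nabla\varphi_i^k$.
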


\begin{proof}
The proof is identical to the one of Proposition \ref{prop_continuite_ordre_zero}. Using the same partition of unity and the same notation, we get that $\int_{\partial \Omega_{i}}j_{i}[\mathbf{x},\mathbf{n}(\mathbf{x})]dA(\mathbf{x})$ is equal to:
 \[  \sum_{k=1}^{K}  \int_{D_{\tilde{r}}(\mathbf{x}_{k})} \xi^{k} \left( \begin{array}{c} \mathbf{x'} \\  \varphi_{i}^{k}\left( \mathbf{x'} \right) \\ \end{array} \right)   j_{i} \left[ \left(  \begin{array}{c} \mathbf{x'} \\ \varphi_{i}^{k} \left(\mathbf{x'} \right) \\ \end{array} \right), \left( \begin{array}{c} \frac{ -\nabla \varphi^{k}_{i} \left( \mathbf{x'} \right)}{\sqrt{1 + \Vert \nabla \varphi^{k}_{i} \left( \mathbf{x'} \right) \Vert^{2}}} \\ \frac{1}{\sqrt{1 + \Vert \nabla \varphi^{k}_{i} \left( \mathbf{x'} \right) \Vert^{2}}} \\ \end{array} \right)  \right] \sqrt{1 + \Vert \nabla \varphi^{k}_{i} \left( \mathbf{x'} \right) \Vert^{2}} d\mathbf{x'}. \]
Then, instead of using the uniform convergence of each integrand on a compact set as it is the case in Proposition \ref{prop_continuite_ordre_zero}, we apply instead Lebesgue's Dominated Convergence Theorem. Indeed, the diagonal convergence ensures the pointwise convergence of each integrand, which are also, using the other hypothesis, uniformly bounded. Hence, we can let $i \rightarrow + \infty$ in the above expression.  
\end{proof}

\begin{definition}
\label{definition_convergence_diagonale_vector_fields}
Let $ \mathcal{S}, \mathcal{S}_{i}$ be some non-empty compact $C^{1}$-hypersurfaces of $\mathbb{R}^{n}$ such that $(\mathcal{S}_{i})_{i \in \mathbb{N}}$ converges to $\mathcal{S}$ for the Hausdorff distance: $d_{H}(\mathcal{S}_{i},\mathcal{S})\longrightarrow_{i \rightarrow + \infty}  0$. On each hypersurface $\mathcal{S}_{i}$, we also consider a continuous vector field $\mathbf{V}_{i} : \mathbf{x} \in \mathcal{S}_{i} \mapsto \mathbf{V}_{i}(\mathbf{x}) \in T_{\mathbf{x}} \mathcal{S}_{i}$. We say that $(\mathbf{V}_{i})_{i \in \mathbb{N}}$ is diagonally converging to a vector field on $\mathcal{S}$ denoted by $\mathbf{V} : \mathbf{x} \in \mathcal{S} \mapsto \mathbf{V}(\mathbf{x}) \in T_{\mathbf{x}} \mathcal{S}$ if for for any point $\mathbf{x} \in \mathcal{S}$ and for any sequence of points $\mathbf{x}_{i} \in \mathcal{S}_{i}$ that converges to $\mathbf{x}$, we have $ \Vert \mathbf{V}_{i}(\mathbf{x}_{i}) - \mathbf{V}(\mathbf{x}) \Vert \longrightarrow_{i \rightarrow + \infty} 0$. 
\end{definition}

\begin{corollary}
\label{coro_continuite_ordre_zero_vector_fields}
Let $ \varepsilon, B, \Omega, (\Omega_{i})_{i \in \mathbb{N}} $ be as in Assumption \ref{hypothese_continuite}, and consider some continuous vector fields $\mathbf{V}_{i}$ on $\partial \Omega_{i}$ converging to a continuous vector field $\mathbf{V}$ on $\partial \Omega$ as in Definition \ref{definition_convergence_diagonale_vector_fields}. We also assume that $ (\mathbf{V}_{i})_{i \in \mathbb{N}} $ is uniformly bounded. Considering a continuous map $j : \mathbb{R}^{n} \times \mathbb{S}^{n-1} \times \mathbb{R}^{n} \rightarrow \mathbb{R}$, then we have:
\[ \lim_{i \rightarrow + \infty} \int_{\partial \Omega_{i}} j\left[ \mathbf{x}, \mathbf{n}\left(\mathbf{x}\right), \mathbf{V}_{i}\left(\mathbf{x} \right) \right] dA (\mathbf{x} ) = \int_{\partial \Omega} j\left[ \mathbf{x}, \mathbf{n}\left(\mathbf{x}\right), \mathbf{V}\left(\mathbf{x} \right) \right] dA (\mathbf{x} ).    \]
Of course, this continuity result can be extended to a finite number of vector fields.
\end{corollary}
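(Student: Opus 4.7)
The plan is to adapt the proof of Proposition~\ref{prop_continuite_ordre_zero_i} essentially verbatim, with the pointwise convergence of the integrand now supplied by the diagonal convergence of the vector fields. First, I would invoke Theorem~\ref{thm_parametrisation_locale_i} to obtain a finite cover of a tubular neighbourhood $\mathcal{V}_{\delta}(\partial \Omega)$ by cylinders $\mathcal{C}_{\tilde{r},\varepsilon}(\mathbf{x}_{k})$, $1 \leq k \leq K$, together with a subordinate partition of unity $(\xi^{k})$ and local graphs $\varphi_{i}^{k} : \overline{D_{\tilde{r}}}(\mathbf{x}_{k}) \to \,]-\varepsilon,\varepsilon[\,$ parametrizing $\partial \Omega_{i}$ inside $\mathcal{C}_{\tilde{r},\varepsilon}(\mathbf{x}_{k})$, with $\varphi_{i}^{k} \to \varphi^{k}$ strongly in $C^{1}$. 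For $i$ large enough, the Hausdorff convergence of boundaries ensures $\partial \Omega_{i} \subset \mathcal{V}_{\delta}(\partial \Omega)$, so the integral $\int_{\partial \Omega_{i}} j[\mathbf{x},\mathbf{n}(\mathbf{x}),\mathbf{V}_{i}(\mathbf{x})]\,dA(\mathbf{x})$ splits into a finite sum of integrals over $D_{\tilde{r}}(\mathbf{x}_{k})$, each involving the local parametrization $X_{i}^{k}(\mathbf{x'}) = (\mathbf{x'},\varphi_{i}^{k}(\mathbf{x'}))$, the pullback $\mathbf{V}_{i} \circ X_{i}^{k}$, the normal expressed through~\eqref{expression_normale}, and the area factor $\sqrt{1 + \Vert \nabla \varphi_{i}^{k} \Vert^{2}}$.

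In each such local integral I would then apply Lebesgue's dominated convergence theorem on $\overline{D_{\tilde{r}}}(\mathbf{x}_{k})$. For the pointwise convergence, fix $\mathbf{x'}$ and set $\mathbf{x}_{i} := X_{i}^{k}(\mathbf{x'}) \in \partial \Omega_{i}$ and $\mathbf{x} := X^{k}(\mathbf{x'}) \in \partial \Omega$. Uniform convergence of $\varphi_{i}^{k}$ gives $\mathbf{x}_{i} \to \mathbf{x}$, so Definition~\ref{definition_convergence_diagonale_vector_fields} yields $\mathbf{V}_{i}(\mathbf{x}_{i}) \to \mathbf{V}(\mathbf{x})$; uniform convergence of $\nabla \varphi_{i}^{k}$ gives $\mathbf{n}(\mathbf{x}_{i}) \to \mathbf{n}(\mathbf{x})$ through~\eqref{expression_normale}; continuity of $j$ and of $\xi^{k}$ then delivers pointwise convergence of the integrand. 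For the domination, the uniform bound $\Vert \nabla \varphi_{i}^{k} \Vert_{L^{\infty}} \leq L$ from Theorem~\ref{thm_parametrisation_locale_i}, the uniform bound on $\Vert \mathbf{V}_{i} \Vert$, and the boundedness of $B$ together confine all arguments of $j$ to a common compact subset of $\mathbb{R}^{n} \times \mathbb{S}^{n-1} \times \mathbb{R}^{n}$; continuity of $j$ bounds the integrand uniformly by a constant, which is integrable over the compact $\overline{D_{\tilde{r}}}(\mathbf{x}_{k})$.

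Summing over $k$ produces the desired limit $\int_{\partial \Omega} j[\mathbf{x},\mathbf{n}(\mathbf{x}),\mathbf{V}(\mathbf{x})]\,dA(\mathbf{x})$, and the extension to finitely many vector fields is immediate by enlarging the target of $j$ and applying the argument componentwise. There is no serious obstacle here: the only conceptual novelty beyond Proposition~\ref{prop_continuite_ordre_zero_i} is recognizing that diagonal convergence along the sequence $\mathbf{x}_{i} = X_{i}^{k}(\mathbf{x'})$ is exactly the notion tailored to pass to the limit inside $j$ when the vector fields live on the moving surfaces $\partial \Omega_{i}$ rather than being restrictions of a fixed map defined on all of $\mathbb{R}^{n}$.
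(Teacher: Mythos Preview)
Your argument is correct and is in fact slightly more direct than the paper's. The paper does not redo the local-chart computation; instead it reduces the statement to Proposition~\ref{prop_continuite_ordre_zero_i} by first extending each map $(\mathbf{x},\mathbf{u}) \mapsto j[\mathbf{x},\mathbf{u},\mathbf{V}_{i}(\mathbf{x})]$, initially defined only on $\partial\Omega_{i} \times \mathbb{S}^{n-1}$, to a map $j_{i}$ on all of $\mathbb{R}^{n} \times \mathbb{S}^{n-1}$. This extension is built via the partition of unity and the $C^{1,1}$-diffeomorphisms $\Psi_{i}^{k}:(\mathbf{x'},x_{n}) \mapsto (\mathbf{x'},\varphi_{i}^{k}(\mathbf{x'})-x_{n})$, composed with the projection $\Pi_{\mathbf{x}_{k}}$ so as to pull arguments back onto $\partial\Omega_{i}$. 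The paper then checks that the extended $(j_{i})$ are uniformly bounded and diagonally convergent in the sense of Definition~\ref{definition_convergence_diagonale}, and invokes Proposition~\ref{prop_continuite_ordre_zero_i}.

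Your route bypasses this extension step entirely: you go straight to the local integrals and apply dominated convergence, using Definition~\ref{definition_convergence_diagonale_vector_fields} along the particular sequences $\mathbf{x}_{i}=X_{i}^{k}(\mathbf{x'})$. This is exactly the computation hidden inside the proof of Proposition~\ref{prop_continuite_ordre_zero_i}, so the two proofs share the same core; yours simply unfolds that proposition rather than packaging the vector-field dependence into an ambient $j_{i}$. The paper's reduction has the minor advantage of isolating once and for all the mechanism ``diagonally convergent, uniformly bounded integrand $\Rightarrow$ continuity'', while your version avoids the somewhat artificial extension construction.
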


\begin{proof}
We only have to check that the maps $j_{i} : (\mathbf{x},\mathbf{u}) \in \partial \Omega_{i} \times \mathbb{S}^{n-1} \rightarrow j[\mathbf{x},\mathbf{u}, \mathbf{V}_{i}(\mathbf{x})]$ can be extended to $\mathbb{R}^{n} \times \mathbb{S}^{n-1}$ such that their extension satisfy the hypothesis of Proposition \ref{prop_continuite_ordre_zero_i}. This is a standard procedure \cite[Section 5.4.1]{HenrotPierre}. Using the partition of unity given in Proposition \ref{prop_continuite_ordre_zero} and introducing the $C^{1,1}$-diffeomorphisms $\Psi^{k}_{i} : (\mathbf{x'},x_{n}) \in \overline{\mathcal{C}_{r,\varepsilon}}(\mathbf{x_{k}}) \mapsto (\mathbf{x'},\varphi^{k}_{i}(\mathbf{x'}) - x_{n})$, we can set:
\[ \forall  (\mathbf{x},\mathbf{u}) \in  \mathbb{R}^{n} \times \mathbb{S}^{n-1}, \quad j_{i}(\mathbf{x},\mathbf{u}) = \sum_{k = 1}^{K} \xi^{k}(\mathbf{x}) j \left[ (\Psi^{k}_{i})^{-1} \circ \Pi_{\mathbf{x}_{k}} \circ \Psi^{k}_{i}(\mathbf{x}), \mathbf{u}, \mathbf{V}_{i}\circ (\Psi^{k}_{i})^{-1} \circ \Pi_{\mathbf{x}_{k}} \circ \Psi^{k}_{i}(\mathbf{x}) \right]. \]
We recall that $\Pi_{\mathbf{x}_{k}} $ is defined by \eqref{eqn_projection}. Finally, $(j_{i})_{i \in \mathbb{N}}$ diagonally converges to the extension of $(\mathbf{x},\mathbf{u}) \mapsto j[\mathbf{x},\mathbf{u}, \mathbf{V}(\mathbf{x})]$, since $(V_{i})_{i \in \mathbb{N}}$ is diagonally converging to $V$. Moreover, $(\Omega_{i})_{i \in \mathbb{N}} \subset B$, the Gauss map is always valued in $\mathbb{S}^{n-1}$, and $(V_{i})_{i \in \mathbb{N}}$ is uniformly bounded. Hence, $(\mathbf{x},\mathbf{n}_{\partial \Omega_{i}}(\mathbf{x}), \mathbf{V}_{i}(\mathbf{x}) )$ is valued in a compact set. Since $j$ is continuous on this compact set, it is bounded and $(j_{i})_{i \in \mathbb{N}}$ is thus uniformly bounded on $\overline{B} \times \mathbb{S}^{n-1}$. Finally, we can apply Proposition \ref{prop_continuite_ordre_zero_i} to let $i \rightarrow + \infty$.
\end{proof}

\subsection{Some linear functionals involving the second fundamental form} 
From Theorem \ref{thm_parametrisation_locale_i}, we only have the $L^{\infty}$-weak-star convergence of the coefficients associated with the Hessian of the local maps $\varphi_{i}^{k}$ so we consider here the case of functionals whose expressions in the parametrization are linear in $\partial_{pq} \varphi_{i}^{k}$. This is the case for the scalar mean curvature and the second fundamental form of two vector fields.

\begin{proposition}
\label{prop_continuite_ordre_un}
Consider Assumption \ref{hypothese_continuite} and a continuous map $j: \mathbb{R}^{n} \times \mathbb{S}^{n-1} \rightarrow \mathbb{R}$. Then, the following functional is continuous:
\[ \lim_{i \rightarrow + \infty} \int_{\partial \Omega_{i}} H \left( \mathbf{x} \right) j \left[ \mathbf{x},\mathbf{n}\left(\mathbf{x} \right) \right]dA\left(\mathbf{x} \right) = \int_{\partial \Omega} H \left( \mathbf{x} \right) j \left[ \mathbf{x},\mathbf{n}\left(\mathbf{x} \right) \right]dA\left(\mathbf{x} \right). \]
\end{proposition}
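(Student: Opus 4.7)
The plan is to localize via the partition of unity built in Proposition \ref{prop_continuite_ordre_zero}, then exploit the fact that, up to the area factor $\sqrt{1+\Vert\nabla\varphi\Vert^{2}}$, the mean curvature of a graph can be written as a linear combination of the second derivatives of $\varphi$ whose coefficients depend only continuously on $\nabla\varphi$. The strong $C^{1}$ convergence of the local graphs will take care of these coefficients, while the weak-star $W^{2,\infty}$ convergence will handle the second derivatives.

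First, I would keep the setting of Proposition \ref{prop_continuite_ordre_zero}: finitely many points $\mathbf{x}_{1},\ldots,\mathbf{x}_{K}\in\partial\Omega$, cylinders $\mathcal{C}_{\tilde r,\varepsilon}(\mathbf{x}_{k})$, a subordinate partition of unity $(\xi^{k})_{1\leqslant k\leqslant K}$, an integer $I\in\mathbb{N}$ such that for $i\geqslant I$ each piece $\partial\Omega_{i}\cap\mathcal{C}_{\tilde r,\varepsilon}(\mathbf{x}_{k})$ is the graph of a map $\varphi_{i}^{k}\in C^{1,1}(\overline{D_{\tilde r}}(\mathbf{x}_{k}))$, and the two convergences in \eqref{eqn_convergence} for each $k$. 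The crucial algebraic identity comes from \eqref{expression_weingarten_map_local_def}: taking the trace and multiplying by $\sqrt{1+\Vert\nabla\varphi\Vert^{2}}$ yields
\[
H\circ X(\mathbf{x'})\,\sqrt{1+\Vert\nabla\varphi(\mathbf{x'})\Vert^{2}} \;=\; \sum_{p,q=1}^{n-1} C_{pq}\bigl(\nabla\varphi(\mathbf{x'})\bigr)\,\partial_{pq}\varphi(\mathbf{x'}),
\]
with $C_{pq}(\boldsymbol{\xi}) = -\bigl[(1+\Vert\boldsymbol{\xi}\Vert^{2})\delta_{pq}-\xi_{p}\xi_{q}\bigr]/(1+\Vert\boldsymbol{\xi}\Vert^{2})$, which is smooth and bounded on $\mathbb{R}^{n-1}$. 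Applying the area formula exactly as in Proposition \ref{prop_continuite_ordre_zero} (and noting that $\text{supp}(\xi^{k})$ is a compact subset of the open cylinder, hence projects to a compact subset of $D_{\tilde r}(\mathbf{x}_{k})$), the integral under study decomposes, for $i\geqslant I$, as
\[
\int_{\partial\Omega_{i}} H\,j\,dA \;=\; \sum_{k=1}^{K}\sum_{p,q=1}^{n-1}\int_{D_{\tilde r}(\mathbf{x}_{k})} a_{i}^{k,pq}(\mathbf{x'})\,\partial_{pq}\varphi_{i}^{k}(\mathbf{x'})\,d\mathbf{x'},
\]
where $a_{i}^{k,pq}(\mathbf{x'}) := \xi^{k}\bigl(\mathbf{x'},\varphi_{i}^{k}(\mathbf{x'})\bigr)\,j\bigl[(\mathbf{x'},\varphi_{i}^{k}(\mathbf{x'})),\mathbf{n}_{i}^{k}(\mathbf{x'})\bigr]\,C_{pq}\bigl(\nabla\varphi_{i}^{k}(\mathbf{x'})\bigr)$ and $\mathbf{n}_{i}^{k}$ is given by \eqref{expression_normale} with $\varphi$ replaced by $\varphi_{i}^{k}$.

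Next, I would pass to the limit in each summand. By the first half of \eqref{eqn_convergence}, $\varphi_{i}^{k}\to\varphi^{k}$ and $\nabla\varphi_{i}^{k}\to\nabla\varphi^{k}$ uniformly on $\overline{D_{\tilde r}}(\mathbf{x}_{k})$; combined with the continuity of $\xi^{k}$, $j$, $C_{pq}$ and the formula \eqref{expression_normale}, this gives $a_{i}^{k,pq}\to a^{k,pq}$ uniformly on the compact set $\overline{D_{\tilde r}}(\mathbf{x}_{k})$, where $a^{k,pq}$ is the analogous expression built from $\varphi^{k}$. On the other hand, the second half of \eqref{eqn_convergence} asserts $\partial_{pq}\varphi_{i}^{k}\rightharpoonup\partial_{pq}\varphi^{k}$ weak-star in $L^{\infty}(D_{\tilde r}(\mathbf{x}_{k}))$; in particular $\Vert\partial_{pq}\varphi_{i}^{k}\Vert_{L^{\infty}}\leqslant L$ uniformly. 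Then the decomposition
\[
\int a_{i}^{k,pq}\partial_{pq}\varphi_{i}^{k} - \int a^{k,pq}\partial_{pq}\varphi^{k} \;=\; \int (a_{i}^{k,pq}-a^{k,pq})\,\partial_{pq}\varphi_{i}^{k} \;+\; \int a^{k,pq}\bigl(\partial_{pq}\varphi_{i}^{k}-\partial_{pq}\varphi^{k}\bigr)
\]
shows the limit vanishes: the first term is bounded by $L\,\vert D_{\tilde r}(\mathbf{x}_{k})\vert\,\Vert a_{i}^{k,pq}-a^{k,pq}\Vert_{L^{\infty}}\to0$ by uniform convergence, and the second term goes to zero by the weak-star convergence tested against the $L^{1}$ function $a^{k,pq}$.

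The main obstacle, and the reason for this Proposition being separated from Proposition \ref{prop_continuite_ordre_zero}, is precisely the absence of a strong limit for $\partial_{pq}\varphi_{i}^{k}$: one really does need the bilinear pairing argument above, and crucially this forces $H$ (and later the vector-valued second fundamental form applied to already converging tangent fields) to appear linearly in the expression. The approach breaks for the Gaussian curvature $K$ or for $H^{2}$, where products of two weakly convergent sequences of second derivatives occur; the trick for $K$ in dimension three, to be handled separately, is the classical fact that $K\,dA$ can be re-written as a total derivative of first-order quantities via the Gauss-Bonnet structure, so that no products of weakly convergent factors remain.
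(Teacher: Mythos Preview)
Your argument is correct and matches the paper's proof: localize via the partition of unity, express $H\,dA$ in each chart as a sum of $\partial_{pq}\varphi_{i}^{k}$ against coefficients depending continuously on $(\varphi_{i}^{k},\nabla\varphi_{i}^{k})$, and pair the weak-star convergence of the Hessian entries with the uniform (hence $L^{1}$) convergence of the coefficients. The paper compresses your explicit bilinear splitting into the single sentence ``$(H\circ X_{i}^{k})$ weak-star converges while the remaining part of the integrand uniformly converges'', but the content is identical.

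One side remark: your closing comment about the Gaussian curvature being handled ``via the Gauss--Bonnet structure as a total derivative of first-order quantities'' is not the route the paper takes. The paper instead invokes compensated compactness (Tartar's lemma, Proposition~\ref{prop_tartar}) using the Codazzi--Mainardi equations \eqref{expression_codazzi_mainardi_equation} to show that the differential constraints force the $2\times2$ minors of the second fundamental form to pass to the weak-star limit. This does not affect the present proposition, but you should revise your expectation for how $K$ is treated later.
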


\begin{proof}
The proof is identical to the one of Proposition \ref{prop_continuite_ordre_zero}. Using the same notation and the same partition of unity, we have to check that in the parametrization $X_{i}^{k}: \mathbf{x'} \in \overline{D_{\tilde{r}}}(\mathbf{x}_{k}) \mapsto (\mathbf{x'},\varphi_{i}^{k}(\mathbf{x'}))$, the scalar mean curvature $L^{\infty}$-weakly-star converges. It is the trace \eqref{expression_scalar_mean_curvature} of the Weingarten map defined by \eqref{expression_weingarten_map} so Relation \eqref{expression_weingarten_map_local_def} gives:
\begin{equation}
\label{expression_scalar_mean_curvature_local} 
(H\circ X_{i}^{k}) = - \sum_{p,q= 1}^{n-1} g^{pq}b_{qp} =  - \sum_{p,q= 1}^{n-1} \left( \delta_{pq} - \dfrac{\partial_{p} \varphi_{i}^{k} \partial_{q} \varphi_{i}^{k}}{1 + \Vert \nabla \varphi_{i}^{k} \Vert^{2}} \right) \left( \dfrac{\partial_{pq} \varphi_{i}^{k}}{\sqrt{1 + \Vert \nabla \varphi_{i}^{k} \Vert^{2}}} \right) . 
\end{equation}
Using Theorem \ref{thm_parametrisation_locale_i}, the $K$ sequences $(H\circ X_{i}^{k})_{i \in \mathbb{N}}$ weakly-star converge in $L^{\infty}(D_{\tilde{r}}(\mathbf{x}_{k}))$ respectively to $H \circ X^{k}$. The remaining part of each integrand below uniformly converges to the one of $\partial \Omega$ so we can let $i \rightarrow + \infty$ inside:
\[ \sum_{k = 1}^{K} \int_{D_{\tilde{r}}(\mathbf{x}_{k})} (H \circ X^{k}_{i})(\mathbf{x'}) (\xi^{k} \circ X^{k}_{i})(\mathbf{x'})j[X^{k}_{i}(\mathbf{x'}),(\mathbf{n}\circ X^{k}_{i})(\mathbf{x'})] (dA\circ X^{k}_{i})(\mathbf{x'}), \] 
to get the limit asserted in Proposition \ref{prop_continuite_ordre_un}.
\end{proof} 

\begin{corollary}
\label{coro_continuite_ordre_un_convex}
Consider Assumption \ref{hypothese_continuite} and a continuous map $j: \mathbb{R}^{n} \times \mathbb{S}^{n-1} \times \mathbb{R} \rightarrow \mathbb{R}$ which is convex in its last variable. Then, we have:
\[ \int_{\partial \Omega}  j \left[ \mathbf{x}, \mathbf{n} \left( \mathbf{x} \right), H \left(\mathbf{x} \right) \right] dA \left(\mathbf{x} \right)  \leqslant  \liminf_{i \rightarrow + \infty}  \int_{\partial \Omega_{i}}  j \left[ \mathbf{x}, \mathbf{n} \left( \mathbf{x} \right),H \left(\mathbf{x} \right) \right] dA \left(\mathbf{x} \right) . \]
\end{corollary}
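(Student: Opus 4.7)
The plan is to proceed in the same spirit as Proposition \ref{prop_continuite_ordre_un}, but since $H$ now appears nonlinearly inside the convex slot of $j$, I would invoke the classical lower semi-continuity of convex integrals under weak convergence rather than a direct passage to the limit.

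First I would localize exactly as in Proposition \ref{prop_continuite_ordre_un}: fix the same finite cover of $\partial \Omega$ by cylinders $\mathcal{C}_{\tilde r,\varepsilon}(\mathbf{x}_k)$, $1 \leqslant k \leqslant K$, with subordinated partition of unity $(\xi^k)$, and describe each $\partial \Omega_i$ locally via the $C^{1,1}$-parametrization $X_i^k(\mathbf{x'}) = (\mathbf{x'},\varphi_i^k(\mathbf{x'}))$. For every sufficiently large $i$, the integral on the right-hand side rewrites as
\[ \sum_{k=1}^{K} \int_{D_{\tilde r}(\mathbf{x}_k)} \Phi_i^k\bigl(\mathbf{x'}, H_i^k(\mathbf{x'})\bigr)\, d\mathbf{x'}, \]
where $H_i^k := H \circ X_i^k$ and $\Phi_i^k(\mathbf{x'},t) := (\xi^k \circ X_i^k)(\mathbf{x'})\, j\bigl[X_i^k(\mathbf{x'}),\mathbf{n}\circ X_i^k(\mathbf{x'}),t\bigr]\sqrt{1+\|\nabla\varphi_i^k(\mathbf{x'})\|^2}$. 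Since the prefactor is non-negative and $j$ is convex in its last slot, each $\Phi_i^k(\mathbf{x'},\cdot)$ is convex.

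Next I would extract from Theorem \ref{thm_parametrisation_locale_i} the two relevant convergences. The strong $C^1$-convergence $\varphi_i^k \to \varphi^k$ together with continuity of $j$ and $\xi^k$ on compact sets forces $\Phi_i^k \to \Phi^k$ uniformly on $\overline{D_{\tilde r}}(\mathbf{x}_k) \times [-M,M]$ for every $M > 0$. The weak-star $W^{2,\infty}$-convergence, combined with formula \eqref{expression_scalar_mean_curvature_local} in which the coefficient of each $\partial_{pq}\varphi_i^k$ depends only on $\nabla \varphi_i^k$ (hence converges uniformly), gives $H_i^k \rightharpoonup^{*} H\circ X^k$ in $L^\infty(D_{\tilde r}(\mathbf{x}_k))$. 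The decisive extra input is Remark \ref{remarque_borne_linfty}, yielding the uniform bound $\|H_i^k\|_{L^\infty} \leqslant (n-1)/\varepsilon =: M_0$; since $D_{\tilde r}(\mathbf{x}_k)$ has finite Lebesgue measure, $H_i^k \rightharpoonup H\circ X^k$ also weakly in $L^2(D_{\tilde r}(\mathbf{x}_k))$.

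I would conclude on each chart by a splitting argument, writing
\[ \int_{D_{\tilde r}(\mathbf{x}_k)} \Phi_i^k(\mathbf{x'},H_i^k)\, d\mathbf{x'} = \int_{D_{\tilde r}(\mathbf{x}_k)} \Phi^k(\mathbf{x'},H_i^k)\, d\mathbf{x'} + \int_{D_{\tilde r}(\mathbf{x}_k)} \bigl[\Phi_i^k-\Phi^k\bigr](\mathbf{x'},H_i^k)\, d\mathbf{x'}. \]
The second term tends to zero: the bound $|H_i^k| \leqslant M_0$ confines $t$ to the fixed compact interval $[-M_0,M_0]$ on which $\Phi_i^k \to \Phi^k$ uniformly, and $D_{\tilde r}(\mathbf{x}_k)$ has finite measure. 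For the first term, $\Phi^k$ is a continuous integrand convex in $t$, bounded on $\overline{D_{\tilde r}}(\mathbf{x}_k)\times[-M_0,M_0]$, and $H_i^k$ converges weakly in $L^2$ to $H\circ X^k$; the classical lower semi-continuity theorem for convex integrands (obtained via the Fenchel dualization $\Phi^k(\mathbf{x'},t) = \sup_s [st - (\Phi^k)^*(\mathbf{x'},s)]$ followed by Fatou, or as a special case of Ioffe's theorem) delivers the $\liminf$-inequality on each chart. Summing the $K$ inequalities gives the claim.

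The main obstacle will be precisely this last step, since the integrand $\Phi_i^k$ and its argument $H_i^k$ both vary with $i$; the splitting reduces the problem to the classical convex weak lower semi-continuity with a frozen integrand, but the reduction hinges on the uniform $L^\infty$-bound on curvatures from Remark \ref{remarque_borne_linfty} to confine $t$ to a compact range where the perturbation $\Phi_i^k - \Phi^k$ vanishes uniformly. Beyond this, no geometric input other than Theorem \ref{thm_parametrisation_locale_i} should be required.
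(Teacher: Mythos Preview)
Your proof is correct but takes a genuinely different route from the paper's. The paper proceeds by the explicit affine-max device of \cite[Theorem~2.2.1]{Evans}: first assume $j(\mathbf{x},\mathbf{n},t) = \max_{0\leqslant l\leqslant L}\bigl(j_l[\mathbf{x},\mathbf{n}]\,t + \tilde{j}_l[\mathbf{x},\mathbf{n}]\bigr)$, partition each chart domain $D_{\tilde r}(\mathbf{x}_k)$ into the sets where a given affine piece is active for the \emph{limit} curvature $H\circ X^k$, apply Proposition~\ref{prop_continuite_ordre_un} on each such piece (since the integrand is then linear in $H$), and bound from below using that the maximum dominates each affine piece on $\partial\Omega_i$; the general convex case follows by monotone approximation.

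Your argument instead freezes the integrand via the splitting $\Phi_i^k=\Phi^k+(\Phi_i^k-\Phi^k)$ and invokes classical convex weak lower semi-continuity as a black box. What this buys you is a cleaner separation of the two convergences in Theorem~\ref{thm_parametrisation_locale_i}: the $C^1$-part kills the perturbation, the weak part feeds the lsc theorem. The price is that you must appeal to the uniform curvature bound of Remark~\ref{remarque_borne_linfty} to confine $t$ to $[-M_0,M_0]$ and make the perturbation term vanish; the paper's approach never needs this bound explicitly, since Proposition~\ref{prop_continuite_ordre_un} already handles the weak-star limit directly at the linear level. Both arguments are sound and of comparable length; the paper's is more self-contained, yours is closer to the standard calculus-of-variations template.
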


\begin{remark}
In particular, this result implies that the Helfrich \eqref{expression_helfrich_energy} and Willmore functionals \eqref{expression_willmore_energy} are lower semi-continuous, and so does the $p$-th power norm of mean curvature $\int \vert H \vert^{p} dA$, $p \geqslant 1$. Note that we are able to treat the critical case $p = 1$, while it is often excluded from many statements of geometric measure theory \cite[Example 4.1]{Delladio} \cite[Definition 2.2]{Mondino} \cite[Definition 4.1.2]{Hutchinson}. 
\end{remark}

\begin{proof}
The arguments are standard \cite[Theorem 2.2.1]{Evans}. We only sketch the proof. First, assume that $j$ is the maximum of finitely many affine functions according to its last variable:
\begin{equation}
\label{expression_continuite_ordre_un_convex} 
\forall t \in \mathbb{R}, \quad j(\mathbf{x}, \mathbf{n}(\mathbf{x}), t ) = \max_{0 \leqslant l \leqslant L} j_{l} \left[ \mathbf{x}, \mathbf{n}(\mathbf{x}) \right] t + \tilde{j}_{l} \left[ \mathbf{x}, \mathbf{n}(\mathbf{x}) \right].  
\end{equation}
For simplicity, let us assume that $j$ only depends on the position. Using a partition of unity as in Proposition \ref{prop_continuite_ordre_zero}, we introduce the local parametrizations $X^{k} : \mathbf{x'} \in D_{\tilde{r}}(\mathbf{x}_{k}) \mapsto (\mathbf{x'},\varphi^{k}(\mathbf{x'})) $ and we make a partition of the set $D_{\tilde{r}}(\mathbf{x}_{k})$ into $L$ disjoints sets. We define for any $ l \in \lbrace 1, \ldots L \rbrace$:
\[   D_{l}^{k} = \left\lbrace \mathbf{x'} \in D_{\tilde{r}}(\mathbf{x}_{k}),~  j \left[ X^{k} \left( \mathbf{x'} \right),  \left(H \circ X^{k} \right) \left(\mathbf{x'} \right) \right]  = j_{l} \left[ X^{k} \left(\mathbf{x'}\right) \right] H \left[ X^{k} \left(\mathbf{x'} \right) \right]  +  \tilde{j}_{l} \left[ X^{k} \left(\mathbf{x'} \right) \right]   \right\rbrace.  \]
Then, applying Proposition \ref{prop_continuite_ordre_un} and following \cite[below (2.9)]{Evans}, we have successively:
\[  \begin{array}{rcl}
\displaystyle{ \int_{\partial \Omega} j[\mathbf{x},H(\mathbf{x})] dA (\mathbf{x}) } & = & \displaystyle{ \sum_{k = 1}^{K} \int_{D_{\tilde{r}}(\mathbf{x}_{k})} j[X^{k},(H \circ X^{k})](dA \circ X^{k}) } \\
& & \\
& = & \displaystyle{ \sum_{k = 1}^{K} \sum_{l = 1}^{L} \int_{D^{k}_{l}} \left( j_{l}[X^{k}] H[X^{k}] + \tilde{j}_{l}[X^{k}] \right) (dA \circ X^{k}) } \\
& & \\
& = & \displaystyle{ \sum_{k = 1}^{K} \sum_{l = 1}^{L} \lim_{i \rightarrow + \infty} \int_{D_{l}^{k}}  \left( j_{l}[X_{i}^{k}] H[X_{i}^{k}] + \tilde{j}_{l}[X_{i}^{k}] \right) (dA \circ X_{i}^{k})   } \\
& & \\
& \leqslant & \displaystyle{   \sum_{k = 1}^{K} \sum_{l = 1}^{L} \liminf_{i \rightarrow + \infty} \int_{D^{k}_{l}}  j[X_{i}^{k},(H \circ X_{i}^{k})](dA \circ X_{i}^{k}) } \\
& & \\
& \leqslant &  \displaystyle{ \liminf_{i \rightarrow + \infty} \int_{\partial \Omega_{i}} j[\mathbf{x},H(\mathbf{x})] dA (\mathbf{x}).  } \\
\end{array} \]
The result holds for maps $j$ that are maximum of finitely many planes. In the general case, we write $j = \lim_{L \rightarrow + \infty} j_{L} $ where $j_{L}$ is defined by \eqref{expression_continuite_ordre_un_convex} and apply the Monotone Convergence Theorem.
\end{proof}

\begin{proposition}
Consider Assumption \ref{hypothese_continuite} and some continuous maps $j, j_{i}: \mathbb{R}^{n} \times \mathbb{S}^{n-1} \rightarrow \mathbb{R}$ such that $(j_{i})_{i \in \mathbb{N}}$ is uniformly bounded on $\overline{B} \times \mathbb{S}^{n-1}$ and diagonally converges to $j$ in the sense of Definition \ref{definition_convergence_diagonale}. Then, the following functional is continuous: 
\[ \lim_{i \rightarrow + \infty} \int_{\partial \Omega_{i}} H \left( \mathbf{x} \right) j_{i} \left[ \mathbf{x},\mathbf{n}\left(\mathbf{x} \right) \right]dA\left(\mathbf{x} \right) = \int_{\partial \Omega} H \left( \mathbf{x} \right) j \left[ \mathbf{x},\mathbf{n}\left(\mathbf{x} \right) \right]dA\left(\mathbf{x} \right). \]
\end{proposition}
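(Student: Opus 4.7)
The plan is to combine the proof of Proposition \ref{prop_continuite_ordre_un} (handling the scalar mean curvature, which is linear in the second derivatives of the local graph) with the dominated-convergence argument of Proposition \ref{prop_continuite_ordre_zero_i} (handling the diagonally-converging data $j_i$). As in Proposition \ref{prop_continuite_ordre_un}, I would first introduce the partition of unity $(\xi^k)_{1 \leqslant k \leqslant K}$ subordinated to the cylinders $\mathcal{C}_{\tilde{r},\varepsilon}(\mathbf{x}_k)$ covering a tubular neighbourhood of $\partial \Omega$, and apply Theorem \ref{thm_parametrisation_locale_i} to obtain local parametrizations $X_i^k : \overline{D_{\tilde{r}}}(\mathbf{x}_k) \to \partial \Omega_i$ with $\varphi_i^k \to \varphi^k$ in $C^1$ and $\varphi_i^k \rightharpoonup \varphi^k$ weak-star in $W^{2,\infty}$. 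After writing the integral as a finite sum
\[
\int_{\partial \Omega_i} H\, j_i[\mathbf{x},\mathbf{n}(\mathbf{x})]\,dA = \sum_{k=1}^{K} \int_{D_{\tilde{r}}(\mathbf{x}_k)} (H \circ X_i^k)(\mathbf{x}') \, \Phi_i^k(\mathbf{x}') \, d\mathbf{x}',
\]
where $\Phi_i^k(\mathbf{x}') := (\xi^k \circ X_i^k)(\mathbf{x}')\, j_i[X_i^k(\mathbf{x}'),(\mathbf{n} \circ X_i^k)(\mathbf{x}')]\,\sqrt{1 + \|\nabla \varphi_i^k(\mathbf{x}')\|^2}$, the problem reduces on each patch to passing to the limit in a product weak-star $\times$ strong.

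The first key step is to verify that $(H \circ X_i^k)_{i \in \mathbb{N}}$ converges weak-star in $L^\infty(D_{\tilde{r}}(\mathbf{x}_k))$ to $H \circ X^k$. This follows exactly as in \eqref{expression_scalar_mean_curvature_local}: the expression of $H$ in the local chart is a sum of products of $\partial_{pq} \varphi_i^k$ (weak-star converging in $L^\infty$ to $\partial_{pq} \varphi^k$) with rational functions of $\nabla \varphi_i^k$ (uniformly converging, and uniformly bounded thanks to Remark \ref{remarque_borne_linfty} and Theorem \ref{thm_parametrisation_locale_i}); the product of a weak-star converging factor with a uniformly converging one yields weak-star convergence.

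The second and main step is to establish strong $L^1(D_{\tilde{r}}(\mathbf{x}_k))$ convergence of $\Phi_i^k$ to $\Phi^k := (\xi^k \circ X^k)\, j[X^k,\mathbf{n}\circ X^k]\,\sqrt{1+\|\nabla \varphi^k\|^2}$. Here is where the diagonal convergence hypothesis is used: for every fixed $\mathbf{x}' \in D_{\tilde{r}}(\mathbf{x}_k)$, the sequence $(X_i^k(\mathbf{x}'),\mathbf{n}\circ X_i^k(\mathbf{x}'))$ converges to $(X^k(\mathbf{x}'),\mathbf{n}\circ X^k(\mathbf{x}'))$ by the $C^1$ convergence of $\varphi_i^k$ and formula \eqref{expression_normale}, so Definition \ref{definition_convergence_diagonale} gives $j_i[X_i^k(\mathbf{x}'),\mathbf{n}\circ X_i^k(\mathbf{x}')] \to j[X^k(\mathbf{x}'),\mathbf{n}\circ X^k(\mathbf{x}')]$ pointwise. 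The uniform bound on $(j_i)$ and the compact domain then allow Lebesgue's dominated convergence to upgrade this to $L^1$ strong convergence of $\Phi_i^k$, once combined with the uniform convergence of $(\xi^k \circ X_i^k)$ and of $\sqrt{1+\|\nabla \varphi_i^k\|^2}$.

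Finally, I would conclude by the standard weak-star $\times$ strong duality: writing
\[
\int_{D_{\tilde{r}}(\mathbf{x}_k)} (H \circ X_i^k)\,\Phi_i^k\,d\mathbf{x}' - \int_{D_{\tilde{r}}(\mathbf{x}_k)} (H \circ X^k)\,\Phi^k\,d\mathbf{x}'
= \int (H \circ X_i^k)(\Phi_i^k - \Phi^k) + \int [(H \circ X_i^k) - (H \circ X^k)]\,\Phi^k,
\]
the first term is bounded by $\tfrac{1}{\varepsilon}\|\Phi_i^k - \Phi^k\|_{L^1} \to 0$ (by the uniform $L^\infty$ bound on $H$ from Remark \ref{remarque_borne_linfty}) and the second tends to $0$ by the weak-star convergence established above. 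Summing over $k = 1, \ldots, K$ yields the claim. The main subtle point is ensuring the product passage to the limit works: the hypothesis that $j_i$ is uniformly bounded on $\overline{B} \times \mathbb{S}^{n-1}$ is precisely what allows dominated convergence and hence the required strong $L^1$ convergence of the test-function side, which is indispensable since $H$ is controlled only weakly-star.
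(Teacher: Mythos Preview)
Your proposal is correct and follows exactly the same approach as the paper: localize via the partition of unity, note that $(H\circ X_i^k)$ converges weak-star in $L^\infty$ by \eqref{expression_scalar_mean_curvature_local}, and use the diagonal convergence together with the uniform bound on $(j_i)$ to obtain strong $L^1$ convergence of the remaining factor via Lebesgue's Dominated Convergence Theorem. The paper's proof is simply a two-sentence summary of precisely the argument you have spelled out in detail.
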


\begin{remark}
\label{remarque_vector_fields}
As in Corollary \ref{coro_continuite_ordre_zero_vector_fields}, we can consider here that $j_{i} $ is a continuous map of the position, the normal, and a finite number of uniformly bounded vector fields diagonally converging in the sense of Definition \ref{definition_convergence_diagonale_vector_fields}.
\end{remark}

\begin{proof}
The proof is identical to the one of Proposition \ref{prop_continuite_ordre_un}. Writing the functional in terms of local parametrizations, it remains to check that we can let $i \rightarrow + \infty$ in each integral. From \eqref{expression_scalar_mean_curvature_local}, $(H\circ X_{i}^{k})_{i \in \mathbb{N}}$ weakly-star converges in $L^{\infty}(\overline{D_{\tilde{r}}}(\mathbf{0'}))$ to $H \circ X^{k}$, while the remaining part of the integrand is strongly converging in $L^{1}(\overline{D_{\tilde{r}}}(\mathbf{0'}))$, since the hypothesis allows one to apply Lebesgue's Dominated Convergence Theorem. Hence, the functional is continuous. 
\end{proof} 

\begin{proposition}
\label{prop_continuite_seconde_forme_fondamentale}
Consider Assumption \ref{hypothese_continuite} and some uniformly bounded continuous vector fields $\mathbf{V}_{i}$ and $ \mathbf{W}_{i}$ on $\partial \Omega_{i}$ that are diagonally converging to continuous vector fields $\mathbf{V}$ and $\mathbf{W}$ on $\partial \Omega$ in the sense of Definition \ref{definition_convergence_diagonale_vector_fields}. Let $j, j_{i} : \mathbb{R}^{n} \times \mathbb{S}^{n-1} \rightarrow \mathbb{R}$ be continuous maps such that $(j_{i})_{i \in \mathbb{N}}$ is uniformly bounded on $\overline{B} \times \mathbb{S}^{n-1}$ and diagonally converges to $j$ as in Definition \ref{definition_convergence_diagonale}. Then, we have:
\[ \lim_{i \rightarrow + \infty} \int_{\partial \Omega_{i}} \mathrm{\mathbf{II}} \left(\mathbf{x}\right) \left[ \mathbf{V}_{i} \left( \mathbf{x} \right), \mathbf{W}_{i}\left(\mathbf{x} \right) \right] j_{i} \left[ \mathbf{x}, \mathbf{n} \left( \mathbf{x} \right) \right] dA \left( \mathbf{x} \right) = \int_{\partial \Omega} \mathrm{\mathbf{II}} \left(\mathbf{x}\right) \left[ \mathbf{V} \left( \mathbf{x} \right), \mathbf{W} \left(\mathbf{x} \right) \right] j \left[ \mathbf{x}, \mathbf{n} \left( \mathbf{x} \right) \right] dA \left( \mathbf{x} \right) .  \]
\end{proposition}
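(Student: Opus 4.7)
The plan is to follow closely the pattern of Propositions \ref{prop_continuite_ordre_zero}, \ref{prop_continuite_ordre_zero_i} and \ref{prop_continuite_ordre_un}, reducing the functional to a sum of local integrals via a partition of unity, and then exploiting the structure ``coefficient that converges strongly in $L^{1}$ times Hessian that converges weakly-star in $L^{\infty}$''. First, I would fix a finite covering of $\partial \Omega$ by cylinders $\mathcal{C}_{\tilde r/2,\varepsilon/2}(\mathbf{x}_{k})$ together with a partition of unity $(\xi^{k})_{1\leqslant k \leqslant K}$ as in the proof of Proposition \ref{prop_continuite_ordre_zero}, and an integer $I$ beyond which Theorem \ref{thm_parametrisation_locale_i} supplies the local graphs $\varphi_{i}^{k}$ and the parametrizations $X_{i}^{k} : \mathbf{x}' \mapsto (\mathbf{x}',\varphi_{i}^{k}(\mathbf{x}'))$, with $\varphi_{i}^{k} \to \varphi^{k}$ strongly in $C^{1}(\overline{D_{\tilde r}}(\mathbf{x}_{k}))$ and the Hessians converging only weakly-star in $L^{\infty}(D_{\tilde r}(\mathbf{x}_{k}))$.

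The key algebraic observation is that, after expansion of $\mathrm{\mathbf{II}}(\mathbf{V}_{i},\mathbf{W}_{i})$ via formulas \eqref{eqn_composantes}, \eqref{expression_inverse_premiere_forme_fondamentale}, \eqref{expression_seconde_forme_fondamentale} and \eqref{eqn_seconde_forme_fondamentale_def}, the quantity $\mathrm{\mathbf{II}}[\mathbf{V}_{i},\mathbf{W}_{i}] \circ X_{i}^{k}$ is \emph{linear} in the Hessian of $\varphi_{i}^{k}$: one can write
\[
\bigl(\mathrm{\mathbf{II}}[\mathbf{V}_{i},\mathbf{W}_{i}]\,j_{i}[\cdot,\mathbf{n}]\,\xi^{k}\,dA\bigr)\circ X_{i}^{k}(\mathbf{x}') = \sum_{p,q=1}^{n-1} A_{i,pq}^{k}(\mathbf{x}')\,\partial_{pq}\varphi_{i}^{k}(\mathbf{x}')\,d\mathbf{x}',
\]
where each coefficient $A_{i,pq}^{k}$ depends only on $\mathbf{x}'$, on $\nabla\varphi_{i}^{k}(\mathbf{x}')$, on the values of $\mathbf{V}_{i}$, $\mathbf{W}_{i}$ and $j_{i}$ at $X_{i}^{k}(\mathbf{x}')$, and on $\xi^{k}$, but not on any second-order derivative of $\varphi_{i}^{k}$. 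After using Corollary \ref{coro_continuite_ordre_zero_vector_fields}'s extension trick to extend $\mathbf{V}_{i}, \mathbf{W}_{i}$ to neighbourhoods of their respective surfaces and using the diagonal convergence together with the uniform $C^{1}$-convergence $X_{i}^{k}\to X^{k}$, one checks pointwise a.e. convergence $A_{i,pq}^{k}(\mathbf{x}')\to A_{pq}^{k}(\mathbf{x}')$. The uniform bounds on $\mathbf{V}_{i}, \mathbf{W}_{i}, j_{i}$, combined with the uniform bound $\|\nabla\varphi_{i}^{k}\|_{L^{\infty}} \leqslant \frac{1}{\tan(f^{-1}_{g^{-1}(\varepsilon)}(\varepsilon))}$ from Corollary \ref{coro_lipschitz_i} (which keeps the denominator $\sqrt{1+\|\nabla\varphi_{i}^{k}\|^{2}}$ uniformly bounded above and below), then supply a dominating constant, so that Lebesgue's Dominated Convergence Theorem yields $A_{i,pq}^{k}\to A_{pq}^{k}$ strongly in $L^{1}(D_{\tilde r}(\mathbf{x}_{k}))$.

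Finally, since $\partial_{pq}\varphi_{i}^{k}\rightharpoonup^{\ast}\partial_{pq}\varphi^{k}$ in $L^{\infty}(D_{\tilde r}(\mathbf{x}_{k}))$ and $A_{i,pq}^{k}\to A_{pq}^{k}$ strongly in $L^{1}$, the standard product lemma (strong $L^{1}$ times weak-star $L^{\infty}$ converges to the product of limits) gives
\[
\int_{D_{\tilde r}(\mathbf{x}_{k})} A_{i,pq}^{k}(\mathbf{x}')\,\partial_{pq}\varphi_{i}^{k}(\mathbf{x}')\,d\mathbf{x}' \;\longrightarrow\; \int_{D_{\tilde r}(\mathbf{x}_{k})} A_{pq}^{k}(\mathbf{x}')\,\partial_{pq}\varphi^{k}(\mathbf{x}')\,d\mathbf{x}',
\]
and summing over $p,q,k$ concludes the proof.

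The main obstacle is the strong $L^{1}$-convergence of the coefficients $A_{i,pq}^{k}$. The delicate point is that the components of $\mathbf{V}_{i}$ in the local tangential basis $(\partial_{p}X_{i}^{k})$ involve both $\mathbf{V}_{i}\circ X_{i}^{k}$ (which converges only pointwise via diagonal convergence) and the metric inverse $g^{pq}$ (which involves $\nabla\varphi_{i}^{k}$, uniformly convergent); combining these two layers of convergence cleanly requires one to carefully propagate the diagonal-convergence definition through the compositions $\mathbf{V}_{i}\circ X_{i}^{k}\to \mathbf{V}\circ X^{k}$ and verify that the composed map is indeed bounded and pointwise convergent before appealing to dominated convergence. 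Everything else is a bookkeeping exercise already carried out in the preceding propositions.
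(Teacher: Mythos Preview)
Your proposal is correct and follows essentially the same route as the paper: localize via the partition of unity from Proposition~\ref{prop_continuite_ordre_zero}, observe that $\mathrm{\mathbf{II}}(\mathbf{V}_i,\mathbf{W}_i)\circ X_i^k$ is linear in the second derivatives of $\varphi_i^k$, and split each integrand into a factor converging weakly-star in $L^\infty$ times a factor converging strongly in $L^1$ by dominated convergence. The only cosmetic difference is that the paper groups the weak-star part as $g^{pq}b_{qr}g^{rs}$ rather than the raw $\partial_{pq}\varphi_i^k$, and it does not pass through the extension trick of Corollary~\ref{coro_continuite_ordre_zero_vector_fields} (which is indeed unnecessary here, since $X_i^k$ already lands in $\partial\Omega_i$ so $\mathbf{V}_i\circ X_i^k$ is directly defined and its pointwise convergence follows from diagonal convergence together with $X_i^k(\mathbf{x}')\to X^k(\mathbf{x}')$).
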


\begin{remark}
\label{remarque_lower_semicontinuity}
Note that if $j_{i} = j$ for any $i \in \mathbb{N}$, then the above assertion states that a functional which is linear in the second fundamental form is continuous. Hence, using the same arguments than in Corollary \ref{coro_continuite_ordre_un_convex}, any functional whose integrand is a continuous map of the position, the normal, and the second fundamental form, convex in its last variable, is lower semi-continuous. 
\end{remark}

\begin{proof}
We write the integral in terms of local parametrizations and check that we can let $i \rightarrow + \infty$. In the local basis $(\partial_{1}X_{i}^{k}, \ldots, \partial_{n-1}X_{i}^{k})$, using \eqref{eqn_seconde_forme_fondamentale_def}, the second fundamental form takes the form:  
\[ \left( \mathrm{\mathbf{II}}\circ X_{i}^{k} \right) \left( \mathbf{V}_{i} \circ X_{i}^{k}, \mathbf{W}_{i} \circ X_{i}^{k} \right) = \sum_{p,q,r,s = 1}^{n-1} \left\langle \mathbf{V}_{i} \circ X_{i}^{k} ~\vert~ \partial_{p} X_{i}^{k} \right\rangle g^{pq}b_{qr}g^{rs} \left\langle \mathbf{W}_{i} \circ X_{i}^{k} ~\vert~ \partial_{s} X_{i}^{k} \right\rangle.     \]
Hence, each integrand is the product of $g^{pq}b_{qr}g^{rs}$ with a remaining term. Using the assumptions, the convergence results of Theorem \ref{thm_parametrisation_locale_i}, and Lebesgue's Dominated Convergence Theorem, we get that $g^{pq}b_{qr}g^{rs}$ weakly-star converges in $L^{\infty}$, while the remaining term $L^{1}$-strongly converges.
\end{proof}

\subsection{Some non-linear functionals involving the second fundamental form}
\label{section_continuite_non_lineaire}
All the previous continuity results were obtained by expressing the integrals in the parametrizations associated with a suitable partition of unity, and by observing that each integrand is the product of $b_{pq}$ converging $L^{\infty}$-weakly-star with a remaining term converging $L^{1}$-strongly. We are wondering here if a non-linear function such as the determinant of the $(b_{pq})$ can also $L^{\infty}$-weakly-star converge. Note that the convergence is in $L^{\infty}$ and \textit{not} in $W^{1,p}$ so we cannot use e.g. \cite[Section 8.2.4.b]{Evansbook}.
\bigskip

However, the coefficients of the first and second fundamental forms are not random coefficients. They characterize the hypersurfaces through the Gauss-Codazzi-Mainardi equations \eqref{expression_gauss_equation} and \eqref{expression_codazzi_mainardi_equation}. Hence, using the differential structure of these equations, we want to obtain the $L^{\infty}$-weak-star convergence of non-linear functions of the $b_{pq}$. This is done by considering a generalization of the Div-Curl Lemma due to Tartar. We refer to \cite[Section 5.5]{Evans} for references and it states as follows.

\begin{proposition}[\textbf{Tartar 1979}]
\label{prop_tartar}
Let $n \geqslant 3$ and $U \subset \mathbb{R}^{n-1}$ be open and bounded with smooth boundary. Let us consider a sequence of maps $(u_{i})_{i \in \mathbb{N}}$ weakly-star converging to $u$ in $L^{\infty}(U,\mathbb{R}^{M})$, $M \geqslant 1$, and a continuous functional $F : \mathbb{R}^{M} \rightarrow \mathbb{R}$ such that $(F(u_{i}))_{i \in \mathbb{N}}$ is weakly-star converging in $L^{\infty}(U, \mathbb{R})$. Let us suppose we are given $P$ first-order constant coefficient differential operators $A^{p} v : = \sum_{q = 1}^{n-1} \sum_{m=1}^{M} a_{mq}^{p} \partial_{q} v_{m}$ so that the sequences $(A^{p}u_{i})_{i \in \mathbb{N}}$ lies in a compact subset of $H^{-1}(U)$. We also assume that $(u_{i})_{i \in \mathbb{N}} $ is almost everywhere valued in $K$ for some given compact set $K\subset \mathbb{R}^{M}$. We introduce the following wave cone:
\[ \Lambda = \left\lbrace \lambda \in \mathbb{R}^{M} ~\vert~ \exists \mu \in \mathbb{R}^{n-1} \backslash \lbrace \mathbf{0'} \rbrace, \forall p \in \lbrace 1, \ldots P \rbrace, \quad  \sum_{q = 1}^{n-1} \sum_{m=1}^{M} a_{mq}^{p} \lambda_{m} \mu_{q} = 0 \right\rbrace.   \]
If $F$ is a quadratic form and $F = 0$ on $ \Lambda$, then the weak-star limit of $(F(u_{i}))_{i \in \mathbb{N}}$ is $F(u)$.
\end{proposition}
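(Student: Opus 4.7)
The plan is to follow the classical Murat--Tartar compensated compactness argument via Fourier analysis. First, let $B$ denote the symmetric bilinear form associated with the quadratic form $F$, so that $F(u_i) = F(u) + 2 B(u, u_i - u) + F(u_i - u)$. Since $B(u, \cdot)$ has bounded coefficients and $u_i - u \rightharpoonup^* 0$ in $L^{\infty}$, the middle term tends to zero when tested against any $L^{1}$ function. Hence it suffices to treat the case $u = 0$: show that if $v_i \rightharpoonup^* 0$ in $L^{\infty}(U, \mathbb{R}^M)$, is uniformly valued in a compact $K$, and $(A^p v_i)$ is precompact in $H^{-1}(U)$ for each $p$, then $\int \varphi \, F(v_i) \to 0$ for every $\varphi \in C_c^{\infty}(U)$. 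To localize, introduce a cutoff $\chi \in C_c^{\infty}(U)$ with $\chi \equiv 1$ on $\operatorname{supp}(\varphi)$, and work with $w_i := \chi v_i$ extended by zero to $\mathbb{R}^{n-1}$. The identity $A^p w_i = \chi (A^p v_i) + [A^p, \chi] v_i$ shows that $(A^p w_i)$ remains precompact in $H^{-1}(\mathbb{R}^{n-1})$: the commutator is a uniformly bounded sequence of $L^{2}$-functions with fixed compact support, hence precompact in $H^{-1}$ by Rellich--Kondrachov.

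Next, apply Plancherel's identity to obtain
\[ \int_{\mathbb{R}^{n-1}} F(w_i) \, d\mathbf{x'} = \sum_{m,m'} c_{mm'} \int_{\mathbb{R}^{n-1}} \widehat{w_{i,m}}(\xi) \, \overline{\widehat{w_{i,m'}}(\xi)} \, d\xi, \]
where $F(\lambda) = \sum_{m,m'} c_{mm'} \lambda_m \lambda_{m'}$ with symmetric $(c_{mm'})$. Split the Fourier integral into the regions $\{|\xi| \leqslant R\}$ and $\{|\xi| > R\}$. On the bounded region, since $w_i$ is uniformly $L^{2}$-bounded with fixed compact support and $w_i \rightharpoonup 0$ weakly in $L^{2}$, the Fourier transforms converge locally uniformly to zero, so this contribution vanishes as $i \to \infty$ for each fixed $R$. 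On the unbounded region, observe that the principal symbol of $A^p$ at frequency $\xi$ is the linear functional $\mathbf{a}^p(\xi): \lambda \mapsto \sum_{q,m} a_{mq}^p \xi_q \lambda_m$, and the wave cone is precisely $\Lambda = \bigcup_{\mu \neq \mathbf{0'}} \bigcap_{p} \ker \mathbf{a}^p(\mu)$. The $H^{-1}$-precompactness of $(A^p w_i)$ means that $|\xi| (1 + |\xi|^2)^{-1/2} \, \mathbf{a}^p(\xi/|\xi|) \cdot \widehat{w_i}(\xi)$ is strongly convergent in $L^{2}(\mathbb{R}^{n-1})$, forcing $\widehat{w_i}(\xi)$ to concentrate near the joint kernel $\bigcap_p \ker \mathbf{a}^p(\xi/|\xi|) \subseteq \Lambda$ for large $|\xi|$.

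The main obstacle is converting the vanishing of $F$ on the real cone $\Lambda$ into a quantitative estimate of the form $|F(\lambda)| \leqslant \varepsilon |\lambda|^2 + C_\varepsilon |\lambda| \cdot \operatorname{dist}(\lambda, \Lambda)$, uniform over the compact range $K$, and transferred to complex arguments by splitting $\widehat{w_i}(\xi)$ into real and imaginary parts (since $\Lambda$ is a real cone). Integrating this bound yields
\[ \left| \int_{|\xi| > R} F(\widehat{w_i}(\xi)) \, d\xi \right| \leqslant \varepsilon \|w_i\|_{L^2}^2 + C_\varepsilon \|w_i\|_{L^2} \, \delta_i(R), \]
where $\delta_i(R)$ measures the $L^{2}$-distance of $\widehat{w_i}|_{\{|\xi| > R\}}$ from the joint kernels of the principal symbols; the $H^{-1}$-precompactness hypothesis ensures $\limsup_i \delta_i(R) \to 0$ as $R \to \infty$. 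Sending first $i \to \infty$, then $R \to \infty$, then $\varepsilon \to 0$ closes the argument for test functions in $C_c^{\infty}(U)$. A final density argument, using the uniform $L^{\infty}$-bound on $F(u_i)$ coming from continuity of $F$ on the compact $K$, extends the weak-star convergence from $C_c^{\infty}(U)$ test functions to arbitrary $L^{1}(U)$ test functions, yielding the identification of the weak-star limit with $F(u)$.
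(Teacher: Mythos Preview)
The paper does not prove this proposition: it is stated as a known result due to Tartar, with a reference to \cite[Section 5.5]{Evans}, and is then used as a black box to obtain weak-star convergence of curvature quantities via the Codazzi--Mainardi equations. So there is no ``paper's own proof'' to compare against.

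What you have written is the classical Murat--Tartar Fourier-analytic argument, and the outline is correct. Two small points are worth tightening. First, after localising you compute $\int_{\mathbb{R}^{n-1}} F(w_i)\,d\mathbf{x'} = \int \chi^{2} F(v_i)$ via Plancherel, which shows $\int \chi^{2} F(v_i) \to 0$ for every $\chi \in C_c^{\infty}(U)$, not directly $\int \varphi F(v_i) \to 0$; one then passes to general $\varphi \in L^{1}(U)$ by writing non-negative test functions as limits of squares and using the uniform $L^{\infty}$ bound on $F(v_i)$ --- you allude to this in your final density step, but it should be made explicit. Second, the quantitative bound you invoke is more naturally stated frequency-by-frequency: for each $\varepsilon > 0$ there is $C_\varepsilon$ with $|F(\lambda)| \leqslant \varepsilon |\lambda|^{2} + C_\varepsilon \sum_{p} |\mathbf{a}^{p}(\xi/|\xi|)\cdot\lambda|^{2}$ uniformly over $\xi \neq 0$ (proved by a compactness--contradiction argument on $\mathbb{S}^{n-2}\times\mathbb{S}^{M-1}$, using that $\lambda \in \bigcap_{p}\ker\mathbf{a}^{p}(\xi) \subseteq \Lambda$ forces $F(\lambda)=0$). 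This avoids the ambiguity of $\operatorname{dist}(\lambda,\Lambda)$, since the relevant projection is onto the $\xi$-dependent subspace $\bigcap_{p}\ker\mathbf{a}^{p}(\xi)$ rather than onto the full cone $\Lambda$. With these clarifications your argument is complete.
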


We now treat the case of $\mathbb{R}^{3}$ to get familiar with the notation and observe how Proposition \ref{prop_tartar} can be used here to obtain the $L^{\infty}$-weak-star convergence of the Gaussian curvature $K = \kappa_{1} \kappa_{2}$. Let $n = 3$, $U = D_{\tilde{r}}(\mathbf{x}_{k})$, and $u_{i} : \mathbf{x'} \mapsto (b_{pq}) \in \mathbb{R}^{2^{2}} $ defined by \eqref{expression_seconde_forme_fondamentale} with $X_{i}^{k} : \mathbf{x'}  \mapsto (\mathbf{x'},\varphi_{i}^{k}(\mathbf{x'})) \in \partial \Omega_{i} $. First, we show that the assumptions of Proposition \ref{prop_tartar} are satisfied. From Theorem \ref{thm_parametrisation_locale_i}, $(u_{i})_{i \in \mathbb{N}}$ $L^{\infty}(U)$-weakly-star converges to $u$ and it is uniformly bounded so it is valued in a compact set. Moreover, in the case $n = 3$, there are only two Codazzi-Mainardi equations \eqref{expression_codazzi_mainardi_equation}:
\[ \left\lbrace \begin{array}{l}
  \partial_{1} b_{12} - \partial_{2} b_{11} = \left( \Gamma_{11}^{1} b_{12} - \Gamma_{12}^{1} b_{11} \right) +  \left( \Gamma_{11}^{2} b_{22} - \Gamma_{12}^{2} b_{21} \right) \\
\\
 \partial_{1} b_{22} - \partial_{2} b_{21} = \left( \Gamma_{21}^{1} b_{12} - \Gamma_{22}^{1} b_{11} \right) + \left( \Gamma_{21}^{2} b_{22} - \Gamma_{22}^{2} b_{21} \right) . \\
\end{array} \right. \]
Hence, the two differential operators $A^{1}u_{i} := \partial_{1} b_{12} - \partial_{2} b_{11} $ and $A^{2} u_{i} := \partial_{1} b_{22} - \partial_{2} b_{21}$ are valued and uniformly bounded in $L^{\infty}(U)$, which is compactly embedded in $H^{-1}(U)$ (Rellich-Kondrachov Embedding Theorem), so we deduce that up to a subsequence, $(A^{1}u_{i})_{i \in \mathbb{N}} $ and $ (A^{2}u_{i})_{i \in \mathbb{N}}$ lies in a compact subset of $H^{-1}(U)$. Let us now have a look at the wave cone:
\[ \Lambda = \left\lbrace \left( \begin{array}{cc} \lambda_{11} & \lambda_{12} \\
\lambda_{21} & \lambda_{22} \\ \end{array} \right) \in \mathbb{R}^{2^{2}} ~\vert~ \exists \left( \begin{array}{c} \mu_{1} \\ \mu_{2} \\ \end{array} \right) \neq \left( \begin{array}{c} 0 \\ 0 \\ \end{array} \right), ~ \mu_{1} \lambda_{12} - \mu_{2} \lambda_{11} = 0 ~ \mathrm{and} ~ \mu_{1} \lambda_{22} - \mu_{2} \lambda_{21} = 0 \right\rbrace.  \] 

\begin{remark}
\label{remark_cone_rtrois}
The wave cone $\Lambda$ is the set of $(2\times 2)$-matrices with zero determinant.
\end{remark}

Consequently, if we want to apply Proposition \ref{prop_tartar} on a quadratic form in the $b_{pq}$, we get from Remark \ref{remark_cone_rtrois} that the determinant is one possibility. Indeed, if we set $F(u_{i}) = \mathrm{det}(u_{i})$, then $F$ is quadratic and $F(\lambda) = 0$ for any $\lambda \in \Lambda$. Since $(F(u_{i}))_{i \in \mathbb{N}}$ is uniformly bounded in $L^{\infty}(U)$, up to a subsequence, it is converging and applying Proposition \ref{prop_tartar}, the limit is $F(u)$. This also proves that $F(u)$ is the unique limit of any converging subsequence. Hence, the whole sequence is converging to $F(u)$ and we are now in position to prove the following result.

\begin{proposition}
\label{prop_continuite_gaussian_curvature}
Consider Assumption \ref{hypothese_continuite} and some continuous maps $j, j_{i}: \mathbb{R}^{3} \times \mathbb{S}^{2} \rightarrow \mathbb{R}$ such that $(j_{i})_{i \in \mathbb{N}}$ is uniformly bounded on $\overline{B} \times \mathbb{S}^{2}$ and diagonally converges to $j$ as in Definition \ref{definition_convergence_diagonale}. Then, we have (note that Remarks \ref{remarque_vector_fields} and \ref{remarque_lower_semicontinuity} also hold here): 
\[ \lim_{i \rightarrow + \infty} \int_{\partial \Omega_{i}} K \left( \mathbf{x} \right) j_{i} \left[ \mathbf{x},\mathbf{n}\left(\mathbf{x} \right) \right]dA\left(\mathbf{x} \right) = \int_{\partial \Omega} K \left( \mathbf{x} \right) j \left[ \mathbf{x},\mathbf{n}\left(\mathbf{x} \right) \right]dA\left(\mathbf{x} \right). \]
In particular, the genus is continuous: $\mathrm{genus}(\partial \Omega_{i}) \longrightarrow_{i \rightarrow + \infty} \mathrm{genus}(\partial \Omega)$.
\end{proposition}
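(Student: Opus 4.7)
The plan is to mimic the strategy used for the scalar mean curvature in Proposition \ref{prop_continuite_seconde_forme_fondamentale}, replacing the trivial weak-star convergence of the $b_{pq}$ by the more delicate weak-star convergence of their determinant, obtained via Tartar's Div-Curl Lemma (Proposition \ref{prop_tartar}). First, I would introduce the same finite partition of unity $(\xi^{k})_{1 \leqslant k \leqslant K}$ associated with a covering of $\partial \Omega$ by cylinders $\mathcal{C}_{\tilde{r},\varepsilon}(\mathbf{x}_{k})$ as in the proof of Proposition \ref{prop_continuite_ordre_zero}, and use Theorem \ref{thm_parametrisation_locale_i} to rewrite, for $i$ large enough,
\[ \int_{\partial \Omega_{i}} K \, j_{i}[\mathbf{x},\mathbf{n}(\mathbf{x})] \, dA(\mathbf{x}) = \sum_{k=1}^{K} \int_{D_{\tilde{r}}(\mathbf{x}_{k})} (\xi^{k}\circ X_{i}^{k}) (j_{i} \circ (X_{i}^{k},\mathbf{n}\circ X_{i}^{k})) (K \circ X_{i}^{k}) (dA \circ X_{i}^{k}), \]
with $X_{i}^{k}(\mathbf{x'})=(\mathbf{x'},\varphi_{i}^{k}(\mathbf{x'}))$. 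Using \eqref{expression_weingarten_map_local_def} and the explicit form \eqref{expression_seconde_forme_fondamentale} of the $b_{pq}$, the factor $(K\circ X_{i}^{k})(dA\circ X_{i}^{k})$ simplifies in $\mathbb{R}^{3}$ to $\det(b_{pq})/\sqrt{1+\|\nabla \varphi_{i}^{k}\|^{2}}$, i.e.\ to $\det(\mathrm{Hess}\, \varphi_{i}^{k})/(1+\|\nabla\varphi_{i}^{k}\|^{2})^{3/2}$.

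Second, I would apply Tartar's lemma on $U=D_{\tilde{r}}(\mathbf{x}_{k})$ to the $\mathbb{R}^{4}$-valued maps $u_{i}=(b_{pq}\circ X_{i}^{k})$. From Remark \ref{remarque_borne_linfty}, $(u_{i})$ is uniformly bounded in $L^{\infty}$, and Theorem \ref{thm_parametrisation_locale_i} gives $u_{i}\rightharpoonup u$ weakly-star. The two Codazzi--Mainardi equations \eqref{expression_codazzi_mainardi_equation} written out in the paragraph preceding Proposition \ref{prop_continuite_gaussian_curvature} show that the two first-order operators $A^{1}u_{i}=\partial_{1}b_{12}-\partial_{2}b_{11}$ and $A^{2}u_{i}=\partial_{1}b_{22}-\partial_{2}b_{21}$ equal combinations of products $\Gamma^{m}_{pq}b_{rs}$; since the Christoffel symbols only involve the $g^{ij}$ and $\partial_{l}g_{ij}$, which converge strongly in $L^{p}$ for every finite $p$ (they depend continuously on $\nabla \varphi_{i}^{k}$, which converges uniformly on the compact set), while the $b_{rs}$ are uniformly bounded in $L^{\infty}$, these right-hand sides are uniformly bounded in $L^{\infty}(U)$. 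By the Rellich--Kondrachov embedding, $(A^{p}u_{i})$ thus lies in a compact subset of $H^{-1}(U)$. Since the wave cone $\Lambda$ is exactly the set of $(2\times 2)$-matrices of zero determinant (Remark \ref{remark_cone_rtrois}), the quadratic form $F(\lambda)=\det(\lambda)$ vanishes on $\Lambda$. Tartar's lemma then yields $\det(u_{i}) \rightharpoonup \det(u)$ in $L^{\infty}(U)$ weak-star, hence
\[ \frac{\det(\mathrm{Hess}\,\varphi_{i}^{k})}{(1+\|\nabla \varphi_{i}^{k}\|^{2})^{3/2}} \;\rightharpoonup\; \frac{\det(\mathrm{Hess}\,\varphi^{k})}{(1+\|\nabla \varphi^{k}\|^{2})^{3/2}} \qquad \text{weakly-star in } L^{\infty}(U). \]

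Third, for the remaining factor $(\xi^{k}\circ X_{i}^{k})\, j_{i}[X_{i}^{k},\mathbf{n}\circ X_{i}^{k}]$, the uniform bound on $(j_{i})$ together with its diagonal convergence to $j$ and the $C^{1}$-uniform convergence $\varphi_{i}^{k}\to \varphi^{k}$ allow one to invoke Lebesgue's Dominated Convergence Theorem: the integrand converges pointwise and is uniformly dominated, so it converges strongly in $L^{1}(U)$ to the corresponding factor built with $\varphi^{k}$ and $j$. The standard weak-star $\times$ strong pairing then lets me pass to the limit in each integral and conclude that the whole sum converges to $\int_{\partial \Omega} K\, j[\mathbf{x},\mathbf{n}(\mathbf{x})]dA$. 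The main obstacle, which Tartar's lemma precisely resolves, is that $K$ is \emph{quadratic} in the second fundamental form and therefore not weakly-star continuous \emph{a priori}; one really needs the differential structure coming from the Codazzi--Mainardi equations to extract the extra compactness. Finally, the continuity of the genus follows by taking $j_{i}\equiv j\equiv 1$ and applying the Gauss--Bonnet theorem \cite[Theorem 5.19]{Federer} on sets of positive reach (valid here thanks to Theorem \ref{thm_reach_equiv_boule}): then $4\pi(1-\mathrm{genus}(\partial \Omega_{i}))=\int_{\partial \Omega_{i}} K\, dA \to \int_{\partial \Omega} K\, dA = 4\pi(1-\mathrm{genus}(\partial \Omega))$, and since the genus is integer-valued, it must be eventually constant.
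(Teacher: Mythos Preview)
Your proposal is correct and follows essentially the same approach as the paper: partition of unity, Tartar's lemma applied to the $(b_{pq})$ via the Codazzi--Mainardi equations to get weak-star convergence of $\det(b_{pq})$, then weak-star $\times$ strong $L^{1}$ pairing, and finally Gauss--Bonnet for the genus. In fact the paper places most of the Tartar argument in the discussion preceding the statement and only invokes ``the foregoing'' in the proof itself, so your write-up is a slightly more self-contained version of the same argument.
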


\begin{proof}
As in the proof of Proposition \ref{prop_continuite_ordre_zero}, we can express the functional in the parametrizations associated with the partition of unity. Then, we have to check we can let $i \rightarrow + \infty$ in each integral. Note that $K$ is the determinant \eqref{expression_scalar_mean_curvature} of the Weingarten map \eqref{expression_weingarten_map} so we get from \eqref{expression_weingarten_map_local_def}:
\[ K \circ X^{k}_{i} = \mathrm{det}(h) =  \mathrm{det}(-g^{-1}b) = - \dfrac{\mathrm{det}(b_{pq})}{\mathrm{det}(g_{rs})}. \]
From the foregoing and the uniform convergence of $(g_{rs})$, we get that the sequences $(K \circ X_{i}^{k})_{i \in \mathbb{N}}$ converge $L^{\infty}$-weakly-star respectively to $ K \circ X^{k}$, whereas the remaining term in the integrand is $L^{1}$-strongly converging using the hypothesis and Lebesgue's Dominated Convergence Theorem. Hence, we can let $i \rightarrow + \infty$ and Proposition \ref{prop_continuite_gaussian_curvature} holds. Finally, concerning the genus, we apply the Gauss-Bonnet Theorem $\int_{\partial \Omega_{i}} K dA = 4 \pi (1- g_{i}) \longrightarrow_{i \rightarrow + \infty} \int_{\partial \Omega} K dA = 4 \pi ( 1 - g)$.
\end{proof}

We now establish the equivalent of Proposition \ref{prop_continuite_gaussian_curvature} in $\mathbb{R}^{n}$. First, instead of working with the coefficients $(b_{pq})$ of the second fundamental form \eqref{expression_seconde_forme_fondamentale}, we prefer to work with the ones $(h_{pq})$ representing the Weingarten map. We set $n > 3$, $U = D_{\tilde{r}}(\mathbf{x}_{k})$, and $u_{i} : \mathbf{x'} \in U  \mapsto (h_{pq}) \in \mathbb{R}^{(n-1)^{2}} $ defined by \eqref{expression_weingarten_map_local_def} in the local parametrizations $X_{i}^{k} : \mathbf{x'} \in U \mapsto (\mathbf{x'},\varphi_{i}^{k}(\mathbf{x'})) \in \partial \Omega_{i} $ introduced in the proof of Proposition \ref{prop_continuite_ordre_zero}. Then, we check that the hypothesis of Proposition \ref{prop_tartar} are satisfied. From Theorem \ref{thm_parametrisation_locale_i}, $(u_{i})_{i \in \mathbb{N}}$ weakly-star converges to $u$ in $L^{\infty}(U)$ and it is uniformly bounded so it is valued in a compact set. Using the Codazzi-Mainardi equations \eqref{expression_codazzi_mainardi_equation}, the differential operators:
\[ \partial_{q'} h_{pq} - \partial_{q} h_{pq'} = \sum_{m=1}^{n-1} \left( (\partial_{q'}g^{pm})b_{mq} - (\partial_{q} g^{pm}) b_{mq'} \right) + \sum_{m=1}^{n-1} g^{pm} \left( \partial_{q'} b_{mq} - \partial_{q}b_{mq'} \right), \]
are valued and uniformly bounded in $L^{\infty}(U)$, which is compactly embedded in $H^{-1}(U)$ (Rellich-Kondrachov Embedding Theorem), so up to a subsequence, they lies in a compact set of $H^{-1}(U)$. Finally, we introduce the wave cone of Proposition \ref{prop_tartar}:
\[ \Lambda = \left\lbrace \lambda \in \mathbb{R}^{(n-1)^{2}} ~\vert~ \exists \mu \neq 0_{(n-1)\times 1}, \forall (p,q,m) \in \lbrace 1,\ldots,n-1 \rbrace^{3}, \quad \mu_{m} \lambda_{pq} - \mu_{q} \lambda_{pm} = 0  \right\rbrace.  \] 

\begin{definition}
\label{definition_mineur}
A $p$th-order minor of a square $(n-1)^{2}$-matrix $M$ is the determinant of any $(p \times p)$-matrix $M[I,J]$ formed by the coefficients of $M$ corresponding to rows with index in $I$ and columns with index in $J$, where $I,J \subset \lbrace 1, \ldots, n-1 \rbrace$ have $p$ elements i.e. $\sharp I = \sharp J = p$.
\end{definition}

\begin{remark}
\label{remark_cone_rn}
The wave cone $\Lambda$ is the set of square $(n-1)^{2}$-matrices of rank zero or one. In particular, any minor of order two is zero for such matrices. 
\end{remark}

Consequently, Remark \ref{remark_cone_rn} combined with Proposition \ref{prop_tartar} tells us that continuous functionals are given by the ones whose expressions in the local parametrizations (cf. proof of Proposition \ref{prop_continuite_ordre_zero}) are linear in terms of the form $h_{pq} h_{p'q'} - h_{pq'} h_{p'q}$. However, such terms depend on the partition of unity and on the parametrizations i.e. on the chosen basis $(\partial_{1}X_{i}^{k}, \ldots, \partial_{n-1}X_{i}^{k})$ whereas the integrand of the functional cannot. We now give three applications for which it is the case.

\begin{proposition}
\label{prop_continuite_ordre_deux}
Consider Assumption \ref{hypothese_continuite} and some continuous maps $j, j_{i}: \mathbb{R}^{n} \times \mathbb{S}^{n-1} \rightarrow \mathbb{R}$ so that $(j_{i})_{i \in \mathbb{N}}$ is uniformly bounded on $\overline{B} \times \mathbb{S}^{n-1}$ and diagonally converges to $j$ in the sense of Definition \ref{definition_convergence_diagonale}. Then, introducing $H^{(2)} = \sum_{1 \leqslant p < q \leqslant n-1} \kappa_{p} \kappa_{q}$ defined in \eqref{expression_symmetric_polynomial_curvature}, we have: 
\[ \lim_{i \rightarrow + \infty} \int_{\partial \Omega_{i}} H^{(2)} \left( \mathbf{x} \right) j_{i} \left[ \mathbf{x},\mathbf{n}\left(\mathbf{x} \right) \right]dA\left(\mathbf{x} \right) = \int_{\partial \Omega} H^{(2)} \left( \mathbf{x} \right) j \left[ \mathbf{x},\mathbf{n}\left(\mathbf{x} \right) \right]dA\left(\mathbf{x} \right). \]
Note that Remarks \ref{remarque_vector_fields} and \ref{remarque_lower_semicontinuity} also hold for this functional.
\end{proposition}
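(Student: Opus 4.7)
The plan is to closely follow the structure of the proof of Proposition \ref{prop_continuite_gaussian_curvature}, with Tartar's compensated compactness result (Proposition \ref{prop_tartar}) as the main analytic input, and with the key algebraic observation that $H^{(2)}$ is, up to sign, the coefficient of $\lambda^{n-3}$ in the characteristic polynomial of the Weingarten matrix $(h_{pq})$, hence the sum of all $2 \times 2$ principal minors of $(h_{pq})$. Since this sum is invariant under similarity, it gives an intrinsic, basis-independent quadratic expression in the coefficients $(h_{pq})$.

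First, I would localize using the partition of unity from the proof of Proposition \ref{prop_continuite_ordre_zero}, writing
\[ \int_{\partial \Omega_i} H^{(2)}(\mathbf{x}) j_i[\mathbf{x},\mathbf{n}(\mathbf{x})] dA(\mathbf{x}) = \sum_{k=1}^{K} \int_{D_{\tilde{r}}(\mathbf{x}_k)} (H^{(2)} \circ X_i^k)(\mathbf{x'}) \, R_i^k(\mathbf{x'}) \, d\mathbf{x'}, \]
where $R_i^k$ gathers the partition-of-unity function, the evaluation of $j_i$ at $(X_i^k, \mathbf{n} \circ X_i^k)$, and the area element $\sqrt{1 + \|\nabla \varphi_i^k\|^2}$. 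As in Proposition \ref{prop_continuite_gaussian_curvature}, the hypotheses on $(j_i)$ combined with the $C^1$-strong convergence of $\varphi_i^k$ to $\varphi^k$ given by Theorem \ref{thm_parametrisation_locale_i} and Lebesgue's dominated convergence theorem yield that $R_i^k \to R^k$ strongly in $L^1(D_{\tilde{r}}(\mathbf{x}_k))$. So the whole question reduces to the $L^\infty$-weak-star convergence of $H^{(2)} \circ X_i^k$ to $H^{(2)} \circ X^k$.

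For this, I set $u_i = (h_{pq}^i) : D_{\tilde{r}}(\mathbf{x}_k) \to \mathbb{R}^{(n-1)^2}$ and verify the hypotheses of Proposition \ref{prop_tartar} exactly as in the discussion preceding Proposition \ref{prop_continuite_ordre_deux}: by Remark \ref{remarque_borne_linfty} and Theorem \ref{thm_parametrisation_locale_i}, $(u_i)$ is uniformly bounded in $L^\infty$ and weakly-star converges to $u$ (valued in a fixed compact set), while the Codazzi-Mainardi combinations $\partial_{q'} h_{pq}^i - \partial_q h_{pq'}^i$ are uniformly bounded in $L^\infty$, hence precompact in $H^{-1}$ by Rellich-Kondrachov. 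From Remark \ref{remark_cone_rn}, the wave cone $\Lambda$ consists of matrices of rank at most one, on which every $2 \times 2$ minor vanishes. Each such minor being a continuous quadratic form on $\mathbb{R}^{(n-1)^2}$, Proposition \ref{prop_tartar} applies and yields that each $2 \times 2$ principal minor of $(h_{pq}^i)$ weakly-star converges in $L^\infty$ to the corresponding minor of $(h_{pq})$. Summing over principal index pairs and invoking the characteristic-polynomial identity mentioned above gives $H^{(2)} \circ X_i^k \rightharpoonup H^{(2)} \circ X^k$ weak-star in $L^\infty(D_{\tilde{r}}(\mathbf{x}_k))$.

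The conclusion then follows by combining $L^\infty$-weak-star with $L^1$-strong convergence in each of the finitely many local integrals, exactly as in Propositions \ref{prop_continuite_ordre_un} and \ref{prop_continuite_gaussian_curvature}. The main obstacle is really the algebraic identification of $H^{(2)}$ as a sum of $2 \times 2$ principal minors of $(h_{pq})$, a well-known fact about elementary symmetric polynomials of eigenvalues that is essential because Proposition \ref{prop_tartar} only delivers the weak-star limit of \emph{quadratic} forms vanishing on $\Lambda$; this is precisely why the argument stops at $H^{(2)}$ and does not extend by the same token to the higher $H^{(l)}$ with $l \geq 3$, which are polynomial of degree $l > 2$ in the entries. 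The remaining adaptations (Remarks \ref{remarque_vector_fields} and \ref{remarque_lower_semicontinuity}) are verbatim those already made for Propositions \ref{prop_continuite_seconde_forme_fondamentale} and \ref{prop_continuite_gaussian_curvature}.
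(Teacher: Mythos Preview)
Your proof is correct and follows essentially the same approach as the paper: localize via the partition of unity, identify $H^{(2)}$ with the sum of $2\times 2$ principal minors of $(h_{pq})$ through the characteristic polynomial, and apply Tartar's compensated compactness (Proposition \ref{prop_tartar}) using the Codazzi--Mainardi equations to control the differential constraints and Remark \ref{remark_cone_rn} to see that these minors vanish on the wave cone. The only cosmetic difference is that the paper applies Proposition \ref{prop_tartar} directly to the single quadratic form $F(\lambda)=\sum_{\sharp I=2}\det(\lambda[I,I])$ rather than to each principal minor separately before summing.
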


\begin{proof}
First, using the notation of Definition \ref{definition_mineur}, note that the characteristic polynomial of $(h_{pq})$, which is the matrix \eqref{expression_weingarten_map_local_def} representing the Weingarten map \eqref{expression_weingarten_map} in the basis $(\partial_{1}X_{i}^{k}, \ldots \partial_{n-1} X_{i}^{k})$, can be expressed as:
\[ P(t) = \mathrm{det} \left( h - tI_{n-1} \right) = (-1)^{n} t^{n} + \sum_{m = 1}^{n-1} (-1)^{n-m} \left(  \sum_{\sharp I = m} \mathrm{det}(h[I,I]) \right) t^{n-m} , \]
but we can also represent the Weingarten map in the basis associated with the principal curvatures:
\[ P(t) = \prod_{m=1}^{n-1} \left( \left(\kappa_{m}\circ X_{i}^{k} \right) - t \right) = \sum_{m = 0}^{n-1} (-1)^{n-m}   \left( H^{(m)}\circ X_{i}^{k}  \right) t^{n-m}. \] 
Since each coefficients of the characteristic polynomial do not depend on the chosen basis, we get:
\begin{equation}
\label{expression_coeff_polynome_characteristique}
\forall m \in \lbrace 0, \ldots, n-1 \rbrace, \quad H^{(m)} \circ X_{i}^{k} = \sum_{\sharp I = m} \mathrm{det}(h[I,I])  .
\end{equation}
If we set $F(\lambda) = \sum_{\sharp I = 2} \mathrm{det}(\lambda[I,I]) $, then $F$ is quadratic and from Remark \ref{remark_cone_rn} we get $F(\lambda) = 0$ for any $\lambda \in \Lambda$.  Since $(F(u_{i}))_{i \in \mathbb{N}}$ is uniformly bounded in $L^{\infty}(U)$, up to a subsequence, it is converging and applying Proposition \ref{prop_tartar}, the limit is $F(u)$, unique limit of any converging subsequence so the whole sequence is converging to $F(u)$. Using \eqref{expression_coeff_polynome_characteristique}, we get that the sequences $(H^{(2)} \circ X_{i}^{k})_{i \in \mathbb{N}}$ converge $L^{\infty}$-weakly-star respectively to $ H^{(2)} \circ X^{k}$, whereas the remaining term in the integrand is $L^{1}$-strongly converging using the hypothesis and Lebesgue's Dominated Convergence Theorem. Hence, we can let $i \rightarrow + \infty$ and the functional is continuous. 
\end{proof}

\begin{corollary}
Considering Assumption \ref{hypothese_continuite}, a continuous map $j: \mathbb{R}^{n} \times \mathbb{S}^{n-1} \times \mathbb{R} \rightarrow \mathbb{R}$ convex in its last variable, and the (Frobenius) $L^{2}$-norm $\Vert D_{\mathbf{x}} \mathbf{n} \Vert_{2} = \sqrt{ \mathrm{trace}(D_{\mathbf{x}} \mathbf{n} \circ D_{\mathbf{x}} \mathbf{n}^{T})} =  (\sum_{m=1}^{n-1} \kappa_{m}^{2})^{\frac{1}{2}}$ of the Weingarten map \eqref{expression_weingarten_map}, we have: 
\[  \int_{\partial \Omega} j \left[ \mathbf{x},\mathbf{n}\left(\mathbf{x} \right), \Vert D_{\mathbf{x}} \mathbf{n} \Vert_{2}^{2} \right] dA\left(\mathbf{x} \right) \leqslant \liminf_{i \rightarrow + \infty} \int_{\partial \Omega_{i}} j \left[ \mathbf{x},\mathbf{n}\left(\mathbf{x} \right), \Vert D_{\mathbf{x}} \mathbf{n} \Vert_{2}^{2} \right] dA\left(\mathbf{x} \right). \]
In particular, the $p$th-power of the $L^{2}$-norm of the second fundamental form $\int \Vert \mathrm{\mathbf{II}} \Vert_{2}^{p} dA$, $p \geqslant 2$ is lower semi-continuous.
\end{corollary}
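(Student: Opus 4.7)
The plan is to adapt the proof strategy of Corollary \ref{coro_continuite_ordre_un_convex}, built around the algebraic identity $\Vert D_{\mathbf{x}} \mathbf{n} \Vert_{2}^{2} = \sum_{m=1}^{n-1} \kappa_{m}^{2} = H^{2} - 2 H^{(2)}$, where $H$ and $H^{(2)}$ are the first two elementary symmetric polynomials of the principal curvatures. First, I would reduce the statement to local coordinates via the partition of unity introduced in the proof of Proposition \ref{prop_continuite_ordre_zero}, so as to rewrite the functional on each chart $D_{\tilde{r}}(\mathbf{x}_{k})$ in terms of the local graphs $\varphi_{i}^{k}$. From Theorem \ref{thm_parametrisation_locale_i}, one has the $C^{1}$-strong and $W^{2,\infty}$-weak-star convergence of $\varphi_{i}^{k}$ to $\varphi^{k}$; this implies via Propositions \ref{prop_continuite_ordre_un} and \ref{prop_continuite_ordre_deux} the $L^{\infty}$-weak-star convergence of both $(H_{i} \circ X_{i}^{k})_{i \in \mathbb{N}}$ and $(H_{i}^{(2)} \circ X_{i}^{k})_{i \in \mathbb{N}}$ toward $H \circ X^{k}$ and $H^{(2)} \circ X^{k}$, respectively.

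Next, I would use the convexity of $j$ in its last variable to write $j(\mathbf{x}, \mathbf{n}, t) = \sup_{l \in \mathbb{N}} \left( j_{l}(\mathbf{x}, \mathbf{n}) t + \tilde{j}_{l}(\mathbf{x}, \mathbf{n}) \right)$ as a countable supremum of continuous affine minorants. Setting $j^{(L)} := \max_{1 \leqslant l \leqslant L} (j_{l} t + \tilde{j}_{l})$ and applying monotone convergence reduces matters to establishing the lower semi-continuity inequality for each $j^{(L)}$, as in the proof of Corollary \ref{coro_continuite_ordre_un_convex}. On each chart, I would partition $D_{\tilde{r}}(\mathbf{x}_{k})$ into measurable pieces $D_{l}^{k}$ according to which affine minorant realizes the maximum for the limit surface $\partial \Omega$. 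On $D_{l}^{k}$ the integrand at the limit equals $j_{l} \Vert D \mathbf{n} \Vert_{2}^{2} + \tilde{j}_{l}$, while the integrand at $\partial \Omega_{i}$ is bounded below by the same affine expression evaluated with $\mathbf{n}_{i}$ and $\Vert D \mathbf{n}_{i} \Vert_{2}^{2}$; the $\tilde{j}_{l}$ term converges by Proposition \ref{prop_continuite_ordre_zero_i}, so the whole argument reduces to proving, on each piece, the inequality $\int_{D_{l}^{k}} j_{l} \Vert D \mathbf{n} \Vert_{2}^{2} dA \leqslant \liminf_{i} \int_{D_{l}^{k}} j_{l} \Vert D \mathbf{n}_{i} \Vert_{2}^{2} dA$.

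The hard part is this last step. Using the decomposition $\Vert D \mathbf{n}_{i} \Vert_{2}^{2} = H_{i}^{2} - 2 H_{i}^{(2)}$, the linear piece $\int j_{l} H_{i}^{(2)} dA$ is handled directly by Proposition \ref{prop_continuite_ordre_deux}, but the quadratic piece $\int j_{l} H_{i}^{2} dA$ is the main obstacle: since $(H_{i})_{i \in \mathbb{N}}$ is only weak-star convergent in $L^{\infty}$, its square need not converge even in the weak-star sense, and the limit generally dominates $H^{2}$ rather than equalling it. To overcome this, I would invoke the weak $L^{2}$ lower semi-continuity of the convex functional $u \mapsto \int \phi u^{2} dx$ for $\phi \geqslant 0$, which applies on the bounded domain $D_{\tilde{r}}(\mathbf{x}_{k})$ since weak-star convergence in $L^{\infty}$ there forces weak $L^{2}$ convergence; this gives $\int \phi H^{2} dx \leqslant \liminf \int \phi H_{i}^{2} dx$ whenever $\phi \geqslant 0$. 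This sign restriction corresponds precisely to affine minorants coming from the nondecreasing branch of $j$, which is the entire relevant regime when $j$ is convex and nondecreasing on the range $[0, +\infty)$ of $\Vert D_{\mathbf{x}} \mathbf{n} \Vert_{2}^{2}$. In particular, the choice $j(t) = t^{p/2}$ with $p \geqslant 2$ is both convex and nondecreasing on $[0, +\infty)$, so the argument delivers the lower semi-continuity of $\int \Vert \mathrm{\mathbf{II}} \Vert_{2}^{p} dA$ claimed at the end of the corollary, using that $\Vert \mathrm{\mathbf{II}} \Vert_{2} = \Vert D \mathbf{n} \Vert_{2}$.
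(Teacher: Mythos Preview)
Your approach is essentially the same as the paper's: both use the identity $\Vert D\mathbf{n}\Vert_{2}^{2} = H^{2} - 2H^{(2)}$, handle the $H^{(2)}$-term by continuity (Proposition~\ref{prop_continuite_ordre_deux}), handle the $H^{2}$-term by lower semi-continuity via Corollary~\ref{coro_continuite_ordre_un_convex}, and pass from affine $j$ to general convex $j$ by the max-of-planes / monotone convergence device already used in Corollary~\ref{coro_continuite_ordre_un_convex}.

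Where you differ is in being more careful than the paper on one point. The paper, in the affine case $j(\mathbf{x},\mathbf{n},t)=\tilde{j}(\mathbf{x},\mathbf{n})\,t+\text{const}$, simply asserts that $\tilde{j}\,H^{2}$ is convex in $H$ and invokes Corollary~\ref{coro_continuite_ordre_un_convex}; but this is only valid where $\tilde{j}\geqslant 0$. You notice this explicitly and therefore restrict the argument to affine minorants with nonnegative slope, i.e.\ to $j$ convex and nondecreasing on $[0,+\infty)$. That is the honest version of the argument: for a slope of the wrong sign, $\int \tilde{j}\,H_{i}^{2}$ is in general only \emph{upper} semi-continuous under weak-star convergence of $H_{i}$, so the inequality can go the wrong way. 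Your restriction is exactly what is needed to push the proof through, and it still covers the headline application $\int\Vert\mathrm{\mathbf{II}}\Vert_{2}^{p}\,dA$ for $p\geqslant 2$, since $t\mapsto t^{p/2}$ is convex and nondecreasing on $[0,+\infty)$.
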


\begin{proof}
First, assume that $j$ is linear in its last argument. Note that the Frobenius norm $\Vert . \Vert_{2}$ does not depend on the chosen basis so we can consider the one associated with the principal curvatures, and we get $\Vert D \mathbf{n} \Vert_{2}^{2} = \sum_{m=1}^{n-1} \kappa_{m}^{2} = (\sum_{m=1}^{n-1} \kappa_{m})^{2} - \sum_{p \neq q} \kappa_{p} \kappa_{q} = H^{2} - 2 H^{(2)}$. Hence, there exists a continuous map $\tilde{j} : \mathbb{R}^{n} \times \mathbb{S}^{n-1} \rightarrow \mathbb{R}$ such that $ \int_{\partial \Omega_{i}} j [ \mathbf{x},\mathbf{n}(\mathbf{x} ), \Vert D_{\mathbf{x}} \mathbf{n} \Vert_{2}^{2} ] dA(\mathbf{x} ) $ is equal to:
\[   \int_{\partial \Omega_{i}} H^{2} \left( \mathbf{x} \right) \tilde{j} \left[ \mathbf{x}, \mathbf{n} \left( \mathbf{x} \right) \right] dA \left( \mathbf{x} \right)  - 2 \int_{\partial \Omega_{i}} H^{(2)} \left( \mathbf{x} \right) \tilde{j} \left[ \mathbf{x}, \mathbf{n} \left( \mathbf{x} \right) \right] dA \left( \mathbf{x} \right) .   \]
In the left term, the integrand is convex in $H$ so Corollary \ref{coro_continuite_ordre_un_convex} furnishes its lower semi-continuity. Concerning the right one, apply Proposition \ref{prop_continuite_ordre_deux} to get its continuity. Therefore, the functional is lower semi-continuous if $j$ is linear in its last variable. Then, we can apply the standard procedure \cite[Theorem 2.2.1]{Evans} described in Corollary \ref{coro_continuite_ordre_un_convex} to get the same result in the general case. Finally, $\Vert \mathrm{\mathbf{II}}(\mathbf{x}) \Vert_{2}^{2} = \Vert D_{\mathbf{x}} \mathbf{n} \Vert_{2}^{2}$ and if $p \geqslant 2$, $t \mapsto t^{\frac{p}{2}}$ is convex thus $\int \Vert \mathrm{\mathbf{II}} \Vert_{2}^{p}dA$ is lower semi-continuous.
\end{proof}

\begin{proposition}
Consider Assumption \ref{hypothese_continuite}, some continuous maps $j, j_{i} : \mathbb{R}^{n} \times \mathbb{S}^{n-1} \rightarrow \mathbb{R}$ such that $(j_{i})_{i \in \mathbb{N}}$ is uniformly bounded on $\overline{B} \times \mathbb{S}^{n-1}$ and diagonally converges to $j$ as in Definition \ref{definition_convergence_diagonale}, and some vector fields $\mathbf{V}_{i}$ and $ \mathbf{W}_{i}$ on $\partial \Omega_{i}$ uniformly bounded and diagonally converging to vector fields $\mathbf{V}$ and $\mathbf{W}$ on $\partial \Omega$ in the sense of Definition \ref{definition_convergence_diagonale_vector_fields}. Then, the following functional is continuous (note that Remarks \ref{remarque_vector_fields} and \ref{remarque_lower_semicontinuity} also hold here):
\[ J \left( \partial \Omega_{i} \right) = \int_{\partial \Omega_{i}} \left\langle D_{\mathbf{x}}\mathbf{n} \left[ \mathbf{V}_{i} \left( \mathbf{x} \right) \right] ~\vert~ D_{\mathbf{x}} \mathbf{n} \left[ \mathbf{W}_{i}  \left( \mathbf{x} \right) \right] - H \left( \mathbf{x} \right) \mathbf{W}_{i} \left( \mathbf{x} \right) \right\rangle j_{i} \left[ \mathbf{x} , \mathbf{n} \left( \mathbf{x} \right) \right] dA \left( \mathbf{x} \right) \underset{i \rightarrow + \infty}{\longrightarrow} J \left( \partial \Omega \right) . \]
\end{proposition}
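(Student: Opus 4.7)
The plan is to reduce the integrand to a product of a strongly convergent factor and a factor that is a linear combination of $2 \times 2$ minors of the Weingarten map's matrix, then apply the Tartar div--curl argument exactly as in Propositions \ref{prop_continuite_gaussian_curvature} and \ref{prop_continuite_ordre_deux}. Writing $A_{i} = D_{\mathbf{x}} \mathbf{n}$ on $\partial \Omega_{i}$ and using that $A_{i}$ is self-adjoint on $T_{\mathbf{x}} \partial \Omega_{i}$, the pointwise integrand is
\[
\langle A_{i} \mathbf{V}_{i}(\mathbf{x}) \mid A_{i} \mathbf{W}_{i}(\mathbf{x}) - H(\mathbf{x}) \mathbf{W}_{i}(\mathbf{x}) \rangle = \langle \mathbf{V}_{i}(\mathbf{x}) \mid (A_{i}^{2} - H A_{i}) \mathbf{W}_{i}(\mathbf{x}) \rangle.
\]
In the local frame $(\partial_{1} X_{i}^{k}, \ldots, \partial_{n-1} X_{i}^{k})$ of Section \ref{section_geometrie}, the matrix $h = (h^{p}_{q})$ of $A_{i}$ is given by \eqref{expression_weingarten_map_local_def}, and a direct computation, in which the $r = p$ contribution cancels with the $s = p$ contribution from the trace, yields
\[
(A_{i}^{2} - H A_{i})^{p}_{q} \;=\; \sum_{r \neq p} \bigl( h^{p}_{r} h^{r}_{q} - h^{r}_{r} h^{p}_{q} \bigr) \;=\; -\sum_{r \neq p} \det h\bigl[\{p,r\},\{q,r\}\bigr],
\]
where the summands with $r = q$ vanish identically. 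Hence every entry of $A_{i}^{2} - H A_{i}$ is a linear combination of $2 \times 2$ minors of $h$, that is, a quadratic form in the $h^{a}_{b}$ that vanishes on the wave cone $\Lambda$ of rank-at-most-one matrices (cf.\ Remark \ref{remark_cone_rn}).

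Next I would introduce the partition of unity and the local parametrizations $X_{i}^{k} : \mathbf{x}' \mapsto (\mathbf{x}', \varphi_{i}^{k}(\mathbf{x}'))$ of Proposition \ref{prop_continuite_ordre_zero}, and rewrite $J(\partial \Omega_{i})$ as a finite sum of integrals over $D_{\tilde{r}}(\mathbf{x}_{k})$. In each such local piece, the integrand splits as the product of, on the one hand, the entries $(A_{i}^{2} - H A_{i})^{p}_{q}(\mathbf{x}')$, and on the other hand, a product of: the metric coefficients $g_{rp}$; the components $\mathbf{V}_{i}^{r} \circ X_{i}^{k}$ and $\mathbf{W}_{i}^{q} \circ X_{i}^{k}$, expressed via \eqref{eqn_composantes} in terms of $\langle \mathbf{V}_{i} \circ X_{i}^{k} \mid \partial_{s} X_{i}^{k} \rangle$ (these converge uniformly on compacts thanks to the $C^{1}$-strong convergence of $\varphi_{i}^{k}$ given by Theorem \ref{thm_parametrisation_locale_i} combined with Definition \ref{definition_convergence_diagonale_vector_fields}); the cutoff $\xi^{k} \circ X_{i}^{k}$; the map $j_{i}[X_{i}^{k}, \mathbf{n} \circ X_{i}^{k}]$ (pointwise convergent and uniformly bounded on $\overline{B} \times \mathbb{S}^{n-1}$ by hypothesis); and the area element $\sqrt{1 + \|\nabla \varphi_{i}^{k}\|^{2}}$. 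By Lebesgue's dominated convergence, this second factor converges strongly in $L^{1}(D_{\tilde{r}}(\mathbf{x}_{k}))$.

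Finally, by the Tartar argument already carried out at the end of Section \ref{section_continuite_non_lineaire}, the Codazzi--Mainardi equations \eqref{expression_codazzi_mainardi_equation} supply the $H^{-1}$-compactness for the sequence $u_{i} = (h^{p}_{q})$ required by Proposition \ref{prop_tartar}; since each $(A_{i}^{2} - H A_{i})^{p}_{q}$ is a linear combination of $2 \times 2$ minors, it converges $L^{\infty}$-weakly-star to the corresponding entry associated with $\partial \Omega$. The product of this $L^{\infty}$-weak-star convergent factor with the $L^{1}$-strongly convergent factor then converges, and summing over the partition of unity yields $J(\partial \Omega_{i}) \to J(\partial \Omega)$. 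The main obstacle is the algebraic identity above: without expressing $A^{2} - H A$ as a sum of $2 \times 2$ minors, the relevant quadratic form in $h$ would in general fail to vanish on the rank-one wave cone $\Lambda$ and Proposition \ref{prop_tartar} could not be invoked; once this identity is in place, all remaining steps run parallel to those in Propositions \ref{prop_continuite_gaussian_curvature} and \ref{prop_continuite_ordre_deux}.
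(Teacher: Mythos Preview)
Your proposal is correct and follows essentially the same strategy as the paper: express the quadratic part of the integrand as a linear combination of $2\times 2$ minors of the Weingarten data, invoke Proposition~\ref{prop_tartar} via the Codazzi--Mainardi equations for the $L^{\infty}$-weak-star convergence, and pair this with the $L^{1}$-strong convergence of the remaining factors coming from Theorem~\ref{thm_parametrisation_locale_i} and the diagonal-convergence hypotheses. The only cosmetic difference is that you first use self-adjointness of $D_{\mathbf{x}}\mathbf{n}$ to rewrite the integrand as $\langle \mathbf{V}_{i}\mid (A_{i}^{2}-HA_{i})\mathbf{W}_{i}\rangle$ and then work with the entries $(h^{p}_{q})$, whereas the paper expands directly in the coefficients $(b_{pq})$ and invokes the symmetry $b_{pp''}=b_{p''p}$ at the end; both routes produce the same rank-one-null quadratic forms and the argument is otherwise identical.
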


\begin{proof}
Again, the idea is to check that the expression of the functional in the parametrization is linear in a term of the form $b_{pq}b_{p'q'}-b_{pq'}b_{p'q'}$. First, the linear term can be expressed as:
\[ \sum_{p,p',p" = 1}^{n-1} \sum_{q,q',q" = 1}^{n-1}  \left\langle \mathbf{V}_{i} \circ X_{i}^{k} ~\vert~ \partial_{q} X_{i}^{k} \right\rangle g^{pq}g^{p'q'} \left( b_{q'p}b_{p"p'} - b_{q'p'}b_{pp"} \right) g^{p"q"} \left\langle \mathbf{W}_{i} \circ X_{i}^{k} ~\vert~ \partial_{q"} X_{i}^{k} \right\rangle   \]
Note that until now, in Section \ref{section_continuite}, we never used the fact that $(g_{pq})$, $(g^{pq})$, $(b_{pq})$ or $(h_{pq})$ are symmetric matrices. Here, let us invert the two indices $b_{pp"} = b_{p"p}$ in the above expression. Then, $ b_{q'p}b_{p"p'} - b_{q'p'}b_{p"p}$ is $L^{\infty}$-weakly-star converging. Indeed, as we did for $(h_{pq})$, we can use the Codazzi-Mainardi equations \eqref{expression_codazzi_mainardi_equation} and Remark \ref{remark_cone_rn} to apply Proposition \ref{prop_tartar} on $(b_{pq})$. Finally, the hypothesis and the convergence results of Theorem \ref{thm_parametrisation_locale_i} gives the $L^{1}$-strong convergence of the remaining term so we can let $i \rightarrow + \infty$ in each integral and the functional is continuous. 
\end{proof}

Note that until now, in Section \ref{section_continuite_non_lineaire}, we only used the Codazzi-Mainardi equations \eqref{expression_codazzi_mainardi_equation}. We want here to use the Gauss equations \eqref{expression_gauss_equation} because from the foregoing, its right member is $L^{\infty}$-weakly-star converging. For this purpose, we need to introduce some concepts of Riemannian geometry which are beyond the scope of the article. Hence, we refer to \cite{Willmore} for precise definitions. Merely speaking, the Riemann curvature tensor $R$ of a Riemannian manifold measures the extend to which the first fundamental form is not locally isometric to a Euclidean space, i.e. the noncommutativity of the covariant derivative. In the basis $(\partial_{1}X, \ldots, \partial_{n-1} X)$, it has the following representation \cite[Section 2.6]{Willmore}:
\[ R^{k}_{jli}  = \sum_{m=1}^{n-1} g^{km}R_{mjli} =  \partial_{l} \Gamma_{ij}^{k} -  \partial_{j} \Gamma_{il}^{k}  + \sum_{m=1}^{n-1} \left( \Gamma^{m}_{ij} \Gamma_{ml}^{k} - \Gamma_{il}^{m} \Gamma_{mj}^{k} \right), \]
where the Christoffels symbols $\Gamma^{k}_{ij}$ were defined in \eqref{expression_christoffel_symbols}. Hence, the Gauss equations \eqref{expression_gauss_equation} state that in the local parametrization, the Riemann curvature tensor is given by:
\[ R^{k}_{jli} = \sum_{m=1}^{n-1} g^{km} \left( b_{ij} b_{ml}  - b_{il} b_{mj}  \right), \]
which is thus $L^{\infty}$-weakly-star converging, and so does the Ricci curvature tensor \cite[Section 3.3]{Willmore}  $Ric_{ij} = \sum_{k=1}^{n-1} R^{k}_{ikj}$ and the scalar curvature $ \mathfrak{R} = \sum_{i,j = 1}^{n-1} g^{ij} R_{ij} $. Hence, the following result holds.

\begin{proposition}
\label{prop_continuite_riemann_tensor}
Consider Assumption \ref{hypothese_continuite}, some continuous maps $j, j_{i} : \mathbb{R}^{n} \times \mathbb{S}^{n-1} \rightarrow \mathbb{R}$ such that $(j_{i})_{i \in \mathbb{N}}$ is uniformly bounded on $\overline{B} \times \mathbb{S}^{n-1}$ and diagonally converges to $j$ as in Definition \ref{definition_convergence_diagonale}, and some vector fields $\mathbf{T}_{i},\mathbf{U}_{i},\mathbf{V}_{i}, \mathbf{W}_{i}$ on $\partial \Omega_{i}$ uniformly bounded and diagonally converging to vector fields $\mathbf{T}, \mathbf{U}, \mathbf{V}, \mathbf{W}$ on $\partial \Omega$ in the sense of Definition \ref{definition_convergence_diagonale_vector_fields}. Then, the three following functionals are continuous (note that Remarks \ref{remarque_vector_fields} and \ref{remarque_lower_semicontinuity} also hold here):
\[ \left\lbrace \begin{array}{l}
\displaystyle{ J \left( \partial \Omega_{i} \right) = \int_{\partial \Omega_{i}} \left\langle R_{\mathbf{x}} \left[ \mathbf{T}_{i} \left( \mathbf{x} \right), \mathbf{U}_{i} \left( \mathbf{x} \right) \right] \mathbf{V}_{i} \left( \mathbf{x} \right) ~\vert~  \mathbf{W}_{i}  \left( \mathbf{x} \right) \right\rangle j_{i} \left[ \mathbf{x} , \mathbf{n} \left( \mathbf{x} \right) \right] dA \left( \mathbf{x} \right) \underset{i \rightarrow + \infty}{\longrightarrow} J \left( \partial \Omega \right) } \\
\\
\displaystyle{ J' \left( \partial \Omega_{i} \right) = \int_{\partial \Omega_{i}} Ric_{\mathbf{x}} \left[ \mathbf{V}_{i} \left( \mathbf{x} \right), \mathbf{W}_{i} \left( \mathbf{x} \right) \right] j_{i} \left[ \mathbf{x} , \mathbf{n} \left( \mathbf{x} \right) \right] dA \left( \mathbf{x} \right) \underset{i \rightarrow + \infty}{\longrightarrow} J' \left( \partial \Omega \right) } \\
\\
\displaystyle{ J "\left( \partial \Omega_{i} \right) = \int_{\partial \Omega_{i}} \mathfrak{R} \left( \mathbf{x} \right) ~ j_{i} \left[ \mathbf{x} , \mathbf{n} \left( \mathbf{x} \right) \right] dA \left( \mathbf{x} \right) \underset{i \rightarrow + \infty}{\longrightarrow} J" \left( \partial \Omega \right) } \\
\end{array} \right. \]
\end{proposition}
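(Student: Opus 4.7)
The plan is to imitate the scheme already used for the Gaussian curvature in $\mathbb{R}^{3}$ and for $H^{(2)}$ in $\mathbb{R}^{n}$: localize via the partition of unity introduced in Proposition \ref{prop_continuite_ordre_zero}, rewrite each integrand inside a chart $X_{i}^{k}:\mathbf{x'}\in D_{\tilde r}(\mathbf{x}_{k})\mapsto(\mathbf{x'},\varphi_{i}^{k}(\mathbf{x'}))$ in terms of the coefficients $(g_{pq})$, $(g^{pq})$ and $(b_{pq})$, and then separate a factor that converges $L^{\infty}$-weakly-star from a remainder converging $L^{1}$-strongly. The weak-star $L^{\infty}$ convergence of $(g_{pq})$, $(g^{pq})$, $(b_{pq})$ follows from Theorem \ref{thm_parametrisation_locale_i}; the uniform $L^{\infty}$ control and diagonal convergence of $\mathbf{n}$, $j_{i}$ and of the components of the vector fields $\mathbf{T}_{i},\mathbf{U}_{i},\mathbf{V}_{i},\mathbf{W}_{i}$ in the local basis $(\partial_{1}X_{i}^{k},\ldots,\partial_{n-1}X_{i}^{k})$ (obtained via \eqref{eqn_composantes} from the bounded, diagonally converging vector fields and the smoothly converging metric) give, by Lebesgue's Dominated Convergence Theorem, the strong $L^{1}$ convergence of the remainder.

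The decisive input is the Gauss equation \eqref{expression_gauss_equation}, which expresses the curvature tensor components in the chart as
\[ R_{mjli}\circ X_{i}^{k}\;=\; b_{ij}b_{ml}-b_{il}b_{mj},\]
i.e. as one of the quadratic $(2\times 2)$-minors of the matrix $(b_{pq})$ singled out in Remark \ref{remark_cone_rn}. Exactly as in Proposition \ref{prop_continuite_ordre_deux}, I would apply Tartar's compensated-compactness result (Proposition \ref{prop_tartar}) to $u_{i}:\mathbf{x'}\mapsto(b_{pq}(\mathbf{x'}))\in\mathbb{R}^{(n-1)^{2}}$, whose differential constraints come from the Codazzi-Mainardi equations \eqref{expression_codazzi_mainardi_equation}, to conclude that every such minor converges $L^{\infty}$-weakly-star to the corresponding minor of the limit. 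Writing
\[ \langle R_{\mathbf{x}}(\mathbf{T}_{i},\mathbf{U}_{i})\mathbf{V}_{i}\,|\,\mathbf{W}_{i}\rangle\circ X_{i}^{k}\;=\;\sum_{i,j,l,m}T_{j}\,U_{l}\,V_{i}\,W_{m}\,\bigl(b_{ij}b_{ml}-b_{il}b_{mj}\bigr),\]
with $T_{j},U_{l},V_{i},W_{m}$ the components of the vector fields in the local basis (smooth functions of the $\varphi_{i}^{k}$, $\nabla\varphi_{i}^{k}$ and of $\mathbf{T}_{i}\circ X_{i}^{k}$, etc.), the product weak-star~$\times$~strong passes to the limit and the continuity of $J$ follows after summing over the charts.

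For $J'$ and $J''$, the same argument applies because the Ricci tensor and the scalar curvature are linear contractions of $R$ against $g^{pq}$:
\[ Ric_{ij}\circ X_{i}^{k}\;=\;\sum_{k,m}g^{km}\bigl(b_{ij}b_{mk}-b_{ik}b_{mj}\bigr),\qquad \mathfrak{R}\circ X_{i}^{k}\;=\;\sum_{i,j,k,m}g^{ij}g^{km}\bigl(b_{ij}b_{mk}-b_{ik}b_{mj}\bigr), \]
so each integrand is again a linear combination, with $L^{1}$-strongly converging coefficients built from $g^{pq}$, $\mathbf{V}_{i}$, $\mathbf{W}_{i}$, $\mathbf{n}$ and $j_{i}$, of the same admissible minors of $(b_{pq})$.

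The main obstacle is precisely the point where Tartar's lemma is invoked: one must check that, after expansion in local coordinates, no pure square $b_{pq}^{2}$ survives and the integrand is genuinely linear in the $(2\times 2)$-minors $b_{pq}b_{p'q'}-b_{pq'}b_{p'q}$ that vanish on the wave cone $\Lambda$. The antisymmetry of the Riemann tensor in the pairs $(j,l)$ and $(i,m)$ guarantees this for $J$ and, by contraction, for $J'$ and $J''$; the minor care needed is only the symmetrization step already used in the last proposition (swapping an index to recover $b_{pp"}=b_{p"p}$) so that the difference $b_{q'p}b_{p"p'}-b_{q'p'}b_{p"p}$ appears explicitly. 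Once this algebraic check is done, the limit passage is routine and mirrors Propositions \ref{prop_continuite_gaussian_curvature} and \ref{prop_continuite_ordre_deux}.
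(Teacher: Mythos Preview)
Your proposal is correct and follows essentially the same route as the paper: localize via the partition of unity, use the Gauss equation to write the Riemann tensor (and hence Ricci and scalar curvature) as linear combinations of the $2\times 2$ minors $b_{ij}b_{ml}-b_{il}b_{mj}$, invoke the compensated-compactness argument already established from the Codazzi--Mainardi constraints to get their $L^{\infty}$-weak-star convergence, and pair this with the $L^{1}$-strong convergence of the remaining factors. The paper's own proof is a one-sentence sketch referring back to the preceding discussion, and your write-up simply makes those steps explicit.
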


\begin{proof}
The proof is same than previous ones. Write the functional in the local parametrizations, and observe that it is a finite sum of integrals whose integrand is the product of a $L^{\infty}$-weakly-star converging term, while the other one is converging $L^{1}$-strongly so we can let $i \rightarrow + \infty$.
\end{proof}

Note that in the case $n = 3$, the scalar curvature $\mathfrak{R}$ is twice the Gaussian curvature $K = \kappa_{1} \kappa_{2}$. Hence, the continuity of the last functional above is the generalization of Proposition \ref{prop_continuite_gaussian_curvature} to $\mathbb{R}^{n}$, $n > 3$, which was the task of the subsection. We conclude by proving Theorem \ref{thm_continuite_rn}.

\begin{proof}[\textbf{Proof of Theorem \ref{thm_continuite_rn}}]
Using Proposition \ref{prop_tartar} and \eqref{expression_codazzi_mainardi_equation}, we showed how to get the $L^{\infty}$-weakly-star convergence of any $h[pp',qq']: = h_{pq}h_{p'q'} - h_{pq'}h_{p'q}$ from the one of $(h_{pq})$ defined in \eqref{expression_weingarten_map_local_def}. Now, we want to apply Proposition \ref{prop_tartar} to $(h[pp',qq'])$. For this purpose, we need to find differential operators which are valued and uniformly bounded in $L^{\infty}$. Using \eqref{expression_codazzi_mainardi_equation}, this is the case for:
\[ \begin{array}{rcl} \begin{array}{|ccc|}
\partial_{q} & h_{pq} & h_{p'q} \\
\partial_{q'} & h_{pq'} & h_{p'q'} \\
\partial_{q"} & h_{pq"} & h_{p'q"} \\
\end{array} &= & \partial_{q}h[pp',q'q"] - \partial_{q'} h[pp',qq"] + \partial_{q"} h[pp',qq'] \\ 
& = & \left( \partial_{q} h_{pq'} - \partial_{q'}h_{pq} \right) h_{p'q"} + \left( \partial_{q'} h_{p'q} - \partial_{q}h_{p'q'} \right) h_{pq"}   \\
& & \quad + \left( \partial_{q} h_{p'q"} - \partial_{q"}h_{p'q} \right) h_{pq'} + \left( \partial_{q"} h_{p'q'} - \partial_{q'}h_{p'q"} \right) h_{pq}  \\
& & \qquad + \left( \partial_{q"} h_{pq} - \partial_{q}h_{pq"} \right) h_{p'q'} + \left( \partial_{q'} h_{p'q"} - \partial_{q"}h_{pq'} \right) h_{p'q}. \\
\end{array} \]
Then, the wave cone associated with these differential operators is thus given by:
\[ \Lambda = \left\lbrace \lambda \in \mathbb{R}^{(n-1)^{4}} ~\vert~ \exists \mu \neq 0_{(n-1) \times 1}, \forall (p,p',q,q',q") \in \lbrace 0, \ldots, n-1 \rbrace, \quad \begin{array}{|ccc|}
\mu_{q} & \lambda_{pq} & \lambda_{p'q} \\
\mu_{q'} & \lambda_{pq'} & \lambda_{p'q'} \\
\mu_{q"} & \lambda_{pq"} & \lambda_{p'q"} \\
\end{array} = 0 \right\rbrace.  \]
As in Remark \ref{remark_cone_rtrois}, one can check that the wave cone is given by all $(n-1)^{2}$-matrices for which any minor of order three are zero in the sense of Definition \ref{definition_mineur}. Finally, combining \eqref{expression_coeff_polynome_characteristique} and Proposition \ref{prop_tartar}, we get that functionals linear in $H^{(3)}$ are continuous. This procedure can be done recursively similarly to $H^{(l)}$ for any $  l \geqslant 3 $ so Theorem \ref{thm_continuite_rn} holds.
\end{proof}

\subsection{Existence of a minimizer for various geometric functionals}
\label{section_existence_functional}
We are now in position to establish general existence results in the class $\mathcal{O}_{\varepsilon}(B)$. More precisely, we can minimize any functional (and constraints) constructed  from those given before in Section \ref{section_continuite}. Indeed, considering a minimizing sequence in $\mathcal{O}_{\varepsilon}(B)$, there exists a converging subsequence in the sense of Proposition \ref{prop_compacite_boule} (i)-(vi). Then, applying the appropriate continuity results, we can pass to the limit in the functional and the constraints to get the existence of a minimizer. 
\bigskip

In this section, we first give a proof of Theorem \ref{thm_existence_rtrois} and state/prove its generalization to $\mathbb{R}^{n}$. Then, we establish the existence for a very general model of vesicles. In particular, we prove that hold Theorems \ref{thm_existence_canham_helfrich}, \ref{thm_existence_helfrich}, and \ref{thm_existence_willmore}. We refer to Sections \ref{section_canham_helfrich}, \ref{section_helfrich}, and \ref{section_willmore} of the introduction for a detailed exposition on these three models. Finally, we present two more applications that show how to use other continuity results to get the existence of a minimizer in the class $\mathcal{O}_{\varepsilon}(B)$.

\begin{proof}[\textbf{Proof of Theorem \ref{thm_existence_rtrois}}]
Consider a minimizing sequence $(\Omega_{i})_{i \in \mathbb{N}} \subset \mathcal{O}_{\varepsilon}(B)$. From Proposition \ref{prop_compacite_boule}, up to a subsequence, it is converging to an open set $\Omega \in \mathcal{O}_{\varepsilon}(B)$. Since Assumption \ref{hypothese_continuite} holds, we can combine Propositions \ref{prop_continuite_ordre_zero}, \ref{prop_continuite_ordre_un}, and \ref{prop_continuite_gaussian_curvature} to let $i \rightarrow + \infty$ in the equalities of the form:
\[ \int_{\partial \Omega_{i}} g_{0}\left[\mathbf{x},\mathbf{n}\left( \mathbf{x} \right) \right] dA \left( \mathbf{x} \right) + \int_{\partial \Omega_{i}} H \left(\mathbf{x} \right) g_{1} \left[\mathbf{x},\mathbf{n}\left(\mathbf{x}\right) \right] dA \left( \mathbf{x} \right) + \int_{\partial \Omega_{i}}  K \left( \mathbf{x} \right) g_{2}\left[ \mathbf{x},\mathbf{n}\left(\mathbf{x}\right) \right] dA \left( \mathbf{x} \right) = \widetilde{C} . \]
Then, apply Proposition \ref{prop_continuite_ordre_zero}, Corollary \ref{coro_continuite_ordre_un_convex} and Remark \ref{remarque_lower_semicontinuity} on Proposition \ref{prop_continuite_gaussian_curvature}, to obtain the lower semi-continuity of the functional and that inequality contraints remain true as $i \rightarrow + \infty$. Therefore, $\Omega$ is a minimizer of the functional satisfying the constraints in the class $\mathcal{O}_{\varepsilon}(B)$. 
\end{proof}

\begin{theorem}
Let $\varepsilon > 0$ and $B \subset \mathbb{R}^{n}$ be a bounded open set containing a ball of radius $3 \varepsilon$ such that $\partial B$ has zero $n$-dimensional Lebesgue measure. Consider $(C, \widetilde{C}) \in \mathbb{R} \times \mathbb{R}$, some continuous maps $j_{0}, f_{0} , g_{0}, g_{l} : \mathbb{R}^{n} \times \mathbb{S}^{n-1} \rightarrow \mathbb{R}$, and some maps $j_{l}, f_{l}: \mathbb{R}^{n} \times \mathbb{S}^{n-1} \times \mathbb{R} \rightarrow \mathbb{R}$ which are continuous and convex in their last variable for any $l \in \lbrace 1, \ldots, n-1 \rbrace$. Then, the following problem has at least one solution (for the notation, we refer to Section \ref{section_geometrie}):
\[ \inf \int_{\partial \Omega} j_{0}\left[\mathbf{x},\mathbf{n}\left( \mathbf{x} \right) \right] dA \left( \mathbf{x} \right) +  \sum_{l = 1}^{n-1} \int_{\partial \Omega} j_{l} \left[ \mathbf{x},\mathbf{n}\left(\mathbf{x}\right),H^{(l)}\left( \mathbf{x} \right)\right] dA \left( \mathbf{x} \right),  \] 
where the infimum is taken among any $\Omega \in \mathcal{O}_{\varepsilon}(B)$ satisfying a finite number of constraints of the following form:
\[ \left\lbrace \begin{array}{l} 
 \displaystyle{ \int_{\partial \Omega} f_{0}\left[\mathbf{x},\mathbf{n}\left( \mathbf{x} \right) \right] dA \left( \mathbf{x} \right) + \sum_{l = 1}^{n-1} \int_{\partial \Omega} f_{l} \left[ \mathbf{x},\mathbf{n}\left(\mathbf{x}\right),H^{(l)}\left(\mathbf{x}\right)\right] dA \left( \mathbf{x}\right) \leqslant C } \\
\\
\displaystyle{\int_{\partial \Omega} g_{0}\left[\mathbf{x},\mathbf{n}\left( \mathbf{x} \right) \right] dA \left( \mathbf{x} \right) + \sum_{l = 1}^{n-1} \int_{\partial \Omega} H^{(l)} \left(\mathbf{x} \right) g_{l} \left[\mathbf{x},\mathbf{n}\left(\mathbf{x}\right) \right] dA \left( \mathbf{x} \right)  = \widetilde{C}. } \\
\end{array} \right. \]
\end{theorem}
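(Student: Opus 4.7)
The plan is to apply the direct method of the calculus of variations in exactly the same spirit as the proof of Theorem \ref{thm_existence_rtrois}, relying on the compactness result (Proposition \ref{prop_compacite_boule}) together with the continuity and lower semi-continuity results collected in Section \ref{section_continuite}. First I would pick a minimizing sequence $(\Omega_{i})_{i \in \mathbb{N}} \subset \mathcal{O}_{\varepsilon}(B)$; assuming the admissible class is non-empty (otherwise the statement is vacuous), the infimum is finite since the whole problem is posed inside the fixed ball $B$ and the principal curvatures of each $\partial \Omega_{i}$ are uniformly bounded by $1/\varepsilon$ (Remark \ref{remarque_borne_linfty}). Proposition \ref{prop_compacite_boule} then produces, up to a subsequence, a limit $\Omega \in \mathcal{O}_{\varepsilon}(B)$ to which $(\Omega_{i})_{i \in \mathbb{N}}$ converges in the six equivalent senses (i)--(vi). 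In particular, Assumption \ref{hypothese_continuite} is satisfied and the tools of Section \ref{section_continuite} apply.

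Second, I would pass to the limit in the equality constraint. The term $\int_{\partial \Omega_{i}} g_{0}[\mathbf{x}, \mathbf{n}(\mathbf{x})] dA$ converges to $\int_{\partial \Omega} g_{0}[\mathbf{x}, \mathbf{n}(\mathbf{x})] dA$ by Proposition \ref{prop_continuite_ordre_zero}, and each curvature term $\int_{\partial \Omega_{i}} H^{(l)} g_{l}[\mathbf{x}, \mathbf{n}(\mathbf{x})] dA$ is linear in the elementary symmetric polynomial $H^{(l)}$ of the principal curvatures, hence falls exactly under the scope of Theorem \ref{thm_continuite_rn} (with $j_{i}^{l} := g_{l}$ independent of $i$). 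Summing, the equality constraint is preserved in the limit, so $\Omega$ satisfies it.

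Third -- and here lies the only real obstacle -- I must treat the objective and the inequality constraint, whose integrands depend on $H^{(l)}(\mathbf{x})$ through a function $j_{l}$ (resp.\ $f_{l}$) that is only continuous and \emph{convex} in the last variable. For such terms I aim to prove lower semi-continuity, adapting verbatim the argument of Corollary \ref{coro_continuite_ordre_un_convex}. The standard approximation of a continuous convex function by an increasing sequence of maxima of finitely many affine functions, combined with the Monotone Convergence Theorem, reduces the problem to the case where
\[ j_{l}(\mathbf{x}, \mathbf{n}, t) = \max_{1 \leqslant m \leqslant M} \bigl( a_{m}[\mathbf{x}, \mathbf{n}]\, t + b_{m}[\mathbf{x}, \mathbf{n}] \bigr) \]
with $a_{m}, b_{m}$ continuous on $\mathbb{R}^{n} \times \mathbb{S}^{n-1}$. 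Then, using the partition of unity and local $C^{1,1}$-parametrizations $X_{i}^{k}$ introduced in the proof of Proposition \ref{prop_continuite_ordre_zero}, I would partition each chart domain $D_{\tilde{r}}(\mathbf{x}_{k})$ according to which affine piece achieves the maximum for $\partial \Omega$, apply Theorem \ref{thm_continuite_rn} to each resulting linear integrand (whose convergence encapsulates the delicate Tartar compensated-compactness argument needed for $l \geqslant 2$), and conclude by bounding from below the integral over $\partial \Omega_{i}$ by the sum of the affine pieces, exactly as in Corollary \ref{coro_continuite_ordre_un_convex}. This gives the inequality
\[ \int_{\partial \Omega} j_{l}\bigl[\mathbf{x}, \mathbf{n}(\mathbf{x}), H^{(l)}(\mathbf{x})\bigr] dA \leqslant \liminf_{i \to \infty} \int_{\partial \Omega_{i}} j_{l}\bigl[\mathbf{x}, \mathbf{n}(\mathbf{x}), H^{(l)}(\mathbf{x})\bigr] dA, \]
and the same statement for the $f_{l}$.

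Combining these three ingredients closes the proof: the limit $\Omega$ still satisfies the equality constraint (by continuity), still satisfies the inequality constraint $\int f_{0} + \sum_{l} \int f_{l} \leqslant C$ (by continuity of the $f_{0}$-term and lower semi-continuity of the $f_{l}$-terms), and attains the infimum of the objective (again by Proposition \ref{prop_continuite_ordre_zero} for the $j_{0}$-term and the lower semi-continuity just established for the $j_{l}$-terms). Hence $\Omega \in \mathcal{O}_{\varepsilon}(B)$ is a minimizer. I expect the only technical care to be in verifying that the reduction to max-of-affine approximants is compatible with the partition-of-unity rewriting in local charts, since the affine approximants depend on $(\mathbf{x}, \mathbf{n}(\mathbf{x}))$ rather than being scalars; this is handled exactly as in Corollary \ref{coro_continuite_ordre_un_convex}, where the position-and-normal dependence is carried along without change.
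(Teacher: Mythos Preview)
Your proposal is correct and follows essentially the same approach as the paper: compactness via Proposition \ref{prop_compacite_boule}, continuity of the equality constraint via Theorem \ref{thm_continuite_rn}, and lower semi-continuity of the convex terms via the affine-approximation argument of Corollary \ref{coro_continuite_ordre_un_convex}. The paper's own proof is more terse, simply invoking Remark \ref{remarque_lower_semicontinuity} (which encapsulates exactly the argument you spell out) rather than redoing the max-of-affine reduction, but the content is identical.
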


\begin{proof}
Consider a minimizing sequence $(\Omega_{i})_{i \in \mathbb{N}} \subset \mathcal{O}_{\varepsilon}(B)$. From Proposition \ref{prop_compacite_boule}, up to a subsequence, it is converging to an open set $\Omega \in \mathcal{O}_{\varepsilon}(B)$. Since Assumption \ref{hypothese_continuite} holds, we can apply Theorem \ref{thm_continuite_rn} to let $i \rightarrow + \infty$ in the following equality:
\[ \int_{\partial \Omega_{i}} g_{0}\left[\mathbf{x},\mathbf{n}\left( \mathbf{x} \right) \right] dA \left( \mathbf{x} \right) + \sum_{l = 1}^{n-1} \int_{\partial \Omega_{i}} H^{(l)} \left(\mathbf{x} \right) g_{l} \left[\mathbf{x},\mathbf{n}\left(\mathbf{x}\right) \right] dA \left( \mathbf{x} \right) = \widetilde{C} . \]
Then, we can use again Theorem \ref{thm_continuite_rn} for any $l_{0} \in \lbrace 1, \ldots ,n-1 \rbrace$ by setting $j_{l_{0}} = g_{l_{0}}$ and $j_{l} = 0$ for any $l \neq l_{0}$ to obtain the continuity of any $\int H^{(l_{0})}(.) g_{l_{0}}[.,\mathbf{n}(.)]$ and Remark \ref{remarque_lower_semicontinuity} gives the lower semi-continuity of any $\int f_{l_{0}}[.,\mathbf{n}(.),H^{(l_{0})}(.)]$ and $\int j_{l_{0}}[.,\mathbf{n}(.),H^{(l_{0})}(.)]$. Hence, the functional is lower-semi-continuous and the inequality constraint remains true as $i \rightarrow + \infty$. Therefore, $\Omega$ is a minimizer of the functional satisfying the constraints. 
\end{proof}

\begin{proposition}
\label{prop_existence_vesicles}
Let $H_{0}, M_{0}, k_{G}, k_{m} \in \mathbb{R}$ and $\varepsilon, k_{b}, A_{0}, V_{0} > 0$ such that $A_{0}^{3} > 36 \pi V_{0}^{2}$. Then, the following problem modelling the equilibrium shapes of vesicles \cite[Section 2.5]{Seifert} has at least one solution (see Notation \ref{notation_geometrie_rtrois}):
\[ \inf_{\substack{\Omega \in \mathcal{O}_{\varepsilon}(\mathbb{R}^{n}) \\ A(\partial \Omega) = A_{0} \\ V(\Omega) = V_{0} }} \dfrac{k_{b}}{2} \int_{\partial \Omega} (H-H_{0})^{2}dA + k_{G} \int_{\partial \Omega} K dA + k_{m} \left( \int_{\partial \Omega} H dA - M_{0} \right)^{2} .  \]
\end{proposition}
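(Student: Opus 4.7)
The argument follows the direct method of the calculus of variations, built on the compactness of $\mathcal{O}_\varepsilon(B)$ and the (lower semi-)continuity results of Section~\ref{section_continuite}. I start with a minimizing sequence $(\Omega_i)_{i\in\mathbb{N}}\subset\mathcal{O}_\varepsilon(\mathbb{R}^n)$ with $A(\partial\Omega_i)=A_0$ and $V(\Omega_i)=V_0$. All integrands depend only on $\mathbf{n}$, $H$, $K$, so the functional and the constraints are invariant under rigid motions; in particular, connected components of each $\Omega_i$ can be translated independently. By Remark~\ref{remarque_borne_linfty}, one has $\Vert H\Vert_{L^\infty(\partial\Omega_i)}\le 2/\varepsilon$, hence Simon's Lemma~1.1 together with the area constraint yields a uniform diameter bound on each connected component, exactly as explained in Remark~\ref{remarque_isoperimetrique}. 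After suitable translations, I can therefore assume $(\Omega_i)\subset\mathcal{O}_\varepsilon(B)$ for some fixed open ball $B$ meeting the hypotheses of Proposition~\ref{prop_compacite_boule}.

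Applying Proposition~\ref{prop_compacite_boule}, up to a subsequence, $(\Omega_i)$ converges to some $\Omega\in\mathcal{O}_\varepsilon(B)\subset\mathcal{O}_\varepsilon(\mathbb{R}^n)$ in the six senses (i)--(vi). By Proposition~\ref{prop_continuite_ordre_zero}, area and volume are continuous, so $A(\partial\Omega)=A_0$ and $V(\Omega)=V_0$ pass to the limit. Expanding $(H-H_0)^2=H^2-2H_0H+H_0^2$, the energy splits as
\[
\frac{k_b}{2}\int_{\partial\Omega_i} H^2\,dA \;-\; k_b H_0\int_{\partial\Omega_i} H\,dA \;+\; \frac{k_b H_0^2}{2}A(\partial\Omega_i) \;+\; k_G\int_{\partial\Omega_i} K\,dA \;+\; k_m\Bigl(\int_{\partial\Omega_i} H\,dA - M_0\Bigr)^{2}.
\]
The term $\int H^2\,dA$ is lower semi-continuous by Corollary~\ref{coro_continuite_ordre_un_convex} applied to the convex integrand $t\mapsto \tfrac{k_b}{2}t^2$. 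The linear term $\int H\,dA$ is continuous by Proposition~\ref{prop_continuite_ordre_un} (take $j\equiv 1$), which also entails the continuity of the nonlocal term $\bigl(\int H\,dA-M_0\bigr)^2$. Finally, $\int K\,dA$ is continuous by Proposition~\ref{prop_continuite_gaussian_curvature}. Summing these five contributions, the energy is lower semi-continuous along the subsequence, so $\Omega$ attains the infimum.

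The main subtlety, compared to Theorem~\ref{thm_existence_rtrois}, is the nonlocal term $k_m\bigl(\int H\,dA - M_0\bigr)^2$: it is not of the form $\int j[\mathbf{x},\mathbf{n},H]\,dA$ with $j$ convex in the last variable, so it cannot be treated by a pointwise lower semi-continuity argument of Corollary~\ref{coro_continuite_ordre_un_convex} type. The key point is that $\int H\,dA$ is \emph{strongly} continuous (the partition-of-unity argument in Proposition~\ref{prop_continuite_ordre_un} exploits that $H$ is linear in the Hessian of the local graphs $\varphi_i^k$, whose entries converge only weakly-$\ast$ in $L^\infty$), so the continuity of the squared quantity follows trivially by composition with $t\mapsto(t-M_0)^2$. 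The preliminary reduction to a fixed bounded ball via Simon's lemma is routine thanks to the uniform curvature bound furnished by the $\varepsilon$-ball condition.
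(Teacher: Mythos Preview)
Your proof is correct and follows the same overall strategy as the paper: bound the diameter to reduce to a fixed ball $B$, apply the compactness Proposition~\ref{prop_compacite_boule}, then invoke the continuity results of Section~\ref{section_continuite} term by term. The treatment of the nonlocal term $k_m(\int H\,dA - M_0)^2$ via the continuity of $\int H\,dA$ (Proposition~\ref{prop_continuite_ordre_un}) is exactly what the paper does.

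One difference worth pointing out: your diameter bound is more direct than the paper's. The paper first controls the Willmore energy $\int H^2\,dA$ in terms of the functional value $J(\partial\Omega_i)$, using the $L^\infty$-curvature bound only to handle the $\int|K|\,dA$ and $(\int H\,dA)^2$ terms, and then applies Simon's lemma. You instead use the $L^\infty$-bound $\|H\|_{L^\infty}\le 2/\varepsilon$ from Remark~\ref{remarque_borne_linfty} straight away to get $\int H^2\,dA\le 4A_0/\varepsilon^2$, and then Simon's lemma gives the diameter bound without ever touching the functional. This is cleaner and avoids the intermediate algebraic manipulations. You are also more explicit than the paper about the need to translate connected components (and the translation invariance that permits it), a point the paper leaves implicit; just be aware that the translated components must be kept at mutual distance $\ge 2\varepsilon$ to remain in $\mathcal{O}_\varepsilon$, which is possible since the $\varepsilon$-ball condition and the area constraint bound their number.
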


\begin{proof}
Let us consider a minimizing sequence $(\Omega_{i})_{i \in \mathbb{N}} \subset \mathcal{O}_{\varepsilon}(\mathbb{R}^{n})$ of the functional satisfying the area and volume constraints. First, we need to find an open ball $B$ such that $(\Omega_{i})_{i \in \mathbb{N}} \subset \mathcal{O}_{\varepsilon}(B)$. This can be done if we can bound the diameter thanks to the functional and the area constraint. The first step is to control the Willmore energy \eqref{expression_willmore_energy}. Denoting by $J$ the functional, we have:
\[  \begin{array}{rcl}
\displaystyle{ \dfrac{k_{b}}{4}  \int_{\partial \Omega} H^{2} dA } & = & \displaystyle{  \dfrac{k_{b}}{4}  \int_{\partial \Omega} (H-H_{0}+H_{0})^{2} dA \quad \leqslant \quad  \dfrac{k_{b}}{2} \int_{\partial \Omega} (H-H_{0})^{2}dA  + \dfrac{k_{b}H_{0}^{2}}{2} A(\partial \Omega) }  \\
 & & \\
 & \leqslant & \displaystyle{  J(\partial \Omega) + \dfrac{k_{b}H_{0}^{2}}{2}  A(\partial \Omega) +  \vert k_{G} \vert ~ \begin{array}{|c|}
\displaystyle{ \int_{\partial \Omega} KdA} \\ 
 \end{array} + \vert k_{m} \vert  \left( \int_{\partial \Omega} HdA - M_{0} \right)^{2} } \\
 & & \\
 & \leqslant & \displaystyle{  J(\partial \Omega) +  \dfrac{k_{b}H_{0}^{2}}{2} A(\partial \Omega) +  \vert k_{G} \vert  \int_{\partial \Omega} \vert K \vert dA + 2 \vert k_{m} \vert  \left( \int_{\partial \Omega} HdA\right)^{2} + 2 \vert k_{m} \vert M_{0}^{2}  . } \\
\end{array}  \]
The second step is to use Point (iii) in Theorem \ref{thm_boule_equiv_cunun} and Remark \ref{remarque_borne_linfty}. Considering a point $\mathbf{x} \in \partial \Omega$ in which the Gauss map $\mathbf{n}$ is differentiable, and a unit eigenvector $\mathbf{e}_{l}$ associated with the eigenvalue $\kappa_{l}$ of the Weingarten map $D_{\mathbf{x}}\mathbf{n}$, we have:
\begin{equation}
\label{eqn_kappa}
\vert \kappa_{l} (\mathbf{x} ) \vert = \Vert \kappa_{l} (\mathbf{x} ) \mathbf{e}_{l} \Vert = \Vert D_{\mathbf{x}} \mathbf{n} (\mathbf{e}_{l}) \Vert \leqslant \Vert D_{\mathbf{x}} \mathbf{n} \Vert_{\mathcal{L}(T_{\mathbf{x}}\partial \Omega)} \Vert \mathbf{e}_{l} \Vert \leqslant \dfrac{1}{\varepsilon},  
\end{equation}
from which we deduce that $\max_{1 \leqslant l \leqslant n-1} \Vert \kappa_{l} \Vert_{L^{\infty}(\partial \Omega)} \leqslant \frac{1}{\varepsilon}$. Hence, we obtain:
\[ \dfrac{k_{b}}{4}  \int_{\partial \Omega} H^{2} dA \leqslant   J(\partial \Omega) +  \dfrac{k_{b}H_{0}^{2}}{2} A(\partial \Omega) +  \dfrac{\vert k_{G} \vert}{\varepsilon^{2}}  A(\partial \Omega) + \dfrac{8 \vert k_{m} \vert}{\varepsilon^{2}} A  \left(\partial \Omega\right)^{2}  + 2 \vert k_{m} \vert M_{0}^{2} .    \] 
The final step is to apply \cite[Lemma 1.1]{Simon} to get four positive constants $C_{0}, C_{1}, C_{2},C_{3}$ such that:
\[ \mathrm{diam}( \Omega ) \leqslant C_{0} J(\partial \Omega) A (\partial \Omega) + C_{1}  A (\partial \Omega)  + C_{2} A (\partial \Omega)^{2} + C_{3}  A (\partial \Omega)^{3}.   \]
Hence, we can bound uniformly the diameter of the $\Omega_{i}$ and there exists a ball $B \subset \mathbb{R}^{n}$ sufficiently large such that $(\Omega_{i})_{i \in \mathbb{N}} \subset \mathcal{O}_{\varepsilon}(B)$. From Proposition \ref{prop_compacite_boule}, up to a subsequence, it is converging to an $\Omega \in \mathcal{O}_{\varepsilon}(B)$. Then, we can apply:
\begin{itemize}
\item Corollary \ref{coro_continuite_ordre_un_convex} with $j(x,y,z)=\frac{k_{b}}{2}(z-H_{0})^{2}$ to get the lower semi-continuity of $\frac{k_{b}}{2}\int(H-H_{0})^{2}$;
\item Proposition \ref{prop_continuite_gaussian_curvature} with $j_{i} \equiv 1$ to obtain the continuity of $\kappa_{G} \int K$;
\item Proposition \ref{prop_continuite_ordre_un} with $j \equiv 1$ to have the continuity of $\int H dA$ thus the one of $k_{m}(\int H dA - M_{0} ) ^{2}$. 
\end{itemize}
The functional is lower semi-continuous and from Proposition \ref{prop_continuite_ordre_zero} with $j\equiv 1$ and $j(x,y)=\langle x ~\vert~ y \rangle$, the area and volume constraints are also continuous so let $i \rightarrow + \infty$ and $\Omega$ is a minimizer.
\end{proof}

\begin{proof}[\textbf{Proof of Theorem \ref{thm_existence_canham_helfrich}}]
It is the particular case $k_{m} = 0$ in Proposition \ref{prop_existence_vesicles}. This can be also deduced from Theorem \ref{thm_existence_rtrois}, it suffices to follow the method described in the next proof.
\end{proof}

\begin{proof}[\textbf{Proof of Theorem \ref{thm_existence_helfrich}}]
First, as in the proof of Proposition \ref{prop_existence_vesicles} , one can show that minimizing in $\mathcal{O}_{\varepsilon}(\mathbb{R}^{n})$ or in $\mathcal{O}_{\varepsilon}(B)$ is equivalent here. Then, apply Theorem \ref{thm_existence_rtrois} by setting $j_{0} = j_{2} \equiv 0$ and $j_{1}(x,y,z) = (z-H_{0})^{2}$ which is continuous and convex in $z$. The area and volume constraints can be expressed as in Proposition \ref{prop_continuite_ordre_zero} by setting $g_{1} = g_{2} \equiv 0$ and successively $g_{0}\equiv 1$, $g_{0}(x,y)=\langle x ~\vert~ y \rangle$. Using the Gauss-Bonnet Theorem, the genus constraint is included in $ \int K dA = 4 \pi(1-g) : = K_{0} $. Hence, Theorem \ref{thm_existence_rtrois} gives the existence of a minimizer satisfying the three constraints. Finally, we can apply \cite[Proposition 2.2.17]{HenrotPierre} to ensure that the compact minimizer is connected since it is the case for any minimizing sequence of compact sets. Hence, using again the Gauss-Bonnet Theorem, the minimizer has the right genus so Theorem \ref{thm_existence_helfrich} holds.  
\end{proof}

\begin{proof}[\textbf{Proof of Theorem \ref{thm_existence_willmore}}]
The proof is identical to the previous one. We just need to set $H_{0} = 0$ and add a fourth equality constraint of the form $g_{0} = g_{2} \equiv 0$, $g_{1} \equiv 1$.
\end{proof}

\begin{proposition}
Let $\varepsilon > 0$ and $B \subset \mathbb{R}^{4}$ be a bounded open set containing a ball of radius $3 \varepsilon$, and such that $\partial B$ has zero $4$-dimensional Lebesgue measure. We consider two bounded continuous vector fields of $\mathbb{R}^{4}$ denoted by $\mathbf{V}, \mathbf{W} : \mathbb{R}^{4} \rightarrow \mathbb{R}^{4}$ and a continuous map $j: \mathbb{R}^{4} \times \mathbb{S}^{3} \times \mathbb{R}$, which is convex in its last variable. Then, the following problem has at least one solution (for the notation we refer to Section \ref{section_geometrie} and Proposition \ref{prop_continuite_riemann_tensor} above):
\[ \inf \int_{\partial \Omega} j \left[ \mathbf{x}, \mathbf{n} \left( \mathbf{x} \right) , Ric_{\mathbf{x}} \left( \mathbf{V} \left( \mathbf{x} \right) \wedge \mathbf{n} \left(\mathbf{x}\right) ,  \mathbf{W} \left( \mathbf{x} \right) - \left\langle \mathbf{W} \left( \mathbf{x} \right) ~\vert~ \mathbf{n} \left( \mathbf{x} \right) \right\rangle \mathbf{n} \left( \mathbf{x} \right) \right) \right] dA \left( \mathbf{x} \right), \]
where the infimum is taken among all $\Omega \in \mathcal{O}_{\varepsilon}(B)$ satisfying the following constraint:
\[ \int_{\partial \Omega} \mathfrak{R} \left( \mathbf{x} \right) \left\langle \mathbf{V} \left( \mathbf{x} \right) ~\vert~ \mathbf{n} \left( \mathbf{x} \right)  \right\rangle dA \left( \mathbf{x} \right) = \int_{\partial \Omega}  H^{(2)} \left( \mathbf{x} \right) \left\langle \mathbf{W} \left( \mathbf{x} \right) ~\vert~ \mathbf{n} \left( \mathbf{x} \right) \right\rangle dA \left( \mathbf{x} \right) .  \]
\end{proposition}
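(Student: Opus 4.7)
The plan is to apply the direct method from the calculus of variations, following the same scheme as the earlier existence results of this section. I would first extract a minimizing sequence $(\Omega_i)_{i \in \mathbb{N}} \subset \mathcal{O}_{\varepsilon}(B)$. Since $B$ satisfies the hypotheses of Proposition \ref{prop_compacite_boule}, up to a subsequence $(\Omega_i)_{i \in \mathbb{N}}$ converges to some $\Omega \in \mathcal{O}_{\varepsilon}(B)$ in the sense of Proposition \ref{prop_compacite_boule} (i)--(vi). In particular Assumption \ref{hypothese_continuite} is satisfied, so the machinery of Section \ref{section_continuite} is available.

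The key preparatory step is to check that the two tangent vector fields occurring as arguments of the Ricci tensor satisfy the diagonal convergence condition of Definition \ref{definition_convergence_diagonale_vector_fields}. I would set $\mathbf{W}_{i}(\mathbf{x}) := \mathbf{W}(\mathbf{x}) - \langle \mathbf{W}(\mathbf{x}) \mid \mathbf{n}(\mathbf{x}) \rangle \mathbf{n}(\mathbf{x})$ on $\partial \Omega_{i}$, and define $\mathbf{V}_{i}$ analogously from $\mathbf{V}(\mathbf{x}) \wedge \mathbf{n}(\mathbf{x})$, understood as a tangent vector field on $\partial \Omega_{i}$ depending continuously on $(\mathbf{x}, \mathbf{n})$. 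Both families are tangent to $\partial \Omega_{i}$ and uniformly bounded because $\mathbf{V}, \mathbf{W}$ are. For any sequence $\mathbf{x}_{i} \in \partial \Omega_{i}$ converging to $\mathbf{x} \in \partial \Omega$, the local $C^{1}$-convergence of the boundary graphs given by Theorem \ref{thm_parametrisation_locale_i} forces $\mathbf{n}_{\partial \Omega_{i}}(\mathbf{x}_{i}) \to \mathbf{n}_{\partial \Omega}(\mathbf{x})$, so continuity of the construction provides the required diagonal convergence $\mathbf{V}_{i}(\mathbf{x}_{i}) \to \mathbf{V}(\mathbf{x})$ and $\mathbf{W}_{i}(\mathbf{x}_{i}) \to \mathbf{W}(\mathbf{x})$ (where here $\mathbf{V}$ and $\mathbf{W}$ on the right denote the corresponding tangential vector fields on $\partial \Omega$).

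The three integral expressions are then handled by Section \ref{section_continuite}. For the equality constraint I would apply Proposition \ref{prop_continuite_riemann_tensor} (third assertion, on $\mathfrak{R}$) with $j_{i}(\mathbf{x}, \mathbf{u}) = \langle \mathbf{V}(\mathbf{x}) \mid \mathbf{u} \rangle$ to obtain continuity of the left-hand side, and Proposition \ref{prop_continuite_ordre_deux} with $j_{i}(\mathbf{x}, \mathbf{u}) = \langle \mathbf{W}(\mathbf{x}) \mid \mathbf{u} \rangle$ to obtain continuity of the right-hand side; in both cases the $j_{i}$ are continuous and uniformly bounded on $\overline{B} \times \mathbb{S}^{3}$ because $\mathbf{V}$ and $\mathbf{W}$ are bounded. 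Hence the equality constraint passes to the limit along the subsequence. For the functional, Proposition \ref{prop_continuite_riemann_tensor} (first assertion, on the Ricci tensor) yields continuity whenever $j$ is affine in its last variable; the standard approximation of a continuous convex function by suprema of finitely many affine maps, already used in the proof of Corollary \ref{coro_continuite_ordre_un_convex} and reproduced in the setting of curvatures in Remark \ref{remarque_lower_semicontinuity}, then promotes this continuity to lower semi-continuity for the convex $j$ of the statement. Combining these three convergences shows that $\Omega$ is admissible and minimizes the functional.

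The main obstacle is the careful verification of the simultaneous diagonal convergence of the two tangent vector fields, which hinges on the convergence of the Gauss maps along sequences of converging boundary points and, implicitly, on the requirement that the construction $\mathbf{V}(\mathbf{x}) \wedge \mathbf{n}(\mathbf{x})$ genuinely yields a uniformly bounded tangent vector field continuous in $(\mathbf{x}, \mathbf{n})$. Once this verification is settled through Theorem \ref{thm_parametrisation_locale_i}, the remaining steps reduce to direct applications of the continuity and lower semi-continuity statements of Section \ref{section_continuite}.
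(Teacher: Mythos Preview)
Your proposal is correct and follows essentially the same route as the paper: compactness via Proposition~\ref{prop_compacite_boule}, diagonal convergence of the tangential vector fields through the $C^{1}$-convergence of the local graphs (Theorem~\ref{thm_parametrisation_locale_i}) and the resulting convergence of the Gauss maps, continuity of both sides of the constraint via the results of Section~\ref{section_continuite}, and lower semi-continuity of the functional via the Ricci continuity combined with the convex-envelope argument of Corollary~\ref{coro_continuite_ordre_un_convex}/Remark~\ref{remarque_lower_semicontinuity}. Two cosmetic remarks: in Proposition~\ref{prop_continuite_riemann_tensor} the Ricci statement is the second functional $J'$, not the first; and the paper handles the constraint by invoking Theorem~\ref{thm_continuite_rn} on one side and Proposition~\ref{prop_continuite_riemann_tensor} on the other (using that $\mathfrak{R}=2H^{(2)}$ for hypersurfaces), which is equivalent to your choice of Propositions~\ref{prop_continuite_riemann_tensor} and~\ref{prop_continuite_ordre_deux}.
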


\begin{proof}
Consider a minimizing sequence $(\Omega_{i})_{i \in \mathbb{N}} \subset \mathcal{O}_{\varepsilon}(B)$ of the functional satisfying the constraint. From Proposition \ref{prop_compacite_boule}, up to a subsequence, it is converging to a set $\Omega \in \mathcal{O}_{\varepsilon}(B)$. We define $V_{i} := \mathbf{V} \wedge \mathbf{n}_{\partial \Omega_{i}}$ and $W_{i} := \mathbf{W} - \langle \mathbf{W} ~\vert~ \mathbf{n}_{\partial \Omega_{i}} \rangle \mathbf{n}_{\partial \Omega_{i}}$ which are two continuous vector fields on $\partial \Omega_{i}$, uniformly bounded since $\mathbf{V}$ and $\mathbf{W}$ are. We now check the diagonal convergence. Choose any sequence of points $\mathbf{x}_{i} \in \partial \Omega_{i}$ converging to $\mathbf{x} \in \partial \Omega$. Using the partition of unity introduced in Proposition \ref{prop_continuite_ordre_zero}, we get that $\mathbf{x} \in \partial \Omega \cap \mathcal{C}_{\tilde{r},\varepsilon}(\mathbf{x}_{k})$ for some $k \in \lbrace 1, \ldots K \rbrace$. Hence, there exists $\mathbf{x'} \in D_{\tilde{r}}(\mathbf{x}_{k})$ such that $\mathbf{x} = (\mathbf{x'},\varphi^{k}(\mathbf{x'}))$. Since $(\mathbf{x}_{i})_{i \in \mathbb{N}}$ is converging to $\mathbf{x}$, for $i$ sufficiently large, we can write $\mathbf{x}_{i} = (\mathbf{x}_{i}', \varphi_{i}^{k}(\mathbf{x}_{i}'))$ with $\mathbf{x}_{i}' \in D_{\tilde{r}}(\mathbf{x}_{k})$. Hence, $\mathbf{x}_{i}' \rightarrow \mathbf{x'}$ and $\varphi_{i}^{k}(\mathbf{x}_{i}') \rightarrow \varphi^{k}(\mathbf{x'}) $, but we also have from the triangle inequality:
\[ \Vert \nabla \varphi_{i}^{k} (\mathbf{x}_{i}') - \nabla \varphi^{k}(\mathbf{x'}) \Vert \leqslant  \Vert \nabla \varphi_{i}^{k} - \nabla \varphi^{k} \Vert_{C^{0}(\overline{D_{\tilde{r}}}(\mathbf{x}_{k}))} + \Vert \nabla \varphi^{k}(\mathbf{x}_{i}') -  \nabla \varphi^{k}(\mathbf{x'}) \Vert . \] 
From \eqref{eqn_convergence} and the continuity of $\nabla \varphi^{k}$, we can let $i \rightarrow + \infty$ and the diagonal convergence of $(\nabla \varphi_{i}^{k})_{i \in \mathbb{N}}$ to $\nabla \varphi^{k}$ holds. Then, using \eqref{expression_normale}, $n_{\partial \Omega_{i}}$ is also diagonally converging to $n_{\partial \Omega}$, and so does $V_{i}$ and $W_{i}$. If $j$ is linear in its last variable, we can apply Proposition \ref{prop_continuite_riemann_tensor} to obtain the continuity of the functional, otherwise we can use Remark \ref{remarque_lower_semicontinuity} on the previous case to get the lower semi-continuity of the functional. Finally, apply Theorem \ref{thm_continuite_rn} with $j_{i}^{l} \equiv 0 $ if $l \neq 2$ and $j_{i}^{2} = \langle \mathbf{V} ~\vert~ \mathbf{n} \rangle$ to have the continuity of the left member of the constraint. The continuity of the right one comes from Proposition \ref{prop_continuite_riemann_tensor} on $J^{"}$ with $j_{i} = \langle \mathbf{W} ~\vert~ \mathbf{n} \rangle$. Hence, we can let $i \rightarrow + \infty$ in the constraint..
\end{proof}

\begin{proposition}
Let $\varepsilon, A_{0}, V_{0} > 0$ be such that $A_{0}^{3} > 36 \pi V_{0}^{2}$, and let $B \subset \mathbb{R}^{3}$ be a ball of radius at least $3 \varepsilon $. We consider a bounded vector field in $\mathbb{R}^{3}$ denoted by $\mathbf{V}: \mathbb{R}^{3} \rightarrow \mathbb{R}^{3} $ and a continuous map $j : \mathbb{R}^{3} \times \mathbb{R}^{2} \times \mathbb{R} \rightarrow \mathbb{R}$ which is convex in its last variable. Then, the following problem has at least one solution:
\[ \inf_{\substack{\Omega \in \mathcal{O}_{\varepsilon}(B) \\ A(\partial \Omega) = A_{0} \\ V(\Omega) = V_{0} }} \int_{\partial \Omega} j \left[ \mathbf{x}, \mathbf{n} \left(\mathbf{x}\right), \kappa_{\mathbf{v}} \left( \mathbf{x} \right) \right] dA \left( \mathbf{x} \right), \]
where $\kappa_{\mathbf{v}}$ is the normal curvature at $\mathbf{x}$ i.e. the curvature at $\mathbf{x}$ of the curve formed by the intersection of the surface $\partial \Omega $ with the plane spanned by $\mathbf{n}(\mathbf{x})$ and the vectore $ \mathbf{v} : = \mathbf{V}(\mathbf{x}) - \langle \mathbf{V}(\mathbf{x}) ~\vert~ \mathbf{n}(\mathbf{x}) \rangle \mathbf{n}(\mathbf{x})$.
\end{proposition}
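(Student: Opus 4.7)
The plan is to mimic the argument used for the previous vesicle-type existence results in this section. Consider a minimizing sequence $(\Omega_i)_{i \in \mathbb{N}} \subset \mathcal{O}_{\varepsilon}(B)$ satisfying $A(\partial \Omega_i) = A_0$ and $V(\Omega_i) = V_0$. The admissible class is non-empty thanks to the isoperimetric assumption $A_0^3 > 36 \pi V_0^2$ (cf.\ Remark \ref{remarque_isoperimetrique}) and to the fact that $B$ contains a ball of radius $3\varepsilon$, which allows one to place, for instance, an appropriate ellipsoid with the prescribed area and volume inside $B$. Since $(\Omega_i)_{i \in \mathbb{N}} \subset \mathcal{O}_{\varepsilon}(B)$ and $B$ is bounded, Proposition \ref{prop_compacite_boule} yields, up to extraction, a limit $\Omega \in \mathcal{O}_{\varepsilon}(B)$ converging in the six senses of its statement. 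By Proposition \ref{prop_continuite_ordre_zero} applied with $j \equiv 1$ and (via the Divergence Theorem) with $j(\mathbf{x},\mathbf{y}) = \frac{1}{3}\langle \mathbf{x} \mid \mathbf{y}\rangle$, the area and volume constraints pass to the limit, so $\Omega$ is admissible.

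The core of the proof is the lower semi-continuity of the functional. Observe that for a tangent vector $\mathbf{v} \neq \mathbf{0}$, the normal curvature in the direction of $\mathbf{v}$ is given by $\kappa_{\mathbf{v}}(\mathbf{x}) = \mathrm{\mathbf{II}}(\mathbf{x})(\mathbf{v},\mathbf{v})/\Vert \mathbf{v}\Vert^{2}$; we set $\kappa_{\mathbf{v}} = 0$ on the (closed) set where $\mathbf{v}$ vanishes. Define on each boundary the continuous tangent vector field
\[
\mathbf{v}_i(\mathbf{x}) = \mathbf{V}(\mathbf{x}) - \langle \mathbf{V}(\mathbf{x}) \mid \mathbf{n}_{\partial\Omega_i}(\mathbf{x})\rangle \mathbf{n}_{\partial\Omega_i}(\mathbf{x}),
\]
which is uniformly bounded by $\Vert \mathbf{V}\Vert_\infty$. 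The $C^{1}$-strong convergence of the local graphs furnished by Theorem \ref{thm_parametrisation_locale_i} yields (as in the proof of the previous proposition) the diagonal convergence of the Gauss maps $\mathbf{n}_{\partial\Omega_i} \to \mathbf{n}_{\partial\Omega}$, hence of $\mathbf{v}_i \to \mathbf{v}$, in the sense of Definition \ref{definition_convergence_diagonale_vector_fields}.

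First treat the linear case $j(\mathbf{x},\mathbf{y},z) = a(\mathbf{x},\mathbf{y})\, z + b(\mathbf{x},\mathbf{y})$ with $a,b$ continuous. The $b$-term is continuous by Proposition \ref{prop_continuite_ordre_zero}. The $a$-term reads $\int_{\partial \Omega_i} \tilde{a}_i[\mathbf{x}, \mathbf{n}_{\partial\Omega_i}(\mathbf{x})]\, \mathrm{\mathbf{II}}(\mathbf{x})(\mathbf{v}_i,\mathbf{v}_i)\, dA(\mathbf{x})$, with $\tilde{a}_i(\mathbf{x},\mathbf{y}) = a(\mathbf{x},\mathbf{y})/\Vert \mathbf{V}(\mathbf{x}) - \langle \mathbf{V}(\mathbf{x})\mid\mathbf{y}\rangle\mathbf{y}\Vert^{2}$, which is continuous and bounded wherever the denominator does not vanish; Proposition \ref{prop_continuite_seconde_forme_fondamentale} then yields continuity of this integral. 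For the general convex $j$, one approximates $j$ from below by a supremum of finitely many affine functions in $z$ and repeats the standard construction of Corollary \ref{coro_continuite_ordre_un_convex} (reasoning on a partition of the local parameter domain into the disjoint sets where each affine piece is active), which combines the linear continuity just obtained with Remark \ref{remarque_lower_semicontinuity} to deliver the lower semi-continuity of the whole functional. Letting $i \to +\infty$ shows that $\Omega$ realizes the infimum.

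The main obstacle is the non-linear factor $1/\Vert \mathbf{v}\Vert^{2}$: the continuity results of Section \ref{section_continuite} handle integrands that are a product of an $L^{\infty}$-weak-$\star$ converging second fundamental form term with an $L^{1}$-strongly converging weight, but the weight $\tilde{a}_i$ blows up on the set where $\mathbf{V}$ is parallel to $\mathbf{n}$. To circumvent this, one splits each integral over $\{\Vert\mathbf{v}_i\Vert > \delta\}$ and its complement, applying the above argument to the first piece (on which $\tilde{a}_i$ is genuinely continuous and bounded) and controlling the second by Remark \ref{remarque_borne_linfty}, which ensures the uniform bound $|\kappa_{\mathbf{v}_i}| \leqslant 1/\varepsilon$, so that the excluded contribution is at most a constant times the area of $\{\Vert\mathbf{v}_i\Vert \leqslant \delta\}$, which vanishes uniformly as $\delta \to 0$ since $\mathbf{V}$ and the Gauss maps are continuous and the boundaries $\partial \Omega_i$ converge to $\partial\Omega$ in Hausdorff distance.
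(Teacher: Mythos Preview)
Your overall strategy---compactness via Proposition~\ref{prop_compacite_boule}, continuity of the area/volume constraints via Proposition~\ref{prop_continuite_ordre_zero}, diagonal convergence of $\mathbf{v}_i$ inherited from the $C^{1}$-convergence of the Gauss maps, and then Proposition~\ref{prop_continuite_seconde_forme_fondamentale} for the linear case together with the affine-approximation trick of Corollary~\ref{coro_continuite_ordre_un_convex}/Remark~\ref{remarque_lower_semicontinuity} for the convex case---is exactly the paper's route.

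The difference, and the gap, is the normalization. The paper uses the identity $\kappa_{\mathbf{v}}=\mathrm{\mathbf{II}}(\mathbf{v},\mathbf{v})$ (citing \cite[Proposition~3.26, Remark~3.27]{MontielRos}) and feeds $\mathbf{v}_i$ \emph{unnormalized} into Proposition~\ref{prop_continuite_seconde_forme_fondamentale}; the weight $j_i$ is then genuinely bounded and continuous, and no singularity ever appears. You instead write $\kappa_{\mathbf{v}}=\mathrm{\mathbf{II}}(\mathbf{v},\mathbf{v})/\Vert\mathbf{v}\Vert^{2}$ and are forced to carry the singular factor $\tilde a_i=a/\Vert\mathbf{v}_i\Vert^{2}$. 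Your $\delta$-splitting does not close: the claim that $A(\{\Vert\mathbf{v}_i\Vert\leqslant\delta\})\to 0$ uniformly as $\delta\to 0$ is not a consequence of continuity of $\mathbf{V}$ and Hausdorff convergence of the boundaries, and is false in general---take $\mathbf{V}$ normal to $\partial\Omega$ (or simply $\mathbf{V}\equiv 0$) on a subset of positive area, so that $\{\mathbf{v}=0\}$ has positive measure for every $\delta>0$. Moreover, setting $\kappa_{\mathbf{v}}:=0$ on $\{\mathbf{v}=0\}$ makes $\mathbf{x}\mapsto\kappa_{\mathbf{v}}(\mathbf{x})$ discontinuous there (directional limits recover the various normal curvatures in $[\kappa_1,\kappa_2]$), which undermines the pointwise passage to the limit on the bad set. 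The clean fix is precisely the paper's: drop the normalization and apply Proposition~\ref{prop_continuite_seconde_forme_fondamentale} with $\mathbf{V}_i=\mathbf{W}_i=\mathbf{v}_i$ and weight $j_i=j$, so that the whole singular-weight discussion becomes unnecessary.
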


\begin{proof}
First, \cite[Proposition 3.26, Remark 3.27]{MontielRos} gives $ \kappa_{\mathbf{v}} =  \kappa_{1} \vert \langle \mathbf{v} \vert \mathbf{e}_{1} \rangle \vert^{2} + \kappa_{2} \vert \langle \mathbf{v} \vert \mathbf{e}_{2} \rangle \vert^{2} = \mathrm{\mathbf{II}}(\mathbf{v},\mathbf{v}) $.  Then, as in the previous proof, we can show that $\mathbf{v}_{\partial \Omega_{i}}$ is diagonally converging to $\mathbf{v}_{\partial \Omega}$. Finally, if $j$ is linear in its last variable, we can apply Proposition \ref{prop_continuite_seconde_forme_fondamentale} to get the continuity, otherwise use Remark \ref{remarque_lower_semicontinuity} to get its lower semi-continuity. The area and volume constraints are continuous from Proposition \ref{prop_continuite_ordre_zero}. Hence, from Proposition \ref{prop_compacite_boule}, a minimizing sequence has a converging subsequence to an $\Omega$ and from the foregoing we can let $i \rightarrow + \infty$ in the functional and constraints so $\Omega$ is a minimizer.  
\end{proof}

%%%%%%%%%%%%%%%%%%%%%%%%%%%%%%%%%%%%%%%%%%%%%%%%%%%%%%%%%%%%%%%%%%%%%%%%%%%%%%%%%%%%%%%%%%%%%%%%%%%%%%%%%%%%%%%%%%%%%%%%
\small
\bibliographystyle{plain}
\bibliography{biblio}

\end{document}